\newtheorem{theorem}{\bf Theorem}[section]
\newtheorem{lemma}[theorem]{\bf Lemma}
\newtheorem{definition}[theorem]{\bf Definition}
\newtheorem{corollary}[theorem]{\bf Corollary}
\newtheorem{proposition}[theorem]{\bf Proposition}
\newtheorem{remark}[theorem]{\bf Remark}
\newtheorem{conjecture}[theorem]{\bf Conjecture}
\newcommand{\rme}{\mathrm{e}}
\newcommand{\rmi}{\mathrm{i}}
\newcommand{\rmd}{\mathrm{d}}
\newcommand{\defeq}{\mathrel{\mathop:}=}
\begin{document}

\title{Braided open book decompositions in $S^3$}

\author{
Benjamin Bode}
\date{}

\address{Instituto de Ciencias Matemáticas, Consejo Superior de Investigaciones Científicas, Campus Cantoblanco UAM, C/ Nicolás Cabrera, 13-15, 28049 Madrid, Spain}
\email{benjamin.bode@icmat.es}




\maketitle
\begin{abstract}
We study four (a priori) different ways in which an open book decomposition of the 3-sphere can be defined to be braided. These include generalised exchangeability defined by Morton and Rampichini and mutual braiding defined by Rudolph, which were shown to be equivalent by Rampichini, as well as P-fiberedness and a property related to simple branched covers of $S^3$ inspired by work of Montesinos and Morton. We prove that these four notions of a braided open book are actually all equivalent to each other. We show that all open books in the 3-sphere whose binding has a braid index of at most 3 can be braided in this sense. We relate our findings to a conjecture on real algebraic links by Benedetti and Shiota and to a stronger version of Harer's conjecture due to Montesinos and Morton.
\end{abstract}
%

\keywords{Keywords: open book decomposition, fibered link, braided surface, P-fibered braid, exchangeable braid, real algebraic link, Hopf plumbing}

\subjclass{Primary: 57K10; 57K35; Secondary: 14P25; 32S55}

\section{Introduction}\label{sec:intro}
It is one of the most fundamental results in knot theory, proved by Alexander in 1923, that every link $L$ in the 3-sphere $S^3$ is the closure of some braid \cite{alexander}. In particular, the binding of any open book decomposition of $S^3$ can be braided. This article is concerned with different ways in which an open book decomposition in $S^3$ (and not only its binding) can be braided. For all of these, the binding of the open book is a braid in the usual sense, but there are some additional requirements regarding the relation between the pages of the open book and the braid axis.

We denote an open book decomposition of $S^3$ by $(L,\Psi)$, where $L$ is a link in $S^3$ and $\Psi:S^3\backslash L\to S^1$ is a fibration map over the circle with a specified behaviour on a tubular neighbourhood of $L$. The fibers of $\Psi$, the \textit{pages} of the open book, are Seifert surfaces of $L$, the \textit{binding} of the open book. One particular open book is the \textit{unbook}, whose pages are disks $D$ and whose binding is the unknot $O$. Basic information on open books is reviewed in Section \ref{sec:defs}.


The following four notions all capture some amount of what it could mean for an open book $(L,\Psi)$ to be braided. Detailed definitions are given in Section \ref{sec:defs}.
\begin{enumerate}[label=B\arabic*)]
\item The binding $L$ is the closure of a generalised exchangeable braid (cf. Definition \ref{def:exch}).
\item The open book $(L,\Psi)$ and the unbook $(O,\Phi)$ are mutually braided (cf. Definition \ref{def:mut}).
\item The binding $L$ and a braid axis $O$ of $L$ arise as the preimages of the two components of a \textit{Hopf $n$-braid axis} of the branch link of a simple branched cover $\pi:S^3\to S^3$ (cf. Definition \ref{def:hopf}). 
\item The binding $L$ is the closure of a P-fibered braid (cf. Definition \ref{def:pfib}).
\end{enumerate}

The first type of braiding B1) is due to Morton and Rampichini \cite{mortramp, rampi}, generalising exchangeable braids as proposed by Goldsmith \cite{goldsmith} (cf. also \cite{mortonex} by Morton). It means that $L$ is positively transverse to the pages of the unbook and the binding of the unbook is positively transverse to the pages of $(L,\Psi)$. Mutually braided open book decompositions as in B2) were introduced and studied by Rudolph \cite{rudolphmut}. For this type of braiding it is required that every page of $(L,\Psi)$ is a \textit{braided surface} (with respect to the unbook), in the sense of Rudolph, and every page of the unbook is a generalised braided surface with respect to $(L,\Psi)$. B3) is inspired by work of Montesinos and Morton on simple branched covers of $S^3$ \cite{morton}, but has not appeared in this form so far. Here the binding $L$ and a braid axis $O$ should be preimages $\pi^{-1}(\alpha)$ and $\pi^{-1}(\beta)$ of the two components $\alpha$ and $\beta$ of a Hopf link, each of which forms a braid axis for the branch link of a simple branched cover $\pi:S^3\to S^3$. The term \textit{P-fibered braid} in B4) was introduced in \cite{bodesat}, but the concept of B4) has already appeared in earlier work in the context of links of isolated critical points of real polynomial maps \cite{bode:real} and loops in the space of monic complex polynomials \cite{bode:adicact}. We associate to every braid a loop in the space of monic polynomials. A braid is defined to be P-fibered if the argument of the polynomial defines a fibration over the circle leading to an explicit open book.

The main result of this paper is that these different characterisations are actually all equivalent in the following sense.
\begin{theorem}\label{thm:main}
If an open book $(L,\Psi)$ in $S^3$ satisfies one of the above properties B1) through  B4), say B$i)$, then for any of the above properties B1) through B4), say B$j)$, $(L,\Psi)$ can be isotoped so that it satisfies property B$j)$. We call an open book in $S^3$ \textbf{braided} if it satisfies any (and hence all) of the properties B1) through B4).
\end{theorem}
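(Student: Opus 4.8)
The natural strategy is to prove a cycle of implications among B1)--B4) rather than all $\binom{4}{2}$ directions directly. I would organize the argument as
\[
\text{B1)} \Longleftrightarrow \text{B2)} \Longrightarrow \text{B4)} \Longrightarrow \text{B3)} \Longrightarrow \text{B1)},
\]
where each arrow means ``up to an isotopy of the open book.'' The equivalence B1) $\Leftrightarrow$ B2) is essentially Rampichini's theorem equating generalised exchangeability with mutual braiding, so I would cite \cite{rampi} and only sketch how her statement is repackaged in the present language (pages of $(L,\Psi)$ being braided surfaces with respect to the unbook is exactly transversality of $O$ to those pages, together with the corresponding condition with the roles reversed). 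The content to be supplied by this paper is therefore the three remaining arrows, each of which translates one combinatorial/geometric model of ``braided'' into the next.

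For B2) $\Rightarrow$ B4): starting from a mutually braided pair, the braid $L$ closes up around the axis $O$, and I would build the associated loop of monic polynomials from the Rudolph ``braided surface'' data — the roots at each $\theta \in S^1$ are the intersection points of the corresponding page of the unbook with $L$. Mutual braiding says these roots depend on $\theta$ in a way that sweeps out all of $S^3 \setminus L$ as $\theta$ varies and as one moves through the pages of $(L,\Psi)$; I would argue that this is precisely the condition that the argument of the polynomial map has no critical points, i.e. P-fiberedness, perhaps after a $C^0$-small perturbation to make the polynomial dependence smooth and generic. For B4) $\Rightarrow$ B3): given a P-fibered braid on $n$ strands, the polynomial map $(g,\theta)\colon S^3 \setminus L \to \mathbb{C}^*$ (suitably normalised near $L$) can be completed to a map $S^3 \to S^3$ which one checks is a simple branched cover with branch link a closed braid, with $L = \pi^{-1}(\alpha)$ and the braid axis $O = \pi^{-1}(\beta)$ of a Hopf link downstairs; this is the Montesinos--Morton picture and the key is that P-fiberedness of the braid is equivalent to the total space map being a (simple, branched) fibration-like object, with the ``Hopf $n$-braid axis'' of Definition \ref{def:hopf} being exactly what the preimage of a Hopf link looks like. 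Finally B3) $\Rightarrow$ B1): if $L$ and $O$ are $\pi^{-1}(\alpha)$, $\pi^{-1}(\beta)$ for a Hopf $n$-braid axis, then $\beta$ being a braid axis downstairs pulls back (since $\pi$ is a branched cover respecting the fibrations) to $O$ being transverse to the pages of $(L,\Psi)$ and $L$ being transverse to the disk pages bounded by $O$, which is the defining property of generalised exchangeability; here I would lean on \cite{morton} for the branched-cover transfer of fibration data.

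The main obstacle I anticipate is the step B4) $\Rightarrow$ B3): one must actually construct the branched cover $\pi$ from the polynomial data, verify it is \emph{simple} (branch points of the generic-fibre type only, no higher multiplicities), and identify the downstairs branch set as a \emph{closed braid} with $\alpha \cup \beta$ genuinely a Hopf link serving simultaneously as a braid axis for it — matching Definition \ref{def:hopf} on the nose. Controlling the behaviour near the binding $L$ (where the polynomial has its roots and where $\pi$ is branched) and arranging genericity without destroying the P-fibered property will require care, probably a perturbation argument in the space of monic polynomial loops together with an explicit local model near $L$. The reverse bookkeeping in B2) $\Rightarrow$ B4) — promoting the piecewise/topological braided-surface data to a smooth polynomial loop whose argument is a submersion — is the second most delicate point, and I would handle it by a Weierstrass-type interpolation followed by a transversality perturbation, checking that ``no critical points of the argument'' is an open condition that the mutual-braiding hypothesis already guarantees in a neighbourhood.
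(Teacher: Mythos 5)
Your cycle of implications is precisely the one used in the paper: B1) $\Leftrightarrow$ B2) by Rampichini, then B2) $\Rightarrow$ B4) $\Rightarrow$ B3) $\Rightarrow$ B1), with the content of the paper being the last three arrows. For B4) $\Rightarrow$ B3) your sketch matches Lemma \ref{lemma43} well: the paper builds $\pi$ explicitly as $\pi(\phi^{-1}(u),\rme^{\rmi t}) = (\phi(g_t(u)),\rme^{\rmi t})$ (with $\phi$ a diffeomorphism $D\to\mathbb{C}$), so that $L_{branch}$ is the closed braid of critical values, $\alpha$ is the $0$-strand, $\beta$ the boundary circle, and simplicity is automatic from the distinct critical values. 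For B3) $\Rightarrow$ B1) your pull-back-of-transversality argument is exactly Lemma \ref{lem:liftex}, together with the Riemann--Hurwitz count (Lemma \ref{lem:RH}) to see that $\pi^{-1}(\beta)$ is an unknot.

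Where your plan diverges from the paper, and where I think it would actually get stuck, is B2) $\Rightarrow$ B4). You propose to place the roots of $g_t$ at the points $L\cap D_t$ and then argue that mutual braiding forces $\arg g_t$ to be a submersion after a generic perturbation. But specifying the roots of a monic polynomial gives essentially no control over its critical values, and P-fiberedness is a condition on the critical values (namely $\partial_t \arg v_j(t) \neq 0$ for all $j,t$, Eq.\ (\ref{eq:satellite})). Nothing about where the roots sit forces the critical values to wind monotonically, and a $C^0$-small perturbation of the roots only gives a $C^0$-small perturbation of the critical values, which cannot create monotonicity out of nothing. The paper instead controls the critical values directly: it observes that the Rampichini diagram of the mutually braided pair is the same combinatorial object as the square diagram of a loop of polynomials (the transpositions labelling hyperbolic points are the cactus of $\arg g_t$ restricted to $D_t$, a fact established in Lemma \ref{lem:transprod} by showing $r\rme^{\rmi\Psi}$ is a simple branched cover of the disk, and the BKL relations at crossings in the Rampichini diagram coincide with Eqs.\ (\ref{eq:tau1})--(\ref{eq:tau2}) governing how the cactus changes as critical values pass one another). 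It then prescribes a loop of critical values realising the Rampichini diagram, invokes Riemann's existence theorem to produce a polynomial with that cactus, and uses the Beardon--Carne--Ng covering $\theta_n:X_n\to V_n$ (and Corollary \ref{cor:lifting}) to lift the critical-value loop to a loop $g_t$. Monotonicity of $\arg v_j$ is then built in by construction, not obtained by perturbation. This cactus/Riemann-existence mechanism is the central new technical idea of the proof and is absent from your sketch. Relatedly, you identify B4) $\Rightarrow$ B3) as the hardest step, but in the paper that step is essentially a direct construction; the real weight is carried by B2) $\Rightarrow$ B4) (Section \ref{sec:31}).
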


The implication B$2)\implies$ B1) follows directly from their definitions, while the converse B$1)\implies$ B2) has been proved by Rampichini.


The central aspect of the definition of a braided open book $(L,\Psi)$ is how a page $F_\varphi\defeq\Psi^{-1}(\varphi)$, $\varphi\in S^1$, lies in $S^3$ relative to the unbook and, conversely, how a page of the unbook lies in $S^3$ relative to the pages $F_\varphi$, $\varphi\in S^1$. This is strongly related to braid foliations, going back to Bennequin \cite{benn} and Birman-Menasco \cite{birmena, birmena2}, and the more general open book foliations, which were introduced by Ito and Kawamuro \cite{obf}. The relevant definitions are reviewed in Section \ref{sec:defs}.

In this article we only deal with open book decompositions of the 3-sphere. Properties B1) through B3) have natural generalisations to more general 3-manifolds, with the (generalised) braid axis $O$ no longer an unknot but the binding of another open book $(L',\Psi')$. However, the results in this direction are extremely scarce.

It remains an open problem if any open book in $S^3$ can be braided. A positive answer to this question would not only address the corresponding question for each of the four braid properties, but also prove a conjecture by Montesinos and Morton \cite{morton}, which would imply a new proof of Harer's conjecture. It could also play a big role in proving a conjecture by Benedetti and Shiota \cite{benedetti} on real algebraic links. The connections between braided open books and these conjectures are explained in Section \ref{sec:defs}. 

While we are not able to provide a proof for all fibered links, we prove that if the braid index of the binding is at most 3, the corresponding open book can be braided, thereby proving these conjectures for this simple family of links.

\begin{theorem}
\label{thm:3}
Let $(L,\Psi)$ be an open book in $S^3$ whose binding $L$ has a braid index of at most 3. Then $L$ is the closure of a generalised exchangeable braid.
\end{theorem}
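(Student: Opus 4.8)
The plan is to argue by induction on the braid index, using the equivalence of the four notions of braidedness from Theorem~\ref{thm:main} so that we are free to work with whichever characterisation is most convenient. Since links of braid index $1$ are unknots and links of braid index $2$ are torus links $T(2,n)$ — whose open books are well understood and easily seen to be braided (e.g. they are P-fibered via the standard polynomial $z^2 - w^n$, or one checks exchangeability directly) — the substance is the braid index $3$ case. So fix a fibered link $L$ which is the closure of a $3$-braid $\beta \in B_3$; here $\Psi$ is forced, up to the relevant equivalence, by the fiberedness of $L$ together with $\beta$ (one should note that a fibered closure of a $3$-braid has a unique fiber surface, so the page structure is determined). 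The aim is to produce an isotopy after which $\beta$ is a P-fibered braid, equivalently (by Theorem~\ref{thm:main}) a generalised exchangeable braid.

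The key step is a classification input: the fibered $3$-braids are, up to conjugacy and the relevant moves, a very restricted list. Concretely, I would use the classification of $3$-braids (Birman–Menasco, or the Murasugi normal form for $B_3$) together with the characterisation of which $3$-braid closures are fibered. Writing $\beta$ in band-generator / Birman–Ko–Lee form or in the form $h^d \sigma_1^{a_1}\sigma_2^{-b_1}\cdots$, the fibered ones are exactly those obtained from the trivial braid by a sequence of Hopf plumbings/stabilisations that stay within braid index $3$; in practice these are the links that arise as closures of positive or "almost positive" $3$-braids of a specific shape, together with the connected sums and cablings that Murasugi's classification produces. I would then handle each family: for the homogeneous/positive $3$-braids one can quote or adapt the result (already in the literature on P-fibered braids, cf. \cite{bode:real, bodesat}) that positive braids which are "visually" fibered — those whose associated polynomial loop has the argument as a submersion — are P-fibered; for the remaining finite list of exceptional families one exhibits the polynomial loop explicitly and checks the submersion condition, or exhibits the generalised exchange isotopy directly.

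The main obstacle I expect is precisely this case analysis: verifying the P-fibration condition (that $\arg$ of the associated polynomial loop has no critical points on $S^1$) for each family of fibered $3$-braids is not automatic — P-fiberedness is \emph{not} preserved under arbitrary isotopy, only under the moves that the equivalence theorem supplies, so one must be careful to realise each fibered $3$-braid by a \emph{specific} braid word for which the polynomial can be written down and the derivative computed. A secondary subtlety is making sure the induction is set up on the right quantity: plumbing a Hopf band can a priori raise the braid index, so the inductive hypothesis must be phrased so that every fibered link of braid index $\le 3$ is reached by a de-plumbing sequence staying in braid index $\le 3$, which is where the $3$-braid classification does the real work. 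Once every fibered $3$-braid is shown to be P-fibered, Theorem~\ref{thm:main} immediately upgrades this to generalised exchangeability, giving the statement.
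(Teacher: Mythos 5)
Your approach is genuinely different from the paper's, but as written it has a gap that you yourself identify and do not close. The paper does not classify fibered $3$-braid closures at all. Instead it proves a general sufficient criterion (Lemma \ref{lem:over}): if a braided surface $F$ admits an \emph{overpass} and an \emph{underpass} in its ladder diagram, then a braid axis can be constructed in a neighbourhood of $F\cup\partial F$ that is positively transverse to the pages, hence the boundary braid is generalised exchangeable. It then invokes Bennequin's result that every link of braid index $3$ has a minimal-genus Seifert surface that can be realised as a \emph{degree-$3$} braided surface, and a short band-word manipulation (cycling indices mod $3$, plus possibly passing to the mirror) shows such a surface always admits an over- and an underpass. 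The mirror issue is then resolved by observing that mirror images of P-fibered braids are P-fibered via coefficient conjugation of the polynomial loop, and applying Theorem \ref{thm:main}. No classification is needed, and no polynomial loop is ever written down explicitly.

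The gaps in your sketch are real and not easily repaired along the lines you propose. First, the statement that the fibered $3$-braid closures are "exactly those obtained from the trivial braid by a sequence of Hopf plumbings/stabilisations that stay within braid index $3$" is not a theorem; Giroux--Goodman gives no control on braid index along the plumbing sequence, and you flag this as an obstacle without resolving it. Second, the induction on braid index does not carry weight: knowing the result for index $1$ and $2$ gives no leverage on index $3$, since a de-plumbing from a $3$-braid closure need not land in index $\le 2$. Third, "handle each family" is precisely the part that requires work — for non-positive, non-homogeneous fibered $3$-braids you give no method for producing a P-fibered representative, and P-fiberedness is a property of a geometric braid, not a link, so "checking the submersion condition" needs a specific candidate. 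You should also note the need to quote Bennequin's theorem that braid index $3$ links admit degree-$3$ braided Bennequin surfaces; without it, one does not even know that the fiber surface can be put in braided form on $3$ strands, and the analogous statement fails already at braid index $4$. If you wanted to complete a classification-style argument, the missing ingredient you would still need is some uniform sufficient condition for exchangeability applicable word-by-word — which is exactly what the overpass/underpass criterion supplies.
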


Together with Theorem \ref{thm:main} this immediately implies the following corollary.

\begin{corollary}
Let $(L,\Psi)$ be an open book in $S^3$ whose binding $L$ has a braid index of at most 3. Then $(L,\Psi)$ can be braided.
\end{corollary}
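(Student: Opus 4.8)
\emph{Plan.} The plan is to deduce the corollary directly from the two preceding theorems; essentially all of the work is already contained in Theorems~\ref{thm:main} and~\ref{thm:3}. Recall that, by definition, an open book is \emph{braided} exactly when it satisfies at least one --- equivalently, by Theorem~\ref{thm:main}, all --- of the properties B1) through B4). Hence it suffices to verify a single one of these four properties for $(L,\Psi)$. Theorem~\ref{thm:3} does precisely this: when the binding $L$ has braid index at most $3$, it is the closure of a generalised exchangeable braid, which is exactly property B1). Applying Theorem~\ref{thm:main} with $i=1$, we conclude that $(L,\Psi)$ can be isotoped to satisfy any of B1) through B4), and therefore $(L,\Psi)$ can be braided.

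Two points deserve emphasis. First, the notion of a braided open book builds an isotopy of $(L,\Psi)$ (together with its pages) into each of the defining properties, so once $L$ is realised as the closure of a generalised exchangeable braid by Theorem~\ref{thm:3} there is nothing further to reconcile with the originally given fibration $\Psi$: property B1) is then in force for $(L,\Psi)$ after that isotopy. Second, the step from B1) to the remaining properties is genuinely non-trivial; only B$2)\!\implies\!$B1) is immediate from the definitions, while the converse (due to Rampichini) together with all implications involving the branched-covering property B3) and the P-fibered property B4) are the substance of Theorem~\ref{thm:main}. It is exactly this equivalence that allows one conveniently chosen property --- here B1), the output of Theorem~\ref{thm:3} --- to certify that $(L,\Psi)$ is braided in every sense.

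Since there is no obstacle internal to the corollary, the difficulty lies entirely upstream: in the construction behind Theorem~\ref{thm:3} of a generalised exchangeable braid representative for an arbitrary fibered link of binding braid index at most $3$, and in the network of implications among B1)--B4) that constitutes Theorem~\ref{thm:main}. We also record the consequence highlighted in Section~\ref{sec:defs}: for every open book in $S^3$ whose binding has braid index at most $3$, this verifies the conjectures of Montesinos and Morton and of Benedetti and Shiota restricted to this family of links.
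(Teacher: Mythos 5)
Your proposal is correct and follows exactly the paper's own route: Theorem~\ref{thm:3} supplies property B1) for the binding, and Theorem~\ref{thm:main} then upgrades this to all of B1)--B4), i.e., to the open book being braided. The paper states this in one line ("Together with Theorem~\ref{thm:main} this immediately implies the following corollary"), so there is nothing further to add.
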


A \textit{braided Seifert surface} of degree $n$ consists of $n$ parallel disks and some half-twisted bands connecting them. A braided Seifert surface of a link $L$ that has minimal genus among all Seifert surfaces of $L$ is called a \textit{Bennequin surface}. Every braided Seifert surface can be represented by a band word in the band generators $a_{i,j}$, $1\leq i<j\leq n$, which represent a positively half-twisted band between the $i$th and the $j$th disk, and their inverses $a_{i,j}^{-1}$, which represent bands with negative half-twists. More detailed definitions of braided surfaces and the band generators are provided in Section \ref{sec:mutual}.

\begin{theorem}
\label{thm:benn}
Let $F$ be a Bennequin surface of degree $n$, whose boundary is a fibered link $L$. If $F$ is represented by a band word that contains the band generators $a_{1,2}$, $a_{2,3}$, $a_{3,4},\ldots, a_{n-1,n}$ and $a_{1,n}$ (anywhere in the band word, not necessarily consecutively, in any order, with any sign), then the open book with binding $L$ and fiber $F$ can be braided. 
\end{theorem}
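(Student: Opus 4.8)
The plan is to exhibit, given such a Bennequin surface $F$, an explicit P-fibered braid whose closure is $L$ and whose associated open book is isotopic to $(L,\Psi)$; by Theorem \ref{thm:main} this suffices. The starting observation is that a braided Seifert surface of degree $n$ is, up to isotopy, determined by its band word $w = b_1 b_2 \cdots b_k$ in the generators $a_{i,j}^{\pm 1}$, and the surface $F$ is fibered if and only if its boundary $\partial F = \hat{w}$ (the closure, read as a braid word via $a_{i,j}\mapsto$ the standard band braid) is a fibered link with fiber $F$. The hypothesis guarantees that the band word uses, among its letters, a full set of bands $a_{1,2}, a_{2,3},\ldots,a_{n-1,n}$ together with the ``closing'' band $a_{1,n}$, i.e. the bands corresponding to the edges of an $n$-cycle on the $n$ disks. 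The key point is that this cyclic pattern of bands lets one view the page $F$ as swept out by a genuine fibration: the disks together with the $n$-cycle of bands already form a fibered subsurface (an annulus or a once-punctured surface whose monodromy is a product of the corresponding band twists), and the remaining bands only refine this picture without destroying fiberedness.

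Concretely, I would first reduce to a normal form for the band word. Using the braid relations among the $a_{i,j}$ (the BKL/band-generator presentation of the braid group), together with Markov-type moves that preserve both the boundary link and the isotopy class of the braided surface, I would move the distinguished letters so that the word has the shape $w = a_{1,2}^{\varepsilon_1} a_{2,3}^{\varepsilon_2}\cdots a_{n-1,n}^{\varepsilon_{n-1}} a_{1,n}^{\varepsilon_n}\, w'$, where $w'$ is an arbitrary band word in the $a_{i,j}$ and the $\varepsilon_\ell\in\{\pm 1\}$. The prefix here is exactly the ``cycle'' part; the essential content is that the braided surface carried by the cycle prefix alone is the fiber surface of a P-fibered braid. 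This last fact is where I would invoke the earlier machinery: a braid given by such a cyclic band word maps to a loop in the space of monic polynomials whose roots trace out the $n$-cycle, and one checks directly (as in the polynomial models of \cite{bode:real, bode:adicact}) that the argument of this polynomial loop is a submersion off the binding, hence P-fibered.

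The main step — and the expected main obstacle — is showing that appending the extra band word $w'$ to a P-fibered cyclic prefix again yields a P-fibered braid with the correct open book. The natural approach is a gluing/stabilization argument: each band generator $a_{i,j}^{\pm 1}$ corresponds to a Hopf-plumbing (or a more general Murasugi sum with a Hopf band) onto the surface, and Murasugi sum preserves fiberedness (Stallings, Gabai). So I would argue that once the cyclic prefix gives a fibered surface realized by a P-fibered braid, each additional band is a Hopf plumbing that can be performed in the P-fibered category — i.e. the polynomial loop can be modified by a local move near the plumbing region without losing the submersion property. Translating Gabai--Stallings plumbing into an explicit deformation of polynomial loops is the technical heart; the paper's framework (braided surfaces, band generators, and the correspondence between braids and polynomial loops set up in Section \ref{sec:mutual} and Section \ref{sec:defs}) should make this manageable, but the bookkeeping — ensuring the deformations near different bands are supported in disjoint regions and that the resulting open book is isotopic to the given $(L,\Psi)$, not merely abstractly homeomorphic — is where care is needed.

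Finally, I would assemble these pieces: the normal-form reduction (Step 1), the P-fiberedness of the cyclic prefix (Step 2), and the inductive Hopf-plumbing-preserves-P-fiberedness argument over the letters of $w'$ (Step 3) together produce a P-fibered braid whose open book is $(L,\Psi)$, so property B4) holds, and Theorem \ref{thm:main} upgrades this to ``braided'' in the sense of the paper. The hypothesis that $F$ is a \emph{Bennequin} surface (minimal genus) is used to guarantee that the band word has no ``redundant'' cancelling pairs forcing a genus defect, so that the fibered boundary link $L$ really has $F$ as its fiber; if one only assumed $F$ fibered this would be automatic, so the Bennequin condition is there mainly to match the setup of the earlier theorems.
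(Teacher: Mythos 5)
Your plan is quite different from the paper's, and unfortunately it has gaps that are not just ``bookkeeping.''

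The paper's proof is short and direct: it introduces the combinatorial notions of an \emph{overpass} and an \emph{underpass} on the ladder diagram of a braided surface (Section~\ref{sec:23}), and proves in Lemma~\ref{lem:over} that a ladder diagram admitting both an overpass and an underpass yields an explicit unknotted braid axis $O$ lying in $F\cup F^+\cup N(B)$ and positively transverse to every page, hence a generalised exchangeable braid. The hypotheses of Theorem~\ref{thm:benn} are then checked to produce an overpass directly from the presence of $a_{1,n}$ and an underpass from the presence of all $a_{i,i+1}$. No normal form, no induction, no polynomial model is needed.

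Your Step~3 is the critical gap. You propose that ``each additional band is a Hopf plumbing that can be performed in the P-fibered category,'' i.e.\ that one can always modify the polynomial loop locally to accommodate a new band while keeping $\arg g_t$ a submersion. This is not a technical detail: it is essentially the open problem underlying the whole paper. If Hopf plumbing always preserved P-fiberedness (or, via Theorem~\ref{thm:main}, mutual braidedness / exchangeability), then since every fiber surface arises from the disk by Hopf plumbings and deplumbings (Giroux--Goodman), you would have proved that \emph{every} open book in $S^3$ can be braided, which is explicitly stated in the paper to be an open question (and would also resolve Conjecture~\ref{con:morton} and make major progress on Conjecture~\ref{conj:bene}). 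The paper only knows that special ``$\pi$-symmetric'' Hopf plumbings behave well. You cannot invoke the general claim without proving it, and proving it would be a much stronger result than Theorem~\ref{thm:benn}.

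Step~1 is also problematic: the BKL relations and cyclic permutation (conjugation) of the band word do \emph{not} let you freely reorder letters into the prefix form $a_{1,2}^{\varepsilon_1}\cdots a_{n-1,n}^{\varepsilon_{n-1}}a_{1,n}^{\varepsilon_n}w'$, while preserving the isotopy class of the braided surface. Relation~(\ref{eq:1rel}) only commutes non-interlacing bands, relation~(\ref{eq:2rel}) changes which bands are present, and Markov stabilisation changes the braid index. The theorem's hypothesis is deliberately phrased as ``anywhere in the band word, not necessarily consecutively, in any order'' precisely because no such normal-form reduction is available; the overpass/underpass construction works with the letters wherever they happen to sit. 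Finally, Step~2 (P-fiberedness of the cyclic prefix) is asserted without verification, and your interpretation of the Bennequin hypothesis is slightly off: its role is simply to ensure that $F$, being minimal genus, is genuinely the fiber surface of the open book with binding $L$, so that the open book in the conclusion is the one with page $F$.
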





The remainder of this paper is structured as follows. Section \ref{sec:defs} gives all the definitions and necessary background on the four properties B1) through B4) and how they relate to each other and to open book foliations. We then prove the implications B4)$\implies$B3) in Section \ref{sec:43}, B3)$\implies$B1) in Section \ref{sec:42} and the implication B2)$\implies$B4) in Section \ref{sec:31}, which together with the implications that are already known prove the equivalence between the properties B1) through B4). Theorem \ref{thm:3} and Theorem \ref{thm:benn} are proved in Section \ref{sec:23}.

\ \\

\textbf{Acknowledgements:} This work was partially supported by the Severo Ochoa Postdoctoral Programme at ICMAT, Grant CEX2019-000904-S funded by MCIN/AEI/ 10.13039/501100011033 and by the European Union's Horizon 2020 research and innovation programme through the Marie Sklodowska-Curie grant agreement 101023017. The author would like to thank James Dix for pointing out errors in an earlier version of this article.

\section{Definitions and background}\label{sec:defs}
In this section we provide the reader with the necessary background and, most importantly, give the definitions of the four braiding properties B1) through B4) from the introduction. Some basic knowledge on knots, links and open books is assumed, but can be found in several standard texts such as \cite{etnyre, rolfsen} if necessary.

\subsection{Open books}

An abstract open book is described by a compact oriented surface $F$ with non-empty boundary and a homeomorphism $h:F\to F$, which is the identity on the boundary. From this description we obtain a 3-manifold 
\begin{equation}
M\defeq F\times[0,2\pi]/ \sim,
\end{equation}
where the equivalence relation $\sim$ identifies $(x,0)$ with $(h(x),2\pi)$ for all $x\in F$ and $(x,\varphi)$ with $(x,0)$ for all $x\in\partial F,\varphi\in[0,2\pi]$.
The 3-manifold $M$ is thus filled by homeomorphic disjoint surfaces $F_\varphi\defeq (F,\varphi)$, $\varphi\in [0,2\pi]$, the \textit{pages}, that are identified at their boundary, the \textit{binding}. Since the pages $F_0$ and $F_{2\pi}$ are glued together using the homeomorphism $h$, we might as well consider the variable $\varphi$ on the second factor $[0,2\pi]$ as a circular variable, i.e., valued in $S^1$. 

Note that the common boundary of all surfaces $F_\varphi$ is a link $L=\partial F_\varphi$ in $M$ with an orientation induced by that of $F_\varphi$. The projection map on the second factor $\Psi:(x,\varphi)\mapsto \varphi$ defines a fibration of $M\backslash L$ over the circle $S^1$. We say that $(L,\Psi)$ is an \textit{open book decomposition} of $M$, often abbreviated to OBD or \textit{open book}. Occasionally, we will be somewhat inexact regarding the distinction between the abstract surface $F$ and its embeddings $F_\varphi$, $\varphi\in S^1$, and refer to both of these as the page of the open book.

The terms \textit{open book}, \textit{pages} and \textit{binding} originate from the way in which the surfaces $F_\varphi=\Psi^{-1}(\varphi)$ meet along their common boundary. In particular, for every component $K$ of $L$ the fibration map $\Psi$ on the tubular neighbourhood $N(K)\cong S^1\times D$ of $K$ can be given by $\Psi(\ell,z)=\arg(z)$, where $\arg$ maps any non-zero complex number $z\in D\subset\mathbb{C}$ to its argument, i.e., $\arg(r\rme^{\rmi \chi})=\chi$, for all positive, real $r$ and $\chi\in\mathbb{R}/2\pi\cong S^1$.

We say that a link $L$ in $S^3$ is \textit{fibered} if it is the binding of an open book decomposition of the 3-sphere $S^3$. In this case the page of the open book is a Seifert surface of $L$ of minimal genus among all Seifert surfaces of $L$.

The unbook $(O,\Phi)$ is an open book decomposition of $S^3$, whose binding is the unknot, whose pages are disks and the surface homeomorphism $h$ is the identity map. Using the description $S^3=\{(u,v)\in\mathbb{C}^2:|u|^2+|v|^2=1\}$ as a subset of $\mathbb{C}^2$, the fibration map can be given explicitly by $\Phi:S^3\backslash O\to S^1$, $\Phi(u,v)=\arg v$, where the unknot $O$ is parametrised as $(\rme^{\rmi \chi},0)$ with $\chi$ going from 0 to $2\pi$.

Not all links are fibered, the knot $5_2$ being the simplest non-fibered knot (in terms of minimal crossing number). There are several characterisations of fibered links that can be used to check if a given link $L$ is fibered. These include properties of the commutator subgroup $[\pi_1(S^3\backslash L),\pi_1(S^3\backslash L)]$ of the link group $\pi_1(S^3\backslash L)$ (if $L$ is a knot) \cite{stallings1}, knot Floer homology (the categorification of the Alexander polynomial, which itself also offers an obstruction to being fibered) \cite{yi} and Gabai's test using sutured manifolds \cite{gabai}.

Another characterisation of fibered links is more constructive in nature. It was conjectured by Harer in 1982 \cite{harer} and eventually proved by Giroux and Goodman \cite{giroux}. Given a fibered link $L$ and a simple path $\gamma:[0,1]\to F$ on its fiber surface $F$ (i.e., the embedded page in $S^3$) with $\gamma\cap L=\gamma(0)\cup\gamma(1)$, we can glue a twisted annulus $H$, whose boundary is a positive or negative Hopf link, depending on the direction of the twisting, to $F$ along $\gamma$ as indicated in Figure \ref{fig:hopf}. We call $H$ a positive or negative Hopf band. This gluing operation is a special type of Murasugi sum, called positive or negative \textit{Hopf plumbing} depending on the sign of the added Hopf band. Note that this operation changes both the binding and the page of the open book. However, the resulting link and surface $F'$ are still a fibered link and its corresponding fiber surface. We call the reverse of this operation, i.e., the removal of a Hopf band from a fiber surface, (positive/negative) deplumbing.

\begin{figure}[h]
\centering
\labellist
\Large
\pinlabel $H$ at 300 1150
\pinlabel $F$ at 950 780
\pinlabel $\gamma$ at 675 330
\pinlabel $F'$ at 2340 790
\endlabellist
\includegraphics[height=6cm]{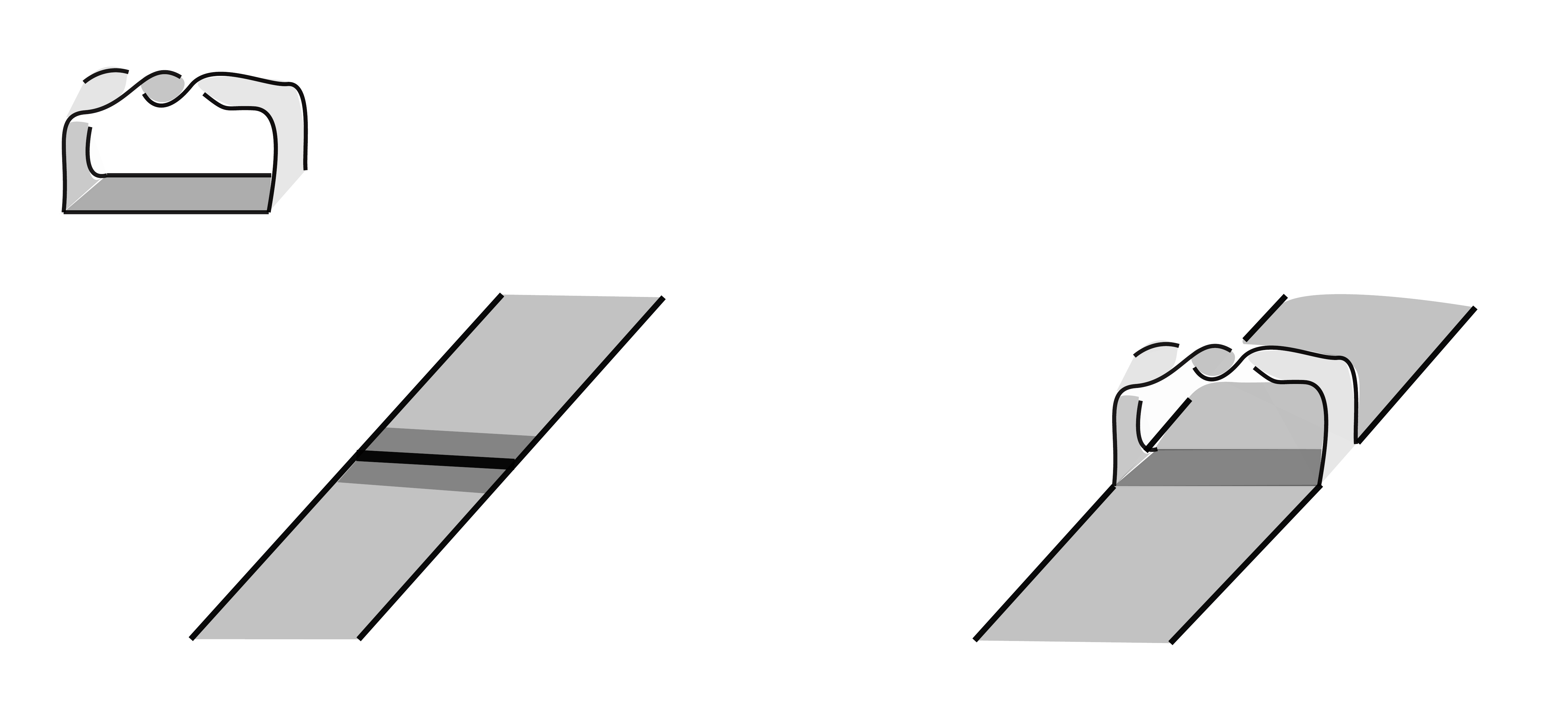}
\caption{A Hopf band $H$ is plumbed to a surface $F$ along a path $\gamma$. \label{fig:hopf}}
\end{figure}

\begin{conjecture}[Harer's conjecture \cite{harer}]
\label{con:harer}
A link $L$ in $S^3$ is fibered if and only if it and its fiber surface can be obtained from the unknot and its fiber disk via a finite sequence of successive Hopf plumbings and deplumbings.
\end{conjecture}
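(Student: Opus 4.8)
The statement is an equivalence, and its two directions are of completely different character: the implication that a link obtained from the unknot by Hopf plumbings and deplumbings is fibered is classical and short, while the converse is the substantive content of Harer's conjecture, whose only known proof --- due to Giroux and Goodman --- passes through contact geometry. For the classical direction, suppose $L$ and its fiber surface $F$ arise from the unknot $O$ and the disk $D$ by a finite sequence of Hopf plumbings and deplumbings. Each Hopf band is the fiber surface of a (positive or negative) Hopf link, and $D$ is the fiber surface of $O$; by Stallings's theorem that plumbing fiber surfaces yields a fiber surface, in the sharp form due to Gabai \cite{gabai} (a Murasugi sum of fiber surfaces is a fiber surface, and fiberedness is preserved under both plumbing and deplumbing), every step of the sequence carries a fiber surface to a fiber surface. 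An induction on the length of the sequence, with base case the fibered unknot, then shows that $L$ is fibered with fiber $F$.

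For the converse I would follow the Giroux--Goodman strategy. To each open book $(L,F,h)$ of $S^3$ one associates, via the Thurston--Winkelnkemper construction, a contact structure $\xi_{(L,F,h)}$ that it supports, well defined up to isotopy. Three ingredients are needed. First, the Giroux correspondence: a positive Hopf plumbing (positive stabilization) leaves the supported contact structure unchanged up to isotopy, and conversely two open books of $S^3$ supporting isotopic contact structures become isotopic after finitely many positive Hopf plumbings. Second, Eliashberg's classification of contact structures on $S^3$: there is a unique tight one, supported by $(O,D)$, while overtwisted structures are classified up to isotopy by the homotopy class of the underlying plane field, i.e.\ by the $d_3$-invariant, which takes all values in $\pi_3(S^2)\cong\mathbb{Z}$. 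Third, a negative Hopf plumbing always produces an open book whose monodromy fails to be right-veering, hence, by Honda--Kazez--Mati\'{c}, whose supported contact structure is overtwisted, and each negative Hopf plumbing changes the $d_3$-invariant by a fixed nonzero amount.

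The argument then runs as follows. Given a fibered $L$ with fiber $F$, perform one negative Hopf plumbing on $(L,F)$ and one on $(O,D)$, so that both resulting open books support overtwisted contact structures on $S^3$. Applying further negative Hopf plumbings to whichever of the two currently has the larger $d_3$-invariant, arrange that the two (repeatedly plumbed) open books support overtwisted contact structures with equal $d_3$-invariant, hence isotopic contact structures by Eliashberg's classification; by the Giroux correspondence, finitely many further positive Hopf plumbings on each side then make the two open books isotopic. Reading this chain of moves backwards starting from $(L,F)$: plumb on the prescribed negative and positive Hopf bands to reach the common open book, then deplumb the positive and negative Hopf bands coming from the $(O,D)$ side to return to $(O,D)$. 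This exhibits $(L,F)$ as obtained from $O$ and $D$ by a finite sequence of Hopf plumbings and deplumbings, completing the converse.

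The main obstacle lies entirely in the contact-geometric inputs to the converse: establishing the Giroux correspondence (in particular the direction that isotopic supported contact structures are stably equivalent under positive stabilization), Eliashberg's classification of overtwisted contact structures, and the precise effect of a negative stabilization on the contact structure and on its $d_3$-invariant. One must also take care to match the purely topological operation of (de)plumbing a Hopf band with the contact-geometric notion of (de)stabilization. No proof of the converse avoiding contact geometry is currently known, so beyond the easy direction there is no shortcut around these ingredients.
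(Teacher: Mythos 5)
Your proposal is correct in outline and follows essentially the same route as the paper, which does not prove this statement itself but cites Giroux--Goodman \cite{giroux}, whose argument is exactly the contact-geometric one you describe (Thurston--Winkelnkemper, the Giroux correspondence, Eliashberg's classification via the $d_3$-invariant, and overtwistedness of negatively stabilised open books), with the easy direction handled by Stallings/Gabai as you say. The only caveat is the usual one: the heavy lifting is entirely in the cited machinery, and a sign convention fixes whether negative stabilisation raises or lowers $d_3$, so you stabilise the side that needs to catch up accordingly.
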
 

\begin{theorem}[Giroux-Goodman \cite{giroux}]
Harer's conjecture is true.
\end{theorem}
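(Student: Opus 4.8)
The plan is to translate the statement into contact geometry via the Giroux correspondence and then invoke Eliashberg's classification of overtwisted contact structures, using crucially that on $S^3$ the homotopy classes of oriented plane fields form a single copy of $\mathbb{Z}$ (the Hopf invariant, or equivalently the $d_3$-invariant). Recall the inputs one needs. To every open book $(L,\Psi)$ of a closed oriented $3$-manifold one associates a supported contact structure $\xi_{(L,\Psi)}$, well-defined up to isotopy (Thurston--Winkelnkemper, Giroux); a \emph{positive} Hopf plumbing is precisely a positive stabilization of the open book and leaves $\xi_{(L,\Psi)}$ unchanged up to isotopy, and conversely two open books of $M$ with isotopic supported contact structures admit a common positive stabilization (Giroux). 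Finally, overtwisted contact structures on a fixed closed oriented $3$-manifold are classified up to isotopy by the homotopy class of their underlying plane field (Eliashberg), and on $S^3$ every such homotopy class, i.e.\ every element of $\mathbb{Z}$, is realized by an overtwisted structure; the unbook $(O,\Phi)$ supports the standard tight structure.

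The technical core — and what I expect to be the main obstacle — is to understand the effect of a single \emph{negative} Hopf plumbing on the supported contact structure. Using an explicit handle model (or achiral Lefschetz fibration model) of the plumbing one must show that: (i) after finitely many negative Hopf plumbings the supported contact structure becomes overtwisted; and (ii) each negative Hopf plumbing changes the $\mathbb{Z}$-valued homotopy invariant of the underlying plane field by the same amount, which one computes to be a generator of $\mathbb{Z}$, and in particular independently of the open book and of the arc along which one plumbs. Granting (ii), the homotopy invariants reachable from any fixed open book of $S^3$ by negative Hopf plumbings are exactly those beyond a threshold, so the families of reachable invariants coming from two different open books always overlap. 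Establishing (ii) requires the Kirby-calculus bookkeeping for the $d_3$-invariant under achiral handle attachments and the verification of arc-independence; this is the heart of the matter.

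With these ingredients the proof closes as follows. Let $(L,\Psi)$ be the open book of a fibered link $L\subset S^3$ with fiber surface $F$, and let $(O,\Phi)$ be the unbook with fiber disk $D$. Apply negative Hopf plumbings to $F$ and to $D$ until both supported contact structures are overtwisted, and — invoking (ii) — apply further negative Hopf plumbings until the resulting open books $(L',\Psi')$ and $(O',\Phi')$, with fiber surfaces $F'$ and $D'$, have underlying plane fields in the same homotopy class. By Eliashberg's theorem their supported contact structures are then isotopic, so by Giroux's common-stabilization theorem they admit a common positive stabilization, with fiber surface $\widetilde F$, obtained from $F'$ and from $D'$ by positive Hopf plumbings. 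Concatenating the four stages: starting from $F$ one plumbs negative Hopf bands to reach $F'$, then positive Hopf bands to reach $\widetilde F$, then deplumbs those positive Hopf bands to reach $D'$, then deplumbs the negative Hopf bands to reach $D$. Thus $F$ and the fiber disk of the unbook are related by a finite sequence of Hopf plumbings and deplumbings. (If $\xi_{(L,\Psi)}$ is already the standard tight contact structure on $S^3$, no negative plumbings are needed and the conclusion is immediate from Giroux's theorem.) Conversely, every link and surface obtained from the unknot and its fiber disk by Hopf plumbings and deplumbings is a fibered link with its fiber surface, as recalled above. Both implications of Conjecture~\ref{con:harer} therefore hold.
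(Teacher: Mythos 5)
The paper does not prove this theorem; it records it as a citation to Giroux and Goodman, whose result says more generally that two open books of a closed oriented 3-manifold admit a common Hopf stabilization if and only if the associated plane fields are homologous --- a condition that is automatic on $S^3$. Your outline is a faithful reconstruction of their argument specialized to $S^3$: support a contact structure (Thurston--Winkelnkemper/Giroux), identify positive plumbing with positive stabilization and invoke the common-positive-stabilization direction of the Giroux correspondence, use negative plumbing to push into the overtwisted regime, then apply Eliashberg's classification to convert homotopy of plane fields into contact isotopy. You correctly isolate item (ii) --- that a single negative Hopf plumbing shifts the $\mathbb{Z}$-valued plane-field invariant by a fixed unit, independently of the plumbing arc and of the ambient open book --- as the step that makes the argument close, and you leave it as a stub; that $d_3$-type bookkeeping is indeed the technical content of the Giroux--Goodman paper and must be carried out (or cited with precision) for the proof to be complete. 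One further point you should make explicit: your item (i), and the parenthetical ``if $\xi_{(L,\Psi)}$ is already tight, no negative plumbings are needed,'' both quietly rest on Eliashberg's uniqueness of the tight contact structure on $S^3$, which deserves a citation alongside his overtwisted $h$-principle; granting (ii) and this uniqueness, at most two negative plumbings already force overtwistedness, since the $\mathbb{Z}$-invariant moves monotonically and can therefore equal the value of the standard tight structure at most once along the sequence.
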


In fact, Giroux and Goodman's proof shows that for any fibered link the sequence of Hopf plumbings and deplumbings can be taken to be a sequence of Hopf plumbings followed by a (possibly empty) sequence of deplumbings. There are examples of fibered links, where deplumbing is required \cite{melmor}.

The proof relies on the Giroux correspondence between contact structures (up to isotopy) on a 3-manifold $M$ and open book decompositions (up to stabilisation) of $M$ \cite{giroux1}. One of the motivations for this work is to eventually provide a new proof of Harer's conjecture that does not rely on Giroux's correspondence.

\subsection{Open book foliations}

All four braiding properties B1) through B4) from the introduction require the pages of an open book $(L,\Psi)$ to be positioned relative to the unbook in a certain way. Braid foliations were introduced by Bennequin \cite{benn} and Birman and Menasco \cite{birmena, birmena2} and are an important tool to study how a surface is positioned relative to the unbook. 

For the type of mutual braiding that is inherent in the braiding properties B1) through B4) from the introduction we will not only need to study how an open book $(L,\Psi)$ is positioned relative to the unbook, but also how the unbook is positioned relative to the open book $(L,\Psi)$. The concept of open book foliations is a generalisation of braid foliations introduced by Ito and Kawamuro \cite{obf}, which allows us to study the position of a surface relative to the pages of a given open book. For an extensive overview of the subject and its applications we point the reader to \cite{folbook}.



Let $L'$ be a fibered link. We say that an oriented link $L\subset S^3\backslash L'$ is \textbf{braided relative to} $L'$ if it is positively transverse to the pages of the open book $(L',\Psi')$ with binding $L'$. Consequently, each page of $(L',\Psi')$ intersects $L$ in the same number of points, say $n$. We also say that $L$ is a braid on $n$ strands relative to $L'$ or that $L'$ is a \textbf{generalised braid axis} of $L$. Note that if $L'=O$ is the unknot, then $L$ is braided relative to $L'$ if and only if it is in braid form (in the usual sense).

Let $(L',\Psi')$ be an open book in $S^3$ with fibers $S_t=\Psi'^{-1}(t)$, $t\in S^1$, and let $F$ be an oriented, connected, compact surface smoothly embedded in $S^3$ such that its boundary is braided relative to $L'$. Then $\Psi'|_{F}$ induces a singular foliation $\mathcal{F}$ on $F$, which is given by the integral curves of the singular vector field $\{T_pS_t\cap T_p F\}$. A connected component of the integral curves, which corresponds to $S_t\cap F$ for some $t\in S^1$, is called a \textit{leaf}. 

\begin{definition}\label{def:fol}
Let $(L',\Psi')$ be an open book in $S^3$ with fibers $S_t=\Psi'^{-1}(t)$, $t\in S^1$, and let $F$ be an oriented, connected, compact surface smoothly embedded in $S^3$ such that its boundary is braided relative to $L'$. The singular foliation $\mathcal{F}$ on $F$ induced by $\Psi'$ is called an \textbf{open book foliation} if the following conditions are satisfied:
\begin{itemize}
\item The surface $F$ intersects $L'$ transversely in finitely many points, around each of which the foliation is radial.
\item All leaves of $\mathcal{F}$ along $\partial F$ meet $\partial F$ transversely.
\item All but finitely many fibers $S_t$ intersect $F$ transversely. For each fiber $S_t$ that has a tangential intersection point with $F$ this tangential intersection point is unique.
\item All the tangential intersection points of $F$ and fibers $S_t$ are saddles.
\end{itemize}
\end{definition}

Definition \ref{def:fol} is stated for open books in $S^3$, which is the only 3-manifold of relevance for this article. The importance of open book foliations (defined for open books in more general 3-manifolds) in a wider context is that it allows to generalise many tools regarding braids, braided surfaces and foliations to other 3-manifolds. 

For the special case where $(L',\Psi')$ is the unbook the above definition is essentially the definition of a braid foliation. The original definition of braid foliations also allows tangential intersection points that are local extrema, which we have excluded, following Ito and Kawamuro \cite{obf}.

An open book foliation has two types of singular points. The intersections of $F$ with $L'$ are \textit{elliptic} (cf. Figure \ref{fig:singulars}a)). We associate to each elliptic singular point a sign, positive or negative, depending on whether $L'$ is positively or negatively transverse to $F$ at that intersection point. The second type of singular points of the foliation are the tangential intersection points of $F$ with fibers $S_t$. They are \textit{hyperbolic} (cf. Figure \ref{fig:singulars}b)). The sign of a hyperbolic point $p\in F\cap S_t$ is positive if the orientation of the tangent plane $T_pF$ coincides with the orientation of $T_p S_t$ and is negative if the two orientations do not coincide.

\begin{figure}[h]
\labellist
\Large
\pinlabel a) at 100 1150
\pinlabel b) at 1600 1150
\endlabellist
\centering
\includegraphics[height=4cm]{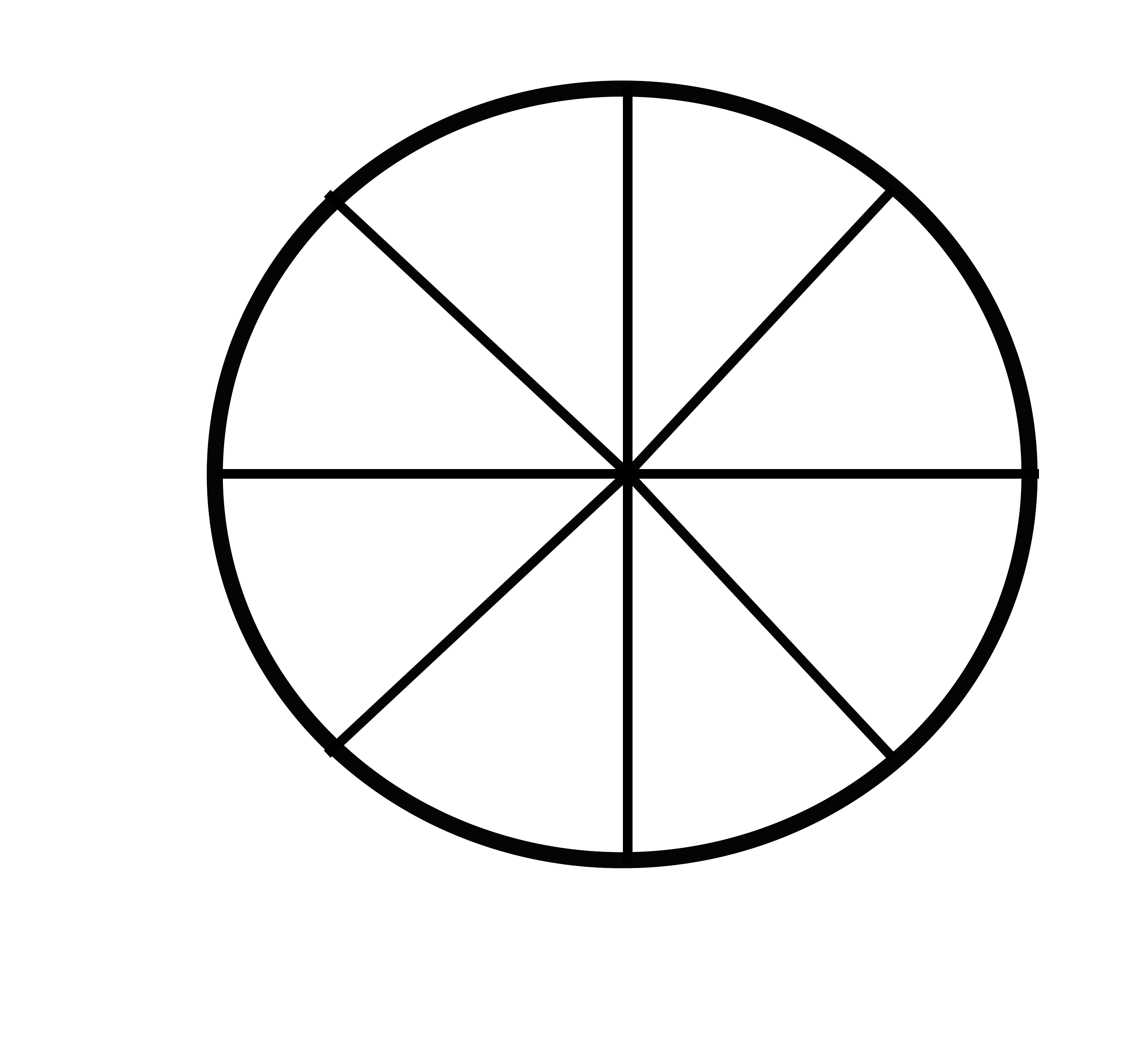}
\includegraphics[height=4cm]{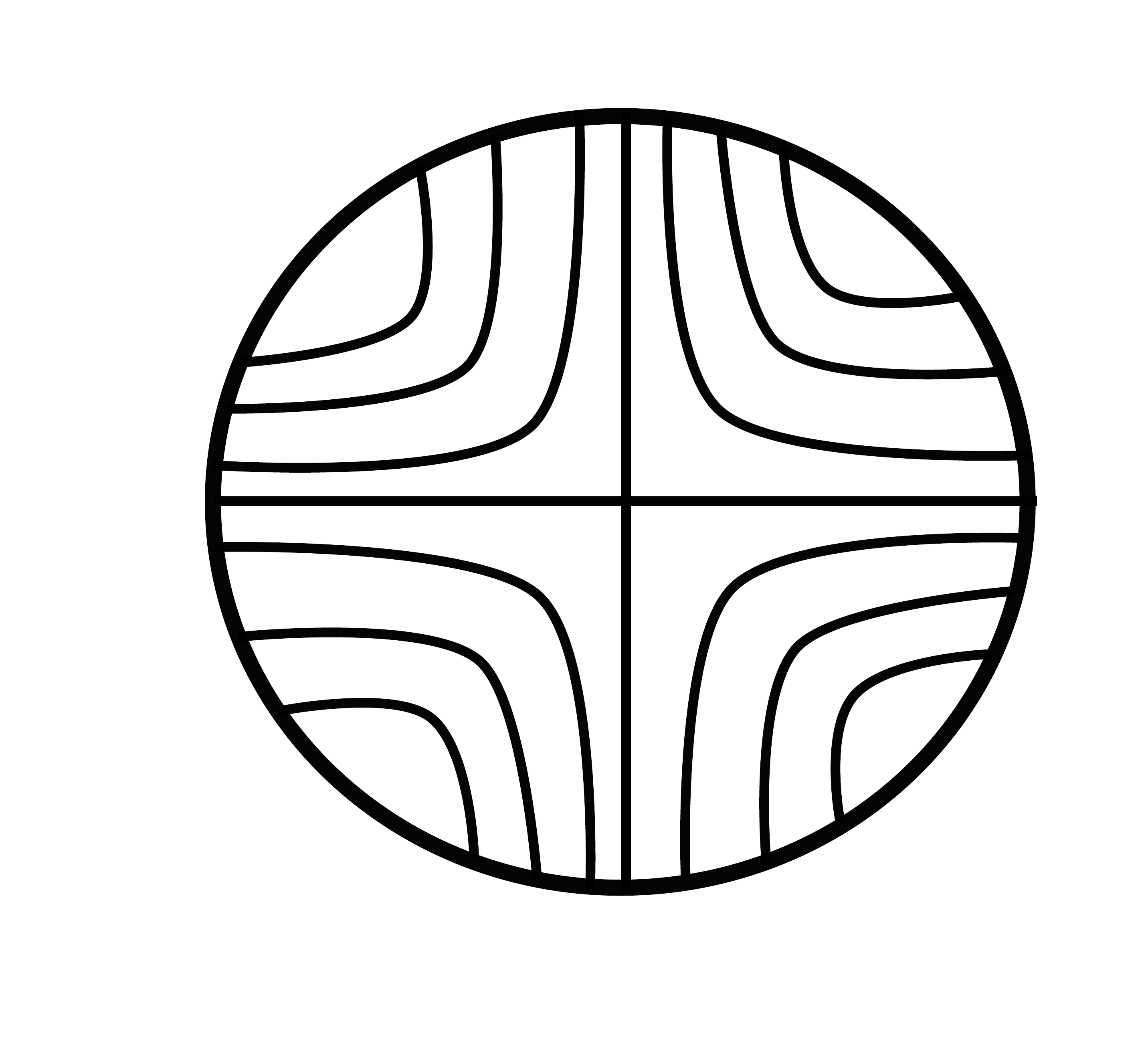}
\caption{Singular points of an open book foliation. a) The neighbourhood of an elliptic singular point. b) The neighbourhood of a hyperbolic singular point. \label{fig:singulars}}
\end{figure}

We say that a leaf is \textit{regular} if it does not contain a singular point.
\begin{proposition}[Ito-Kawamuro \cite{obf}]
Let $\mathcal{F}$ be an open book foliation. Then the regular leaves of $\mathcal{F}$ can be classified into three distinct types:
\begin{enumerate}[label=(\roman*)]
\item a-arc: An arc where one of its endpoints lies on $L'$ and the other lies on $\partial F$.
\item b-arc: An arc where both endpoints lie on $\partial F$.
\item c-arc: A simple closed curve.
\end{enumerate}
\end{proposition}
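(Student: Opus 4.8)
The plan is to analyse a single regular fibre of the open book, reduce the statement to excluding one forbidden kind of leaf, and then rule that kind out by a global argument over the circle of fibres.

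First I would fix a regular value $t \in S^1$. The key observation is that $F$ is transverse to the compactified page $\overline{S_t} = S_t \cup L'$ at every point of $F \cap \overline{S_t}$: away from $L'$ this is precisely what it means for $t$ to be a regular value, and near a point of $F \cap L'$ it is exactly the content of the ``radial'' condition in Definition \ref{def:fol}, since the standard radial local model is transverse to every page. Since $F$ is compact and $\partial F$, being braided relative to $L'$, is disjoint from $L'$, it follows that $F \cap \overline{S_t}$ is a compact $1$-manifold with boundary whose boundary is the finite set $(\partial F \cap S_t) \sqcup (F \cap L')$. Hence each component of $F \cap \overline{S_t}$ is a simple closed curve or an embedded arc with each of its two endpoints on $\partial F$ or at an elliptic point of $\mathcal F$, and (after deleting the singular elliptic endpoints) these components are precisely the regular leaves contained in this fibre. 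A closed curve is a c-arc, an arc with both endpoints on $\partial F$ is a b-arc, and an arc with one endpoint on $\partial F$ and one on $L'$ is an a-arc, so the proposition reduces to ruling out a component $\delta$ both of whose endpoints are elliptic points $e_1, e_2$.

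To exclude such a $\delta$ I would first record two constraints. (i)~$e_1 \neq e_2$: near an elliptic point the foliation is radial, so $F \cap \overline{S_t}$ meets a small disk around it in the single radial leaf of level $t$, and an arc with both endpoints equal to that point would have to contain two disjoint subarcs of this one leaf, which is impossible. (ii)~$e_1$ and $e_2$ have opposite signs: along the regular arc $\delta$ the map $\Psi'|_F$ is a submersion, so $\delta$ is a regular level set and the side of $\delta$ towards which $\Psi'$ increases is well defined and locally -- hence globally -- constant along $\delta$; matching this against the radial models at the two endpoints, where that side is the positive or the negative rotational side of the local leaf according to the sign of the elliptic point and where the two rotational conventions are interchanged by traversing $\delta$, forces the signs of $e_1$ and $e_2$ to differ. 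One should note here that these constraints can genuinely be satisfied -- a round $2$-sphere meeting $L'$ transversely in two points carries an open book foliation \emph{all} of whose leaves are arcs of this kind -- so the argument must, and will, use that $\partial F$ is a non-empty braid and hence that $F$ is not a $2$-sphere.

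The remaining step, which I expect to be the main obstacle, is to derive a contradiction from the existence of $\delta$ when $\partial F \neq \emptyset$. The idea is to promote $\delta$ to the one-parameter family of leaves containing it as the fibre varies over $S^1$: for each value of $\Psi'$ there is exactly one leaf ending at $e_1$, and by (ii) whenever this leaf also ends at $e_2$ it runs parallel to $\delta$ in a band neighbourhood, so the set of values for which the leaf through $e_1$ ends at $e_2$ is open off the finitely many hyperbolic values. Tracking this family around $S^1$, and using the local saddle model together with the conditions that all tangencies are saddles and each singular fibre has a unique tangency to control what happens at the hyperbolic values, one should reach a contradiction in both cases: if the family never degenerates, these arcs sweep out all of $F$ and $F \cong S^2$, contradicting $\partial F \neq \emptyset$; if it degenerates at a hyperbolic value, one follows the reconnection prescribed by the saddle, and the contradiction should come from combining this reconnection combinatorics with the facts that $\partial F$ is disjoint from $L'$ and that $\chi(F) \leq 1$. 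Carrying out this last bookkeeping over all the hyperbolic values and all the sign patterns at the elliptic points is the genuine content of the argument; everything before it is soft transversality and the local models at elliptic and hyperbolic points.
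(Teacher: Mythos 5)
The paper cites this classification from Ito--Kawamuro without supplying a proof of its own, so there is nothing internal to compare against; but the b-arc clause in the paper's statement appears to be a misprint, and your argument inherits it. In the standard classification (Birman--Menasco for braid foliations, Ito--Kawamuro for open book foliations) a b-arc is a regular leaf with \emph{both endpoints on the binding} $L'$, i.e.\ both endpoints elliptic --- not both on $\partial F$. The type that must be \emph{excluded} is an arc with both endpoints on $\partial F$. Your reduction (``an arc with both endpoints on $\partial F$ is a b-arc, \ldots\ so the proposition reduces to ruling out a component $\delta$ both of whose endpoints are elliptic points'') is therefore inverted: elliptic-to-elliptic arcs genuinely occur --- your own round-sphere example exhibits them, and they are ubiquitous in Birman--Menasco's braid foliation arguments --- so the contradiction you are hunting for in your final step simply does not exist, and no amount of bookkeeping over the hyperbolic values will produce it.

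The encouraging part is that your observation~(ii) is precisely the right mechanism, only deployed at the wrong endpoints. Orient each regular leaf so that $\Psi'$ increases to its left; this is well defined along a regular leaf since $\Psi'|_F$ is a submersion there. In the half-plane model at a point of $\partial F$ --- using that $\partial F$ is positively transverse to the pages, hence $\Psi'$ is strictly increasing along $\partial F$ in the boundary orientation, and that $F$ lies to the left of $\partial F$ --- the oriented leaf points \emph{into} $\partial F$, so every $\partial F$-endpoint is terminal. The radial local models show that a positive elliptic point is always initial and a negative elliptic point is always terminal. Consequently every regular arc runs from a positive elliptic point to either a point of $\partial F$ (an a-arc) or a negative elliptic point (a b-arc); in particular no arc has two endpoints on $\partial F$, none joins two elliptic points of the same sign, and none joins a negative elliptic point to $\partial F$. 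This is the entire argument: the soft local orientation count at the three kinds of endpoints finishes the classification directly, and the global tracking over $t\in S^1$ that you flagged as the main obstacle is neither needed nor, as noted above, capable of yielding what you asked of it.
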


Consider an open book $(L,\Psi)$ with pages $F_\varphi\defeq\Psi^{-1}(\varphi)$, $\varphi\in S^1$, and the unbook $(O,\Phi)$ with pages $D_t\defeq \Phi^{-1}(t)$, $t\in S^1$. Then the unbook induces a singular foliation on each fiber $F_{\varphi}$ and, conversely, the open book $(L,\Psi)$ induces a singular foliation on each fiber disk $D_t$.

Given a surface $F$ in an open book, we can always isotope $F$ so that the foliation $\mathcal{F}$ induced by the open book satisfies the conditions above and is therefore an open book foliation. For example, the so-called `\textit{finger move}' can be used repeatedly to remove all tangential intersection points between $F$ and the fibers that are not saddles \cite{obf}. In this article however, we are not dealing with an individual Seifert surface $F$, but with all the pages $F_{\varphi}$ of an open book and all pages $D_t$ of the unbook at the same time. A finger move may be used to remove maxima and minima on one page $F_{\varphi}$, but in doing so we might introduce new maxima and minima to a different page $F_{\varphi'}$. In particular, we cannot expect all pages $F_{\varphi}$ and $D_t$ to be equipped with an open book foliation at the same time. However, we will see that we can arrange that all but finitely many pages $F_{\varphi}$ and $D_t$ are equipped with an open book foliation.

\subsection{Generalised exchangeable braids}\label{sec:exch}

Recall that a link $L$ is braided relative to a fibered link $L'$ if it is positively transverse to pages of the open book with binding $L'$.

\begin{definition}\label{def:exch}
Let $L_{ex}=L\cup O$ be a link in $S^3$, where $L$ is fibered and $O$ is the unknot. We say that $L_{ex}$ is \textbf{generalised exchangeable} if $L$ is braided relative to $O$ and vice versa.\\
A braid $B$ is called \textbf{generalised exchangeable} if the union of its closure $L$ and its braid axis $O$ forms a generalised exchangeable link.
\end{definition}

This definition is due to Morton and Rampichini \cite{mortramp, rampi} and generalises the concept of \textit{exchangeable braids} by Goldsmith \cite{goldsmith}, where $L$ is also required to be an unknot. The term `exchangeable braid' is also used by Stoimenow (e.g. \cite{stoi}) to describe a braid on which an `exchange move' (defined by Birman and Menasco \cite{birmena4}) can be performed. This is not related to exchangeability in the sense of Goldsmith and Definition \ref{def:exch}. Another established term that could cause confusion is that of an \textit{interchangeable} link $L=L_1\cup L_2$, for which an ambient isotopy exchanges $L_1$ and $L_2$, i.e., maps $L_1$ to $L_2$ and vice versa (cf. for example \cite{kadok}). Again, this concept is not related to Definition \ref{def:exch}.

By Alexander every link $L$ is ambient isotopic in $S^3$ to a closed braid \cite{alexander}. However, this isotopy often requires to pass $L$ through the eventual braid axis $L'$. Hence, it is not at all clear if every fibered link can be arranged to be the closure of a generalised exchangeable braid. One might start with an unknot $L'=O$ that is braided relative to $L$, for example a meridian of a tubular neighbourhood of $L$, and then (for example via Vogel's algorithm \cite{vogel} or the procedure outlined in Alexander's proof \cite{alexander}) braid $L$ relative to $O$. However, we cannot expect that at the end of this isotopy $O$ is still braided relative to $L$.

Recall that the braid group $\mathbb{B}_n$ on $n$ strands is the group of homotopy classes of motions of $n$ distinct points in the plane. In this article it will have advantages to work with the band generators, also called BKL-generators, $a_{i,j}$, $1\leq i<j\leq n$, shown in Figure \ref{fig:BKL}, instead of the usual Artin generators \cite{bkl}. The BKL-generators are named for Birman, Ko and Lee. Expressed in Artin generators they are
\begin{equation}
a_{i,j}^{\pm 1}=\sigma_i\sigma_{i+1}\ldots\sigma_{j-2}\sigma_{j-1}^{\pm 1}\sigma_{j-2}^{-1}\ldots\sigma_{i+1}^{-1}\sigma_i^{-1}.
\end{equation}

\begin{figure}[h]
\labellist
\Large
\pinlabel a) at 100 950
\pinlabel b) at 1250 950
\endlabellist
\centering
\includegraphics[height=6cm]{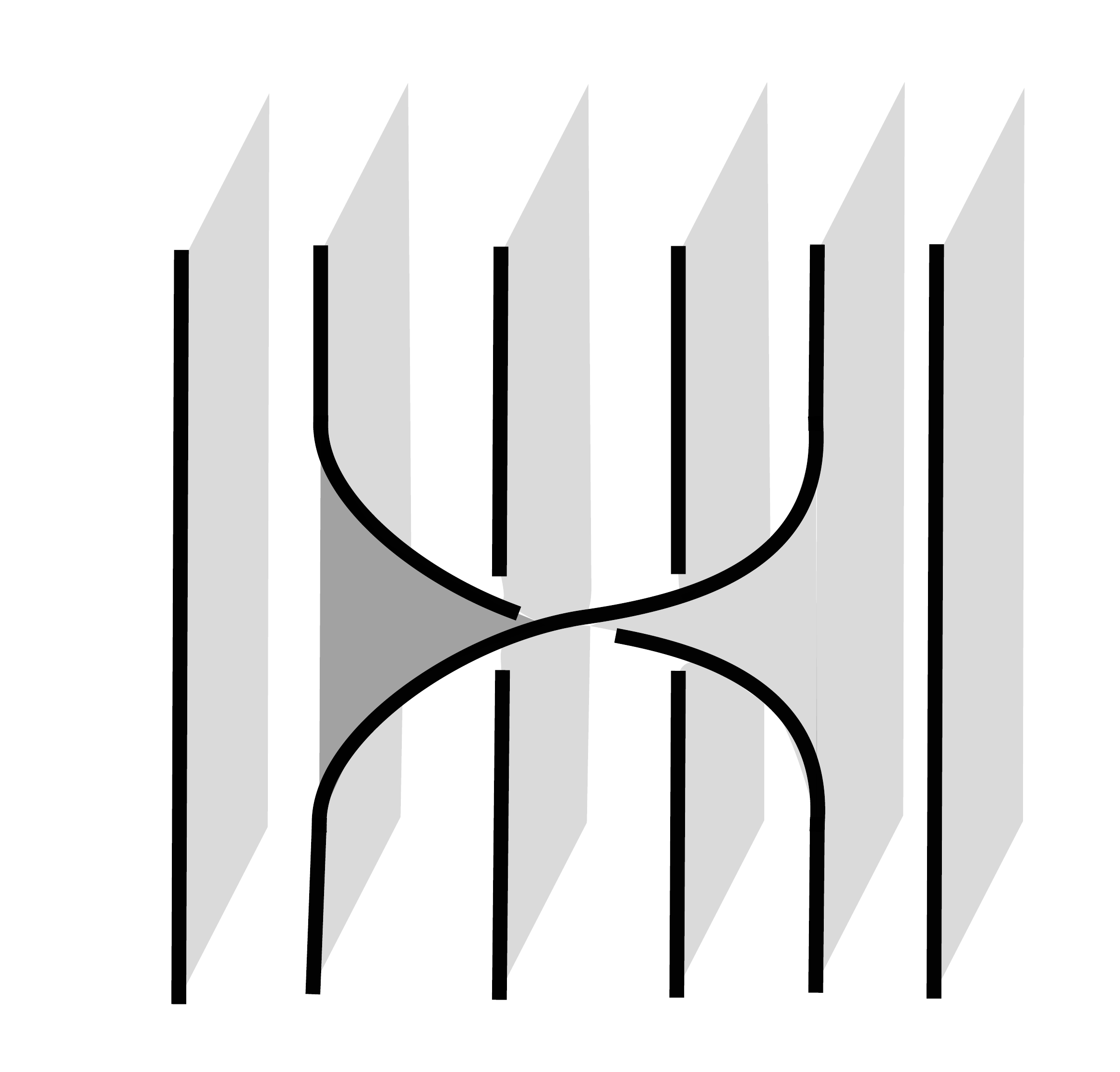}
\includegraphics[height=6cm]{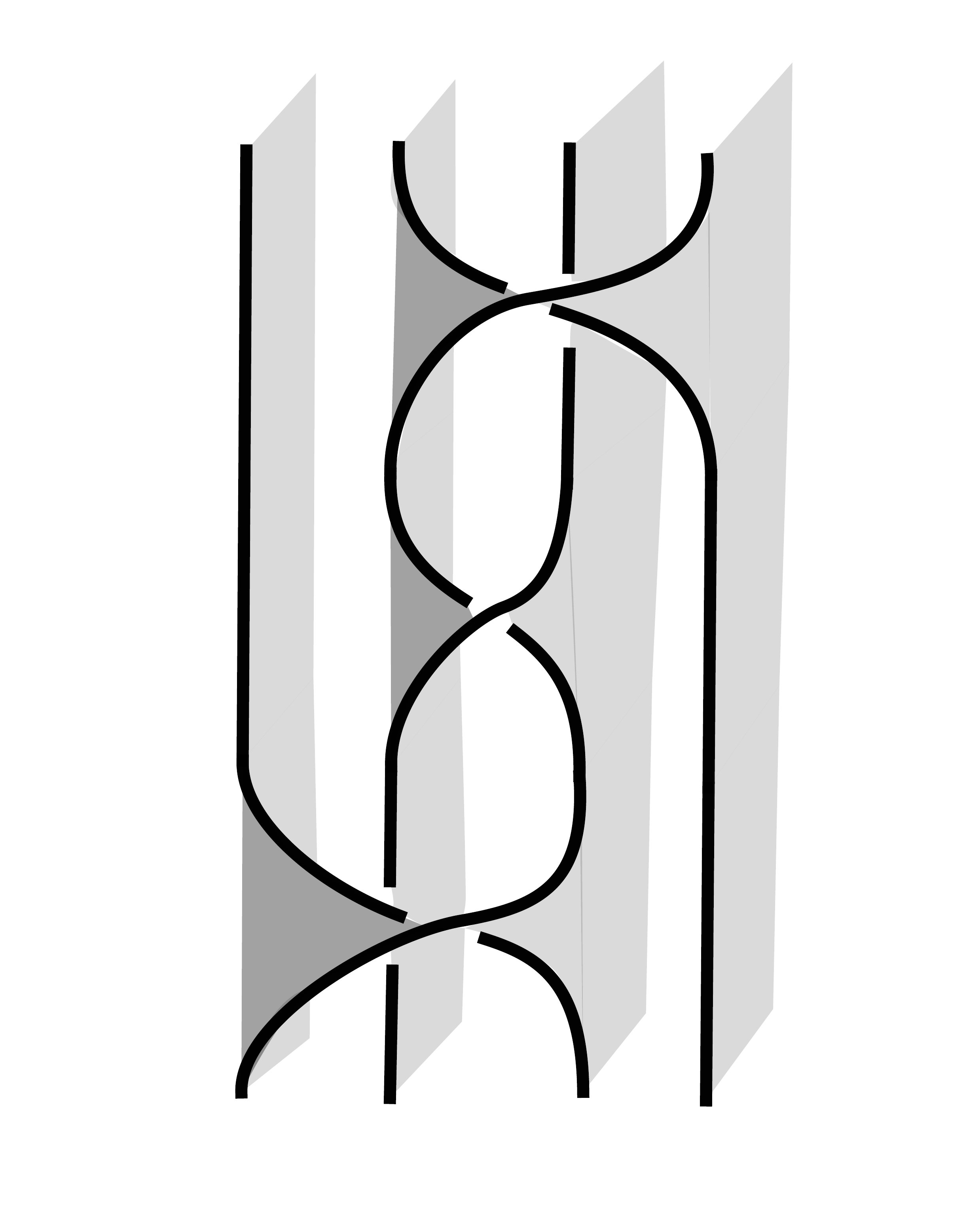}
\caption{a) The band generator $a_{2,5}$. b) A non-exchangeable braid, whose closure is the unknot, given by $a_{1,3}a_{2,3}a_{2,4}$.  \label{fig:BKL}}
\end{figure}

Geometrically, we can think of the band generators as follows. For a braid on $n$ strands we start with $n$ parallel half-planes in $\{z\in\mathbb{C}:\text{Im}(z)\geq 0\}\times[0,2\pi]\subset\mathbb{C}\times[0,2\pi]$, whose boundaries form a trivial braid in $\{z\in\mathbb{C}:\text{Im}(z)= 0\}\times[0,2\pi]$. Going from the bottom of the braid to the top as we go through the braid word from left to right we insert for every band generator $a_{i,j}$ a positively half-twisted band between the $i$th and the $j$th strand in front of all strands between $i$ and $j$. Inverses correspond to negatively half-twisted bands. Which direction of the half-twist is regarded as positive and the choice of reading the braid word from the bottom of the braid to the top are choices of convention, but these choices should be consistent with those made for the Artin generators, so that $a_{i,i+1}a_{i+1,i+2}=\sigma_i\sigma_{i+1}$. After closing the braid as usual the constructed surface consists of $n$ disks connected by a number of half-twisted bands, one for each letter in the band word. Occasionally it will simplify our notation if we allow ourselves to write $a_{j,i}$ with $1\leq i<j\leq n$. The geometric description of the band generators makes it obvious that $a_{j,i}=a_{i,j}$.

In terms of the band generators $a_{i,j}$ the defining relations of the braid group read:
\begin{align}
a_{i,j}a_{k,m}&=a_{k,m}a_{i,j}&\text{ if }(i-k)(i-m)(j-k)(j-m)>0,\label{eq:1rel}\\
a_{i,j}a_{j,k}&=a_{i,k}a_{i,j}=a_{j,k}a_{i,k}&\text{ for all }i,j,k\text{ with }1\leq k<j<i\leq n.\label{eq:2rel}
\end{align}

Note that for $j=i+1$ and $m=k+1$ the relations above become (unsurprisingly) the usual braid relations. More precisely, it follows from Eq.~\eqref{eq:2rel} that
\begin{align}
\sigma_i\sigma_{i+1}\sigma_i&=a_{i,i+1}a_{i+1,i+2}a_{i,i+1}\nonumber\\
&=a_{i+1,i+2}a_{i,i+2}a_{i,i+1}\nonumber\\
&=a_{i+1,i+2}a_{i,i+1}a_{i+1,i+2}=\sigma_{i+1}\sigma_i\sigma_{i+1}
\end{align}
for all $i=1,2,\ldots,n-2$. The condition in the first type of relation (Eq. (\ref{eq:1rel})) is often described as ``the strands/indices do not $\textit{interlace}$''.

Note that changes of band words via a relation of the form $a_{i,j}a_{i,j}^{-1}=a_{i,j}^{-1}a_{i,j}=id$, while not changing the braid, do change the associated banded surface. 

Morton found an example of a braid that closes to the unknot and that is not exchangeable \cite{mortonex}. Neither is any of its conjugates. In band generators it is given by $a_{1,3}a_{2,3}a_{2,4}$. Of course, there are other braids that close to the unknot and that are exchangeable, but Morton's example highlights an important aspect of this topic, namely that not every braid axis of the binding is also a braid axis for the corresponding open book. 

At the moment it is still unknown if every fibered link is the closure of a generalised exchangeable braid.

\subsection{Mutual braiding}
\label{sec:mutual}

We adopt several definitions from Rudolph \cite{rudolphbraid, rudolph2, rudolphmut}.

\begin{definition}\label{def:braidsurf}
Let $(L',\Psi')$ be an open book decomposition of $S^3$. A compact connected oriented surface $F$ smoothly embedded in $S^3$ is called a \textbf{generalised braided surface} of degree $n$ relative to the open book $(L',\Psi')$ if 
\begin{itemize}
\item its boundary $\partial F=L$ is a generalised closed braid on $n$ strands (relative to $L'$),
\item $F$ has $n$ intersection points with the generalised braid axis $L'$, all of which are positive transverse,
\item when restricted to $F$, the fibration map $\Psi':S^3\backslash L'\to S^1$ is Morse without any local extrema.
\end{itemize}
If $(L',\Psi')$ is the unbook, we simply say that $F$ is a \textbf{braided surface}.
\end{definition}

\begin{definition}\label{def:mut}
We say that two open book decompositions $(L,\Psi)$ and $(L',\Psi')$ of $S^3$ are \textbf{mutually braided} if every page $F_t$, $t\in S^1$, is a generalised braided surface relative to $(L',\Psi')$ and every page $F'_t$, $t\in S^1$, is a generalised braided surface relative to $(L,\Psi)$.
\end{definition}

In this article we only study open book decompositions that are braided relative to a braid axis, i.e. relative to an unknot/unbook. In this case, the notion of mutual braiding is equivalent to that of a \textbf{totally braided open book}, also defined by Rudolph \cite{rudolph2}.  




Rudolph showed that Alexander's Theorem can be generalised to surfaces, so that every Seifert surface can be braided (Proposition 1 of \cite{rudolphquasi}). However, it is not clear if all fibers can be braided simultaneously, i.e., whether every open book in $S^3$ can be totally braided.

Let $(L,\Psi)$ and the unbook $(O,\Phi)$ be mutually braided open books. Let $pos(L)$ (respectively, $neg(L)$) denote the set of points where fibers of $\Psi$ and fibers of $\Phi$ intersect tangentially with the same (respectively, opposite) orientation.


\begin{definition}
\label{def:cdb}
Let $(L,\Psi)$ and $(O,\Phi)$ be mutually braided open books. Then $pos(L)\cup neg(L)$ is a braid called the \textbf{derived closed braid} or the \textbf{derived bibraid}.
\end{definition}

Our notation differs slightly from that in \cite{rudolph2}, where $pos(L)$ and $neg(L)$ are defined in terms of tangencies of fibers of $\Phi$ with complex lines in $\mathbb{C}^2$. Overall, the two definitions are identical except that in \cite{rudolph2} $pos(L)$ obtains one extra component $pos_0(L)$ in a tubular neighbourhood of $O$. The \textit{derived closed braid} in \cite{rudolph2} is then $(pos(L)\backslash pos_0(L))\cup neg(L)$, which is the same object as in Definition \ref{def:cdb}.

Note that Definition \ref{def:braidsurf} and Definition \ref{def:mut} guarantee that every point of tangential intersection between fibers of mutually braid open books is a saddle point. This justifies the term `bibraid', since it implies that $pos(L)\cup neg(L)$ is braided relative to both $L$ and $O$. To be precise, we can choose an orientation for the bibraid so that the bibraid is braided relative to $O$. Reversing the orientation on the components of $neg(L)$ then results in a braid relative to $L$.


Let $(L,\Psi)$ be an open book that is mutually braided with an unbook $(O,\Phi)$. Then $\Psi$ induces a singular foliation on each page of the unbook and $\Phi$ induces a singular foliation on each page of $(L,\Psi)$. Using the concepts from Section \ref{sec:exch} Definition \ref{def:mut} can be rephrased as follows. Two open books are mutually braided if and only if their bindings are generalised exchangeable and there are no c-arcs in the corresponding singular foliations. This way it becomes obvious that the bindings of any pair of mutually braided open books are generalised exchangeable and hence property B2) implies property B1).

The converse B1)$\implies$B2) was proved by Rampichini \cite{rampi}.

\begin{theorem}[Rampichini, Theorem 3 in \cite{rampi}]
Let $(L,\Psi)$ be an open book in $S^3$ and let $O$ be an unknot such that $L_{ex}=L\cup O$ is generalised exchangeable. Then (potentially after an isotopy of their pages) $(L,\Psi)$ and the unbook with binding $O$ are mutually braided.
\end{theorem}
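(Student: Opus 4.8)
The plan is to stay inside the language of open book foliations and to reduce the statement to a relative, one-parameter version of Rudolph's theorem that every Seifert surface can be braided. By the reformulation of Definition~\ref{def:mut} recorded just above, $(L,\Psi)$ and the unbook $(O,\Phi)$ are mutually braided exactly when $L_{ex}=L\cup O$ is generalised exchangeable and none of the singular foliations induced by $\Phi$ on the pages $F_\varphi$, or by $\Psi$ on the disks $D_t$, contains a c-arc. So, starting from a configuration satisfying B1), I would keep the open book $(L,\Psi)$ fixed and isotope only the fiber disks $D_t$ of the unbook, maintaining throughout that $O$ is a closed braid relative to $(L,\Psi)$ and that $L$ is positively transverse to every $D_t$ --- so that generalised exchangeability is preserved --- until no c-arc survives.

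Looking only at the unbook side is enough because a page $D_t$ is a disk and so carries no essential simple closed curve: any closed leaf $\delta$ of $\Psi|_{D_t}$ bounds a subdisk $R'\subset D_t$ disjoint from $O$, and since $\Psi|_\delta$ is constant while $L$ meets $D_t$ positively, the degree of $\Psi|_{\partial R'}$ equals both $0$ and $|L\cap R'|$, forcing $R'\cap L=\emptyset$; then $\Psi|_{R'}$ is a genuine $S^1$-valued function whose $\mathbb R$-lift has an interior extremum, so $\Psi|_{D_t}$ has a local extremum, and conversely a local extremum of $\Psi|_{D_t}$ is ringed by closed leaves. Hence ``no c-arc anywhere'' is equivalent to ``$\Psi|_{D_t}$ has no local extremum for every $t$'', i.e.\ to every page of the unbook being a generalised braided surface relative to $(L,\Psi)$ in the sense of Definition~\ref{def:braidsurf}; and this automatically makes every page $F_\varphi$ a generalised braided surface relative to the unbook, since a local extremum of $\Phi|_{F_\varphi}$ would again create c-arcs. (At the finitely many $t$ where $\Psi|_{D_t}$ fails to be Morse one is left with degenerate saddles rather than extrema, which does no harm.) The problem is thus: the disks $D_t$ form a ``movie'' of surfaces with boundary $O$, braided relative to $(L,\Psi)$; isotope this movie, through movies of the same kind, so that $\Psi$ restricts to each $D_t$ without local extrema.

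For a single $t$ this is precisely Rudolph's braiding of Seifert surfaces (Proposition~1 of \cite{rudolphquasi}) run in open book foliations rather than braid foliations: a local extremum of $\Psi|_{D_t}$ is removed by a finger move that pushes the innermost disk of $\Psi|_{D_t}$-level circles across a page $F_\varphi$ and cancels the extremum against a neighbouring saddle. Carried out for the whole movie, such a move can be supported in a ball $B$ meeting only finitely many disks $D_{t'}$ and only those pages $F_\varphi$ with $\varphi$ in a short arc; by the same degree argument the relevant level circle has linking number $0$ with $L$, so $B$ can be taken disjoint from $L$ and transversality of $L$ to the disks is preserved, and one checks in the local model that no new extrema are created on the neighbouring disks $D_{t'}$. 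The organising device is the circle-leaf removal and change-of-foliation machinery of Bennequin and Birman--Menasco \cite{benn,birmena,birmena2}, transported to open book foliations \cite{obf}, together with the classification of a-arcs, b-arcs and c-arcs. One then iterates on a complexity --- say the total number of local extrema of $\Psi|_{D_t}$ over $t\in S^1$, regularised so as to be finite and to drop by at least one per step --- until it vanishes.

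The hard part will be the family aspect of this induction: producing a complexity that provably strictly decreases under one extremum-removing move while all the rigid global data are respected --- the $D_t$ must remain a coherent movie of pages of an unbook with binding $O$, the braid $O$ must stay braided relative to $(L,\Psi)$, and $L$ must stay positively transverse to every $D_t$ --- so that simplifying one slice of the movie never creates new extrema in neighbouring slices faster than they are removed. Controlling this interaction between slices is the technical heart of the theorem; everything else is general position and the braiding-of-Seifert-surfaces algorithm run in a one-parameter family.
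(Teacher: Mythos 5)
The paper does not actually prove this statement: it is attributed to Rampichini and cited as Theorem~3 of \cite{rampi}, with only a one-sentence gloss (``we can isotope the pages of the two open books to remove all local tangential intersection points that are not saddle points''). Your reduction is consistent with that gloss, and the degree argument you give for why a c-arc on a fiber disk $D_t$ bounds a subdisk disjoint from $L$ and hence forces a local extremum of $\Psi|_{D_t}$ is correct; so is the observation that a closed component of $D_t\cap F_\varphi$ is simultaneously a c-arc of both foliations, so it suffices to eliminate them from the unbook side. Up to this point your plan and the paper's framing coincide.

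But the proposal is a programme, not a proof, and you say so yourself: ``the family aspect of this induction\dots is the technical heart of the theorem.'' That admitted gap is precisely Rampichini's contribution, and as stated your outline does not close it. Concretely: (i) the proposed complexity ``total number of local extrema of $\Psi|_{D_t}$ over $t\in S^1$'' is not a finite quantity for a generic one-parameter family --- the locus of center tangencies between pages $D_t$ and pages $F_\varphi$ is a $1$-manifold in $S^3$, not a finite set --- so one must first replace it by a genuine combinatorial invariant (e.g.\ the number of arcs of that $1$-manifold, or the number of birth/death events), and then prove it decreases; (ii) a finger move removes one center from a single $D_{t_0}$ by pushing a subdisk across a page $F_{\varphi_0}$, but when this is performed as an ambient isotopy it changes the singular foliation on all nearby $D_t$, and ruling out that new center arcs appear (or that saddle arcs recombine so as to create new births) is not ``general position''; (iii) you also need the isotopy to preserve positivity of both transversalities simultaneously, which constrains the allowed moves. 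Rampichini handles exactly these issues with the film/photogram/diagram bookkeeping summarised in Section~\ref{sec:mutual}, showing that the relevant data can always be normalised into a Rampichini diagram; without some version of that machinery the inductive step is not established. Finally, a small inaccuracy: at the finitely many $t$ where $\Psi|_{D_t}$ fails to be Morse the degeneracy in a generic family is a birth/death (a cubic singularity merging a center with a saddle), not a ``degenerate saddle,'' and these events are precisely the ones your complexity has to control.
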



For a generalised exchangeable link $L\cup O$ we can isotope the pages of the two open books to remove all local tangential intersection points that are not saddle points and so property B1) is equivalent to B2). One important insight of Rampichini's and Morton's work \cite{mortramp, rampi} is that the interesting information about the position of one open book relative to another is given by the combinatorial structure of the singular leaves of the singular foliation on each fiber. Note that the hyperbolic points of this collection of singular foliations form precisely the derived bibraid defined above. The second important insight is that this information can be encoded in certain diagrams, which can be used to check whether a given open book is totally braided. Several of their concepts and notations will be useful to us later and are explained in the following paragraphs.

\begin{figure}[h]
\labellist
\small
\pinlabel 0 at 150 160
\pinlabel 0 at 210 100
\pinlabel $2\pi$ at 850 100
\pinlabel $2\pi$ at 150 800
\pinlabel $(3\ 4)$ at 270 260
\pinlabel $(1\ 2)$ at 270 460
\pinlabel $(2\ 3)$ at 270 630
\pinlabel $(1\ 2)$ at 410 840
\pinlabel $(2\ 4)$ at 530 840
\pinlabel $(2\ 3)$ at 690 840
\pinlabel $(2\ 4)$ at 415 660
\pinlabel $(3\ 4)$ at 440 465
\pinlabel $(1\ 2)$ at 440 320
\pinlabel  $(4\ 1)$ at 550 670
\pinlabel $(2\ 3)$ at 495 570
\pinlabel $(2\ 4)$ at 630 360
\pinlabel $(2\ 3)$ at 720 230
\pinlabel $(1\ 2)$ at 900 590
\pinlabel $(4\ 1)$ at 900 440
\pinlabel $(2\ 3)$ at 900 260
\pinlabel $(4\ 1)$ at 610 575
\Large
\pinlabel $t$ at 100 500
\pinlabel $\varphi$ at 500 50
\endlabellist
\centering
\includegraphics[height=7cm]{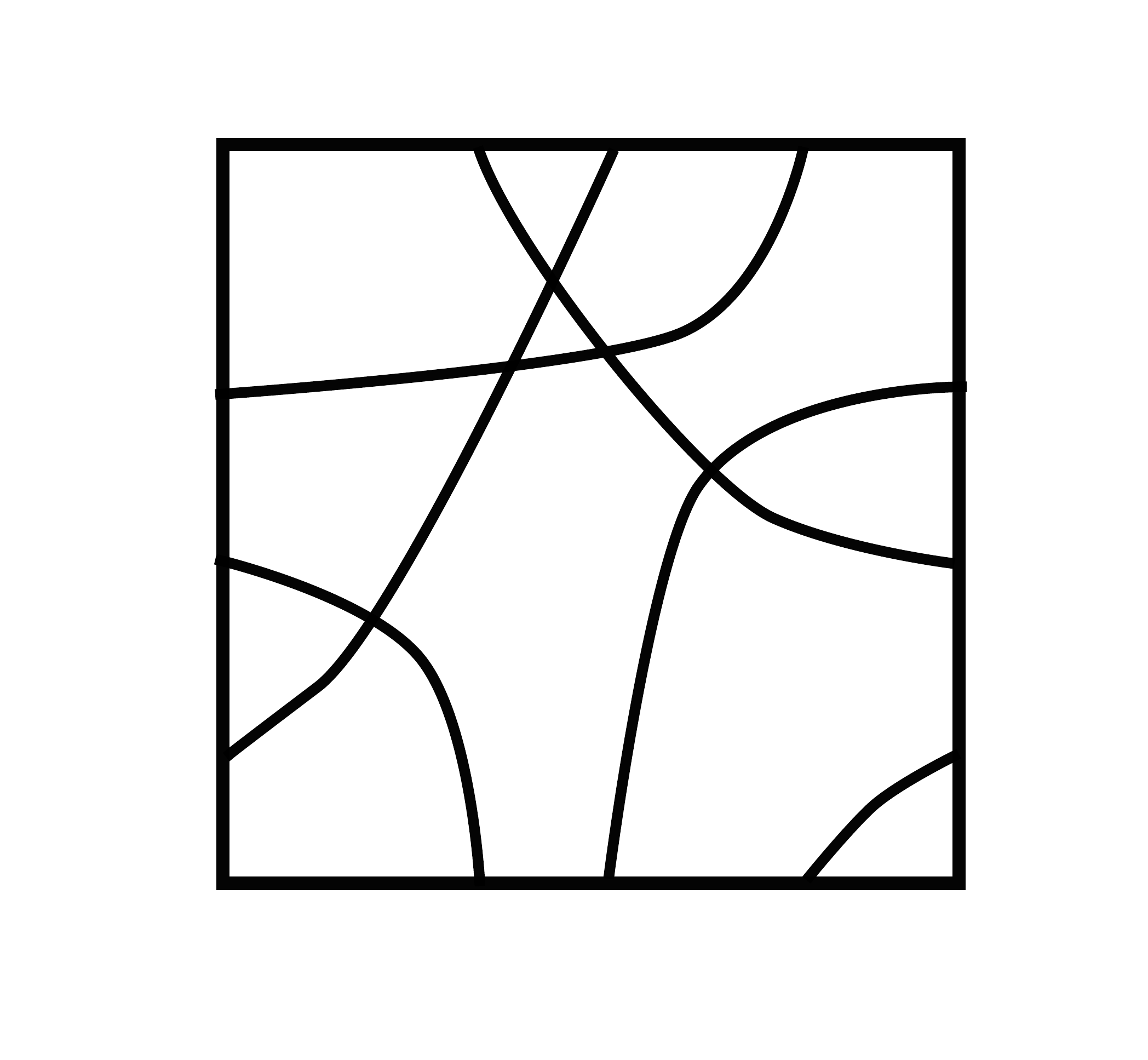}
\caption{A Rampichini diagram. The transpositions at $t=0$ are $\tau_1=(1\ 2)$, $\tau_2=(2\ 4)$ and $\tau_3=(2\ 3)$. A band word for the fiber $F_{\varphi=0}$ is given by $a_{3,4}a_{1,2}^{-1}a_{2,3}$. \label{fig:rampichini}}
\end{figure}

\begin{definition}
\label{def:rampichini}
A \textbf{Rampichini diagram} of degree $n$ is a square, whose horizontal and vertical edges are coordinate axes representing values of variables $\varphi$ and $t$ between 0 and $2\pi$, together with a set of curves $\ell_i$, $i=1,2,\ldots,k$, in the square and a set of transpositions in $S_n$ labelling the points on the curves such that
\begin{enumerate}[label=\arabic*)]
\item The curves are projections of smooth closed curves on a torus with cyclic variables $\varphi$ and $t$.
\item Every curve $\ell_i=\cup_{s\in [0,2\pi]}(\varphi_i(s),t_i(s))$ is either monotone increasing or monotone decreasing, that is for every $\ell_i$ either $\text{sign}\tfrac{\rmd \varphi_i}{\rmd s}=\text{sign}\tfrac{\rmd t_i}{\rmd s}$ everywhere or $\text{sign}\tfrac{\rmd \varphi_i}{\rmd s}=-\text{sign}\tfrac{\rmd t_i}{\rmd s}$ everywhere.
\item There are exactly $n-1$ intersection points between the curves and the horizontal $t=0$-edge.
\item There are finitely many intersections between curves and all of them are simple (i.e., of multiplicity two) and transverse.
\item Every point on a curve that is not an intersection point is labelled by a transposition.
\item Labels only change at intersection points and at the $\varphi=2\pi$-line. 
\item At intersection points the labels change via BKL relations explained below in Eq. (\ref{eq:tau3}) and Eq. (\ref{eq:tau4}).
\item The labels at the $\varphi=0$-line are the labels at the $\varphi=2\pi$-line plus 1 modulo $n$.
\item Along the $t=0$-edge the transpositions $\tau_i$ (indexed with increasing $\varphi$) satisfy the following three properties:
\begin{itemize}
\item No transposition is repeated, $\tau_i\neq \tau_j$ if $i\neq j$.
\item No pair of transpositions is interlaced, i.e., there are no $i<k<j<m$ such that $(i\ j)$ and $(k\ m)$ appear in the list of permutations.
\item There are no $i<k<j$ such that $(i\ j)$ and $(i\ k)$ appear in the list of permutations in this order.
\end{itemize}
\end{enumerate}
\end{definition}

In \cite{mortramp} and \cite{rampi} Rampichini diagrams are called \textit{labelled graphics}. In condition 9) the orderings of at least 3 elements, such as $i<k<j$, should be understood as a cycling ordering in $\mathbb{Z}/n\mathbb{Z}$. 

Since labels only change at intersection points and at the right edge of a Rampichini diagram, we label each arc of the curves between the intersection points (with other curves or the edges of the diagram). It is understood that this label is the transposition associated to all points on that arc, see Figure \ref{fig:rampichini}.

We now describe the \textit{BKL relations} that determine how the labels in a Rampichini diagrams change at an intersection point between curves. Let $\tau_i(t)$, $i=1,2,\ldots,n-1$, denote the transpositions at height $t$, labelled with increasing $\varphi$, that is, going from left to right in the diagram, and let $\varepsilon_i(t)\in\{\pm 1\}$ denote the sign of the slope of the corresponding curve in the diagram. Then the BKL relation at an intersection point $(\varphi_0,t_0)$ between the $i$th and $(i+1)$th line (again counted from left to right) can be expressed as
\begin{align}
\varepsilon_i(t_0-\delta)&\mapsto\varepsilon_i(t_0+\delta)=\varepsilon_{i+1}(t_0-\delta)\nonumber\\
\varepsilon_{i+1}(t_0-\delta)&\mapsto\varepsilon_{i+1}(t_0+\delta)=\varepsilon_{i}(t_0-\delta)
\end{align}
and
\begin{align}
\label{eq:tau3}
\tau_i(t_0-\delta)&\mapsto\tau_{i}(t_0+\delta)=\tau_{i+1}(t_0-\delta)\nonumber\\
\tau_{i+1}(t_0-\delta)&\mapsto\tau_{i+1}(t_0+\delta)=\tau_{i+1}(t_0-\delta)\tau_{i}(t_0-\delta)\tau_{i+1}(t_0-\delta)^{-1}
\end{align}
or
\begin{align}
\label{eq:tau4}
\tau_i(t_0-\delta)&\mapsto\tau_{i}(t_0+\delta)=\tau_i(t_0-\delta)^{-1}\tau_{i+1}(t_0-\delta)\tau_i(t_0-\delta)\nonumber\\
\tau_{i+1}(t_0-\delta)&\mapsto\tau_{i+1}(t_0+\delta)=\tau_{i}(t_0-\delta).
\end{align}
In all of the equations above $\delta$ is a small positive number. Since all $\tau_i(t)$ are transpositions, we have $\tau_i(t)^{-1}=\tau_i(t)$. The reason for explicitly stating the inverse will become apparent in Section \ref{sec:31}, where Rampichini diagrams are compared to similar combinatorial structures that in principle allow more general permutations. Note that, since $\varepsilon_i(t)$ determines the sign of the slope of the $i$th line, $\varepsilon_i(t_0-\delta)=-1$ implies $\varepsilon_{i+1}(t_0-\delta)=-1$. Otherwise, no intersection between these two lines would be possible at $t=t_0$.


We now describe how a pair of mutually braided open books $(L,\Psi)$ and $(O,\Phi)$ gives rise to a Rampichini diagram. Their bindings, a fibered link $L$ and an unknot $O$, are generalised exchangeable. Consider the page $F_0=\Psi^{-1}(0)$ of the open book $(L,\Psi)$. By definition it intersects $O$ in $n$ distinct points, which we may label from $1$ to $n$ such that going along $O$ (with the orientation of $O$) the point with label $i$ follows the point with label $i-1$ modulo $n$. This splits $O$ into $n$ half-open intervals $[i,i+1)$, whose start and end points are the $n$ intersection points. We label each of these arcs $A_i$ with the same index as its start point.

Now consider a fixed fiber disk $D_{t}=\Phi^{-1}(t)$, $t\in S^1$, and its singular foliation induced by $(L,\Psi)$. We assume that this singular foliation is an open book foliation and in particular, each singular leaf only contains one hyperbolic point. Then a singular leaf $D_{t}\cap F_{\varphi}$ for some $\varphi\in S^1$ of this foliation is cross-shaped, consisting of an arc that connects two elliptic points in $L\cap D_t$ and an arc that connects two points on $\partial D_t$. They intersect in a hyperbolic point. Let $A_i$ and $A_j$ be the two arcs of $O$ that contain the two points of the singular leaf on $\partial D_t$. Then the hyperbolic point is labelled by the transposition $(i\ j)$. Note that $i\neq j$, since every arc on $\partial D_t$ contains exactly one point on $F_{\varphi}$. 


\begin{figure}[h]
\labellist
\Large
\pinlabel a) at 100 900
\pinlabel b) at 1200 900
\pinlabel $A_i$ at 440 940
\pinlabel $A_j$ at 580 85
\pinlabel $i$ at 690 940
\pinlabel $j$ at 300 90
\pinlabel $A_1$ at 1280 800
\pinlabel $A_2$ at 1280 250
\pinlabel $A_3$ at 1950 250
\pinlabel $A_4$ at 1950 800
\pinlabel 1 at 1610 1000
\pinlabel 2 at 1140 510
\pinlabel 3 at 1615 50
\pinlabel 4 at 2080 520
\endlabellist
\centering
\includegraphics[height=5cm]{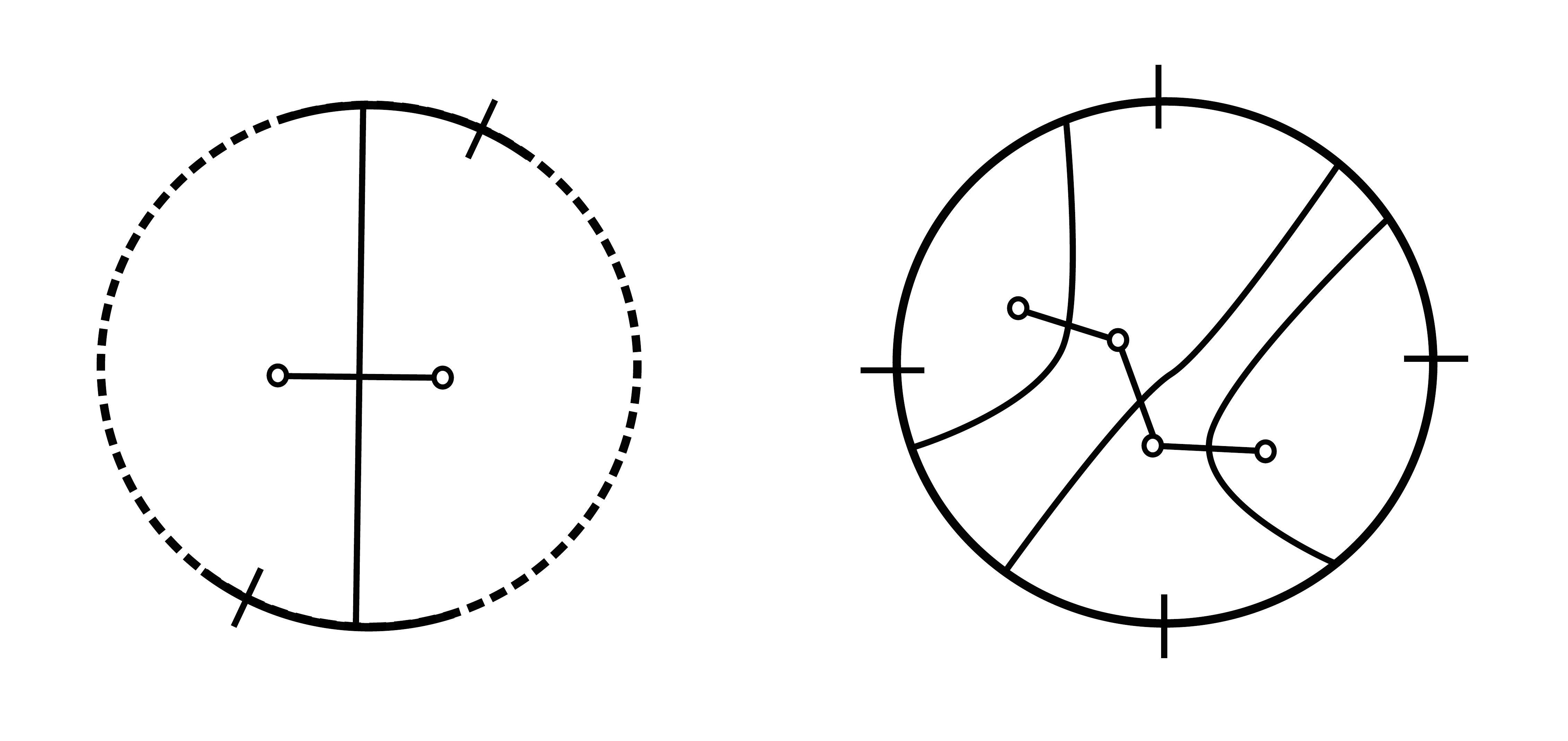}
\caption{a) A singular leaf is cross-shaped and consists of an arc between two elliptic points and an arc between two points on $\partial D_t$ meeting in a hyperbolic point. Here, the points on the boundary $\partial D_t$ belong to the arcs $A_i$ and $A_j$, so that the associated transposition is $\tau=(i\ j)$. The associated band generator is $a_{i,j}$ or $a_{i,j}^{-1}$ depending on the sign of the hyperbolic point. b) A photogram showing the singular leaves of the open book foliation on a disk $D_t$ induced by a mutually braided open book. The associated transpositions are $\tau_1=(1\ 2)$, $\tau_2=(3\ 4)$ and $\tau_3=(2\ 4)$. \label{fig:movie}}
\end{figure}

For each $t\in S^1$, for which all the singular leaves only contain one hyperbolic point each, we obtain in this way $n-1$ hyperbolic points labelled with transpositions. As we vary $t$, the hyperbolic points trace out lines, the derived bibraid of the mutually braided open books, with some points missing, corresponding to the values of $t$ for which there are singular leaves with more than one hyperbolic point, i.e., the values of $t$ for which the singular foliation fails to be an open book foliation. 

We start with a square whose bottom edge is an axis for $\varphi$ and its left edge is an axis for $t$, both going from 0 to $2\pi$. We obtain a Rampichini diagram by drawing in the diagram the lines made up from points with coordinates $(\varphi,t)$ for which there is a tangential intersection between the pages $F_{\varphi}$ and $D_{t}$. The labels of the lines in the diagram correspond to the transpositions labelling the corresponding hyperbolic points in the open book foliation on $D_{t_0}$. The lines $\ell_i$ in the Rampichini diagram are completed by including the data from the singular foliations on $D_t$ that are not open book foliations, which is done in the straightforward way, i.e., by filling in the intersection points between curves in the diagram.

Rampichini \cite{rampi} showed that this produces a Rampichini diagram. Condition 9 in Definition \ref{def:rampichini} is equivalent to the condition that the combinatorial structure of hyperbolic points with labelling transpositions at $t=0$ can indeed be realised by some singular foliation on the disk $D_{t=0}$. The way that the labels change at intersection points and at the right edge of the square guarantees that this remains true for all values of $t$. The fact that every totally braided open book gives rise to a Rampichini diagram also shows that we can assume that there are only finitely many values of $t$ and $\varphi$ for which the singular foliations on $D_t$ and $F_{\varphi}$ fail to be open book foliations and braid foliations, respectively, since there are only finitely many intersection points in a Rampichini diagram. 

We should justify why we called Eq. (\ref{eq:tau3}) and Eq. (\ref{eq:tau4}) BKL relations. Instead of labelling the lines in a Rampichini diagram by transpositions $(i\ j)$ we could label them by band generators $a_{i,j}$ if the corresponding hyperbolic point is positive or, equivalently, if the line in the Rampichini diagram is strictly monotone increasing when considered locally as the graph of a function in the variable $\varphi$, and by $a_{i,j}^{-1}$ if the corresponding hyperbolic point is negative or, equivalently, the line is strictly monotone decreasing. This is the actual labelling convention chosen in \cite{mortramp, rampi}. Since the sign of each labelling band generator is reflected in the sign of the slope of the labelled line in a Rampichini diagram, we do not lose any information by choosing transpositions instead of band generators as labels.

If we use band generators as labels, the change of labels at intersection points in Rampichini diagrams are given precisely by the BKL relations in Eq. (\ref{eq:1rel}) and Eq. (\ref{eq:2rel}). Say that an intersection point between two lines occurs at $(\varphi_0,t_0)$. We can read the labels of the two lines at the height $t=t_0-\delta$ for some small positive $\delta$ from left to right (i.e., with increasing $\varphi$), say $a_{i,j}^{\varepsilon_1}a_{k,m}^{\varepsilon_2}$, $\varepsilon_1,\varepsilon_2\in\{\pm 1\}$. Reading the labels from left to right at height $t_0+\delta$ then gives a sequence of two band generators that is equivalent to $a_{i,j}^{\varepsilon_1}a_{k,m}^{\varepsilon_2}$ via a BKL relation Eq. (\ref{eq:1rel}) or Eq. (\ref{eq:2rel}). Similarly, reading from bottom to the top at vertical lines $\varphi=\varphi_0-\delta$ and $\varphi=\varphi_0+\delta$ results in two sequences of two band generators that are equivalent via a BKL relation.




\begin{figure}[h]
\labellist
\pinlabel $i$ at 550 960
\pinlabel $i$ at 1595 960
\pinlabel $i$ at 2720 970
\pinlabel $j$ at 170 230
\pinlabel $j$ at 1230 220
\pinlabel $j$ at 2350 230
\pinlabel $k$ at 835 155
\pinlabel $k$ at 1890 150
\pinlabel $k$ at 3020 150
\Large
\pinlabel a) at 100 1000
\pinlabel b) at 1200 1000
\pinlabel c) at 2300 1000
\endlabellist
\centering
\includegraphics[height=4cm]{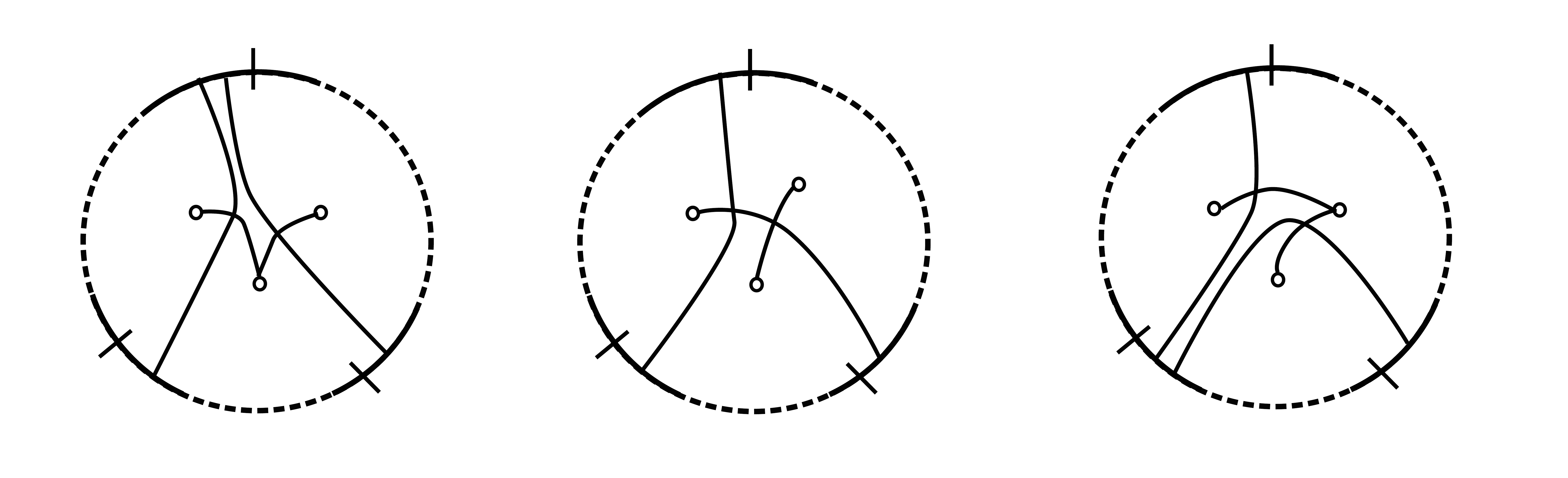}
\caption{The change in the singular foliation on $D_t$ as $t$ varies between $t_0-\delta$ to $t_0+\delta$, $\delta>0$, where $(\varphi_0,t_0)$ is an intersection point in the corresponding Rampichini diagram. a) The singular leaves of the singular foliation on $D_{t_0-\delta}$. b) The singular leaves of the singular foliation on $D_{t_0}$. c) The singular leaves of the singular foliation on $D_{t_0+\delta}$. \label{fig:rampimovie}}
\end{figure}

The singular leaves of the singular foliations of $D_{t_0-\delta}$, $D_{t_0}$, $D_{t_0+\delta}$, $\delta>0$, near an intersection point $(\varphi_0,t_0)$ in a Rampichini diagram are shown in Figure \ref{fig:rampimovie}. Pictures of the singular leaves of a singular foliation on $D_t$ as in Figure \ref{fig:rampimovie} are called \textit{photograms} by Rampichini \cite{rampi}. A finite ordered sequence of photograms depicting the changes of the singular leaves of a singular foliation on $D_t$ as $t$ varies from $0$ to $2\pi$ is called a \textit{film}. Depending on the signs of the two depicted hyperbolic points in Figure \ref{fig:rampimovie}, the figure corresponds to a BKL relation of the form $a_{j,k}a_{i,j}\to a_{i,j}a_{j,k}$, $a_{j,k}^{-1}a_{i,j}\to a_{i,j}a_{j,k}^{-1}$ or $a_{j,k}^{-1}a_{i,j}^{-1}\to a_{i,j}^{-1}a_{j,k}^{-1}$. The singular foliation depicted in Figure \ref{fig:rampimovie}b) corresponds to the presence of a singular leaf with more than one (i.e., two) hyperbolic points in the foliation on $D_{t_0}$. Hence the singular foliation on $D_{t_0}$ is not an open book foliation in the sense of Ito and Kawamuro.

When the cyclic variable $\varphi$ jumps from $2\pi$ to $0$ (that is, from the right edge of the Rampichini diagram to the left) all labels are increased by 1 modulo $n$. This is simply an artifact of the way that the labels of the arcs on $O$ and thus the labels in terms of band generators are defined.


Morton and Rampichini proved that not only does every totally braided open book give rise to a Rampichini diagram in this way, but also every Rampichini diagram arises from some totally braided open book \cite{mortramp, rampi}, i.e., from some open book that is mutually braided with an unbook. They use this to devise an algorithm that can decide whether a given braid is the binding of a totally braided open book. In the following we assume that the lines in Rampichini diagrams are labelled by band generators instead of transpositions. As we have already remarked, this is only a matter of convention and it is straightforward to move between transpositions and band generators.

For every $t_0\in S^1$ there are exactly $n-1$ intersection points of the lines in the diagram with the horizontal line $t=t_0$. The number of intersection points with vertical lines is also constant and is determined by the Euler characteristic of $F$. Note that for a fixed value of $\varphi=\varphi_0\in S^1$ the labels of the intersection points with the corresponding vertical line (read from the bottom to the top) spell a band word that represents $F$, whose boundary is a braid closing to $L$, while the labels at a fixed value of $t=t_0\in S^1$ give a band word for a braided disk.

By varying $\varphi$ from 0 to $2\pi$ we obtain a finite sequence of braid words in band generators, all of which close to $L$. Each braid word differs by the preceding one by a BKL relation or by a conjugation, so that all of them represent the same surface $F$. It follows from the properties of the diagram that exactly $n-1$ of these changes are conjugations. The sequence starts out with some braid word $B(0)$ and ends with the braid word $B(2\pi)$, which is the same as $B(0)$ except that all indices are shifted by $-1$ modulo $n$.

This offers a way of testing whether a given braid $B$ can be the binding of a totally braided open book, since this is the case if and only if it is the starting braid $B(0)=B$ of such a sequence corresponding to a diagram as above. Rampichini and Morton showed that for a given braid word there are only finitely many sequences to check, so that this test can be performed algorithmically. It is not stated explicitly in \cite{mortramp}, but it is not too hard to figure out what the conditions on a sequence of braid words are that guarantee that the lines in the corresponding diagram are indeed monotone. 

Since positive generators $a_{i,j}$ are required to be strictly monotone increasing, conjugation can move them only from the end of the braid word to the start of the braid word, not the other way round. Conversely, inverse generators $a_{i,j}^{-1}$ are only moved from the start to the end by conjugation, not from the end to the start. Furthermore, a BKL relation of the form $a_{i,j}^{-1}a_{k,l}\to a_{m,n}a_{o,p}^{-1}$ does not occur for any $i,j,k,l,m,n,o,p\in\{1,2,\ldots,n\}$, since this would require a strictly monotone decreasing line to intersect a strictly monotone increasing line from below. All other BKL relations can easily be realised so that there are no further restrictions on the sequences of braid words that correspond to diagrams with monotone lines.

Hence we have an algorithm that can decide if a given band word represents a braided fiber of a totally braided open book. However, it is still not known if every fibered link is the binding of a totally braided open book. The problem is that the algorithm only checks the band word. Hence if the algorithm decides that a given band word cannot be totally braided, this still leaves room for other braids with the same closure to be totally braided. Since a link is the closure of infinitely many different braids and a fiber can be represented by infinitely many band words, the algorithm cannot rule out that a fibered link can be totally braided, or, equivalently, generalised exchangeable.


\subsection{Simple branched covers}

Most of the definitions and results reviewed in this section can be found in \cite{morton}.

\begin{definition}
An $n$-sheeted branched covering map $\pi:F\to S$ between two surfaces $F$ and $S$ is called simple if for every point $p$ in the finite branch set $Q\subset S$ the preimage $\pi^{-1}(p)$ consists of $n-1$ points.

A map $\pi:M\to N$ between closed 3-manifolds $M$ and $N$ is a \textbf{simple branched cover} of degree $n$ with branch set $C\subset N$ if it is locally homeomorphic to the product of an interval with a simple $n$-sheeted branched cover of a disk, and the branch points in the products form the set $C$.  
\end{definition}

Consider a simple branched cover $\pi:S^3\to S^3$ whose branch set $C$ is some link $L_{branch}$. Let $\alpha$ be a braid axis for $L_{branch}$. Then $\pi^{-1}(\alpha)$ is a fibered link in $S^3$, since the fibration map $S^3\backslash \alpha\to S^1$ lifts through $\pi$, i.e. any fiber $F$ of the open book associated to $\pi^{-1}(\alpha)$ is the preimage $F=\pi^{-1}(S)$ of a fiber of the unbook, whose boundary is $\alpha$. Conversely, every fibered link arises in this way.

\begin{theorem}[Hilden-Montesinos \cite{hilden}]
For every fibered link $L$ there exists a simple branched cover $\pi:S^3\to S^3$, branched over some link $L_{branch}$ and a braid axis $\alpha$ of $L_{branch}$, such that $L=\pi^{-1}(\alpha)$. More precisely, the open book $(L,\Psi)$ is given by $(L,\Psi)=(\pi^{-1}(\alpha),\Phi(\pi))$, where $(\alpha,\Phi)$ is the unbook with binding $\alpha$.
\end{theorem}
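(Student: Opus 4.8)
The plan is to reverse-engineer the branched covering from the open book data. Fix an unknot $\alpha\subset S^3$ and identify $S^3\setminus\alpha\cong D^2\times S^1$, with the unbook fibration $\Phi$ given by projection to the $S^1$-factor. The key observation is that a degree-$n$ simple branched cover $\pi:S^3\to S^3$ whose branch link $L_{branch}$ is disjoint from $\alpha$ and braided with respect to $\alpha$ is completely encoded by three pieces of data: a finite set $Q=\{q_1,\dots,q_m\}\subset D^2$, namely $L_{branch}$ intersected with a fixed page $D_0$ of the unbook; a transitive homomorphism $\omega:\pi_1(D^2\setminus Q)\to S_n$ sending every meridian to a transposition; and a braid $\beta\in\mathbb{B}_m$ presenting $L_{branch}$ as a closed braid with axis $\alpha$, subject only to the constraint that the action of $\beta$ on $\pi_1(D^2\setminus Q)$ preserve $\omega$ up to overall conjugation in $S_n$, so that the branching data extends coherently over $L_{branch}$. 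From such data, $\pi^{-1}(D_0)$ is the simple $n$-fold branched cover $\widehat F$ of the disk determined by $(Q,\omega)$, the preimages $\pi^{-1}(D_t)$, $t\in S^1$, sweep out the mapping torus of the lift $\widetilde\beta:\widehat F\to\widehat F$ of $\beta$, and $\pi^{-1}(\alpha)=\partial\widehat F$; hence $(\pi^{-1}(\alpha),\Phi\circ\pi)$ is precisely the open book with page $\widehat F$ and monodromy $\widetilde\beta$, and the covering space is automatically $S^3$ as soon as $(\widehat F,\widetilde\beta)$ is the open book of a fibered link in $S^3$. So, starting from $(L,\Psi)$ with page $F$ and monodromy $h$, it suffices to produce data $(Q,\omega,\beta)$ with $\widehat F\cong F$ and $\widetilde\beta$ isotopic to $h$ rel $\partial F$; then $L_{branch}$, taken to be the closure of $\beta$ with axis $\alpha$, together with the cover $\pi$ it determines, are the required branched covering.

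The first ingredient is the classical fact that every compact connected orientable surface $F$ with non-empty boundary is a simple branched cover of the disk. Presenting $F$, of genus $g$ with $b\defeq\#\partial F\geq 1$ boundary components, as a disk with $2g+b-1$ bands and tracking Euler characteristics through the Riemann--Hurwitz relation $\chi(F)=n-m$, one obtains for a suitable degree $n$ a branch set $Q$ of size $m=n-\chi(F)$ together with a monodromy $\omega$ whose boundary permutation has the correct cycle type. I would simply invoke this.

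The heart of the matter is to realise the monodromy $h$ as a lifted braid. By the Dehn--Lickorish theorem, $h$ is isotopic rel $\partial F$ to a product $\prod_j T_{c_j}^{\varepsilon_j}$ of Dehn twists along simple closed curves $c_j\subset F$. The plan is to choose the branched cover $p:F\to D^2$ of the previous step --- enlarging $Q$ and the degree $n$ as needed, which is exactly where the available freedom is used --- so that every $c_j$ is \emph{symmetric} for $p$, that is, isotopic to a component of $p^{-1}(a_j)$ for an embedded arc $a_j\subset D^2$ joining two branch points whose distinguished sheets coincide. For a symmetric curve the Dehn twist descends: $T_{c_j}$ is the $p$-lift of a braid in $\mathbb{B}_m$ preserving $\omega$, and hence so is $h$. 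Conceptually this is the statement that $[h]$ can be placed in the symmetric, fibre-preserving mapping class group of $p$, which the Birman--Hilden correspondence identifies with a quotient of the liftable subgroup of $\mathbb{B}_m$. The substance of this step --- and the point where I expect the real obstacle to lie --- is to show that the freedom in choosing $p$ genuinely suffices to make an arbitrarily prescribed mapping class symmetric once the cover carries sufficiently much branching; this is the technical core, carried out by Montesinos. Granting it, $h=\widetilde\beta$ for a braid $\beta\in\mathbb{B}_m$.

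It then remains to verify the routine points: that the map $\pi$ built from $(Q,\omega,\beta)$ is locally the product of an interval with a simple branched cover of a disk, hence a genuine simple branched cover; that $L_{branch}$ is disjoint from $\alpha$, which is a braid axis for it; and that $(\pi^{-1}(\alpha),\Phi\circ\pi)=(L,\Psi)$. For the last point, over a neighbourhood of $\alpha$ the covering is unbranched, so $\Phi\circ\pi$ retains the local normal form $\arg$ required of an open book fibration, while the identification of page and monodromy recorded in the first paragraph shows that the resulting open book is the given $(L,\Psi)$.
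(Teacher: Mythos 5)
The paper does not actually prove this theorem; it is cited from Hilden--Montesinos, so there is no proof in the paper for you to be compared against. Evaluating your sketch on its own merits: the overall strategy is sound and matches the standard way such results are approached. You correctly encode a simple branched cover of $S^3$, branched over a closed braid with axis $\alpha$, by the triple $(Q,\omega,\beta)$ consisting of the branch points in a page $D_0$, the transitive monodromy homomorphism into $S_n$ sending meridians to transpositions, and a braid lifting $\beta$ preserving $\omega$ up to conjugation; and you correctly observe that the resulting total space is the 3-manifold of the abstract open book $(\widehat F,\widetilde\beta)$, which is $S^3$ once $(\widehat F,\widetilde\beta)\cong(F,h)$. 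The Riemann--Hurwitz bookkeeping for realising $F$ as a simple branched cover of the disk is standard.

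The genuine gap is the one you flag yourself. The claim that after enlarging $n$ and $Q$ one can arrange that the finitely many Dehn twist curves $c_1,\ldots,c_k$ occurring in a Dehn--Lickorish factorisation of $h$ are \emph{all simultaneously symmetric} for a single cover $p:F\to D^2$ is by no means automatic. Adding branch points does give more symmetric curves, but the new symmetric curves are not arbitrary: they are preimages of arcs between compatible branch points, and it is not clear that any prescribed finite collection of essential simple closed curves on $F$ can be forced to lie among them while simultaneously keeping the cover simple and keeping the lifted boundary permutation of the correct cycle type. This is precisely the content of the Hilden--Montesinos/Birman theorem, and saying ``granting it, the theorem follows'' means you have outlined the reduction but not carried out the proof. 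A further point worth flagging: you need not only that each individual $c_j$ is symmetric (so that each $T_{c_j}^{\pm 1}$ descends to a half-twist in the disk braid group), but that the resulting braid $\beta$ preserves $\omega$ up to an inner automorphism, so that the mapping torus closes up to a genuine branched cover; this compatibility across the whole product is part of what must be verified, not a formal consequence.

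So: the outline is a reasonable reconstruction of the proof strategy, and its global structure is correct. But the single step you defer is the theorem, and as written the proposal is an argument template rather than a proof.
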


An analogous result for every open book with connected binding of any 3-manifold $M$ was proved by Birman \cite{bircov}. Furthermore, in this case the degree of the branched cover can be fixed to be 3.

In 1991, when Harer's conjecture (cf. Conjecture \ref{con:harer}) was still open, Montesinos and Morton proposed an approach to prove this conjecture using simple branched covers of $S^3$. They observed that for a fixed simple branched cover $\pi:S^3\to S^3$ with branch set $L_{branch}$ the preimages $\pi^{-1}(\alpha)$ and $\pi^{-1}(\beta)$ of two braid axes $\alpha$ and $\beta$ of $L_{branch}$ are related by a sequence of Hopf plumbings and deplumbings \cite{morton}.

Actually, the Hopf (de)plumbings that correspond to changes of the braid axis take a very specific form.
\begin{definition}
Let $\pi:S^3\to S^3$ be a simple branched cover, branched over a link $L_{branch}$. Let $\alpha$ be a braid axis of $L_{branch}$ and $F$ be a page of the open book with binding $\pi^{-1}(\alpha)$. Let $\gamma$ be a simple path in $F$ with $\gamma\cap\partial F=\partial \gamma$. We say that $\gamma$ is \textbf{$\pi$-symmetric} if $\pi(\gamma)$ is a path in $\pi(F)$ with one endpoint on the boundary $\partial \pi(F)$ and the other on $\pi(F)\cap L_{branch}$.
We call a Hopf (de)plumbing along a $\pi$-symmetric path a \textbf{$\pi$-symmetric Hopf (de)plumbing}.
\end{definition}

\begin{figure}[h]
\centering
\labellist
\pinlabel $\gamma$ at 560 1680
\pinlabel $\pi(\gamma)$ at 530 330
\pinlabel $F$ at 1500 1000
\pinlabel $D$ at 1500 300
\Large
\pinlabel $\pi$ at 740 660
\endlabellist
\includegraphics[height=7cm]{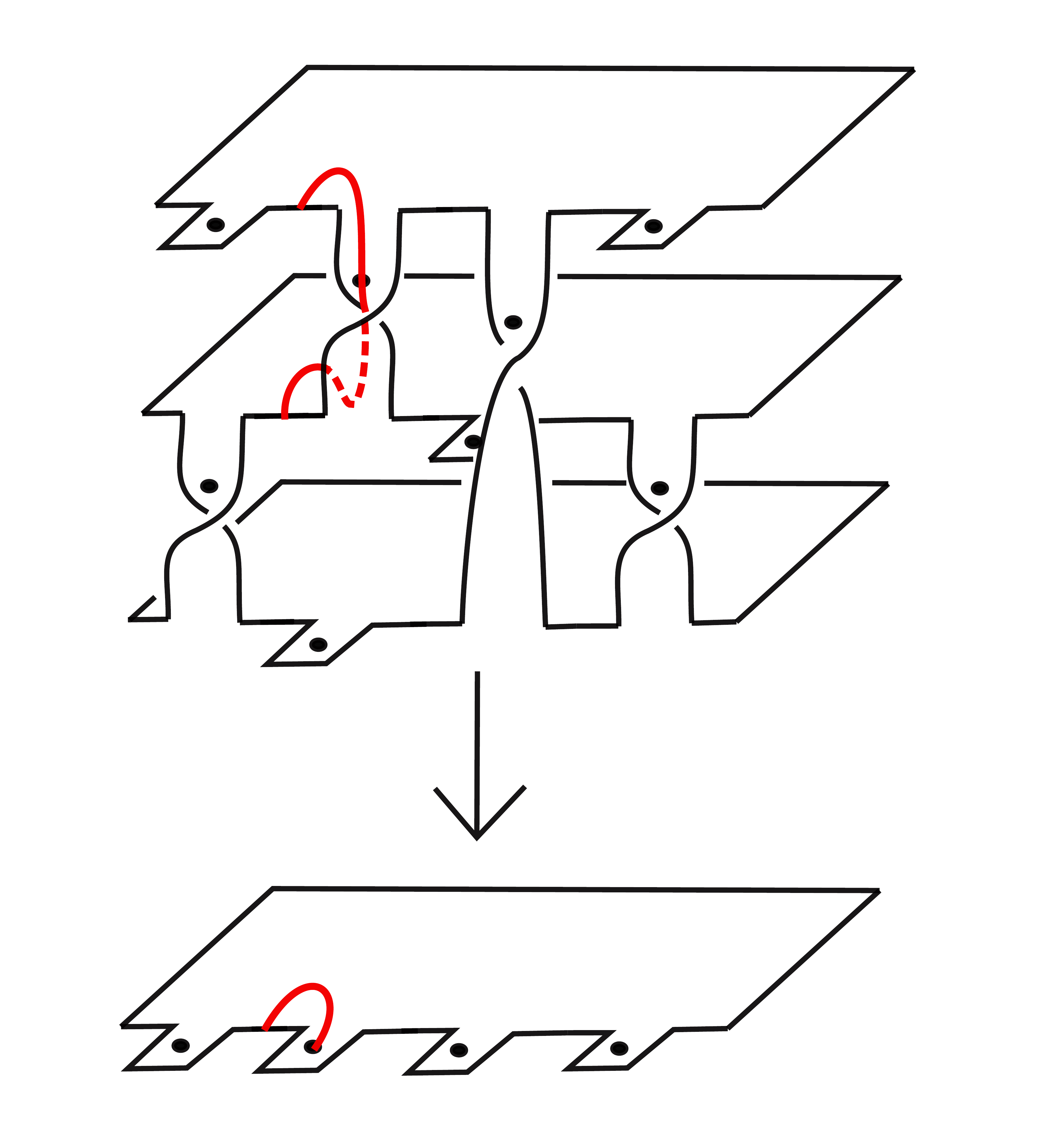}
\caption{A $\pi$-symmetric path on a fiber $F$ and its image in $D$ under $\pi$. \label{fig:cover}}
\end{figure}

This leads to the following conjecture, which would imply a new proof of Harer's conjecture.
\begin{conjecture}
\label{con:morton}
For every fibered link $L$ there is a simple branched cover $\pi:S^3\to S^3$, branched over a link $L_{branch}$ with braid axes $\alpha$ and $\beta$, such that $\pi^{-1}(\alpha)=L$ and $\pi^{-1}(\beta)$ is the unknot.
\end{conjecture}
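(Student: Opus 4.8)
The plan is to deduce Conjecture~\ref{con:morton} --- in fact the sharper statement in which $\alpha$ and $\beta$ are the two components of a single Hopf link --- from property B1) together with Theorem~\ref{thm:main}. Suppose $(L,\Psi)$ is an open book for which property B1) has been established, i.e.\ $L$ is the closure of a generalised exchangeable braid. By Theorem~\ref{thm:main} we may isotope $(L,\Psi)$ until it satisfies property B3): there is a simple branched cover $\pi\colon S^3\to S^3$ with branch link $L_{branch}$ admitting a Hopf $n$-braid axis $\alpha\cup\beta$ (Definition~\ref{def:hopf}), so that $\alpha$ and $\beta$ are each braid axes of $L_{branch}$, together form a Hopf link, and $L=\pi^{-1}(\alpha)$, $O=\pi^{-1}(\beta)$ with $O$ an unknot. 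This $\pi$ is exactly the branched cover demanded by the conjecture. Hence the conjecture, and its Hopf-link refinement, is \emph{equivalent} to the assertion that every fibered link is the closure of a generalised exchangeable braid, and the entire difficulty is concentrated there.

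For bindings of braid index at most $3$ this assertion is precisely Theorem~\ref{thm:3}, so the conjecture holds unconditionally for that family (this is the reduction underlying the corollary to Theorem~\ref{thm:3}). For a general fibered link $L$ I would attempt an induction along the Hopf plumbing/deplumbing sequence provided by the Giroux--Goodman theorem (the proof of Harer's Conjecture~\ref{con:harer}), with inductive hypothesis ``the smaller open book is braided,'' phrased through Rampichini diagrams --- equivalently, by Theorem~\ref{thm:main}, through P-fibered braids. Since Giroux--Goodman may be taken to be a block of plumbings followed by a block of deplumbings, one would first propagate braidedness through a single Hopf plumbing: adding a $\pm$ Hopf band along a suitable arc should, after renumbering the disks of the braided fiber, amount to inserting a band generator $a_{i,i+1}^{\pm1}$ into the band word, and one must verify that the resulting word is still the starting word $B(0)$ of a legal, monotone Rampichini sequence. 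There is a good deal of freedom here (in the choice of plumbing arc and in conjugating $B(0)$), so I expect this direction to be manageable.

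The genuine obstacle is the deplumbing step, which cannot be avoided in general since there are fibered links whose Giroux--Goodman sequences must contain deplumbings. Removing a Hopf band from a braided fiber $F$ deletes a band from the band word, but the resulting word need not represent a \emph{braided} fiber of the deplumbed open book $(L',\Psi')$; and, as noted above, the Morton--Rampichini algorithm tests only a fixed band word, so what is needed is that \emph{some} band word among the infinitely many representing the new fiber $F'$ begins a legal Rampichini sequence. A natural way to gain leverage is to arrange the deplumbing to be $\pi$-symmetric for the branched cover $\pi$ furnished by B3): then the induced operation downstairs changes a braid axis of $L_{branch}$ by a single (de)stabilisation, and one may try to construct the Rampichini diagram of $(L',\Psi')$ by lifting the corresponding, much simpler, change of braid axis of $L_{branch}$ through $\pi$. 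Making that lift respect the transversality and ``saddles only, no local extrema'' requirements of mutually braided open books --- i.e.\ ensuring that no c-arcs and no extremal tangencies are created on the relevant pages --- is where I expect the real technical work to lie, and it is precisely the step for which no general argument is presently available.
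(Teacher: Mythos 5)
The statement you are asked about is labeled a \emph{conjecture} in the paper (attributed to Montesinos and Morton), and the paper does not prove it; it is explicitly left open. There is therefore no proof in the paper to compare against, and your submission is best read as an analysis plus a research programme rather than a proof, which you yourself make clear in your closing paragraphs.

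Your reduction is essentially the one the paper makes implicitly. If an open book $(L,\Psi)$ satisfies B1), then by Theorem~\ref{thm:main} it can be isotoped to satisfy B3), which supplies precisely the simple branched cover the conjecture demands, with the bonus that $\alpha\cup\beta$ is a Hopf link. So B1) for all fibered links implies Conjecture~\ref{con:morton}, and the braid-index-$\le 3$ case is settled by Theorem~\ref{thm:3}, exactly as you say. One imprecision worth flagging: you assert that the conjecture (and its Hopf-link refinement) is \emph{equivalent} to the braidedness assertion. That is true for the refinement, since B3) $\iff$ B1). But the conjecture as literally stated only asks for two braid axes $\alpha,\beta$ of $L_{branch}$ with $\pi^{-1}(\alpha)=L$ and $\pi^{-1}(\beta)$ an unknot; it does not ask that $\alpha\cup L_{branch}$ and $\beta\cup L_{branch}$ be mutually braided, nor that $\alpha\cup\beta$ be a Hopf link (for instance, $\alpha$ could be a multi-strand braid relative to $\beta$ or link $\beta$ with linking number other than $\pm 1$). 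So only the implication from braidedness to the conjecture is established, not the converse, and the paper itself asserts only that one direction.

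Your proposed induction along a Giroux--Goodman plumbing/deplumbing sequence is a reasonable direction of attack, and you correctly locate the genuine obstacle at the deplumbing step; the paper's discussion agrees that Montesinos--Morton already handle the plumbing-only case and that the Morton--Rampichini algorithm tests only fixed band words, leaving no general mechanism for choosing a good band word after a deplumbing. So there is a gap, but it is the gap you identify yourself, and it coincides precisely with the open problem the paper points to.
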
 

Equivalently, Conjecture \ref{con:morton} is asking if every fibered link can be obtained from the unknot and its fiber disk by a sequence of $\pi$-symmetric Hopf (de)plumbings, which is stronger than Harer's conjecture and in particular not answered by Giroux's and Goodman's proof of the same.

Montesinos and Morton showed that Conjecture \ref{con:morton} is true for fibered links $L$ whose fiber surface can be obtained from the disk via a sequence of Hopf plumbings without any deplumbings \cite{morton}.

Since the open books in the domain 3-sphere are simply the lifts of the open books in the target 3-sphere, i.e. compositions of $\pi$ and the fibration map of the unbook, we can study some of their properties, most importantly, transverse and tangential intersection points, by considering the corresponding open books in the target 3-sphere.

\begin{definition}\label{def:hopf}
Let $\pi:S^3\to S^3$ a simple branched cover of degree $n$, branched over a link $L_{branch}$. Let $\alpha$ and $\beta$ be braid axes of $L_{branch}$ such that
\begin{itemize}
\item $\alpha\cup L_{branch}$ is braided relative to $\beta$,
\item $L_{branch}$ is an $(n-1)$-braid relative to $\beta$,
\item $\beta\cup L_{branch}$ is braided relative to $\alpha$.
\end{itemize}
Then we say that $\pi^{-1}(\alpha)\cup \pi^{-1}(\beta)$ is \textbf{lifted generalised exchangeable}.
If additionally $H\defeq\alpha\cup\beta$ is a Hopf link, we say that $H$ is a Hopf $n$-braid axis of $L_{branch}$. 
\end{definition}

Note that the number of strands of the braid $L_{branch}$ relative to $\beta$ is specified, while the number of strands of $\beta\cup L_{branch}$ relative to $\alpha$ and of $\alpha\cup L_{branch}$ relative to $\beta$ is not. This will guarantee that $\pi^{-1}(\beta)$ is an unknot (cf. Lemma \ref{lem:RH}), while $\pi^{-1}(\alpha)$ is some general fibered link. In particular, $\alpha$ is braided relative to $\beta$ and vice versa. Hence they are exchangeable.

We will see that the property 
\begin{enumerate}[label=B\arabic*)]
\setcounter{enumi}{4}
\item The binding $L$ has a braid axis $O$ such that $L\cup O$ is lifted generalised exchangeable for some simple branched cover $\pi:S^3\to S^3$.
\end{enumerate} 
is actually also equivalent to the four characterisations of braided open books given in the introduction. However, we will work with property B3) instead, which in addition to B5) requires the the link type of $\alpha\cup \beta$ to be a Hopf link. It is therefore clear, that B3) implies B5). 


We will go on to prove that B5)$\implies$ B1)$\implies$ B2) $\implies$ B4)$\implies$ B3), so that B3) and B5) are equivalent, i.e., a fibered link and one of its braid axes are lifted generalised exchangeable for some simple branched cover $\pi:S^3\to S^3$ if and only if they are preimages of a Hopf $n$-braid axis for some (in general different) simple branched cover $\pi':S^3\to S^3$  . 

\subsection{P-fibered braids}
\label{sec:poly}

This subsection follows the introduction of P-fibered braids in \cite{bodesat}.

There are several equivalent interpretations of the braid group $\mathbb{B}_n$ on $n$ strands. One of them is to consider the configuration space
\begin{equation}
C_n\defeq\{(z_1,z_2,\ldots,z_n)\in\mathbb{C}^n:z_i\neq z_j \text{ if }i\neq j\}/S_n
\end{equation} 
of unordered $n$-tuples of distinct complex numbers. Here $S_n$ denotes the symmetric group on $n$ elements, acting on $n$-tuples by permutation as usual. The braid group $\mathbb{B}_n$ is the fundamental group of $C_n$.

Since the fundamental theorem of algebra establishes a homeomorphism between $C_n$ and the space of monic, degree $n$, complex polynomials with distinct roots by sending $(z_1,z_2,\ldots,z_n)$ to the polynomial $\prod_{i=1}^n(z-z_j)$, the braid group can also be thought of as the fundamental group of this space of polynomials, so that every geometric braid (parametrised as $\cup_{i=1}^n (z_j(t),t)$ in $\mathbb{C}\times[0,2\pi]$) gives rise to a unique loop in the space of polynomials $g_t(z)=\prod_{i=1}^n(z-z_j(t))$.

Note that the zeros of $g_t$ trace out the braid $B$ as $t$ varies from 0 to $2\pi$. Hence the argument of the polynomials defines a map $\chi(z,t)\defeq\arg g_t(z):(\mathbb{C}\times S^1)\backslash cl(B)\to S^1$, where $cl(B)$ denotes the closed braid $B$ in $\mathbb{C}\times S^1$. We use the same expression to denote its closure in $S^3$.

\begin{definition}\label{def:pfib}
A geometric braid $B$ is called \textbf{P-fibered} if the argument $\chi:(\mathbb{C}\times S^1)\backslash cl(B)\to S^1$ of the loop of polynomials corresponding to $B$ is a fibration.\\
We call a braid $B\in\mathbb{B}_n$ P-fibered if it can be represented by a P-fibered geometric braid.
\end{definition}

Since $g_t$ is a monic polynomial for all values of $t$, it satisfies
\begin{equation}
\label{eq:limit}
\lim_{r\to\infty}\arg(g_t(r\rme^{\rmi \varphi}))=\varphi\deg g_t=\varphi n
\end{equation}
for all $\varphi\in[0,2\pi]$. Hence $\chi$ can be extended to all of $S^3\backslash cl(B)$ by filling the complementary solid torus with meridional disks or, equivalently, by considering the complex plane as an open disk and identifying the boundary of $D\times S^1$ along longitudes: $(\varphi,t)\sim(\varphi,t')$ for all $\varphi,t,t'\in [0,2\pi]$. Since the resulting fibration map $S^3\backslash cl(B)\to S^1$ has the required behaviour in a tubular neighbourhood of the braid closure $cl(B)$ \cite{bodesat}, closures of P-fibered braids are fibered links in $S^3$. 

Several families of fibered links have been shown to be closures of P-fibered braids. In particular, homogeneous braids (see Definition \ref{def:homogeneous}) are P-fibered \cite{bode:real, survey}. However, it is not known if every fibered link is the closure of a P-fibered braid.

The argument map $\chi$ being a fibration is equivalent to the absence of any critical points. Since $g_t$ is holomorphic for all $t$, we find that $(z_*,t_*)\in(\mathbb{C}\times S^1)\backslash cl(B)$ is a critical point of $\chi$ if and only if
\begin{align}
\frac{\partial \chi}{\partial \text{Re}(z)}(z_*,t_*)=\frac{\partial \chi}{\partial \text{Im}(z)}(z_*,t_*)=\frac{\partial \chi}{\partial t}(z_*,t_*)&=0\nonumber\\
\iff \frac{\partial g_t}{\partial z}(z_*)=\frac{\partial\chi}{\partial t}(z_*,t_*)&=0.\label{eq:crit}
\end{align}

Since $g_t$ is a polynomial of degree $n$, $\frac{\partial g_t}{\partial z}$ has $n-1$ zeros for every value of $t$. We denote by $c_j(t)$, $j=1,2,\ldots,n-1$, the zeros of $\frac{\partial g_t}{\partial z}$, i.e., the critical points of $g_t$.

We write $v_j(t)\defeq g_t(c_j(t))$ for the critical values of $g_t$. Since the roots of $g_t$ are simple, its critical values $v_j(t)$, $j=1,2,\ldots,n-1$, are non-zero and after a small isotopy to $B$ we can assume that $v_i(t)\neq v_j(t)$ if $i\neq j$. If this condition is satisfied, it follows that the curves $v_j(t)$ together with a vertical strand $(0,t)\in\mathbb{C}\times[0,2\pi]$, which we call the \textit{0-strand}, form a braid on $n$ strands. Eq. (\ref{eq:crit}) is then equivalent to
\begin{equation}
\label{eq:satellite}
\frac{\partial \arg v_j}{\partial t}(t_*)=0 \qquad\text{for some}j\in\{1,2,\ldots,n-1\}.
\end{equation}

In other words, a geometric braid is P-fibered if and only if Eq. (\ref{eq:satellite}) is never satisfied.

This has the following geometric interpretation. The distinct, non-zero critical values $v_j(t)$, $j=1,2,\ldots,n-1$, of $g_t$ move in the complex plane as $t$ goes from $0$ to $2\pi$. If $\chi(z,t)=\arg g_t(z)$ is a fibration, then Eq. (\ref{eq:satellite}) says that no critical value ever changes its orientation with which it twists around 0. In other words, for every $j$ there is a direction, clockwise or counterclockwise, corresponding to the sign of $\frac{\partial \arg v_j}{\partial t}$, such that $v_j(t)$ moves in that direction around 0 for all values of $t\in[0,2\pi]$.


In \cite{survey} we discuss a construction of P-fibered braids, that uses a deep relation between the direction, clockwise or counterclockwise, of a critical value $v_j(t)$ and the sign of the exponent, positive or negative, of an Artin generator appearing in the braid word of $B$, the braid that is formed by the roots of $g_t$.

\begin{definition}\label{def:homogeneous}
Let $B$ be a braid on $n$ strands. We say that $B$ is \textbf{homogeneous} if it is represented by a word in Artin generators that for all $i=1,2,\ldots,n-1$ contains the generator $\sigma_i$ if and only if it does not contain $\sigma_i^{-1}$.
\end{definition}

The family of homogeneous braids, sometimes in the literature also referred to as strictly homogeneous braids in order to distinguish them from split links with homogeneous braid diagrams in the sense of Cromwell \cite{cromwell}, can be constructed (up to conjugation) as P-fibered braids with the methods in \cite{survey}. This is precisely because of the relation between the direction of a critical value and the sign of a corresponding generator. Since no critical value changes its direction as $t$ varies, no generator changes its sign as we traverse the braid word.

\begin{theorem}[cf. \cite{bode:real, survey}]
Let $B$ be a homogeneous braid. Then $B$ is conjugate to a P-fibered braid.
\end{theorem}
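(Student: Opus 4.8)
The plan is to realise $B$, up to conjugation, by an explicit loop of monic polynomials in which every critical value rotates around the origin always in the same direction; then Eq. (\ref{eq:satellite}) can never be satisfied, so by Eq. (\ref{eq:crit}) the argument map $\chi$ is a fibration and the braid is P-fibered.

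First I would fix a homogeneous word $w=\sigma_{i_1}^{\varepsilon_1}\cdots\sigma_{i_\ell}^{\varepsilon_\ell}$ for $B$. Homogeneity means $\varepsilon_k$ depends only on $i_k$, so there are well-defined signs $\delta_i\in\{\pm1\}$ with $\varepsilon_k=\delta_{i_k}$ for every $k$. I would then work through the branched-cover interpretation underlying Eq. (\ref{eq:crit}): for each $t$ the polynomial $g_t$ is a degree-$n$ simple branched cover of the plane with branch set $\{v_1(t),\dots,v_{n-1}(t)\}$, and the roots of $g_t$ are the $n$ points of $g_t^{-1}(0)$. As the critical values move, the roots braid, and there is a precise dictionary between the loop $t\mapsto(v_1(t),\dots,v_{n-1}(t))$ of critical values --- viewed relative to the fixed point $0$, the \textit{$0$-strand} --- and the resulting root braid $B$, following \cite{bode:real, survey}. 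The crucial feature of that dictionary is that the sign of each Artin generator at position $i$ in $B$ is controlled by the direction, clockwise or counterclockwise, in which the associated critical value $v_i$ winds around $0$.

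Next I would build the loop of polynomials directly from $w$. Place $n-1$ critical values at distinct radii along the positive real ray, and traverse $w$ from left to right: for the $k$-th letter, push the critical value $v_{i_k}$ once around $0$ in the direction prescribed by $\delta_{i_k}$, keeping all radii distinct and leaving the other critical values essentially fixed; this appends the generator $\sigma_{i_k}^{\delta_{i_k}}$ to the root braid. Because $w$ is homogeneous, $v_i$ is only ever pushed in the single direction $\delta_i$. After reparametrising and smoothing so that each $v_i$ turns monotonically, I would obtain $\sign\tfrac{\partial\arg v_i}{\partial t}=\delta_i$ for all $t$ and all $i$; in particular Eq. (\ref{eq:satellite}) never holds, so by Eq. (\ref{eq:crit}) the map $\chi=\arg g_t$ has no critical points and is a fibration, and this geometric braid is P-fibered. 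Reading off its root braid with respect to a fixed choice of arcs connecting $0$ to the critical values returns $w$ up to an overall conjugation (coming from the choice of those arcs and of the basepoint of the $t$-loop), so $B$ is conjugate to a P-fibered braid.

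The hard part will be making the middle step rigorous: one has to check that a loop of critical values of this shape genuinely lifts to a smooth loop of monic degree-$n$ polynomials, that the critical values stay distinct and nonzero throughout, and that the sheet data attached to the branch points evolves so that the root braid really is $w$ up to conjugation --- especially when a critical value passes close to $0$ or past the radii of the others. This lifting and bookkeeping is exactly the content of the construction in \cite{bode:real, survey}; homogeneity is what guarantees that each critical value only ever moves in one direction, which is precisely what forces $\chi$ to be a fibration.
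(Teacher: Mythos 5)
Your plan reproduces exactly the sketch the paper gives just before the theorem statement --- use the dictionary from \cite{bode:real, survey} between the winding direction of each critical value $v_j(t)$ and the sign of the corresponding Artin generator, arrange by homogeneity that every $v_j(t)$ winds monotonically around the origin so that Eq.~(\ref{eq:satellite}) never holds, and conclude via Eq.~(\ref{eq:crit}) that $\chi=\arg g_t$ is a fibration --- and you defer the lifting and cactus/sheet bookkeeping to the same references. This is essentially the same approach as the paper, which itself offers no proof of this statement beyond that sketch and the citations.
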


The fact that homogeneous braids close to fibered links is due to Stallings \cite{stallings2}. The construction in \cite{survey} suggests that given a loop in the space of polynomials (monic, with fixed degree, distinct roots and distinct critical values) there is a relation between the braid that is formed by its roots and the braid that is formed by the union of the 0-strand and its critical values. The following results imply that this relation is of a topological nature, as deformations of the strands that correspond to critical values lift to braid isotopies of the braid that is formed by the roots.

\begin{theorem}[Beardon-Carne-Ng, Theorem 1 in \cite{critical}]
\label{thm:crit}
Let $X_n$ be the space of monic complex polynomials, with degree $n$, distinct roots, distinct critical values and constant term equal to 0. Let 
\begin{equation}
V_n\defeq\{(v_1,v_2,\ldots,v_{n-1})\in\mathbb{C}^{n-1}:v_j\neq 0, v_i\neq v_j\text{ for all }i\neq j\}/S_{n-1},
\end{equation}
where $S_{n-1}$ is the symmetric group on $n-1$ elements, acting by permutations on $(n-1)$-tuples, be the space of unordered critical values of polynomials in $X_n$.

Then the map $\theta_n:X_n\to V_n$ that maps a polynomial to its set of critical values is a covering map of degree $n^{n-1}$.
\end{theorem}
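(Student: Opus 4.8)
The plan is to show that $\theta_n:X_n\to V_n$ is a covering map by exhibiting a local inverse near every point and computing its degree via a monodromy/counting argument. First I would verify that $\theta_n$ is a local homeomorphism. Fix $p\in X_n$ with roots $z_1,\dots,z_n$, critical points $c_1,\dots,c_{n-1}$ and critical values $v_j=p(c_j)$, all distinct and non-zero. The condition that the $v_j$ are distinct and non-zero is exactly the condition that $p$ has only simple critical points and $0$ is not a critical value, so the map $p$ is, over a neighbourhood of each $v_j$, locally a two-to-one branched cover with a single simple branch point, and over a neighbourhood of $\mathbb{C}\setminus\{v_1,\dots,v_{n-1}\}$ an honest $n$-sheeted cover. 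From this one sees that $p$ is (up to the labelling/permutation quotient) determined by the unordered tuple $(v_1,\dots,v_{n-1})$ together with the combinatorial ``passport'' data describing how the $n$ sheets come together at each $v_j$; small perturbations of the critical values can be realised by deforming this branched-cover picture (the Riemann existence theorem / Hurwitz theory), and these deformations depend continuously on the $v_j$. Concretely, I would argue that the equations $p'(c_j)=0$, $p(c_j)=v_j$ ($j=1,\dots,n-1$), $p(0)=0$, together with monicity, cut out $p$ and its critical points implicitly, and that the Jacobian of this system with respect to the coefficients of $p$ and the $c_j$ is nonsingular precisely because the $v_j$ are distinct and nonzero — this gives a local section of $\theta_n$ by the implicit function theorem, hence $\theta_n$ is a local homeomorphism.

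Next I would upgrade ``local homeomorphism'' to ``covering map''. Since $X_n$ and $V_n$ are (connected) complex manifolds of the same dimension $n-1$ and $\theta_n$ is holomorphic and a local biholomorphism, it suffices to show $\theta_n$ is proper, or alternatively to show directly that it has the path-lifting property. Properness is the cleanest route: one shows that if $v^{(k)}\to v^{(\infty)}\in V_n$ and $p^{(k)}\in\theta_n^{-1}(v^{(k)})$, then a subsequence of $p^{(k)}$ converges in $X_n$ (not merely in the larger space of all monic polynomials). The coefficients of $p^{(k)}$ stay bounded because the critical values and the critical points stay bounded (critical points are bounded since the critical values are bounded away from $0$ and $\infty$ and the roots cannot escape, using $p(0)=0$); and the limit polynomial cannot acquire a multiple root or a critical value equal to $0$ or a coincidence of critical values, since $v^{(\infty)}\in V_n$ forces the limit critical values to be distinct and nonzero. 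Hence $\theta_n$ is a proper local homeomorphism between connected manifolds, therefore a finite covering map.

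Finally I would compute the degree. The fibre $\theta_n^{-1}(v)$ over a fixed $v=(v_1,\dots,v_{n-1})$ is in bijection with the set of monic degree-$n$ polynomials with constant term $0$ whose (simple) critical values are exactly $v_1,\dots,v_{n-1}$. Via the branched-cover correspondence this set is counted by Hurwitz-type data: choosing such a $p$ is equivalent to choosing, for a fixed base configuration of $n-1$ marked points in $\mathbb{C}$, an $n$-sheeted branched covering of the plane with one simple branch point over each marked point and unbranched over $\infty$ — equivalently a transitive tuple of transpositions in $S_n$ with the right product, but crucially with \emph{labelled} sheets up to the constant-term normalisation. The count $n^{n-1}$ is most transparently obtained from the well-known fact (Cayley / Hurwitz) that the number of ways to write a full cycle, or more relevantly the number of such ``generic'' polynomials, equals $n^{n-1}$; I would derive it by an explicit monodromy computation — track how $\theta_n^{-1}(v)$ is permuted as $v$ traverses loops in $V_n$, reduce to the case where all $v_j$ are close together so the branched cover degenerates to a star, and count the resulting trees on $n$ labelled vertices, which is $n^{n-1}$ by Cayley's formula.

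\textbf{Main obstacle.} The delicate point is not the local-homeomorphism or properness steps (these are standard once set up carefully) but pinning down the degree: one must be precise about exactly which normalisation (monic, constant term $0$, sheets labelled or not) is in force so that the Hurwitz count comes out to $n^{n-1}$ rather than off by a factor of $n!$ or $n$, and one must justify the degeneration-to-a-tree argument rigorously. Since this is quoted as a known result (Beardon–Carne–Ng, Theorem~1 in \cite{critical}), I would in practice cite their proof for the degree computation and only reprove the covering-map property if a self-contained argument is wanted.
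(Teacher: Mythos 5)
The paper does not prove Theorem \ref{thm:crit}; it is imported verbatim as Theorem~1 of Beardon--Carne--Ng \cite{critical} and used as a black box. So there is no proof in the paper to compare your sketch against. The closest internal material is Proposition \ref{prop:graphcount}, where the degree $n^{n-1}$ is fed in as a hypothesis and used in the opposite direction, to derive that the number of cacti (transitive factorisations of a fixed $n$-cycle into $n-1$ transpositions) is $n^{n-2}$.

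With that said, your degree computation contains a concrete error. Cayley's formula counts $n^{n-2}$ labelled trees on $n$ vertices, not $n^{n-1}$; and $n^{n-2}$ is precisely the number of cacti of degree $n$ just mentioned (D\'enes's theorem), which is what a count of ``the resulting trees'' would give. The fibre of $\theta_n$ over a fixed unordered configuration of $n-1$ nonzero, distinct critical values is genuinely larger, by a factor of $n$, than the set of cacti: by Riemann's existence theorem each cactus yields a polynomial unique only up to affine change of variable $z\mapsto az+b$, and the normalisations ``monic'' and ``constant term $0$'' leave $n^2$ such choices ($a$ an $n$th root of unity, $b$ a root of $p$), while each polynomial in turn carries $n$ cacti (one per cyclic choice of base arc on the boundary circle); the net count is $n^{n-2}\cdot n^2/n = n^{n-1}$. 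So your degeneration-to-a-star argument, as it stands, would land on $n^{n-2}$, and recovering the extra factor of $n$ requires explicitly tracking the normalisation data (which root sits at $z=0$, modulo the $n$th-root-of-unity rotations) --- exactly the bookkeeping you flag as the ``main obstacle'' but do not carry out. The local-homeomorphism and properness steps of your outline are plausible (though the nondegeneracy of the implicit-function-theorem Jacobian is asserted rather than verified), and your closing remark that one would in practice cite \cite{critical} for the degree is precisely what the paper does.
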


\begin{corollary}
\label{cor:lifting}
Let $\widehat{X_n}$ be the space of monic complex polynomials, with degree $n$, distinct roots, distinct critical values and constant term distinct from any of its critical values and 
\begin{equation}
\label{eq:whvn}
\widehat{V_n}\defeq\{((v_1,v_2\ldots,v_{n-1}),a_0)\in V_n\times \mathbb{C}:a_0\neq v_i \text{ for all } i\}.
\end{equation} 
Then $\widehat{\theta_n}:\widehat{X_n}\to\widehat{V_n}$, defined to be the map that sends a polynomial to its set of critical values $(v_1,v_2,\ldots,v_{n-1})$ and its constant term $a_0$, is a covering map of degree $n^{n-1}$.
\end{corollary}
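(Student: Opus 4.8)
The plan is to realise $\widehat{\theta_n}$ as the pullback of the covering map $\theta_n$ of Theorem~\ref{thm:crit} along a suitable ``recentering'' map of base spaces; once this is done, both the covering property and the degree $n^{n-1}$ are inherited from $\theta_n$ by formal nonsense.

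Concretely, I would introduce two translation maps. On the total spaces let $\sigma\colon\widehat{X_n}\to X_n$ send a polynomial $p$ to $p-p(0)$, i.e.\ subtract its constant term; on the bases let $\bar\sigma\colon\widehat{V_n}\to V_n$ send $((v_1,\dots,v_{n-1}),a_0)$ to $(v_1-a_0,\dots,v_{n-1}-a_0)$. The elementary facts to record are that adding a constant to $p$ leaves $p'$ (hence the critical points) unchanged and shifts every critical value by that constant, and that a polynomial with only simple roots has no critical value equal to $0$ (a critical value $0$ would force a multiple root). From these one checks that $\sigma$ indeed lands in $X_n$: the constant term of $p-p(0)$ is $0$; its critical values $v_j(p)-a_0$ are distinct and are nonzero exactly because $a_0\neq v_j(p)$ in the definition of $\widehat{X_n}$; and its roots, the points where $p$ takes the value $a_0$, are simple because $a_0$ is not a critical value of $p$. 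The same inequalities show $\bar\sigma$ lands in $V_n$, and the square with horizontal arrows $\widehat{\theta_n},\theta_n$ and vertical arrows $\sigma,\bar\sigma$ commutes by construction. In passing one notes that $\widehat{\theta_n}$ is well defined and continuous, being the restriction to $\widehat{X_n}$ of the map assigning to a polynomial its (unordered) set of critical values together with its constant term, whose image lies in $\widehat{V_n}$ because for $p\in\widehat{X_n}$ the critical values are distinct, the constant term avoids them, and the critical values are nonzero since $p$ has simple roots.

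The crux is to show this commuting square is cartesian, i.e.\ that $p\mapsto(\sigma(p),\widehat{\theta_n}(p))$ is a homeomorphism $\widehat{X_n}\xrightarrow{\ \sim\ }X_n\times_{V_n}\widehat{V_n}$. Injectivity is immediate, since $p=\sigma(p)+a_0$ is recovered from its image. For surjectivity, given a pair $\bigl(q,((v_j),a_0)\bigr)$ in the fibre product, so that the critical values of $q$ are $(v_j-a_0)$, I would set $p\defeq q+a_0$ and check $p\in\widehat{X_n}$: it is monic of degree $n$; its critical values are those of $q$ shifted up by $a_0$, namely the given distinct $(v_j)$; its constant term $q(0)+a_0=a_0$ differs from each $v_j$ because $((v_j),a_0)\in\widehat{V_n}$; and its roots, the points where $q=-a_0$, are simple because $-a_0$ is not a critical value $v_j-a_0$ of $q$, i.e.\ because $v_j\neq 0$. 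Then $\sigma(p)=q$ and $\widehat{\theta_n}(p)=((v_j),a_0)$, and the inverse map $\bigl(q,((v_j),a_0)\bigr)\mapsto q+a_0$ is manifestly continuous.

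Finally, the pullback of a covering map along a continuous map is again a covering map, whose fibre over $((v_j),a_0)$ is in canonical bijection with the fibre of $\theta_n$ over $\bar\sigma((v_j),a_0)=(v_j-a_0)$; since the latter has $n^{n-1}$ points by Theorem~\ref{thm:crit}, $\widehat{\theta_n}$ is a covering of degree $n^{n-1}$. I expect the only delicate point to be the bookkeeping in the cartesian-square step: each non-degeneracy condition in the definitions of $\widehat{X_n}$, $X_n$, $\widehat{V_n}$, $V_n$ must be matched correctly under the translation $p\mapsto p-a_0$, and although nothing here is deep, it is easy to mismatch a condition, so I would first set down the explicit dictionary between these conditions and only then run the verification.
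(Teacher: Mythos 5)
Your proof is correct and takes essentially the same approach as the paper: both recenter the polynomial by subtracting its constant term, translate the critical values accordingly, and invoke Theorem~\ref{thm:crit} together with general covering-space nonsense to conclude. The paper phrases this as a factorisation $\widehat{\theta_n}=h_2\circ(\theta_n,\mathrm{id})\circ h_1$ with $h_1$ a homeomorphism onto the preimage of $\widehat{V_n}$, while you phrase it as a cartesian square exhibiting $\widehat{X_n}$ as the pullback $X_n\times_{V_n}\widehat{V_n}$; these are two descriptions of the same construction.
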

\begin{proof}
The map $\widehat{\theta_n}$ is the composition of $h_1:\widehat{X_n}\to X_n\times \mathbb{C}$, $h_1(\sum_{j=0}^n a_j z^j)=(\sum_{j=1}^n a_j z^j,a_0)$, the covering map $(\theta_n,\text{id}):X_n\times\mathbb{C}\to V_n\times \mathbb{C}$ of degree $n^{n-1}$ and the homeomorphism $h_2:V_n\times \mathbb{C}\to V_n\times \mathbb{C}$, $((v_1,v_2,\ldots,v_{n-1}),a_0)\mapsto ((v_1+a_0,v_2+a_0,\ldots,v_{n-1}+a_0),a_0)$. 

Note that the image of $h$ is $(h_2\circ(\theta_n,\text{id}))^{-1}(\widehat{V_n})$ and that $h$ is a homeomorphism, if the target space is restricted to $(h_2\circ(\theta_n,\text{id}))^{-1}(\widehat{V_n})$. The statement of the corollary follows.
\end{proof}



In particular, we have the homotopy lifting property, so that deformations of the braid formed by the critical values and the 0-strand (as long as the critical values remain distinct from $a_0(t)$) lift to braid isotopies of the braid formed by the roots.

Theorem \ref{thm:crit} has been used in \cite{bode:adicact} to construct some non-homogeneous P-fibered braids and to explore some related algebraic structures.

The original motivation for the definition of P-fibered braids comes from the study of links of singularities of real polynomial maps, much in the spirit of \cite{milnor}.

For a given polynomial map $f:\mathbb{R}^4\to\mathbb{R}^2$, we consider the vanishing set of $f$: 
\begin{equation}
V_f=\{(x_1,x_2,x_3,x_4)\in\mathbb{R}^4:f(x_1,x_2,x_3,x_4)=(0,0)\}.
\end{equation}

Suppose that $f$ and its first derivates vanish at the origin, i.e., $f(0,0,0,0)=(0,0)$ and $\tfrac{\text{d}f}{\text{d}x_i}(0,0,0,0)=(0,0)$ for all $i=1,2,3,4$. If there is a neighbourhood $U$ of the origin in $\mathbb{R}^4$ such that the matrix $\nabla f(x)$ has full rank for all $x\in U\backslash\{(0,0,0,0)\}$, then we say $f$ has an \textit{isolated singularity} at the origin.

If this is the case, the intersection of $V_f$ and the three-sphere 
\begin{equation}
S^3_{\rho}=\{(x_1,x_2,x_3,x_4)\in\mathbb{R}^4:\sum_{i=1}^4 x_i^2=\rho^2\}
\end{equation} 
of sufficiently small radius $\rho>0$ is a link $L$. Furthermore, it is always the same link type $L$, no matter which (sufficiently small) radius $\rho$ is chosen. As in the complex setting the link $L$ is called the \textit{link of the singularity}. 

\begin{definition}
\label{def:ralg}
A link $L$ is called \textbf{real algebraic} if there is a polynomial $f:\mathbb{R}^4\to\mathbb{R}^2$ with an isolated singularity at the origin such that $L$ is the link of the singularity.
\end{definition}

Definition \ref{def:ralg} is the precise real analogue of Milnor's algebraic links of isolated singularities of complex plane curves. In contrast to the algebraic links it is not known which links are real algebraic. In the definition of real algebraic links we have essentially only replaced each $\mathbb{C}$ by $\mathbb{R}^2$. However, since real polynomial maps are not necessarily holomorphic, we have a lot more functions at our disposal. Hence we should expect many more real algebraic links than algebraic links. On the other hand, the gradient of a real polynomial as above is a 2-by-4 matrix and can hence be non-zero, but still not have full rank. This means that generic real polynomials do not have isolated singularities, which makes explicit constructions much more difficult. This was already noted by Milnor in the last chapter of his seminal work \cite{milnor}, where he also establishes the following result.
\begin{theorem}[Milnor, Theorem 11.2 in \cite{milnor}]
Let $L$ be a real algebraic link. Then $L$ is fibered.
\end{theorem}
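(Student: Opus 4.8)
The statement is Milnor's real fibration theorem, so the plan is to run the argument of Chapter~11 of \cite{milnor} for the case $f=(f_1,f_2)\colon\mathbb{R}^4\to\mathbb{R}^2$; write $f=f_1+\rmi f_2\colon\mathbb{R}^4\to\mathbb{C}$, $V\defeq V_f$, and $L\defeq V\cap S^3_\rho$. The first step is transversality: by the curve selection lemma for semialgebraic sets (if such a set has the origin in its closure it contains a real-analytic arc through the origin), the set of $x\in V\setminus\{0\}$, near the origin, at which the sphere $S^3_{|x|}$ is tangent to the smooth manifold $V$ cannot accumulate at $0$ — an arc $p(t)\to 0$ inside it would force $|p(t)|^2$ to be constant, since along it $p'(t)$ is tangent to $V$ while $p(t)$ is normal to $V$. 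Hence for all sufficiently small $\rho$ the sphere $S^3_\rho$ meets $V$ transversally, so $L$ is a smooth closed $1$-manifold (a link), and the gradient flow of $|\cdot|^2$ restricted to $V$ between two such radii shows that the link type of $L$ is independent of $\rho$. The natural candidate for the open-book projection is $\phi\defeq f/|f|\colon S^3_\rho\setminus L\to S^1$.

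The analytic heart of the proof is the behaviour of the critical points of $\phi$ restricted to $S^3_\rho$: a point $x\in S^3_\rho\setminus L$ is critical exactly when $\operatorname{grad}(\arg f)(x)=|f(x)|^{-2}\bigl(f_1(x)\operatorname{grad}f_2(x)-f_2(x)\operatorname{grad}f_1(x)\bigr)$ is parallel to the position vector $x$. Using the curve selection lemma once more — if such critical points accumulated at $0$ incompatibly with the isolated singularity, they would trace a real-analytic arc $p(t)\to 0$ with $\operatorname{grad}(\arg f)(p(t))=\lambda(t)\,p(t)$, and differentiating $\arg f(p(t))$ and $|f(p(t))|$ along it while using that $\nabla f(p(t))$ has rank $2$ for $t>0$ yields a contradiction — Milnor shows (his Lemmas~11.3--11.6) that, after shrinking $\rho$, all critical points of $\phi|_{S^3_\rho}$ lie inside an arbitrarily small tubular neighbourhood $N(L)$ of $L$; in particular $\phi$ is a submersion onto $S^1$ on $S^3_\rho\setminus N(L)$.

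Since $S^3_\rho\setminus\operatorname{int}N(L)$ is compact and $\phi$ is a submersion onto $S^1$ there and on its boundary $\partial N(L)$, Ehresmann's theorem makes $\phi$ a locally trivial fibre bundle over $S^1$ on $S^3_\rho\setminus\operatorname{int}N(L)$. It then remains to extend the fibration across $N(L)\setminus L$ with the standard open-book behaviour near $L$: using the tubular-neighbourhood identification $N(L)\cong\coprod S^1\times D^2$ together with the transversality of $f$ to $V$ along $L$ (which pins down $\arg f$ near $L$ up to the model $(\ell,z)\mapsto\arg z$), one interpolates, by an isotopy supported in $N(L)$, between $\phi|_{\partial N(L)}$ and $(\ell,z)\mapsto\arg z$, obtaining a fibration $\Psi\colon S^3_\rho\setminus L\to S^1$ that agrees with $\phi$ outside $N(L)$ and equals $\arg z$ on a neighbourhood of $L$. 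Then $(L,\Psi)$ is an open book decomposition of $S^3$ with binding $L$, so $L$ is fibered; since every real algebraic link arises as such an $L$, the theorem follows.

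The main obstacle is precisely the critical-point analysis of $\arg f$ on small spheres, i.e.\ the content of Milnor's Lemmas~11.3--11.6. In the holomorphic case $\operatorname{grad}(\log|f|)$ and $\operatorname{grad}(\arg f)$ are automatically orthogonal of equal norm by the Cauchy--Riemann equations, so $f/|f|$ is itself a fibration with essentially no extra work; in the genuinely real setting these two gradients can be nearly parallel and no such clean statement survives, so the only available leverage is the isolated-singularity hypothesis passed through the curve selection lemma. Everything downstream — the application of Ehresmann's theorem and the collar interpolation near $L$ — is routine.
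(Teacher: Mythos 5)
The paper does not prove this statement; it cites Milnor's Theorem 11.2 directly, so the only comparison to make is with Milnor's own argument in Chapter 11 of \cite{milnor}. Your sketch is essentially the \emph{complex} Milnor fibration argument (Chapter 4) transplanted into the real setting, and the transplant fails exactly at the step you call the ``analytic heart''. You assert that ``Milnor shows (his Lemmas 11.3--11.6) that, after shrinking $\rho$, all critical points of $\phi|_{S^3_\rho}$ lie inside an arbitrarily small tubular neighbourhood $N(L)$ of $L$.'' Milnor proves no such thing. What Theorem 11.2 actually says is that the projection of the fibration \emph{can be taken to equal} $f/|f|$ on a tubular neighbourhood of $K$, and Milnor explicitly remarks that he does not know whether $\phi=f/|f|$ can serve as the bundle projection on all of $S_\epsilon\setminus K$. (Whether the isolated-singularity hypothesis --- $\nabla f$ of full rank on a punctured neighbourhood, ``Milnor condition (a)'' --- forces $f/|f|$ to be submersive on small spheres away from $K$ is the question of ``Milnor condition (b)''/$d$-regularity, a nontrivial problem in its own right that is not implied by condition (a) for general real maps.) Your curve-selection sketch does not close the gap: if $p(t)\to 0$ is an arc with $\operatorname{grad}(\arg f)(p(t))=\lambda(t)\,p(t)$, then differentiating $\arg f(p(t))$ and $\log|f(p(t))|$ only tells you about $\langle\operatorname{grad}(\arg f),p'\rangle$ and $\langle\operatorname{grad}(\log|f|),p'\rangle$. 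In the complex case the Cauchy--Riemann orthogonality of these two gradients forces $|f(p(t))|$ to stay bounded away from $0$ along such an arc, contradicting $f(p(t))\to0$; in the real case $\operatorname{grad}(\log|f|)$ can itself be radial and there is simply no contradiction. Your last paragraph names exactly this obstruction and then declares it resolved ``through the curve selection lemma'', but no such resolution is supplied.

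What Milnor actually does in Chapter 11 is structurally different, and indeed the logic is the reverse of yours. After the transversality lemma he shows that for $\delta\ll\epsilon$ the restriction of $f$ to the \emph{Milnor tube} $D_\epsilon\cap f^{-1}(S^{k-1}_\delta)$ is a proper submersion, hence a fibre bundle over $S^{k-1}_\delta$ by Ehresmann --- this uses only the full-rank hypothesis and never mentions critical points of $\phi$. He then transports the tube fibration out to $S_\epsilon\setminus N(K)$ by a flow; the flow does not preserve $\arg f$, which is exactly why the resulting projection need not coincide with $f/|f|$ away from $K$. Near $K$, by contrast, the full-rank hypothesis pins $\phi$ down to the open-book model, so $f/|f|$ \emph{is} the projection there. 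So Milnor controls $\phi$ near $L$ and argues indirectly away from $L$, while you want to control $\phi$ away from $L$ and interpolate near $L$. Your downstream steps (Ehresmann plus the collar interpolation) are fine as mechanics, but they rest on a premise about the critical locus of $\phi$ that Milnor neither proves nor, by his own admission, knows to be true, and your sketch of why it should hold does not survive the loss of the Cauchy--Riemann relations.
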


Benedetti and Shiota conjecture that this implication should be an equivalence \cite{benedetti}.
\begin{conjecture}[Benedetti-Shiota, Conjecture 1.6 in \cite{benedetti}]
\label{conj:bene}
A link $L$ is real algebraic if and only if it is fibered.
\end{conjecture}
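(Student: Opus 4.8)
The forward implication---every real algebraic link is fibered---is Milnor's theorem \cite{milnor} recalled above, so the content of Conjecture~\ref{conj:bene} is the converse: every fibered link in $S^3$ is real algebraic. The plan is to realise an arbitrary fibered link $L$ as the link of an isolated singularity at the origin of an explicit \emph{semiholomorphic} polynomial map $f:\mathbb{C}^2\cong\mathbb{R}^4\to\mathbb{C}\cong\mathbb{R}^2$, i.e.\ one that is holomorphic in the first variable $u$ and polynomial in the second variable $v$ and its conjugate $\bar v$. The bridge between open books and such singularities is the notion of a P-fibered braid from Section~\ref{sec:poly}: the argument of a loop of monic polynomials is precisely the local model for the argument of a semiholomorphic map on a Milnor tube, so a P-fibered braid representing $L$ should assemble into a suitable $f$; constructions of this type already appear in \cite{bodesat, bode:real}.

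The first and decisive step is the reduction ``$L$ fibered $\Longrightarrow$ $L$ is the closure of a P-fibered braid''. By Theorem~\ref{thm:main} this is equivalent to the assertion that the open book $(L,\Psi)$ can be braided, which is exactly the open problem driving this paper, confirmed here for $L$ of braid index at most $3$ (Theorem~\ref{thm:3}) and for the Bennequin surfaces of Theorem~\ref{thm:benn}. Granting this reduction, fix a P-fibered geometric braid $B$ with $cl(B)=L$ and let $g_t(z)=\prod_{i=1}^n(z-z_i(t))$ be the associated loop of monic polynomials, so that $\chi(z,t)=\arg g_t(z)$ is a fibration and, by Eq.~(\ref{eq:satellite}), no critical value $v_j(t)=g_t(c_j(t))$ ever reverses the direction in which it winds around $0$.

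Next I would convert the loop $t\mapsto g_t$ into a single polynomial. Writing $u=z$ and $v=s\rme^{\rmi t}$, the naive candidate $\widetilde f(u,v)\defeq g_{\arg v}(u)$ is only piecewise smooth, but it is repaired by the standard device of expanding each coefficient of $g_t$ as a trigonometric polynomial in $t$, re-expressing those coefficients on a torus $\{|v|=s\}$ in terms of $v$ and $\bar v$, and clearing denominators by multiplying through by a sufficiently high power of $v\bar v$ together with suitable radial factors. This produces an honest polynomial $f(u,v,\bar v)=\sum_j p_j(u)\,v^{a_j}\bar v^{b_j}$, holomorphic in $u$, with $a_j-b_j$ independent of $j$, that agrees with $\widetilde f$ up to a positive real factor on each ray of the $v$-plane. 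By construction $\{f=0\}\cap S^3_\rho$ is $cl(B)=L$, possibly together with a controlled number of parallel copies of the $v=0$ axis; a careful choice of the homogenisation exponents, or an extra Markov stabilisation performed before building $f$, removes these extra components so that one is left with exactly $L$.

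Finally---and here the hypothesis that $B$ is P-fibered is indispensable---I would verify that $f$ has an \emph{isolated} singularity at the origin, i.e.\ that its real Jacobian has full rank on a punctured neighbourhood of $0$. Since $f$ is holomorphic in $u$, the vanishing of $\partial f/\partial u$ is a single complex equation, forcing $u$ to be a critical point $c_j(\arg v)$ of $g$; along this locus $f$ equals, up to a positive radial factor, the critical value $v_j(\arg v)$, and the remaining rank condition collapses precisely to $\tfrac{\rmd}{\rmd t}\arg v_j\neq 0$, which is the P-fibered condition~(\ref{eq:satellite}). Hence no critical points of $f$ accumulate at the origin apart from the origin itself, and the resulting Milnor fibration identifies the link of the singularity with the open book $(L,\Psi)$, so $L$ is real algebraic. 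The main obstacle is unmistakably the first step: that reduction is equivalent to the still-open question of whether every open book in $S^3$ can be braided, so an unconditional proof of Conjecture~\ref{conj:bene} along these lines requires first settling that question---which is achieved here only for binding braid index at most $3$, yielding the conjecture for that family. A secondary, purely technical hurdle is to ensure that the radial homogenisation introduces no spurious critical points through the $\bar v$-factors and that the zero set is precisely $L$, rather than a cable of $L$ or a connected sum of $L$ with a torus link coming from the axis.
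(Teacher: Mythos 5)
The statement you are asked to prove is a \emph{conjecture}: the paper records it as Conjecture~\ref{conj:bene} (Benedetti--Shiota), notes Milnor's theorem for the ``only if'' direction, and then explicitly says ``the conjecture is still wide open.'' There is no proof of it in the paper to compare against; the paper only establishes special cases (braid index at most~3, certain Bennequin surfaces) via Theorems~\ref{thm:3} and~\ref{thm:benn} together with Theorem~\ref{thm:main}.

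Your proposal does correctly identify the intended strategy (fibered $\Rightarrow$ P-fibered braid $\Rightarrow$ semiholomorphic polynomial with isolated singularity), and you are candid that the first arrow is precisely the open problem. But there is a second, unacknowledged gap in the chain you lean on: the result quoted in Section~\ref{sec:poly} from~\cite{bode:2016lemniscate, bode:real} says that if $B$ is a P-fibered braid then the closure of $B^2$ is real algebraic --- not the closure of $B$ itself. The squaring is not cosmetic. In the construction you sketch, ``expand the coefficients as trigonometric polynomials in $t$ and rewrite on $|v|=s$ in terms of $v,\bar v$'' does not produce a polynomial unless the loop of coefficients has the right parity; passing to $g_{2t}$ (equivalently $B^2$) is exactly how that is repaired in the cited work, and also how the monodromy mismatch at $t=0$ versus $t=2\pi$ is absorbed without introducing spurious components along $\{v=0\}$ or destroying isolatedness. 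Your paragraph on ``clearing denominators by multiplying by a high power of $v\bar v$ together with suitable radial factors'' would in general \emph{create} critical points accumulating at the origin, because multiplying by $v\bar v$ is not holomorphic in $v$ and its gradient vanishes on $\{v=0\}$. So even granting that every fibered link is the closure of a P-fibered braid, the route you outline only yields that some link of the form $cl(B^2)$ (a specific ``doubled'' braid) is real algebraic, and you have not explained why $L$ itself, rather than $cl(B^2)$, is obtained. Unless one can produce a P-fibered braid $B'$ with $cl(B'^2)=L$ --- which is a genuinely different and also unresolved problem --- the argument does not close. In short: you correctly flag the main open reduction, but the proposal also silently promotes a known ``$B^2$'' theorem to a ``$B$'' theorem, and that second promotion is where the technical difficulty actually lives.
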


Despite several constructions of different families of real algebraic links (cf. \cite{bode:real,looijenga, perron, pichon, rudolph}), the conjecture is still wide open.

We would like to point out that if we consider real analytic maps with so-called \textit{tame isolated singularities} instead of real polynomials, then Conjecture \ref{conj:bene} is known to be true \cite{kn, looijenga}.

Real algebraic links are connected to P-fibered braids as follows.

\begin{theorem}[cf. \cite{bode:2016lemniscate, bode:real}]
\label{thm:ralg}
Let $B$ be a P-fibered braid. Then the closure of $B^2$ is real algebraic.
\end{theorem}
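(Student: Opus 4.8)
The plan is to realize the link of the singularity of an explicit real polynomial map $f:\mathbb{R}^4\to\mathbb{R}^2$ built directly out of the P-fibered loop of polynomials associated to $B$. Write $g_t(z)=\prod_{j=1}^n(z-z_j(t))$ for the loop of monic polynomials whose roots trace out $B$, and recall that by hypothesis $\chi(z,t)=\arg g_t(z)$ is a fibration $(\mathbb{C}\times S^1)\backslash cl(B)\to S^1$ which extends to a fibration $S^3\backslash cl(B)\to S^1$. The first step is to express $g_t$ as the restriction to a circle of a genuine polynomial: set $t=\arg(u)$ (with $u\in\mathbb{C}$, $|u|$ near $1$), homogenize the $t$-dependence of the coefficients of $g_t$ using the substitution $\cos t\mapsto \tfrac{1}{2}(u/\bar u^{1/?})$ — more cleanly, write each coefficient as a Laurent polynomial in $\mathrm{e}^{\mathrm{i}t}$ and multiply through by a sufficiently high power of $\bar u$ to clear denominators, obtaining a polynomial $P(u,\bar u,z,\bar z)$ in the real and imaginary parts of $u$ and $z$. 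This is the standard device (going back to \cite{bode:real, bode:2016lemniscate}) by which a loop of polynomials in $z$ becomes a semiholomorphic polynomial on $\mathbb{C}^2$; passing from $B$ to $B^2$ is exactly what makes the resulting map well-defined and the singularity isolated, because squaring doubles the winding and kills the sign ambiguity introduced when one writes $\mathrm{e}^{\mathrm{i}t}$ in terms of a point on the unit circle of modulus close to, but not exactly, $1$.

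The second step is to check that the map $f$ so constructed has an \emph{isolated} singularity at the origin. Here the P-fibered hypothesis is the crucial input. Critical points of the Milnor-type fibration $f/|f|$ away from $V_f$ correspond, after the radial rescaling that identifies a punctured neighbourhood of the origin with $S^3\times(0,\varepsilon)$, to points where $\partial g_t/\partial z=0$ \emph{and} the critical value $v_j(t)=g_t(c_j(t))$ fails to move monotonically around $0$, i.e. to solutions of Eq. (\ref{eq:satellite}); the P-fibered condition says precisely that Eq. (\ref{eq:satellite}) never holds, so $f/|f|$ has no critical points on the sphere, and a standard argument (the gradient of $f$ has full rank wherever $f/|f|$ is submersive and $f\neq 0$, together with $V_f$ being exactly the braid closure near the sphere, which is nonsingular) upgrades this to an isolated singularity at the origin. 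One must also verify the mixed terms coming from the $\bar u$-clearing do not create extra critical points near the sphere of small radius — this is where a careful estimate on the radial behaviour is needed, exploiting that those terms are of strictly higher order.

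The third step identifies the link of the singularity: for small $\rho$, $V_f\cap S^3_\rho$ is isotopic to the closure of the braid traced by the roots of $g_t$ as $t$ runs over a full circle \emph{twice} (because of the doubling), which is $cl(B^2)$; and the fibration $f/|f|$ restricted to $S^3_\rho\backslash L$ agrees up to isotopy with the extension of $\chi$, confirming $L=cl(B^2)$ is not merely fibered but fibered by this map. I expect the main obstacle to be the isolatedness verification: controlling the non-holomorphic correction terms introduced when homogenizing the trigonometric coefficients, so that the correspondence ``critical point of $f/|f|$ on $S^3_\rho$'' $\Longleftrightarrow$ ``solution of Eq. (\ref{eq:satellite})'' holds exactly for all sufficiently small $\rho$, rather than only to leading order. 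Everything else — the algebra of the construction and the braid-isotopy identification of the link — is routine given the machinery already set up in \cite{bode:real} and Theorem \ref{thm:crit}.
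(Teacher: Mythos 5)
The paper does not actually prove this statement; it is quoted from \cite{bode:2016lemniscate, bode:real}, so there is no in-paper argument to compare against. Your sketch does, however, reproduce the overall shape of the construction in those references: write the loop $g_t$ with trigonometric-polynomial coefficients, replace the $t$-dependence by a rational expression in $u,\bar u$, clear denominators to obtain a semiholomorphic polynomial $f$ on $\mathbb{C}^2$, and invoke the P-fibered condition, i.e.\ the nonvanishing of $\partial \arg v_j/\partial t$ and hence the absence of solutions of Eq.~(\ref{eq:satellite}), to rule out critical points of $\arg f$.

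Two points need correcting or filling in. First, the role of $B^2$ is purely to make the homogenization work; it has nothing directly to do with isolatedness. On a circle $|u|=\rho$ one has $\rme^{\rmi t}=u/\sqrt{u\bar u}$, which is not a polynomial in $u$ and $\bar u$; after passing to $g_{2t}$, the loop for $B^2$, the coefficients depend only on $\rme^{2\rmi t}=u/\bar u$, and multiplying by a suitable power of $\bar u$ yields a genuine polynomial. Your ``kills the sign ambiguity'' is pointing at the right obstruction (the missing square root of $u/\bar u$), but you should not also attribute isolatedness to the squaring: isolatedness is entirely a consequence of P-fiberedness together with the radial estimates. Second, and this is the genuine gap, the isolatedness verification that you explicitly defer is the technical core of \cite{bode:2016lemniscate, bode:real}. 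One must fix a concrete radial scaling of the roots so that the zero set on $S^3_\rho$ shrinks to the origin as $\rho\to 0$, show that on each such sphere the critical-point equation for $\nabla f$ reduces \emph{exactly} (not merely to leading order) to Eq.~(\ref{eq:satellite}), and estimate that the higher-order terms produced when clearing the $\bar u$-denominators do not create new critical points near the origin. As written, your proposal names where the argument must go but does not carry it out, and it also omits the preliminary step of replacing the coefficients of $g_t$ by Fourier truncations while checking that P-fiberedness, being an open condition over the compact parameter circle, survives this $C^1$-small perturbation.
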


Here $B^2$ denotes the braid product of a braid $B$ with itself, which is represented by the concatenation of two copies of the same braid word.

Therefore, proving that all fibered links are closures of P-fibered braids would constitute a helpful step with regards to Conjecture \ref{conj:bene}. So far, we know that homogeneous braids and the non-homogeneous family from \cite{bode:adicact} are P-fibered. Moreover, there are certain satellite and twisting operations that were defined in \cite{bodesat}. These operations can be used to construct new P-fibered braids from known ones.

\subsection{Some relations between the different notions}

In this subsection we highlight (without being precise) some structural similarities and differences between the different types of braidings defined in the previous sections.

The properties B1) through B4) in the introduction put (a priori) different requirements on the open books in question. Property B1) only mentions the position of the bindings relative to the pages of the other OBD. Property B2) then demands that not only the bindings, but also the pages themselves are in a `good position' relative to the pages of the other OBD. Property B3) takes this one step further by saying that the OBDs should not only be in a good position to each other but also respect the additional structure of some simple branched cover and property B4) then further restricts the type of simple branched cover that we are allowed to consider.

Properties B3) and B4) are arguably somewhat further removed from an intuitive definition of a braided open book. However, they lend themselves more easily to applications such as the mentioned real algebraic links and Harer's conjecture.

There is a structural similarity between the properties B2) through B4), which will be very useful to us. Let $(L,\Psi)$ be an open book with fibers $F_{\varphi}$, $\varphi\in S^1$, that is mutually braided with the unbook $(O,\Phi)$ with pages $D_t$, $t\in S^1$. The derived bibraid is formed by the tangential intersection points of the pages $F_\varphi$ with the pages $D_t$. In other words, the derived bibraid of a pair of mutually braided open book is formed by the hyperbolic points of the singular foliation on $F_{\varphi}$, when we vary $\varphi$ from 0 to $2\pi$, or, equivalently, by the hyperbolic points of the singular foliation on $D_t$, varying $t$ from 0 to $2\pi$. The sign of a hyperbolic point is positive if it belongs to a component of $pos(L)$ and negative if it belongs to a component of $neg(L)$. Every such hyperbolic point can be associated to a band of the braided surface and diagrams as devised by Rampichini reveal the combinatorial information about which point belongs to which band and how this changes as we vary $\varphi$ or $t$.

For a simple branched cover $\pi:S^3\to S^3$ of degree $n$ branched over a link $L_{branch}$ with a Hopf $n$-braid axis $\alpha\cup\beta$, let $(L,\Psi)$ and $(O,\Phi)$ be the corresponding lifted open books in the domain $S^3$. Again we obtain open book foliations on (almost all) pages of each open book induced by the other open book. The hyperbolic points correspond to the critical points of $\pi$, where $\pi$ fails to be a covering map. The fact that its image $L_{branch}$ is braided relative to both $\alpha$ and $\beta$ implies that this link of hyperbolic points is braided relative to both $L$ and $O$, just like the bibraid of a pair of mutually braided open books.

For a P-fibered braid $B$ of degree $n$, given by the zeros of the loop of polynomials $g_t:\mathbb{C}\to\mathbb{C}$ of degree $n$, the singular foliation on the pages of the unbook $D_t$ induced by $\arg g_t$ has exactly $n-1$ hyperbolic points, which are the critical points $c_j(t)$, $j=1,2,\ldots,n-1$, of $g_t$. These obviously form a braid relative to $O$, which here is the boundary circle of $\mathbb{C}$, and the fact that $B$ is P-fibered implies that it is also braided relative to the closure of $B$. This is because the critical values $v_j(t)$, $j=1,2,\ldots,n-1$, never change their direction with which they twist around the 0-strand. In other words, when we consider the 0-strand, which is the image of $B$ under $g_t$, as $t$ varies from 0 to $2\pi$, as the unknot, the critical values are always transverse to the pages $\varphi=\arg z$, $z\in\mathbb{C}\backslash\{0\}$. The direction of the movement of the critical values (clockwise or counter-clockwise) reflects the sign of the corresponding hyperbolic point (negative or positive).

Thus for B2), B3) and B4) the singular foliations on fibers give rise to a bibraid. We haven't really proved any of the claims in this subsection (although many of them are not difficult to prove). The similarities above should be seen as a motivation of why we should expect these concepts to be related in some way. Note that Morton-Rampichini's algorithm to detect mutual braiding relies only on the combinatorial data of the bibraid stored in the Rampichini diagram. Hence the structural similarity outlined above captures a lot of what it means for an open book to be braided.

\section{Hopf braid axes and polynomials}\label{sec:43}

In this section, we describe a construction of simple branched covers of $S^3$ starting from a P-fibered braid. The P-fibered braid and its braid axis become the preimage of a Hopf $n$-braid axis of the branch link of the constructed simple branched cover.

\begin{lemma}
\label{lemma43}
Let $B$ be a P-fibered braid. Then its closure $L$ and its braid axis $O$ arise as the preimages of the two components of a Hopf $n$-braid axis of the branch link $L_{branch}$ of a simple branched cover $\pi:S^3\to S^3$.
\end{lemma}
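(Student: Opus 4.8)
\section*{Proof proposal for Lemma \ref{lemma43}}

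The plan is to use the covering map of Beardon--Carne--Ng (Theorem \ref{thm:crit}) to turn the loop of polynomials $g_t$ associated to the P-fibered braid $B$ into a branched covering $\pi:S^3\to S^3$ of the target $S^3$ that fibers over the unbook with binding $\beta$, where $\beta$ is (a copy of) the $0$-strand together with $L_{branch}$ being the braid of critical values, and $\alpha$ the braid axis of that critical-value braid. The starting point is the description given in Section \ref{sec:poly}: for each $t$ the polynomial $g_t:\mathbb{C}\to\mathbb{C}$ has $n-1$ distinct critical points $c_j(t)$ and distinct non-zero critical values $v_j(t)=g_t(c_j(t))$, so $g_t$ restricts to a simple $n$-sheeted branched cover of the plane (equivalently, of the disk) with branch set $\{v_j(t)\}$. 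Since $B$ is P-fibered, after the small isotopy making the $v_j(t)$ distinct, the curves $t\mapsto v_j(t)$ together with the $0$-strand form an actual braid; I will call its closure $L_{branch}$ and let $\alpha$ be the braid axis of this closed braid and $\beta$ the circle swept by the $0$-strand.

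First I would assemble the $g_t$ into a single map $\widetilde\pi: D\times S^1\to D\times S^1$, $(z,t)\mapsto (g_t(z),t)$ (after rescaling so everything lands in a fixed large disk $D$), which by the previous paragraph is, fiberwise in $t$, a simple $n$-sheeted branched cover of the disk, hence locally modeled on the product of an interval with such a branched cover; its branch set inside $D\times S^1$ is exactly the braid $\bigcup_j\{(v_j(t),t)\}$. Second I would extend $\widetilde\pi$ over the two solid tori needed to close $D\times S^1$ up to $S^3$: on the target side the solid torus around $\beta$ (the $0$-strand) is glued with meridian disks, and on the source side one uses the standard fact (Equation (\ref{eq:limit}), i.e. $\arg g_t(re^{\rmi\varphi})\to n\varphi$ as $r\to\infty$) that near $|z|=\infty$ the map $g_t$ behaves like $z\mapsto z^n$, so $\widetilde\pi$ extends to a branched cover of the complementary solid torus that is an unbranched $n$-fold cyclic cover near $\beta$; gluing these pieces produces a genuine simple branched cover $\pi:S^3\to S^3$ of degree $n$ with branch link $L_{branch}$. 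Third, I would identify the preimages: $\pi^{-1}(\beta)$ is the $n$-fold cyclic cover of an unknot, hence itself an unknot $O$ and in fact a braid axis for $\pi^{-1}(\alpha)$; and $\pi^{-1}(\alpha)=$ the closure of $B$ up to isotopy, because the open book on the source $S^3$ obtained by lifting the unbook with binding $\alpha$ has fibration map $(z,t)\mapsto\arg g_t(z)=\chi(z,t)$, which by P-fiberedness is precisely the fibration whose binding is $cl(B)$ as explained after Definition \ref{def:pfib}.

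To finish I must verify the three transversality/strand-count conditions in Definition \ref{def:hopf} and that $\alpha\cup\beta$ is a Hopf link. The Hopf condition is forced by construction: $\beta$ is the core of the solid torus that is filled on the source side, $\alpha$ is the boundary curve of the disks $g_t^{-1}(\text{large circle})$ read as a meridian of that same solid torus's complement, so $\alpha$ and $\beta$ are a meridian--longitude pair, i.e.\ a Hopf link (one should pin down the framing/orientation here, but it is the expected one). That $L_{branch}$ is an $(n-1)$-braid relative to $\beta$ is immediate since it consists of the $n-1$ critical-value strands $v_j(t)$, each transverse to the meridian disks of $\beta$'s complementary solid torus. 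That $\alpha\cup L_{branch}$ is braided relative to $\beta$ is the same transversality plus the fact that $\alpha$, being a large round circle in each plane $\mathbb{C}\times\{t\}$, is transverse to those disks. The genuinely substantive point — and the one I expect to be the main obstacle — is the third bullet: $\beta\cup L_{branch}$ must be braided relative to $\alpha$, i.e.\ the $0$-strand and all the critical values must be positively transverse to the pages $\varphi=\arg z$. For the critical values this is exactly the content of Equation (\ref{eq:satellite}): P-fiberedness says $\tfrac{\partial \arg v_j}{\partial t}$ never vanishes, and one must additionally arrange (by the orientation conventions, possibly replacing $B$ by a conjugate or adjusting the isotopy of the $v_j$ so none of the $\tfrac{\partial\arg v_j}{\partial t}$ vanish with the ``wrong'' sign) that the common sign is positive; for the $0$-strand one checks $\arg 0\cdot$-ness is vacuous and instead uses that a small pushoff of $\beta$ at $z$ near $0$ still winds positively, which again follows from the local behavior of $g_t$ near its roots. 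Packaging all of this is where the homotopy-lifting consequence of Corollary \ref{cor:lifting} is useful: deformations of the critical-value braid that fix the property ``each $v_j$ winds monotonically around $0$'' lift to isotopies of $cl(B)$, so I may normalize the critical-value braid without changing $\pi^{-1}(\alpha)$ up to isotopy, which is exactly what Theorem \ref{thm:main} permits.
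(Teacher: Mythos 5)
Your overall approach mirrors the paper's: build the simple branched cover $\pi$ fiberwise from the loop $g_t$ on $D\times S^1$, extend it over the complementary solid torus using the behaviour of $g_t$ at infinity (Eq.~(\ref{eq:limit})), take $L_{branch}$ to be the closure of the critical-value braid, and invoke Eq.~(\ref{eq:satellite}) for the substantive transversality condition. But your assignment of the labels $\alpha$ and $\beta$ is internally inconsistent and, read literally, contradicts Definition~\ref{def:hopf}. In your first paragraph you set $\alpha$ to be the large braid-axis circle and $\beta$ the $0$-strand, yet the second bullet of Definition~\ref{def:hopf} (``$L_{branch}$ is an $(n-1)$-braid relative to $\beta$'') combined with Lemma~\ref{lem:RH} forces $\pi^{-1}(\beta)$ to be the unknot $O$, while the preimage of the $0$-strand is the braid of \emph{roots} of $g_t$, i.e.\ $cl(B)=L$ — which is not an unknot in general. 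So $\alpha$ must be the $0$-strand and $\beta$ the boundary circle, which is in fact what your third paragraph tacitly assumes when you write $\pi^{-1}(\alpha)=cl(B)$ and $\pi^{-1}(\beta)=O$.

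This mix-up propagates into your verification of the three bullets and into the remark that ``the solid torus around $\beta$ (the $0$-strand) is glued with meridian disks'' — it is the complementary solid torus (whose core is the boundary circle, not the $0$-strand) that gets filled. Most seriously, your reading of the third bullet becomes: ``the $0$-strand and all the critical values must be positively transverse to the pages $\varphi=\arg z$.'' The $0$-strand cannot be transverse to these pages; it is the binding of that unbook. With the labels corrected, the third bullet ``$\beta\cup L_{branch}$ braided relative to $\alpha$'' says that the \emph{boundary circle} $\beta$ and the \emph{critical-value strands} are transverse to the pages $\arg z=\mathrm{const}$, and this is exactly $\partial\arg v_j/\partial t\neq 0$ (the boundary circle's transversality being automatic). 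Finally, you do not need to conjugate $B$ or adjust the isotopy of the $v_j$ to force a common sign: ``braided relative to'' is achieved by orienting the components of $L_{branch}$ according to the sign of $\partial\arg v_j/\partial t$, as the paper does. Once the labels and the orientation remark are fixed, your argument coincides with the paper's proof.
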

\begin{proof}
To a large degree this has already been shown in \cite{survey}. We are going to explicitly construct the simple branched cover $\pi$ from the polynomials $g_t$ corresponding to the P-fibered braid $B$.

Recall that by Definition \ref{def:pfib} there is a loop in the space of monic complex polynomials $g_t:\mathbb{C}\to\mathbb{C}$ of degree $n$, the number of strands of $B$, with the property that the roots of $g_t$ trace out the braid $B$ as $t$ varies from 0 to $2\pi$ and $\arg g:(\mathbb{C}\times S^1)\to S^1$ is a fibration map defining an open book for the closure of $B$.

Let $D$ be the open unit disk in $\mathbb{C}$ with closure $\overline{D}$. Note that $(\overline{D}\times S^1)/((\rme^{\rmi \chi},\rme^{\rmi t_1})\sim(\rme^{\rmi \chi},\rme^{\rmi t_2}))\cong S^3$. Let $\phi:D\to \mathbb{C}$ be an orientation-preserving diffeomorphism such as $\phi(u)=\frac{u}{1+|u|}$. Given a P-fibered braid $B$ on $n$ strands and the corresponding function $g_t$, we can define the map $\pi:S^3\to S^3$,
\begin{align}
\pi(\phi^{-1}(u),\rme^{\rmi t})&=(\phi(g_t(u)),\rme^{\rmi t}),\nonumber\\
\pi(\rme^{\rmi \chi},\rme^{\rmi t})&=(\rme^{\rmi \chi n},\rme^{\rmi t}).
\end{align}

Using Eq. (\ref{eq:limit}) and basic properties of complex polynomials it is not difficult to check that $\pi$ is a simple branched cover, branched over the link $L_{branch}$ that is the closure of $(v_1(t),v_2(t),\ldots,v_{n-1}(t))$, i.e., the critical values of the complex polynomial $g_t$. In the coordinates chosen above the branch link is formed by the curves $(\phi(v_j(t)),\rme^{\rmi t})$, $j=1,2,\ldots,n-1$.

We claim that with the choice of $\alpha=(0,\rme^{\rmi t})$, with $t$ going from 0 to $2\pi$, and $\beta=(\rme^{\rmi s},\rme^{\rmi t})$, with $s$ going from 0 to $2\pi$, the union $\alpha\cup\beta$ forms a Hopf $n$-braid axis of $L_{branch}$.

The link $\alpha\cup L_{branch}$ is a braid on $n$ strands relative to $\beta$, $n-1$ of which are formed by the critical values $v_j(t)$ and the $n$th strand is the 0-strand, i.e. $\alpha$. The preimage $\pi^{-1}(\beta)$ is the unknot $(\rme^{\rmi \chi},\rme^{\rmi t})$, where $\chi$ is going from 0 to $2\pi$. 

The fact that $B$ is P-fibered implies that Eq. (\ref{eq:satellite}) is never satisfied, so that all of $(\phi(v_j(t)),\rme^{\rmi t})$, $j=1,2,\ldots,n-1$, and $\beta$ are transverse to the fibers of $(z,\rme^{\rmi t})\mapsto \arg(z)$, the fibration map of the complement of $\alpha$. We can thus choose the orientation of the components of $L_{branch}$ such that $\beta\cup L_{branch}$ is braided relative to $\alpha$. Clearly, $\alpha\cup \beta$ is a Hopf link. Therefore $\alpha$ and $\beta$ form a Hopf $n$-braid axis for $L_{branch}$

The preimage $\pi^{-1}(\alpha)$ is the closure of $B$, which proves the lemma.  
\end{proof}

Lemma \ref{lemma43} shows that the braiding property B4) implies B3).

\section{Simple branched covers and exchangeable braids}\label{sec:42}

In this section we show that B3) implies B1), that is, the preimages of a Hopf $n$-braid axis are generalised exchangeable.

\begin{lemma}\label{lem:RH}
Let $\pi:S^3\to S^3$ be a simple branched cover, branched over a link $L_{branch}$, of degree $n$. Then the braid index $b(L_{branch})$ of $L_{branch}$ is at least $n-1$ and a braid axis $\alpha$ of $L_{branch}$ lifts to the unknot $\pi^{-1}(\alpha)=O$ if and only if it realises the braid index.
\end{lemma}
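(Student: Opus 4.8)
The plan is to analyze the Euler characteristic of a page of the open book for $\pi^{-1}(\alpha)$ via the Riemann–Hurwitz formula, and to extract the braid index bound and the unknot criterion from it. Suppose $\alpha$ is a braid axis for $L_{branch}$ with $L_{branch}$ a braid on $k$ strands relative to $\alpha$. A page $D$ of the unbook with binding $\alpha$ is a disk meeting $L_{branch}$ transversely in $k$ points. The restriction $\pi|_{\pi^{-1}(D)} : \pi^{-1}(D) \to D$ is a simple $n$-sheeted branched cover of a disk, branched over the $k$ points $D \cap L_{branch}$, with exactly one preimage over each branch point being a ramification point (of local degree $2$). First I would check that the fiber surface $F = \pi^{-1}(D)$ is connected — this uses that $\pi$ is a genuine (connected) branched cover of $S^3$, hence the monodromy representation $\pi_1(D \setminus L_{branch}) \to S_n$ has transitive image, so $F$ is connected. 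Then Riemann–Hurwitz gives
\begin{equation}
\chi(F) = n\,\chi(D) - k = n - k,
\end{equation}
where the defect is exactly $k$ since each of the $k$ branch points contributes $n - (n-1) = 1$ to the defect (simplicity of the cover).

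Now $F$ is a connected surface with boundary, so $\chi(F) = 2 - 2g(F) - |\partial F| \le 2 - 1 = 1$, with equality $\chi(F) = 1$ if and only if $g(F) = 0$ and $F$ has a single boundary component, i.e. $F$ is a disk and $\partial F = \pi^{-1}(\alpha)$ is the unknot. Combining with $\chi(F) = n - k$ we get $n - k \le 1$, hence $k \ge n - 1$. Since this holds for the number of strands of $L_{branch}$ relative to \emph{any} braid axis, in particular for one realizing the braid index, we conclude $b(L_{branch}) \ge n - 1$. Moreover $\pi^{-1}(\alpha)$ is the unknot if and only if $\chi(F) = 1$, i.e. $k = n - 1$, i.e. $\alpha$ realizes the braid index (which by the previous inequality is then exactly $n-1$). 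This also shows that when $\pi^{-1}(\alpha) = O$, the page $F$ is a disk, consistent with $O$ being an unknot whose fiber is a disk.

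The one subtlety I expect to be the main point requiring care is the connectedness of $F = \pi^{-1}(D)$, equivalently transitivity of the local monodromy around the $k$ intersection points $D \cap L_{branch}$. This follows from the fact that $\pi : S^3 \to S^3$ is a branched cover with connected total space, so the global monodromy $\pi_1(S^3 \setminus L_{branch}) \to S_n$ is transitive; one then notes that $\pi_1(D \setminus (D \cap L_{branch})) \to \pi_1(S^3 \setminus L_{branch})$ is surjective because $\alpha$ is a braid axis and the complement of a braid closure deformation retracts appropriately — more concretely, the meridians of $L_{branch}$ lying on the disk $D$ already normally generate $\pi_1(S^3 \setminus L_{branch})$ together with a meridian of $\alpha$, but crossing $\alpha$ does not change sheets (its preimage is a single circle mapping by degree... ) — so I would instead argue directly: $S^3 \setminus \alpha$ fibers over $S^1$ with fiber $D \setminus (D\cap L_{branch})$, and $\pi^{-1}(S^3\setminus\alpha) \to S^3 \setminus \alpha$ is an ordinary covering whose total space $S^3 \setminus \pi^{-1}(\alpha)$ is connected; connectedness of the total space of a fiber bundle with connected base forces the fiber $F \setminus \partial$ to meet every component, and since the covering is connected the monodromy around $S^1$ together with the fiberwise monodromy acts transitively, but the fiberwise monodromy is exactly the local monodromy around the branch points — a short argument then yields that $F$ itself is connected. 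Once connectedness is in hand, everything else is the Euler-characteristic bookkeeping above.
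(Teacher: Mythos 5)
Your proof is correct and takes essentially the same route as the paper: apply Riemann--Hurwitz to a page $D$ of the unbook with binding $\alpha$ to obtain $\chi(\pi^{-1}(D)) = n\chi(D) - k = n-k$, then use that a connected compact surface with nonempty boundary has $\chi \le 1$, with equality exactly for a disk, to read off both the bound $b(L_{branch})\geq n-1$ and the unknot criterion. The one point you make explicit that the paper's proof silently assumes is the connectedness of $\pi^{-1}(D)$, and your justification (the image of $\pi_1$ of the punctured fiber disk in $\pi_1(S^3\setminus L_{branch})$ is everything, because a meridian of $\alpha$ generates the $\mathbb{Z}$ quotient coming from the fibration of $S^3\setminus\alpha$ yet becomes trivial once $\alpha$ is filled back in, so the restricted monodromy is still transitive) is a correct and worthwhile addition.
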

\begin{proof}
Let $\alpha$ be a braid axis of $L_{branch}$, realising the braid index of $L_{branch}$, i.e., each fiber disk intersects $L_{branch}$ in exactly $b(L_{branch})$ points. The Riemann-Hurwitz formula expresses the Euler characteristic of $\pi^{-1}(D)$ in terms of $b(L_{branch})$ as
\begin{equation}
\chi(\pi^{-1}(D))=n\chi(D)-b(L_{branch}),
\end{equation} 
and since the Euler characteristic of $\chi(\pi^{-1}(D))$ is at most 1 (if and only if $\pi^{-1}(D)$ is also a disk), it follows that $b(L_{branch})\geq n-1$ with equality if and only if $\pi^{-1}(\alpha)$ is an unknot.
\end{proof}

\begin{corollary}
Let $\pi:S^3\to S^3$ be a simple branched cover, branched over a link $L_{branch}$, of degree $n$. Then the preimage $\pi^{-1}(\alpha)$ of any braid axis $\alpha$ of $L_{branch}$ can be obtained from the unknot via a sequence of $\pi$-symmetric Hopf plumbings and deplumbings if and only if $b(L_{branch})=n-1$.
\end{corollary}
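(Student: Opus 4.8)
The plan is to derive the statement quickly from Lemma~\ref{lem:RH} together with the Montesinos--Morton observation \cite{morton}, recalled above, that for a fixed simple branched cover $\pi$ the preimages $\pi^{-1}(\alpha)$ and $\pi^{-1}(\beta)$ of any two braid axes $\alpha,\beta$ of $L_{branch}$, with their fiber surfaces, are related by a finite sequence of $\pi$-symmetric Hopf plumbings and deplumbings. The only other ingredient is the Riemann--Hurwitz computation appearing in the proof of Lemma~\ref{lem:RH}: if $D$ is a disk page of the unbook whose binding is a braid axis of $L_{branch}$ realising the braid index, then $\chi(\pi^{-1}(D))=n-b(L_{branch})$, while $\pi^{-1}(D)$ is a disk (equivalently $\pi^{-1}(\partial D)$ is an unknot) exactly when this Euler characteristic is $1$.

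For the ``if'' direction, assume $b(L_{branch})=n-1$ and let $\alpha$ be an arbitrary braid axis of $L_{branch}$. Pick a braid axis $\beta$ realising the braid index; by Lemma~\ref{lem:RH} its preimage $\pi^{-1}(\beta)=O$ is an unknot with fiber the disk $\pi^{-1}(D_\beta)$. The Montesinos--Morton correspondence then provides a finite sequence of $\pi$-symmetric Hopf plumbings and deplumbings joining $(\pi^{-1}(\beta),\pi^{-1}(D_\beta))$ to $\pi^{-1}(\alpha)$ with its fiber surface, which is exactly the desired chain starting from the unknot and its fiber disk.

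For the ``only if'' direction, assume $\pi^{-1}(\alpha)$ is obtained from the unknot by such a sequence. If the sequence is empty, then $\pi^{-1}(\alpha)=O$, so by Lemma~\ref{lem:RH} the axis $\alpha$ realises the braid index; since the fiber $\pi^{-1}(D)$ of $O$ is then a disk, the Riemann--Hurwitz identity reads $1=n-b(L_{branch})$, i.e. $b(L_{branch})=n-1$. If the sequence is nonempty, its first move cannot be a deplumbing, since a disk admits no Hopf deplumbing for Euler-characteristic reasons, so it is a Hopf plumbing along a $\pi$-symmetric path on the fiber disk of $O$. By the very definition of $\pi$-symmetry this forces that disk to equal $\pi^{-1}(D_0)$ for a disk page $D_0$ of the unbook whose binding $\alpha_0$ is a braid axis of $L_{branch}$; hence $O=\pi^{-1}(\alpha_0)$, so by Lemma~\ref{lem:RH} the axis $\alpha_0$ realises the braid index, and the same Riemann--Hurwitz computation gives $b(L_{branch})=n-1$.

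The proof is essentially bookkeeping; the one point demanding care is that the notion of a $\pi$-symmetric Hopf (de)plumbing presupposes the surface being operated on to be $\pi^{-1}$ of a disk page of the unbook whose binding is a braid axis of $L_{branch}$, so I must ensure this is respected throughout the chain. For the ``if'' direction this is guaranteed by quoting the Montesinos--Morton correspondence in the stated form; for the ``only if'' direction the crucial point is that the $\pi$-symmetry hypothesis on the very first plumbing of the given sequence already certifies that the unknot it is applied to is $\pi^{-1}$ of a braid axis of $L_{branch}$, after which Lemma~\ref{lem:RH} closes the argument. I expect no genuine difficulty beyond keeping this correspondence straight.
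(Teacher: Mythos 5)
Your argument is correct and follows the same route as the paper's own two-line proof: the Montesinos--Morton correspondence between $\pi$-symmetric Hopf (de)plumbings and changes of braid axis for $L_{branch}$, combined with the Riemann--Hurwitz computation from Lemma~\ref{lem:RH}. Your extra care in the ``only if'' direction---ruling out an initial deplumbing on Euler-characteristic grounds and noting that the definition of a $\pi$-symmetric plumbing already forces the starting fiber disk to be $\pi^{-1}$ of a disk page, hence $O=\pi^{-1}(\alpha_0)$---is a slightly more explicit rendering of what the paper obtains by quoting Montesinos--Morton as a biconditional, but it is the same argument in substance.
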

\begin{proof}
By Montesinos-Morton \cite{morton} two fibered links are related by a sequence of $\pi$-symmetric Hopf plumbings and deplumbings if and only if they arise as the preimages $\pi^{-1}(\alpha)$ and $\pi^{-1}(\beta)$ of two braid axes $\alpha$ and $\beta$ of $L_{branch}$.

The proof of the previous lemma shows that there is a braid axis $\beta$ of $L_{branch}$ whose preimage is an unknot if and only if $b(L_{branch})=n-1$.
\end{proof}

\begin{remark}
Note that the Corollary above does not exclude the possibility of some other simple branched cover $\pi':S^3\to S^3$ such that $\pi^{-1}(\alpha)=\pi'^{-1}(\alpha')$ for braid axes $\alpha$ and $\alpha'$ of the branch links $L_{branch}$ and $L_{branch}'$, respectively, and such that $\pi^{-1}(\alpha')$ can be obtained from the unknot via a sequence of $\pi'$-symmetric Hopf plumbings and deplumbings. In particular, this corollary cannot be used (in a straightforward fashion) to disprove Montesinos-Morton's version of Harer's conjecture (Conjecture \ref{con:morton}).
\end{remark}

\begin{lemma}
\label{lem:liftex}
Let $L=\pi^{-1}(\alpha)$ and $L'=\pi^{-1}(\beta)$ be lifted generalised exchangeable for some simple branched cover $\pi:S^3\to S^3$ with branch link $L_{branch}$ and braid axes $\alpha$ and $\beta$. Then $L'$ is the unknot $O$, and $L$ and $O$ are generalised exchangeable.
\end{lemma}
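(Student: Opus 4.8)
The plan is to deduce from the hypotheses that $\beta$ realises the braid index of $L_{branch}$, apply Lemma \ref{lem:RH} to identify $L'$ with the unknot, and then lift the two transversality conditions of Definition \ref{def:hopf} through $\pi$. The hypothesis that $L_{branch}$ is an $n-1$-braid relative to $\beta$ says exactly that $\beta$ is a braid axis of $L_{branch}$ whose fiber disks all meet $L_{branch}$ in $n-1$ points. By Lemma \ref{lem:RH} we always have $b(L_{branch})\geq n-1$, so this forces $b(L_{branch})=n-1$ and $\beta$ to realise the braid index; Lemma \ref{lem:RH} then gives that $L'=\pi^{-1}(\beta)$ is an unknot, which I denote $O$. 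Moreover, since the fibration map of the unbook $(\beta,\Phi')$ with binding $\beta$ lifts through $\pi$, the unknot $O$ carries the open book $(\pi^{-1}(\beta),\Phi'(\pi))$, whose pages are the preimages $\pi^{-1}(S')$ of the disk pages $S'$ of $(\beta,\Phi')$; as $O$ is an unknot these pages are themselves disks, so $(\pi^{-1}(\beta),\Phi'(\pi))$ is an unbook.

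The key geometric point is that $\pi$ is a local diffeomorphism along $L\cup O$. Indeed, $\alpha$ and $\beta$ are braid axes of $L_{branch}$, hence disjoint from $L_{branch}$, and they are disjoint from one another (for instance because $\beta\cup L_{branch}$ is braided relative to $\alpha$, hence contained in $S^3\setminus\alpha$); therefore $L=\pi^{-1}(\alpha)$ and $O=\pi^{-1}(\beta)$ are disjoint from $\pi^{-1}(L_{branch})$, and so from the set where $\pi$ fails to be a local diffeomorphism, and from each other. After orienting each component of $L$ so that it maps orientation-preservingly onto $\alpha$, and likewise for $O$ and $\beta$, the map $\pi$ carries a small neighbourhood of any point of $L$, resp.\ $O$, diffeomorphically and orientation-preservingly onto a neighbourhood of its image, so positive transversality of $L$ or $O$ to a lifted surface may be checked downstairs.

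I would then lift the two conditions. The assumption that $\beta\cup L_{branch}$ is braided relative to $\alpha$ gives in particular that $\beta$ is positively transverse to the pages $S$ of the unbook with binding $\alpha$; checking along $O$, it follows that $O=\pi^{-1}(\beta)$ is positively transverse to the pages $\pi^{-1}(S)$ of $(L,\Psi)=(\pi^{-1}(\alpha),\Phi(\pi))$, i.e.\ $O$ is braided relative to $L$. Symmetrically, the assumption that $\alpha\cup L_{branch}$ is braided relative to $\beta$ gives that $\alpha$ is positively transverse to the pages $S'$ of the unbook with binding $\beta$; checking along $L$, it follows that $L=\pi^{-1}(\alpha)$ is positively transverse to the pages $\pi^{-1}(S')$ of $(\pi^{-1}(\beta),\Phi'(\pi))$, i.e.\ $L$ is braided relative to $O$. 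Together with the facts that $L$ is fibered and $O$ is an unknot, this is precisely the assertion that $L\cup O$ is generalised exchangeable in the sense of Definition \ref{def:exch}.

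I do not expect a genuine obstacle. The only points needing care, rather than real work, are the orientation bookkeeping that makes ``positively transverse'' lift correctly and the observation that the open book naturally carried by $\pi^{-1}(\beta)$ is honestly an unbook; with Lemma \ref{lem:RH} in hand the rest is formal.
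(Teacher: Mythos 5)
Your proof is correct and follows essentially the same line of argument as the paper: apply Lemma \ref{lem:RH} together with the hypothesis that $L_{branch}$ is an $(n-1)$-braid relative to $\beta$ to conclude $\pi^{-1}(\beta)$ is an unknot, and then lift the two transversality conditions of Definition \ref{def:hopf} through $\pi$. The paper leaves the transversality-lifting step at the level of a remark that the open books upstairs are lifts of the open books downstairs; your observation that $\pi$ is a local orientation-preserving diffeomorphism along $L\cup O$ (since these lie away from $\pi^{-1}(L_{branch})$) is the correct justification for why ``positively transverse'' pushes down and back up, and is a welcome filling-in of detail rather than a different argument.
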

\begin{proof}
By Definition \ref{def:hopf} there are braid axes $\alpha$ and $\beta$ of the branch link $L_{branch}$ of $\pi$ such that $L=\pi^{-1}(\alpha)$ and $L'=\pi^{-1}(\beta)$, and $\beta\cup L_{branch}$ is braided relative to $\alpha$ and $\alpha\cup L_{branch}$ is braided relative to $\beta$, with $L_{branch}$ being an $(n-1)$-braid relative to $\beta$. Therefore, by Lemma \ref{lem:RH} $\pi^{-1}(\beta)$ is an unknot $O$.

Exchangeability of $\alpha$ and $\beta$ implies that $\alpha$ is positively transverse to the pages of the unbook with binding $\beta$ and $\beta$ is positively transverse to the pages of the unbook with binding $\alpha$.

Since the open book decompositions of $\pi^{-1}(\alpha)$ and $\pi^{-1}(\beta)$ are the lifts of the open books of $\alpha$ and $\beta$, $L=\pi^{-1}(\alpha)$ is transverse to the pages of the open book of $O=\pi^{-1}(\beta)$ and vice versa. Hence $L$ and $O$ are generalised exchangeable.
\end{proof}

Lemma \ref{lem:liftex} shows that B5) implies B1). It is clear from Definition \ref{def:hopf} that B3) implies B5), so that Lemma \ref{lem:liftex} proves that B3) implies B1). Note that in this case, where $\alpha$ and $\beta$ form a Hopf $n$-braid axis, $\alpha$ is a 1-braid relative to $\beta$. It follows that $L=\pi^{-1}(\alpha)$ is an $n$-braid relative to $O=\pi^{-1}(\beta)$.





Rudolph also found a way to obtain totally braided open books from certain simple branched covers \cite{rudolph2}. The implication B3)$\implies$ B2) (and hence also B3)$\implies$ B1)) can alternatively be proved by showing that simple branched covers with a Hopf $n$-braid axis satisfy the conditions in Proposition 2.6 in \cite{rudolph2}.

\section{Mutually braided open books and polynomials}\label{sec:31}

In this section we prove the implication B2)$\implies$B4), that is, the bindings of totally braided open books are closures of P-fibered braids.

\begin{proposition}\label{prop:42}
Let $(L,\Psi)$ be an open book in $S^3$ that is mutually braided with the unbook $(O,\Phi)$. Then $L$ is the closure of a P-fibered braid.
\end{proposition}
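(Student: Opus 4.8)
The plan is to start from a totally braided open book $(L,\Psi)$, extract the combinatorial data of its derived bibraid via a Rampichini diagram, and then realize this data by an explicit loop of monic polynomials whose critical values trace out the bibraid. First I would invoke the Morton–Rampichini correspondence: since $(L,\Psi)$ is mutually braided with the unbook, it gives rise to a Rampichini diagram of degree $n$ (where $n$ is the number of intersection points of a page $F_\varphi$ with $O$). The hyperbolic points of the singular foliations on the disks $D_t$ form the derived bibraid $\mathcal{B}\defeq pos(L)\cup neg(L)$, which is braided relative to $O$ (parametrized as $(c_1(t),\ldots,c_{n-1}(t))$ in $\mathbb{C}\times[0,2\pi]$) and, after reversing orientation on $neg(L)$, braided relative to $L$. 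The key structural observation — already emphasized in the excerpt's Subsection on relations between notions — is that this is exactly the configuration one wants for $\mathcal{B}$ to be the braid of critical values of a loop of polynomials whose roots form a braid isotopic to $L$.

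The heart of the argument is the following lifting step. Given the bibraid $\mathcal{B}$ together with the $0$-strand (which plays the role of the image of $L$ under $g_t$, i.e. the binding $O$), I would regard $t\mapsto(v_1(t),\ldots,v_{n-1}(t))$, with $v_j$ a parametrization of the components of $\mathcal{B}$ normalized so that $0$ is the image of the $0$-strand, as a loop in the space $V_n$ of unordered critical-value tuples (distinct, nonzero). By Theorem \ref{thm:crit} (Beardon–Carne–Ng), the map $\theta_n:X_n\to V_n$ sending a polynomial to its critical values is a covering map, so this loop lifts (after choosing a basepoint, i.e. a polynomial with the prescribed set of critical values) to a path $t\mapsto g_t$ in $X_n$; I must check the lift closes up, which is the content of choosing the correct sheet, or more precisely of matching the monodromy of $\theta_n$ with the braid $L$ read off from the Rampichini diagram. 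The monotonicity condition in the Rampichini diagram (positive band generators correspond to strictly monotone increasing curves, negative ones to strictly monotone decreasing curves) translates precisely to the statement that each critical value $v_j(t)$ winds around $0$ in a fixed direction, i.e. $\tfrac{\partial \arg v_j}{\partial t}$ never vanishes — which by Eq. (\ref{eq:satellite}) is exactly the P-fibered condition. Finally, I would verify that the roots of the resulting loop $g_t$ form a braid isotopic to $L$: the Rampichini diagram encodes $L$ as the braid read along a vertical line $\varphi=\varphi_0$, and the lift produced by $\theta_n$ is constrained to have this monodromy by functoriality of the covering, using Corollary \ref{cor:lifting} to keep track simultaneously of the constant term (the $0$-strand) so that the relevant configuration stays in $\widehat{X_n}$.

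The main obstacle I anticipate is the matching of combinatorial data: showing that the braid $L$ obtained from the Rampichini diagram is indeed the braid of roots of the lifted polynomial loop, rather than merely some braid with the same number of strands and the same critical-value braid. There are $n^{n-1}$ sheets of the covering $\theta_n$, corresponding to $n^{n-1}$ possible root braids for a given critical-value braid, so I need to pin down the correct one. The strategy here is to argue that the Rampichini diagram itself specifies, for each $t$, the cyclic arrangement of the $n$ points $F_\varphi\cap O$ together with how the pages $F_\varphi$ sweep past them — this is precisely the data of which sheet of $\theta_n$ we sit on, because the band word read along a horizontal line $t=t_0$ records the disk $D_{t_0}$ as a braided disk, equivalently a factorization of a full twist, equivalently the local monodromy data of $g_{t_0}$ around its critical values. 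Converting "the page $F_0$ meets $O$ in points cyclically labelled $1,\ldots,n$, and the hyperbolic point of type $(i\ j)$ at parameter $t$ merges strands $i$ and $j$" into "the polynomial $g_t$ has a critical point whose two nearby roots are the $i$th and $j$th" is the technical crux; once this dictionary is set up, P-fiberedness of the resulting braid follows immediately from the monotonicity built into the Rampichini diagram. A secondary, more routine point is the transversality genericity (making the $v_j(t)$ distinct and distinct from $0$, and making $\chi$ a genuine submersion), which is handled by small perturbations as in the discussion following Definition \ref{def:pfib}.
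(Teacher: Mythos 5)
Your approach is essentially the paper's: read the combinatorial data of the totally braided open book from the Rampichini diagram, invoke the identification between the transposition labels on a horizontal slice and the cactus of a branched cover (the paper isolates this as Lemma~\ref{lem:transprod}, giving $\prod_j\tau_j=(1\ n\ n-1\ \ldots\ 2)$), realize the $t=0$ slice by a polynomial $p_0$ via Riemann's existence theorem, lift the loop of critical values through $\widehat{\theta_n}$ using Corollary~\ref{cor:lifting}, and observe that monotonicity of the Rampichini curves is literally the non-vanishing of $\partial\arg v_j/\partial t$, i.e.\ Eq.~(\ref{eq:satellite}). You also correctly flag the sheet-matching as the crux and describe the right dictionary for it.

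However, there is a genuine gap in the step ``I must check the lift closes up, which is the content of choosing the correct sheet.'' Once a base point $p_0$ is fixed, the lift of the loop $\big((v_1(t),\dots,v_{n-1}(t)),0\big)$ through the covering $\widehat{\theta_n}$ is a \emph{path}, and its endpoint $p_{2\pi}$ is forced by the monodromy; there is no residual choice. What matching the cactus data buys you is that $p_{2\pi}$ and $p_0$ have the same critical values, constant term, and cactus, so by Riemann's existence theorem $p_{2\pi}(z)=p_0(az+b)$; monicity and the continuity of the arc-labelling force $a=1$. But $b$ is then only constrained to be a root of $p_0$ (so that the constant term stays $0$), and the monodromy of $\widehat{\theta_n}$ on the $n$ polynomials with that fixed cactus may well be a fixed-point-free permutation, in which case \emph{no} choice of base point makes the lift a loop. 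The paper therefore does not stop here: it appends the path $s\mapsto p_{2\pi}\big(z-\gamma(s)\big)$, with $\gamma$ a path from $0$ to $b$ in the complement of the critical values, to close the loop, and then deforms this concatenation inside $\widehat{V_n}$ (lifting the deformation via Corollary~\ref{cor:lifting}) so that the monotonicity condition $\partial\arg w_j/\partial t\neq0$ holds on the whole circle rather than just the original half. This closing-and-deforming step is essential and non-automatic; without it your argument only produces a path of polynomials, not a loop, and hence not a P-fibered braid.

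Two smaller omissions worth noting. First, the modulus $|v_j(t)|$ has to be prescribed explicitly at the crossing parameters so that the cactus update rules Eq.~(\ref{eq:tau1}) and Eq.~(\ref{eq:tau2}) reproduce the BKL relations Eq.~(\ref{eq:tau3}) and Eq.~(\ref{eq:tau4}) recorded in the diagram; you gesture at this but do not pin it down. Second, the dictionary you describe (hyperbolic point of type $(i\ j)$ $\leftrightarrow$ critical point merging roots $i$ and $j$) is justified in the paper by constructing an explicit radial function $r$ on the page disk so that $r\,\rme^{\rmi\Psi}$ is a simple branched cover, and it is this construction that makes the identification between Rampichini data and cacti precise rather than merely suggestive.
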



\subsection{The cactus of a branched cover}

We start by explaining a combinatorial structure that is widely used to describe branched covers of the disk $D$ or the sphere $S^2$. We will see that this combinatorial structure is not just analogous, but in fact identical to the combinatorial structures that Rampichini defined in her study of mutually braided open books \cite{rampi}. Our main reference for this subsection is \cite{cactus} both for its clear exposition and its explicit connection to braid groups. For a broader treatment and aspects of the historical development of the study of branched covers of the disk or $S^2$ the reader should consult the references given in \cite{cactus}.

Let $p:\mathbb{C}\to\mathbb{C}$ be a branched cover of degree $n$ with the property
\begin{equation}
\label{eq:limit2}
\lim_{r\to\infty}\arg p(r\rme^{\rmi \chi})=n \chi.
\end{equation}

Note that this is the same as Eq. (\ref{eq:limit}) and in particular, it is satisfied by any monic polynomial of degree $n$. In the literature it is often advantageous to compactify $\mathbb{C}$, so that $p$ becomes a branched cover of the closed unit disk $p:\overline{D}\to \overline{D}$ or a branched cover of the sphere $p:S^2\to S^2$, where the north-pole $(0,0,1)\in S^2\subset\mathbb{R}^3$ (or whichever point is chosen to correspond to the point at infinity of $\mathbb{C}$) is a branch point with $p^{-1}(0,0,1)=(0,0,1)$.

We assume that $p$ has $n-1$ distinct critical values and denote them by $v_j$, $j=1,2,\ldots,n-1$. Furthermore, we assume that $\arg v_j\neq\arg v_i$ if $j\neq i$ and that (possibly after relabelling) $0\leq \arg v_1<\arg v_2<\ldots<\arg v_{n-1}<2\pi$. As in previous sections we denote by $c_j$ the critical point of $p$ with $p(c_j)=v_j$. Now pick $\chi_0=0$. Interpreting $p$ as a branched cover of the closed unit disk $p:\overline{D}\to \overline{D}$, we pick one preimage point $x_1\in p^{-1}(\rme^{\rmi \chi_0})$. Note that $x_1\in \partial \overline{D}$. As we go around $\partial \overline{D}$ we label the other preimage points of $\rme^{\rmi \chi_0}$ by $x_2,x_3,\ldots,x_{n-1}$ in the order in which we encounter them. We write $A_{i}$ for the arc of $\partial \overline{D}$ that only contains $x_i$ and $x_{i+1}$ (modulo $n$) with $\partial A_{i}=\{x_i,x_{i+1 \text{ mod }n}\}$.


Consider a polygon in the complex plane, whose $n-1$ edges are straight lines connecting the critical value $v_j$ to $v_{j+1 \text{ mod } n-1}$. The preimage of this polygon consists of $n$ polygons with $n-1$ edges that are glued together in a specific way. Among the $n-1$ corners of each of these $n$ polygons is exactly one preimage point for each critical value. We can thus label the corners of each polygon by $1,2\ldots,n-1$ according to the index of the corresponding critical value. Some of these preimage points are critical points of $p$, while others merely happen to have the same image as a critical point. Two polygons are glued together at each critical point. 

For each $j=1,2,\ldots,n-1$ the preimage of $\arg(v_j)$ consists of $n-2$ regular leaves (a-arcs) connecting a root to $\partial \overline{D}$ and one singular leaf, i.e., a line between two roots crossing a line between two points on $\partial \overline{D}$ in a saddle point $c_j$. For each $j=1,2,\ldots,n-1$ we define the permutation $\tau_j\defeq (k\ \ell)$ in cycle notation, where the endpoints on $\partial \overline{D}$ of the singular leaf containing $c_j$ lie on the arcs $A_k$ and $A_{\ell}$.
 

This collection of glued polygons with labelled corners and the list of permutations associated to critical points is called the \textit{cactus} of $p$.

This description of $p$ in terms of a list of permutations associated with its critical points is of course fairly standard in topology. Our choice of labelling the points $x_i$ gives a numbering of the sheets of $p$ when considered as a branched cover. The permutation $\tau_j$ associated to the critical point $c_j$ is then simply the permutation of the sheets induced by a small loops around $c_j$. Eq. (\ref{eq:limit2}) implies that the counterclockwise loop along the boundary $\partial \overline{D}$ induces the $n$-cycle $(1\ 2\ \ldots \ n)$, which implies that
\begin{equation}
\label{eq:permprod1}
\tau_1\tau_2\ldots \tau_{n-1}=(1\ n\ \ n-1\ \ldots \ 3\ 2).
\end{equation}

Eq. (\ref{eq:permprod1}) differs from the usual convention (as in \cite{cactus}), which is $\prod_{i=1}^{n-1}\tau_i=(1\ 2\ \ldots \ n)$. If we wanted to agree with this convention, we could have labelled the arcs $A_i$ and points $x_i$ clockwise along the boundary instead of counterclockwise. In order to facilitate a comparison to Rampichini's work, where the boundary arcs are labelled counterclockwise, we prefer to work with this ordering, which results in Eq. (\ref{eq:permprod1}).

\begin{definition}
A \textbf{cactus} $C$ of degree $n$ is a $k$-tuple of permutations $C=(\tau_1,\tau_2,\ldots,\tau_k)$ on the set $\{1,2,\ldots,n\}$ which satisfies the condition
\begin{equation}
\label{eq:permprod}
\prod_{i=1}^k \tau_i=(1\ n\ \ n-1\ \ldots \ 3\ 2).
\end{equation}
\end{definition}

Above we have considered a branched cover with distinct critical values, so that in the definition above we take $k=n-1$. In particular, each critical point of $p$ has ramification index 2, so that $p$ behaves like $z\mapsto z^2$ in a neighbourhood of each critical point. The above definition of the cactus of a polynomial in terms of polygons easily generalises to any branched cover $p:\mathbb{C}\to\mathbb{C}$ satisfying Eq. (\ref{eq:limit2}). In this case, at each critical point $c_j$ of $p$ the number of polygons that meet at $c_j$ is equal to the ramification index of $c_j$, so that the permutation associated to $c_j$ is a cycle, whose length equals the ramification index of $c_j$.

Note that the cactus associated to a branched cover depends on the choice of labelling of arcs $A_i$ of the boundary circle. A different choice for the first arc, say $A_1'=A_j$, implies a cyclic permutation of the labels, i.e., $A_i'=A_{j+i-1\text{ mod }n}$. Hence the cactus $C'=(\tau_1',\tau_2',\ldots,\tau_k')$ that results from the labelling $A_i'$ differs from the cactus $C'=(\tau_1,\tau_2,\ldots,\tau_k)$ that results from the labelling $A_i$ by $\tau_j'=\gamma^{-m}\tau_j\gamma^m$, for some $m\in\{1,2,\ldots,n\}$ where
\begin{equation}
\gamma=(1\ 2\ 3\ \ldots\ n-1\ n).
\end{equation}

Looking at a cactus as in Figure \ref{fig:cactus}a), it is clear that the picture contains some unnecessary information. We don't really need to draw the preimage points of critical values that are not critical points, since we know that they are points labelled by 1 through $n-1$ going counterclockwise around each polygon. Hence, we can represent a cactus by a graph (cf. Figure \ref{fig:cactus}b)), with a node for each root (or, equivalently, each polygon) and an edge for each pair of polygons that are glued together. Each edge therefore represents a critical point $c_j$ and therefore has a corresponding transposition $\tau_j$ associated to it. (For non-simple branched covers the definition of this graph has to be slightly modified, to include nodes for critical points, compare \cite{cactus}.) Given the information of the transpositions $\tau_j$ we could draw lines in the disk that complete the graph to critical level sets of $\arg p$, which are the singular leaves of the singular foliation on $D$ induced by $\arg p$ (cf. Figure \ref{fig:cactus}c)). Note the similarity between Figure \ref{fig:cactus}c) and Figure \ref{fig:movie}b), which depicts the singular leaves of the foliation on a disk induced by a totally braided open book. Recall that in Figure \ref{fig:movie}b) each singular leaf contains exactly one hyperbolic point, which is equipped with a transposition in $S_n$.

\begin{figure}[h]
\labellist
\Large
\pinlabel a) at 100 900
\pinlabel b) at 1450 900
\pinlabel c) at 900 -100
\pinlabel 2 at 450 1000
\pinlabel 3 at 150 750
\pinlabel 1 at 540 760
\pinlabel 2 at 370 440
\pinlabel 3 at 730 620
\pinlabel 1 at 880 270
\pinlabel 2 at 1000 650
\pinlabel 3 at 1100 280
\pinlabel 1 at 1320 580
\pinlabel $\tau_1$ at 1900 560
\pinlabel $\tau_3$ at 2260 560
\pinlabel $\tau_2$ at 2600 560
\pinlabel $A_1$ at 1000 -300
\pinlabel $A_2$ at 1000 -1000
\pinlabel $A_3$ at 1900 -950
\pinlabel $A_4$ at 1900 -300
\pinlabel 1 at 1440 -80
\pinlabel 2 at 840 -650
\pinlabel 3 at 1450 -1220
\pinlabel 4 at 2020 -650
\endlabellist
\centering
\includegraphics[height=4cm]{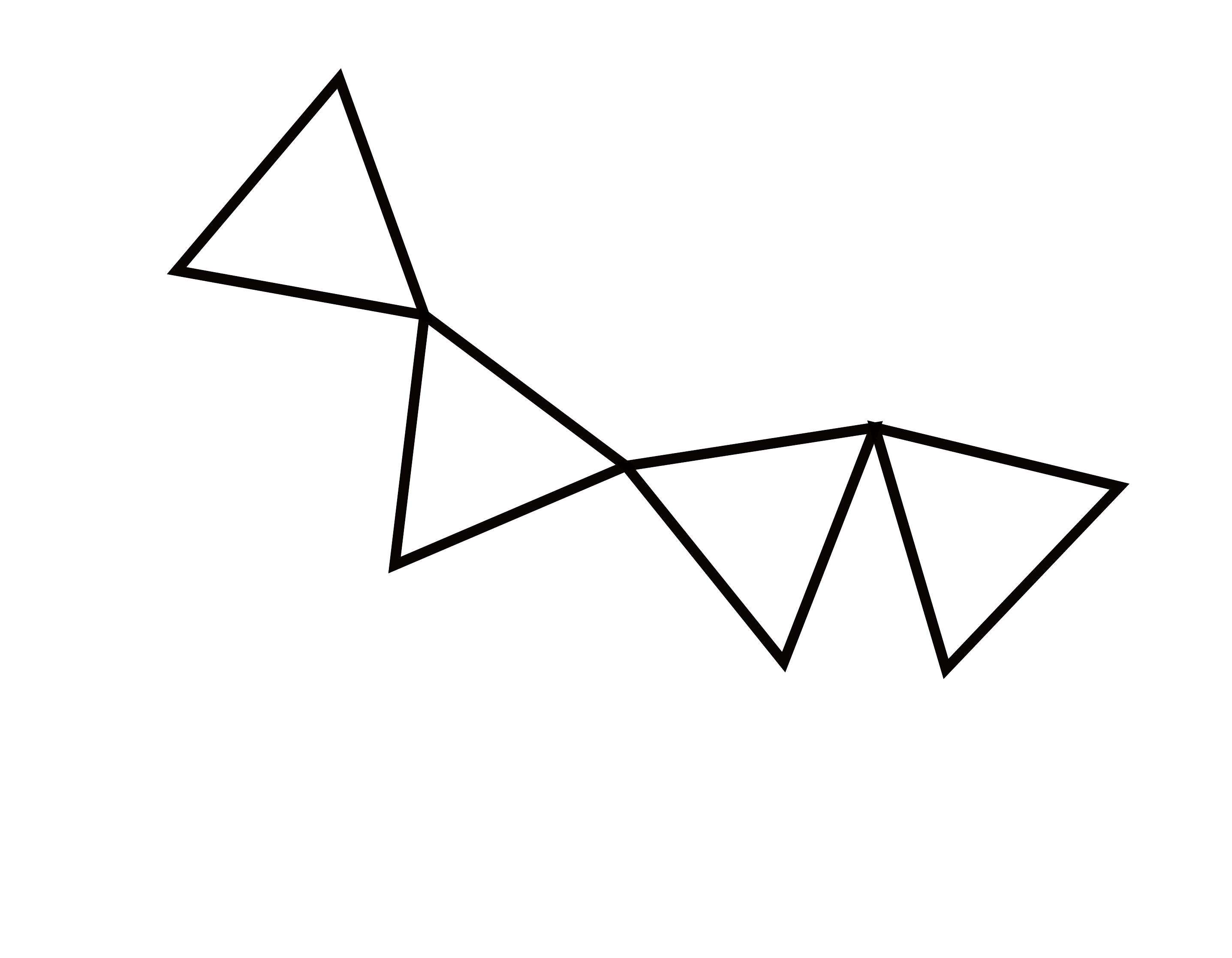}
\includegraphics[height=4cm]{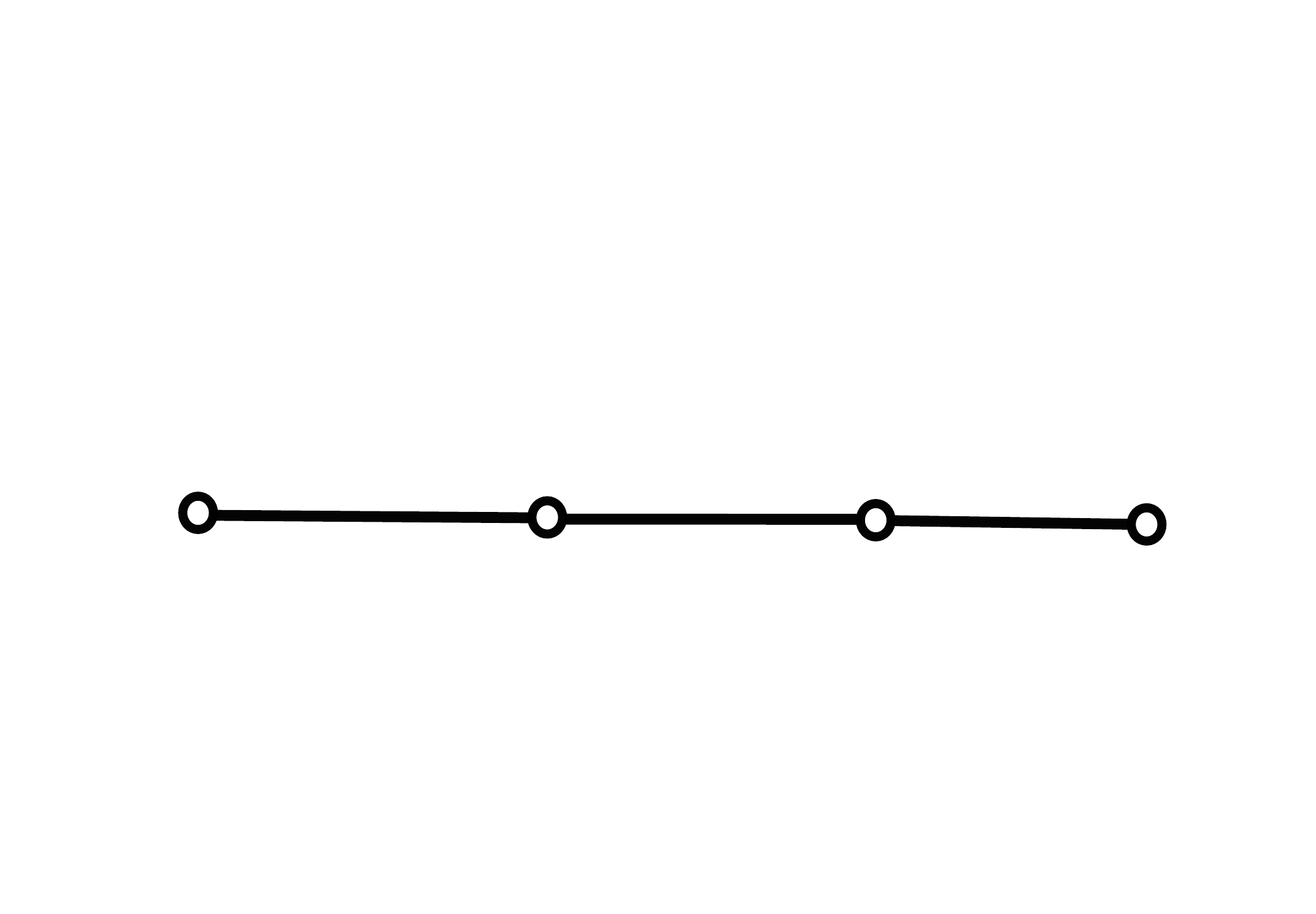}
\includegraphics[height=5cm]{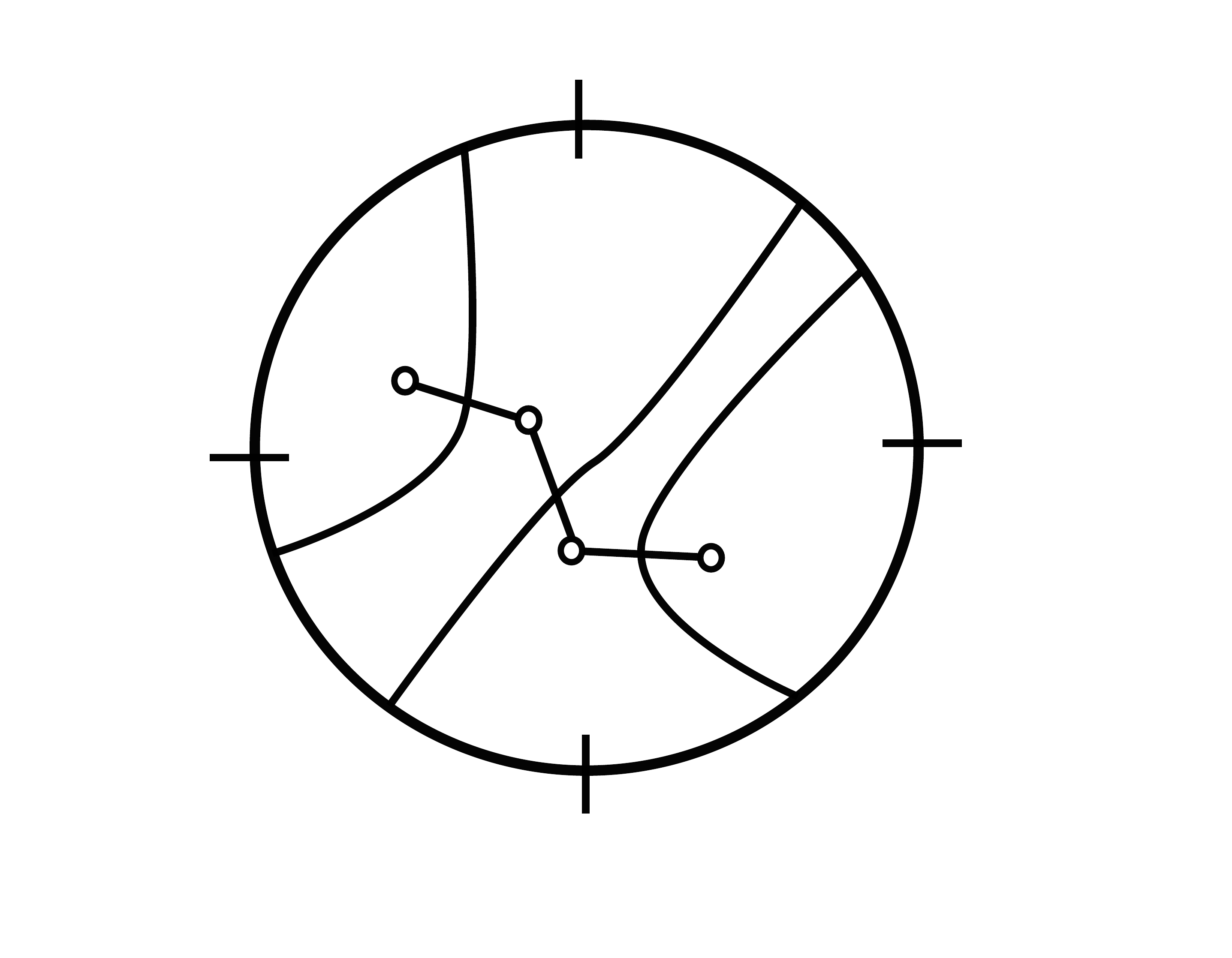}
\caption{The cactus of a simple branched cover $p:\overline{D}\to\overline{D}$ describes combinatorially the singular foliation on $D$ induced by $\arg p$. a) The cactus of a simple branched cover of degree 4. The corners of each triangle are labelled from 1 to 3 in a counterclockwise fashion. For each label $j$ there is a unique corner $c_j$ at which two triangles are glued together. To every $c_j$ we associate a transposition $\tau_j\in S_4$. b) The graph corresponding to the cactus. Each node represents a triangle and each edge a critical point $c_j$. Thus we can label each edge by the corresponding transposition $\tau_j$. c) The singular foliation on $D$ induced by $\arg p$ with $\tau_1=(1\ 2)$, $\tau_2=(3\ 4)$ and $\tau_3=(2\ 4)$. The graph is embedded in $D$ and arcs of $\partial \overline{D}$ are labelled from 1 to 4 in a counterclockwise fashion. The singular leaf through the hyperbolic point $c_j$ connects the two arcs of $\partial \overline{D}$ whose indices are permuted by $\tau_j$. \label{fig:cactus}}
\end{figure}

So far, we have associated to each branched cover, such as a monic polynomial, whose critical values have distinct arguments, a graph and a list of permutations, both of which look suspiciously similar to the combinatorial data describing the open book foliation on a disk, where every singular leaf only contains one hyperbolic point, induced by a totally braid open book. Later we want to find polynomials whose graphs and list of permutations describe the same combinatorial structure as the open book foliations coming from a totally braided open book. For this, we need the converse, i.e., starting with a cactus we want to find a corresponding complex polynomial.

\begin{theorem}[Riemann's existence theorem]
Let $C$ be a cactus of degree $n$ and $v_1,v_2,\ldots,v_k$ be arbitrary complex numbers. Then there exists a complex polynomial $p(z)$ of degree $n$, with $k$ critical values equal to $v_1,v_2,\ldots, v_k$ and with cactus $C$. This polynomial is unique up to an affine change of variable $z\mapsto az+b$, $a,b\in\mathbb{C}$, $a\neq 0$. 
\end{theorem}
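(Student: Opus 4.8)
The plan is to deduce the theorem from the classical correspondence between finite branched covers and monodromy representations; the statement here is simply the specialization of the Riemann existence theorem to the situation where the source surface is forced to be $\mathbb{P}^1$ and the covering map is a polynomial. Moving the $v_j$ by an ambient homeomorphism of $\mathbb{C}$ if necessary (this only reparametrizes the polynomial we will produce), we may assume that $v_1,v_2,\ldots,v_k$ have pairwise distinct arguments, $0\le\arg v_1<\arg v_2<\ldots<\arg v_k<2\pi$, matching the convention used to define a cactus. Fix a base point $*$ with large modulus and $\arg *=0$, and let $\gamma_1,\ldots,\gamma_k$ be the standard free generators of $\pi_1(\mathbb{C}\setminus\{v_1,\ldots,v_k\},*)$, where $\gamma_j$ is a small positively oriented loop around $v_j$ and the $\gamma_j$ are ordered by argument so that $\gamma_1\gamma_2\cdots\gamma_k$ is freely homotopic to the large circle $|z|=|*|$ traversed counterclockwise. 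First I would define $\rho\colon\pi_1(\mathbb{C}\setminus\{v_1,\ldots,v_k\},*)\to S_n$ by $\rho(\gamma_j)=\tau_j$; this is well defined because the $\gamma_j$ are free generators. By Eq.~(\ref{eq:permprod}) the loop around infinity is sent to the $n$-cycle $(1\ n\ n-1\ \ldots\ 2)$, so the image of $\rho$ contains an $n$-cycle and is therefore transitive; hence $\rho$ classifies a connected $n$-sheeted covering $q\colon Y\to\mathbb{C}\setminus\{v_1,\ldots,v_k\}$, and pulling back the complex structure through $q$ makes $Y$ a Riemann surface on which $q$ is holomorphic.

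Next I would fill in the punctures. Over each $v_j$ the local monodromy is the cycle $\tau_j$, so $Y$ extends holomorphically across the fibre over $v_j$ with one ramification point of order $\ell(\tau_j)$ and the remaining preimages unramified; over $\infty$ the monodromy is the single $n$-cycle, so there is exactly one point $\infty_Y$ over $\infty$, totally ramified of order $n$. Adding these points gives a compact Riemann surface $\overline{Y}$ and an extended holomorphic map $\overline{p}\colon\overline{Y}\to\mathbb{P}^1$ — this is the content of Riemann's existence theorem for branched covers of $\mathbb{P}^1$. I would then compute the genus by Riemann--Hurwitz: the total finite ramification is $\sum_{j=1}^k(\ell(\tau_j)-1)$, which for a cactus equals $n-1$ (this is automatic in the simple case $k=n-1$ with each $\tau_j$ a transposition, and in general it is precisely the statement that the genus computed below is $0$), while the ramification over $\infty$ is $n-1$, so
\[
2-2g(\overline{Y})=2n-(n-1)-(n-1)=2,
\]
whence $g(\overline{Y})=0$ and $\overline{Y}\cong\mathbb{P}^1$. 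Choosing such an isomorphism sending $\infty_Y$ to $\infty$, the map $\overline{p}$ becomes a holomorphic self-map of $\mathbb{P}^1$ whose only pole is of order $n$ at $\infty$, i.e.\ a polynomial $p(z)$ of degree $n$; by construction its critical values are $v_1,\ldots,v_k$, and reading off the monodromy in the chosen coordinate shows that its cactus is $C$.

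For uniqueness, suppose $p_1,p_2$ are degree-$n$ polynomials with the same critical values $v_1,\ldots,v_k$ and the same cactus $C$. Then the above procedure attaches to each of them the same labelled monodromy representation $\rho$, hence isomorphic covers of $\mathbb{C}\setminus\{v_1,\ldots,v_k\}$; the covering isomorphism is automatically holomorphic (both complex structures are pulled back from the base) and extends by removable singularities to a biholomorphism $\psi\colon\mathbb{P}^1\to\mathbb{P}^1$ with $p_2\circ\psi=p_1$ and $\psi(\infty)=\infty$. A biholomorphism of $\mathbb{P}^1$ fixing $\infty$ is an affine map $z\mapsto az+b$ with $a\neq 0$, which is exactly the claimed normal form.

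The step I expect to need the most care is the treatment of the point at infinity: one must verify that the $n$-cycle coming from Eq.~(\ref{eq:permprod}) forces a single totally ramified point over $\infty$, that the Riemann--Hurwitz count then genuinely produces genus $0$ (which quietly uses that a cactus satisfies $\sum_j(\ell(\tau_j)-1)=n-1$, not merely the product condition), and that after identifying $\overline{Y}$ with $\mathbb{P}^1$ the resulting rational map is a polynomial carrying the prescribed labelled data rather than a cyclically permuted version of it. A secondary, purely bookkeeping point is reconciling the boundary-arc/sheet labelling used to define the cactus with the coordinate chosen on $\overline{Y}\cong\mathbb{P}^1$; this is absorbed into the freedom, already noted after the definition of a cactus, to cyclically relabel the arcs $A_i$, which conjugates the cactus by a power of $\gamma=(1\ 2\ \ldots\ n)$.
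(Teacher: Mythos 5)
Your proof is correct and is the standard argument; note that the paper itself does not prove this theorem but states it as a known result, citing the reference on cacti. You construct the cover $Y\to\mathbb{C}\setminus\{v_1,\ldots,v_k\}$ from the monodromy representation $\gamma_j\mapsto\tau_j$, observe transitivity from the $n$-cycle product condition, fill in the punctures and the point at infinity, verify genus zero by Riemann--Hurwitz, identify the compactification with $\mathbb{P}^1$, and conclude that the resulting self-map with a single pole of order $n$ at $\infty$ is a polynomial; uniqueness follows because a biholomorphism of $\mathbb{P}^1$ fixing $\infty$ is affine, and the labelled monodromy forces the covering isomorphism. All of this is sound, and your care with the bookkeeping at $\infty$ and the cyclic ambiguity in labelling the arcs $A_i$ is exactly what is needed.

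One substantive observation you raise deserves to be made explicit: the formal definition of a cactus given in the paper (a tuple $(\tau_1,\ldots,\tau_k)$ of permutations whose product is $(1\ n\ n-1\ \ldots\ 2)$) does not by itself force genus zero, so the theorem as literally stated would be false for an arbitrary such tuple. For instance, with $n=3$ the tuple $(\tau_1,\tau_2,\tau_3)=((1\ 2),(1\ 2),(1\ 3\ 2))$ satisfies the product condition and generates a transitive subgroup, yet the total ramification (including the forced $n$-cycle over $\infty$) is $1+1+2+2=6>2n-2=4$, so the associated cover has genus $1$ and no polynomial realizes this data. The theorem therefore implicitly requires the Riemann--Hurwitz condition $\sum_j(\ell(\tau_j)-1)=n-1$, equivalently that the cactus really does arise from a glued-polygons picture as in the paper's pictorial description. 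You correctly flag this; it holds automatically in the only case the paper actually uses, namely $k=n-1$ with all $\tau_j$ transpositions, which is why the gap is harmless for the paper's purposes.
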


We have not defined the cactus of a polynomial with one of the critical values equal to 0 or $\arg(v_i)\geq\arg(v_j)$ for $i<j$, so we will restrict Riemann's existence theorem to non-zero distinct critical values, labelled with increasing argument.

Now, we need to see how the cactus and the list of permutations change, when we vary the critical values $v_1,v_2\ldots,v_{n-1}$. In the following, we consider paths of critical values $v_1(t),v_2(t)\ldots,v_{n-1}(t)$, $t$ going from 0 to 1, with $v_i(0)=v_i$. We denote the end configuration of the path by $\{v_1',v_2',\ldots,v_{n-1}'\}\defeq \{v_1(1),v_2(1),\ldots,v_{n-1}(1)\}$, where the indices are again chosen such that $0\leq \arg v_1'<\arg v_2'<\ldots<\arg v_{n-1}'$. In particular, $v_i'$ is not necessarily equal to $v_i(1)$, but to $v_j(1)$ for some other index $j$.

As long as $0\leq \arg(v_1(t))<\arg(v_2(t))<\ldots<\arg(v_{n-1}(t))<2\pi$, the graph and the permutations $\tau_j$ do not change at all, since their definition is purely topological. We thus need to check what happens when $\arg(v_i(t))=\arg(v_{i+1}(t))$ and when $\arg(v_{n-1}(t))=0\text{ mod }2\pi$ or $\arg(v_1(t))=0\text{ mod }2\pi$.

\begin{figure}[h]
\labellist
\pinlabel 0 at 350 200
\pinlabel 0 at 1650 200
\pinlabel $v_i(0)$ at 1000 400
\pinlabel $v_i(0)$ at 2200 380
\pinlabel $v_{i+1}(t)$ at 880 650
\pinlabel $v_{i+1}(t)$ at 2040 580
\pinlabel $v_i(1)$ at 600 720
\pinlabel $v_i(1)$ at 1800 680
\Large
\pinlabel a) at 100 800
\pinlabel b) at 1350 800
\endlabellist
\centering
\includegraphics[height=4cm]{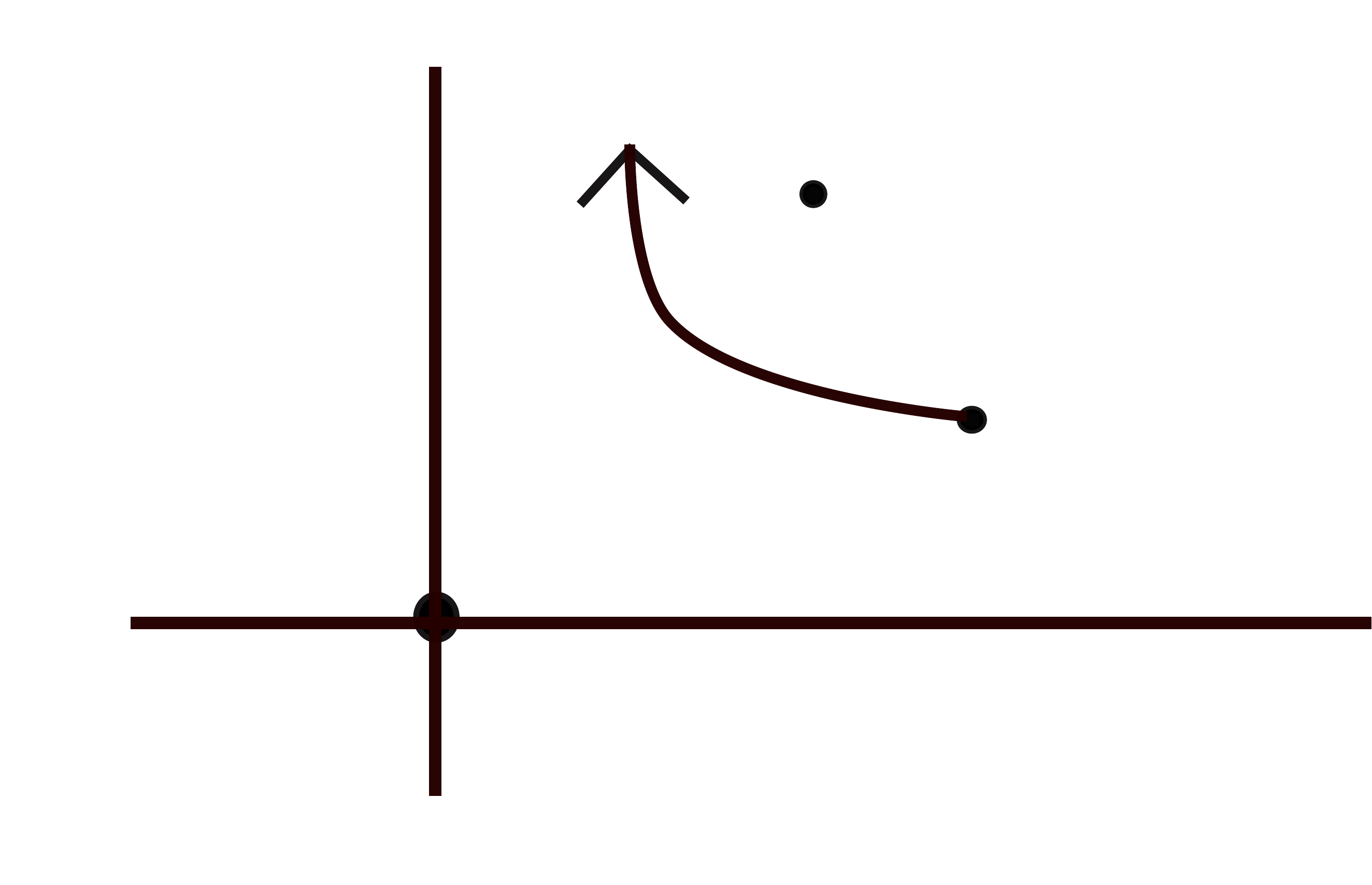}
\includegraphics[height=4cm]{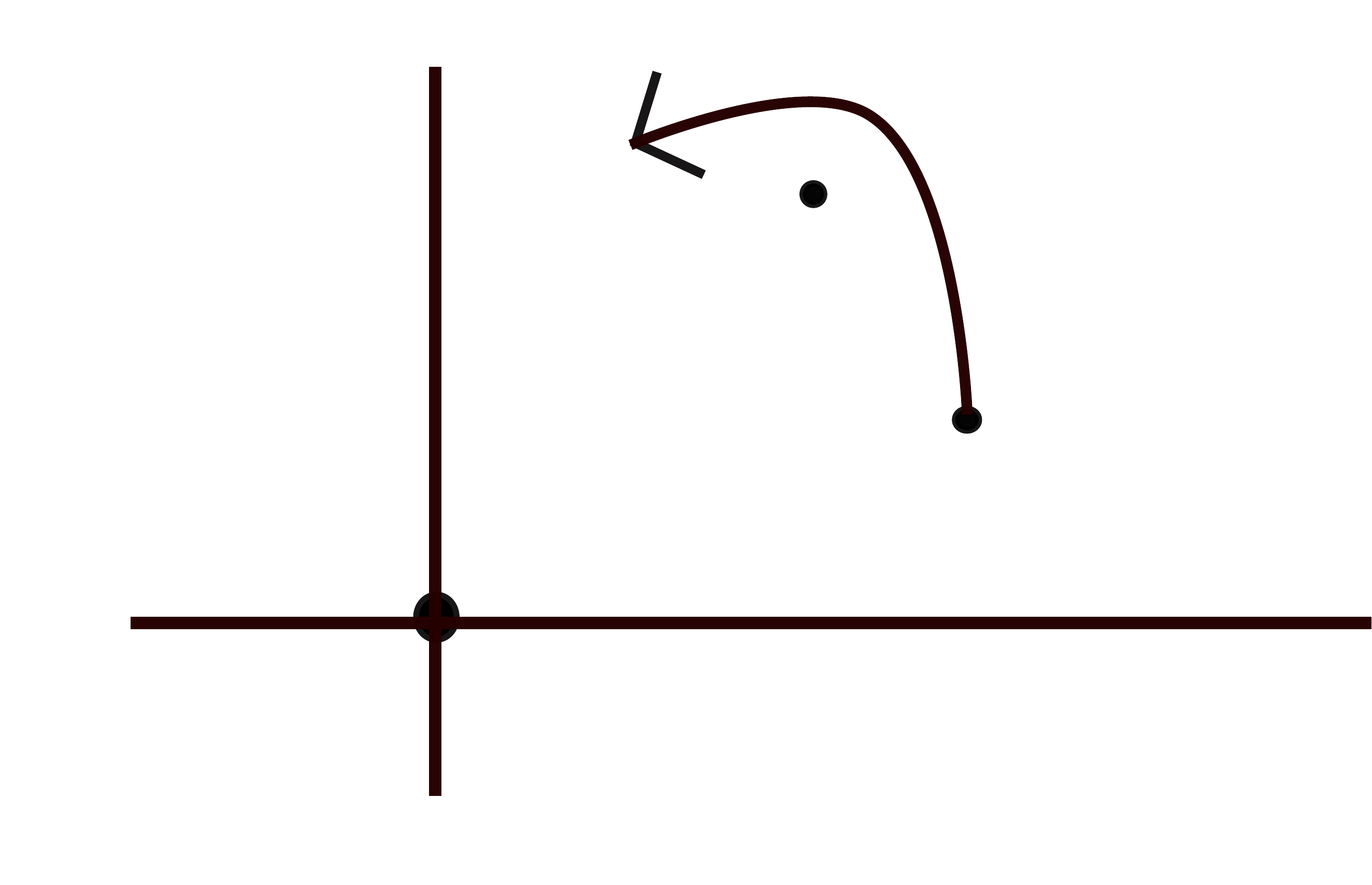}
\caption{Paths of critical values $v_i(t)$ in the complex plane. a) The critical value $v_i(t)$ moves past $v_{i+1}(t)$ with $|v_i(t)|<|v_{i+1}(t)|$. b) The critical value $v_i(t)$ moves past $v_{i+1}(t)$ with $|v_i(t)|>|v_{i+1}(t)|$.  \label{fig:paths}}
\end{figure}

These situations can be created by choosing paths $(v_1(t),v_2(t),\ldots,v_{n-1}(t))$ where only one of the critical values $v_i(t)$ is non-stationary. Suppose we move $v_i$ past $v_{i+1}$ as in Figure \ref{fig:paths}a). In other words, we consider two paths in the plane, $v_i(t)$ and $v_{i+1}(t)$ (the latter being constant), $t$ going from $0$ to $1$, with $v_i(0)=v_i$ and $v_{i+1}(0)=v_{i+1}$, and when $\arg(v_i(t))=\arg(v_{i+1}(t))$ we have $|v_i(t)|<|v_{i+1}(t)|$. In order to maintain our rule by which the critical values are numbered, we then have to change labels, so that $v_i(1)=v_{i+1}'$ and $v_{i+1}(1)=v_i'$. Then the corresponding permutations $\tau_i$ and $\tau_{i+1}$ change as follows (cf. \cite{cactus}):
\begin{align}
\label{eq:tau1}
\tau_i&\mapsto\tau_i'=\tau_{i+1}\nonumber\\
\tau_{i+1}&\mapsto\tau_{i+1}'=\tau_{i+1}\tau_i\tau_{i+1}^{-1}.
\end{align}

Similarly, if we move $v_i$ past $v_{i+1}$ as in Figure \ref{fig:paths}b), i.e., $|v_i(t)|>|v_{i+1}(t)|$ when $\arg(v_i(t))=\arg(v_{i+1}(t))$, then $\tau_i$ and $\tau_{i+1}$ change as follows (cf. \cite{cactus}):
\begin{align}
\label{eq:tau2}
\tau_i&\mapsto\tau_i'=\tau_i^{-1}\tau_{i+1}\tau_i\nonumber\\
\tau{i+1}&\mapsto\tau_{i+1}'=\tau_i.
\end{align}

Note that these are precisely the BKL relations, which describe how the transpositions (interpreted as band generators with appropriate signs) change at crossing points in Rampichini diagrams  (cf. Eq. (\ref{eq:tau3}) and Eq. (\ref{eq:tau4})). Recall that crossing points in Rampichini diagrams occur when a fiber disk intersects a page $F_\varphi$ tangentially in more than one point, or, equivalently, the corresponding open book foliation has a singular leaf with more than one hyperbolic point. This is analogous to the point where $\arg v_i(t)=\arg v_{i+1}(t)=\varphi$, so that $(\arg p)^{-1}(\varphi)$ contains two critical points.

The way that the indices of critical points, critical values and associated permutations are defined implies that we also have to change labels when a critical value, $v_1$ or $v_{n-1}$, passes the line of $\arg=0$. Obviously, the indices of all critical values, critical points and associated permutations have to be cyclically permuted if this happens. For example, if we move $v_{n-1}$ over the line of $\arg=0$, then $v_{n-1}(1)=v_1'$ is now the new critical value with minimal argument between 0 and $2\pi$, $v_1(1)=v_2'$ has the next smallest argument and so on.

Furthermore, the transposition $\tau_{n-1}=(i\ j)$ that was associated to $v_{n-1}$ becomes $\tau_{1}'=(i+1\ j+1)$ modulo $n$. Again, this is exactly the rule that determines the difference between the band generators on the left edge of a Rampichini diagram ($\varphi=0$) and the right edge of a Rampichini diagram $(\varphi=2\pi)$. 

Conversely, moving $v_1$ over the $(\arg=0)$-line from above, induces a cyclic shift of the indices of critical points, critical values and transpositions and $\tau_1=(i\ j)$ becomes $\tau_{n-1}'=(i-1\ j-1)$.

It is easily checked that with these changes the new permutations $\tau_i'$ still satisfy Eq. (\ref{eq:permprod}) as they should.

\subsection{Permutations and singular foliations of the disk}

\begin{lemma}
\label{lem:transprod}
Let $(L,\Psi)$ be an open book in $S^3$. Let $\mathcal{F}$ be an open book foliation of the disk induced by $(L,\Psi)$ with exactly $n$ elliptic singularities, all of which are positive, exactly $n-1$ hyperbolic singularities, and no leaves that are closed loops. We label the hyperbolic points $c_j$, $j=1,2,\ldots,n-1$, such that $0\leq \Psi(c_1)<\Psi(c_2)<\ldots<\Psi(c_{n-1})<2\pi$. We write $\tau_j$ for the transposition associated to the hyperbolic point $c_j$ as defined in Section \ref{sec:mutual}. Then we have $\prod_{j=1}^{n-1}\tau_j=(1\ n\ n-1\ \ldots\ 3\ 2)$. 
\end{lemma}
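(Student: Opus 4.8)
The plan is to reduce the statement to the analogous fact about the cacti of branched covers, namely Eq.~(\ref{eq:permprod1}), by showing that the combinatorial data $(\tau_1,\ldots,\tau_{n-1})$ read off from an open book foliation $\mathcal{F}$ on a disk is literally the same as the data coming from a simple branched cover of the disk. More precisely, an open book foliation on $D$ with $n$ positive elliptic points, $n-1$ hyperbolic points, and no $c$-arcs (closed leaves) is, up to the ambient data, exactly the singular foliation induced by $\arg p$ for a simple branched cover $p:\overline{D}\to\overline{D}$ of degree $n$: the $n$ elliptic points play the role of the $n$ preimages $x_1,\ldots,x_n$ of a basepoint, wait---rather, they play the role of the $n$ preimages of $0$ and the arcs $A_i$ on $\partial D$ play the role of the boundary arcs, and each hyperbolic point $c_j$ with its cross-shaped singular leaf corresponds to a critical point of $p$ with associated transposition $\tau_j$. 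Since condition (iii) in the list of regular leaf types rules out nothing here and the hypothesis rules out $c$-arcs, every singular leaf is a cross joining two $A_i$-arcs, so the $\tau_j$ are well-defined transpositions and the situation is combinatorially identical to the cactus picture of Figure~\ref{fig:cactus}c).

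The key steps, in order, are: (1) cut $D$ along all the singular leaves of $\mathcal{F}$; because there are no $c$-arcs, the complement is a disjoint union of regions each foliated by $a$-arcs running from one elliptic point to a single arc $A_i$ of $\partial D$, so each region is a ``sector'' determined by one elliptic point and a cyclic interval of boundary arcs. (2) Identify each such region with one of the $n$ polygons in the cactus picture, the elliptic point being the central root and the boundary arcs $A_i$ its corners; the gluing of two polygons along a critical point matches the two sectors meeting along the singular leaf through $c_j$. (3) Track the monodromy: going once counterclockwise around $\partial D$, the leaf structure forces the sheet labels (= indices of the arcs $A_i$, equivalently of the sectors) to cycle, and because all $n$ elliptic points are positive the induced permutation on $\{1,\ldots,n\}$ is the full cycle in the direction dictated by the orientation conventions of Section~\ref{sec:mutual}, i.e.\ $(1\ 2\ \cdots\ n)$. (4) Decompose this boundary monodromy as the ordered product of the local monodromies around the interior singular points, ordered by increasing $\Psi$-value $\Psi(c_j)$ exactly as one does for a branched cover with critical values of increasing argument; this yields $\tau_1\tau_2\cdots\tau_{n-1}=(1\ n\ n-1\ \cdots\ 3\ 2)$, the inverse cycle, precisely as in Eq.~(\ref{eq:permprod1}). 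The sign/orientation of this cycle (whether one gets $(1\ 2\ \cdots\ n)$ or its inverse) must be pinned down using the convention that the arcs $A_i$ are indexed following the orientation of $O$ and that the elliptic points are positive, exactly as was done to obtain Eq.~(\ref{eq:permprod1}) rather than the ``usual convention'' $\prod\tau_i=(1\ 2\ \cdots\ n)$.

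The main obstacle I expect is Step (3)--(4): making the ordered-product statement precise. One has to choose a base arc and a consistent way to ``transport'' the labelling of the $A_i$-arcs (equivalently, of the sectors) as $\Psi$ increases past each critical value $\Psi(c_j)$, and verify that passing a hyperbolic point $c_j$ modifies the labelling exactly by the transposition $\tau_j$ — this is the disk-version of the photogram moves in Figures~\ref{fig:movie}--\ref{fig:rampimovie}. Care is needed because the definition of $\tau_j$ in Section~\ref{sec:mutual} refers to which arcs $A_i,A_j$ of $\partial D$ the singular leaf touches, and one must check this matches the abstract monodromy transposition; this is where the orientation convention (positive elliptic points, counterclockwise indexing) enters and determines that the total product is the specific inverse $n$-cycle $(1\ n\ n-1\ \cdots\ 3\ 2)$ rather than its inverse. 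Once this identification with the cactus formalism of Subsection~5.1 is in place, the identity is immediate from the discussion around Eq.~(\ref{eq:permprod1}), and in fact one may simply invoke Riemann's existence theorem: realise $(\tau_1,\ldots,\tau_{n-1})$ as the cactus of an honest polynomial $p$ (the hypotheses guarantee it is a valid cactus precisely when $\prod\tau_j$ is the right cycle, and the no-$c$-arc condition guarantees the foliation really does arise this way), reducing the lemma to Eq.~(\ref{eq:permprod1}).
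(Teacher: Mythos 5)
Your overall strategy is exactly the paper's: reduce to the cactus/branched-cover setting of Section 5.1 and appeal to Eq.~(\ref{eq:permprod1}). Steps (1) and (2) of your outline — cutting $D$ along singular leaves into $n$ sectors, one per elliptic point, and matching them with the polygons of a cactus — also mirror the paper's decomposition of the complement of the singular leaves into regions that are mapped to squares. So far so good.

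Where the proposal has a genuine gap is in how it crosses from ``the picture looks like a cactus'' to ``we may apply Eq.~(\ref{eq:permprod1}).'' In Steps (3)--(4) you propose to prove the product formula directly by tracking a boundary monodromy on the foliation and decomposing it as an ordered product of local transpositions, but you flag this as the ``main obstacle'' and do not carry it out; without an actual covering map there is no monodromy to speak of, so that decomposition is precisely the thing that needs proof, not a tool you get for free. Your fallback, invoking Riemann's existence theorem, is circular: Riemann's existence theorem only produces a polynomial for a tuple $(\tau_1,\ldots,\tau_{n-1})$ once you already know $\prod\tau_j$ equals the appropriate $n$-cycle, which is the conclusion of the lemma. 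You cannot use the hypothesis of that theorem as a source of the fact you want to prove. The paper avoids both issues by doing the concrete construction that is missing from your outline: it builds a radial function $r:\overline{D}\to[0,1]$, sending elliptic points to $0$, $\partial\overline{D}$ to $1$, and making $r$ strictly monotone along the arcs of each cross-shaped singular leaf, and then extends $r$ monotonically over each sector (your ``polygons,'' the paper's squares) so that $r\rme^{\rmi\Psi}:\overline{D}\to\overline{D}$ is an honest simple branched cover whose branch points are exactly the hyperbolic points $c_j$. Once that map exists, Eq.~(\ref{eq:permprod1}) applies verbatim. To repair your proposal you would need to either supply this construction of $r$ (or an equivalent argument that the foliation is realized by a branched cover), or carry out the monodromy-decomposition argument of Steps (3)--(4) in full from first principles rather than borrowing it from the branched-cover theory.
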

\begin{proof}
We want to define a function $r:\overline{D}\to[0,1]$ such that $r\rme^{\rmi \Psi}:\overline{D}\to \overline{D}$ is simple branched cover with no branch points on the boundary. The result then follows from the discussion in the previous subsection.
First of all, we define $r(z)=0$ for all elliptic singularities $z\in \overline{D}$ and $r(z)=1$ for all $z\in\partial \overline{D}$. Every singular leaf is cross-shaped, with two opposite endpoints being elliptic singularities and the other two endpoints being on the boundary $\partial \overline{D}$. For every singular leaf we can define $r$ such that it is strictly monotone increasing along each of the arcs, that is, we choose some value $r(c_j)\in(0,1)$ for $r$ at the hyperbolic point $c_j$ and let $r$ be strictly monotone increasing from $0$ to $r(c_j)$ along the arcs whose endpoints are elliptic points and strictly monotone increasing from $r(c_j)$ to 1 on the arcs whose endpoints are on the boundary.

Consider the complement of the singular leaves in the open disk $D$. It consists of a number of open disks, each of which contains exactly one elliptic point. For each of these open disks, we think of its closure in $\overline{D}$ as the image of a square, whose left edge is collapsed to the elliptic point, whose right edge is the unique arc along the boundary $\partial \overline{D}$ and whose vertical edges are formed by singular leaves, possibly collapsed along an arc. Each horizontal line in the square maps to a regular leaf of the open book foliation.

\begin{figure}[h]
\labellist
\small
\endlabellist
\centering
\includegraphics[height=6cm]{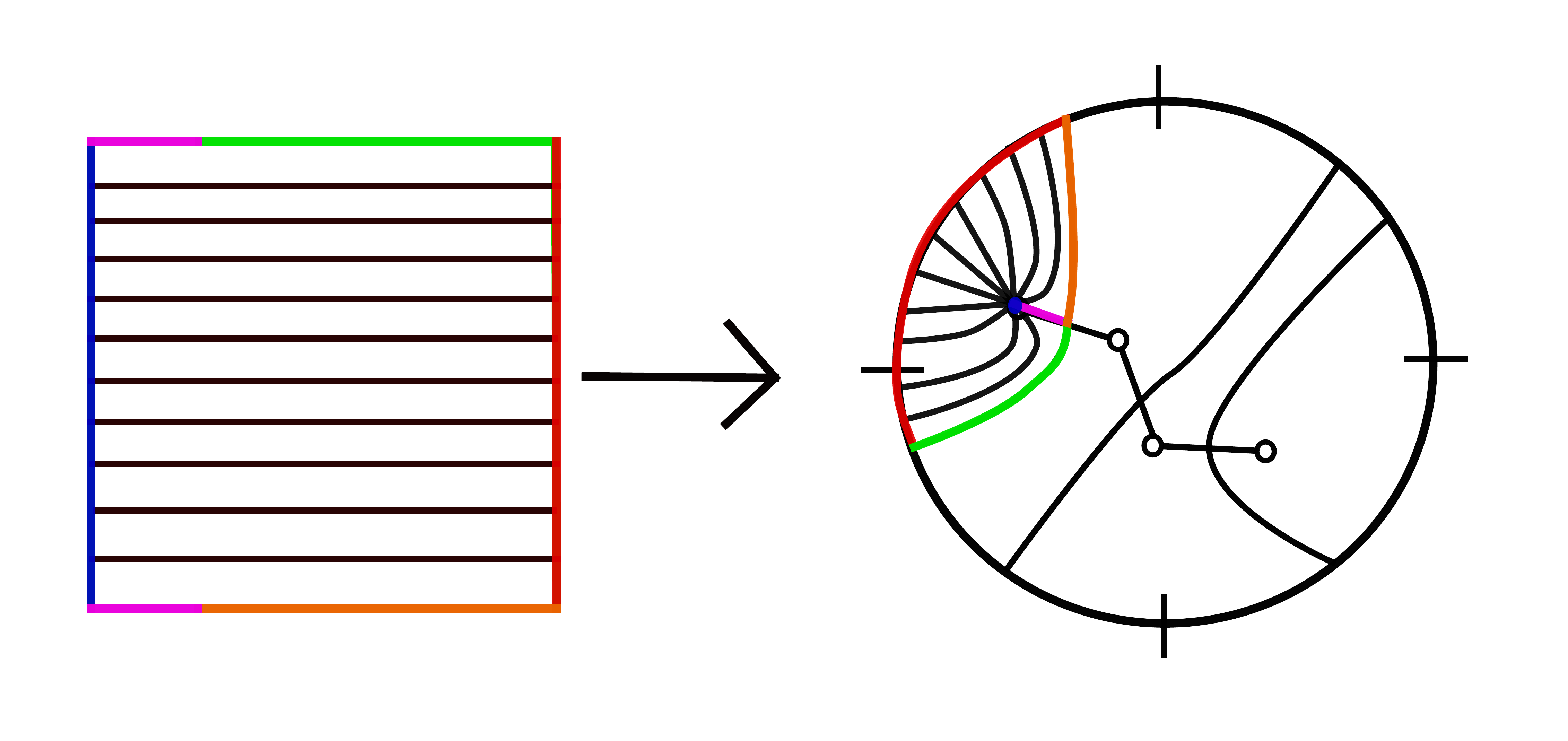}
\caption{Mapping a square into $\overline{D}$. The left edge of the square is mapped to an elliptic point. The horizontal edges are mapped to a singular leaf. The right edge is mapped to $\partial \overline{D}$. Horizontal lines are mapped to regular leaves of the singular foliation. \label{fig:square}}
\end{figure}

The function $r$ defined on the singular leaves, the elliptic points and on $\partial \overline{D}$ thus defines a function on the boundary of the square, which is strictly monotone increasing along the horizontal edges. It is equal to 0 on the left edge and equal to 1 on the right edge. Every pair of strictly monotone increasing functions $f_0$, $f_1:[0,1]\to[0,1]$ with $f_i(0)=0$, $f_i(1)=1$, $i=0,1$, is homotopic via strictly monotone increasing functions $f_s:[0,1]\to[0,1]$ with $f_s(0)=0$, $f_s(1)=1$, $s\in[0,1]$. Therefore, there is a function on the square, which agrees with $r$ on the boundary of the square and which is strictly monotone increasing along each horizontal line.

Mapping the square back into the disk and doing this for all of the components of the complement of the singular leaves, we obtain a function $r:\overline{D}\to [0,1]$ that is strictly monotone increasing along the leaves of the singular foliation. It follows that $r\rme^{\rmi \Psi}:\overline{D}\to \overline{D}$ is a simple branched cover whose branch points are exactly the hyperbolic points of the open book foliation, which proves the lemma by the previous subsection.   
\end{proof}

Let $g_t$ be a loop in the space of polynomials $\widehat{X_n}$. Then we can associate to it a \textit{square diagram}, whose horizontal and vertical edges are coordinate axes representing values of $\varphi\defeq\arg g_t$ and $t$, both going from 0 to $2\pi$. In the square we draw the curves of points $(\varphi,t)$ for which $g_t$ has a critical point $c_j(t)$ with $\varphi=\arg g_t(c_j)$. It follows from the previous subsection that each point on the curve comes with a transposition $\tau_j$, which only change at intersection points of curves and at the right edge of the square. This square diagram satisfies all conditions of Definition \ref{def:rampichini} except 2), that is, the curves in the diagram are not necessarily strictly monotone increasing or strictly monotone decreasing.

The following observation is not necessary for our proof of Theorem \ref{thm:main}, but it is a direct consequence of the discussion above about Rampichini diagram and loops of polynomials.

\begin{proposition}
\label{prop:pfibmut}
Closures of P-fibered braids are bindings of totally braided open books.
\end{proposition}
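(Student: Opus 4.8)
The plan is to show that a P-fibered braid $B$ on $n$ strands, given by a loop $g_t$ in the space of monic polynomials, gives rise to a square diagram that is in fact a genuine Rampichini diagram, which by Morton--Rampichini then realises a totally braided open book whose binding is the closure of $B$. The key point is the extra monotonicity condition 2) in Definition \ref{def:rampichini}, which distinguishes a general square diagram from a Rampichini diagram. First I would recall from Section \ref{sec:31} that to $g_t$ one associates the curves $\{(\varphi,t): \exists j,\ \varphi=\arg g_t(c_j(t))\}$ labelled by the transpositions $\tau_j$, and that this square diagram satisfies all conditions of Definition \ref{def:rampichini} except possibly 2). So the proof reduces entirely to verifying condition 2).

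Here is where P-fiberedness enters. A point $(\varphi_0,t_0)$ on the $j$th curve is a place where $\varphi = \arg g_{t_0}(c_j(t_0))= \arg v_j(t_0)$, so the curve is locally the graph of $t\mapsto \arg v_j(t)$. Its slope in the $(\varphi,t)$-plane has sign equal to $\operatorname{sign}\tfrac{\rmd}{\rmd t}\arg v_j(t)$. By Eq. (\ref{eq:satellite}), the hypothesis that $B$ is P-fibered says precisely that $\tfrac{\rmd}{\rmd t}\arg v_j(t)\neq 0$ for every $j$ and every $t$; moreover, as explained in Section \ref{sec:poly}, each critical value $v_j(t)$ winds around $0$ in a single fixed direction, so the sign of $\tfrac{\rmd}{\rmd t}\arg v_j$ is constant along each curve. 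Hence each curve in the square diagram is either strictly monotone increasing or strictly monotone decreasing throughout, which is exactly condition 2). (One should also note, as in Section \ref{sec:poly}, that after a small isotopy of $B$ we may assume the $v_j(t)$ are pairwise distinct and have pairwise distinct arguments away from finitely many $t$, so that the labelling and the curve structure are well-defined; this is harmless since it does not change the braid type or the fibration.)

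Having established that the square diagram of a P-fibered braid is a Rampichini diagram, I would invoke the result of Morton and Rampichini recalled in Section \ref{sec:mutual}: every Rampichini diagram arises from some open book that is mutually braided with the unbook, and by construction the binding of that open book is the closure of the starting braid word read off along a vertical line $\varphi=\varphi_0$. By Lemma \ref{lem:transprod} and the discussion of the cactus, the transpositions along $t=0$ satisfy $\prod_j \tau_j=(1\ n\ n-1\ \ldots\ 2)$ and the non-interlacing/ordering conditions of condition 9), because they come from an honest branched cover $r\rme^{\rmi\arg g_0}$ of the disk; so the diagram genuinely satisfies all of Definition \ref{def:rampichini}. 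The band word read along $\varphi=0$ is then a band word for a fiber surface of the closure of $B$, exhibiting $(L,\Psi)$ as mutually braided with the unbook.

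The main obstacle --- though it turns out to be mild --- is the bookkeeping needed to check that \emph{all} the remaining conditions of Definition \ref{def:rampichini} (conditions 1), 3)--9)) really do hold for the square diagram of a loop of polynomials, and in particular that the label changes at crossings and at the $\varphi=2\pi$ edge are exactly the BKL relations Eq. (\ref{eq:tau3})--Eq. (\ref{eq:tau4}). This is precisely the content of the cactus discussion in Section \ref{sec:31}, where the transitions Eq. (\ref{eq:tau1})--Eq. (\ref{eq:tau2}) for critical values crossing in the plane, and the cyclic relabelling when $v_1$ or $v_{n-1}$ crosses the ray $\arg=0$, were shown to coincide with the BKL relations and with the $\varphi=0$ versus $\varphi=2\pi$ rule for Rampichini diagrams. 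So the real work has already been done in the preceding subsection, and the proof of Proposition \ref{prop:pfibmut} is essentially the observation that P-fiberedness is exactly the monotonicity condition 2), with everything else supplied by the cactus formalism; this is why the proposition is flagged as a direct consequence rather than a step in the proof of Theorem \ref{thm:main}.
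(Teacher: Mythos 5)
Your proposal is correct and follows the paper's own argument essentially verbatim: you observe that the square diagram of the loop of polynomials satisfies all of Definition \ref{def:rampichini} except possibly condition 2), identify P-fiberedness (Eq. (\ref{eq:satellite})) as exactly the monotonicity required by condition 2), and then invoke Morton--Rampichini's realisation of Rampichini diagrams by totally braided open books, checking via the band words that the resulting binding is the closure of $B$. This matches the paper's proof; the only difference is that you spell out more explicitly why the remaining conditions of Definition \ref{def:rampichini} hold via the cactus discussion, which the paper relegates to the phrase ``as discussed above.''
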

\begin{proof}
Consider the loop of polynomials associated to a given P-fibered braid. As discussed above the corresponding square diagram satisfies all but one of the defining conditions of a Rampichini diagram. The fact that the loop of polynomials corresponds to a P-fibered braid can be expressed as $\tfrac{\partial \arg v_j(t)}{\partial t}\neq 0$ for all critical values $v_j(t)=g_t(c_j(t))$ and all values of $t\in[0,2\pi]$, which is equivalent to the one missing condition, that the curves in the square diagram are strictly monotone increasing or strictly monotone decreasing.

Rampichini proved that every Rampichini diagram gives rise to a totally braided open book. Note that we can read off band words of the fiber surface in band generators from the diagrams, so that the obtained totally braided open book is indeed identical to the one given by the argument of the loop of polynomials. 
\end{proof}

There are more direct ways of proving Proposition \ref{prop:pfibmut}. It will for example follow directly from Theorem \ref{thm:main}. The important aspect of the proposition is not so much in the statement itself, but in the observation that Rampichini diagrams are the same thing as the square diagrams that one can associate to loops of polynomials, provided that the corresponding braid is P-fibered.

Another consequence of this observation is that condition 9) in the definition of a Rampichini diagram (Definition \ref{def:rampichini}) could be changed to the following:
\begin{enumerate}[label=\arabic*)$'$]
\setcounter{enumi}{8}
\item Along the $(t=0)$-edge the transpositions $\tau_i$ (indexed with increasing $\varphi$) satisfy $\prod_{i=1}^{n-1}\tau_i=(1\ n\ n-1\ n-2\ \ldots \ 3 \ 2)$.
\end{enumerate}






\subsection{The proof of Proposition \ref{prop:42}}

We have seen in the previous subsection that P-fibered braids give rise to totally braided open books via a correspondence between square diagrams associated to loops of polynomials on the one hand and Rampichini diagrams associated to mutually braided open books on the other. This same correspondence along with Riemann's existence theorem will also help us prove the converse of that statement.

\begin{theorem}
\label{thm:42}
Let $(L,\Psi)$ be a totally braided open book of degree $n$. Let $G$ denote the corresponding Rampichini diagram. Then there is a loop of polynomials $g_t\subset \widehat{X_n}$ whose argument induces a fibration of $S^3\backslash cl(B)$ over the circle, where $B$ is the braid formed by the roots of $g_t$, and whose square diagram is equal to $G$. In particular, $B$ is a P-fibered braid and closes to $L$.
\end{theorem}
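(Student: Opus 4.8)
The plan is to reverse the correspondence established in the preceding subsections: starting from the Rampichini diagram $G$ of the totally braided open book $(L,\Psi)$, reconstruct, for each fixed $t\in S^1$, a complex polynomial $g_t$ whose cactus matches the combinatorial data of $G$ at height $t$, and then check that as $t$ varies the resulting family is a genuine loop in $\widehat{X_n}$ whose argument fibers the complement of the braid formed by its roots. First I would extract from $G$, for each $t$, the ordered list of transpositions $(\tau_1(t),\ldots,\tau_{n-1}(t))$ labelling the intersection points of the curves with the horizontal line at height $t$, together with the signs $\varepsilon_i(t)$ of the slopes. By Lemma~\ref{lem:transprod} (applied to the open book foliation on the page $D_t$ of the unbook induced by $(L,\Psi)$, which has exactly $n$ positive elliptic points, $n-1$ hyperbolic points and no c-arcs because $(L,\Psi)$ is totally braided), this list is a cactus of degree $n$, i.e.\ $\prod_i \tau_i(t) = (1\ n\ n-1\ \ldots\ 2)$. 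Riemann's existence theorem then produces a polynomial $g_t$ of degree $n$ with prescribed critical values $v_j(t)$ and cactus $(\tau_1(t),\ldots,\tau_{n-1}(t))$, unique up to affine reparametrisation.

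Next I would pin down the affine ambiguity and the choice of critical values so that the family $t\mapsto g_t$ is smooth and closes up into a loop. The natural choice is to read the moduli $|v_j(t)|$ off the $t$-coordinates of the corresponding curves in $G$ and to set $\arg v_j(t) = \varphi_j(t)$, the $\varphi$-coordinate of the curve; then normalise $g_t$ to be monic with constant term zero (this determines the affine scaling up to an $n$-th root of unity, which one fixes continuously). Because $G$ is a Rampichini diagram, the transpositions change at intersection points of curves exactly via the BKL relations Eq.~(\ref{eq:tau1}) and Eq.~(\ref{eq:tau2}) — which, as noted right after those equations, are precisely the moves the cactus of a polynomial undergoes when two critical values swap angular order, and the $\varphi=0/\varphi=2\pi$ relabelling (condition 8) matches the cyclic shift of the cactus when a critical value crosses the ray $\arg = 0$. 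Hence the polynomial produced by Riemann's theorem at $t=0$ and at $t=2\pi$ agree, so the path is a loop; using Corollary~\ref{cor:lifting} (the covering $\widehat{\theta_n}:\widehat X_n\to\widehat V_n$) one lifts the loop of critical-value-data to a genuine loop $g_t\subset\widehat X_n$, and the braid $B$ formed by its roots is well-defined.

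Then I would verify the two remaining claims. That the square diagram of $g_t$ equals $G$ follows because, by construction, the curves of points $(\varphi,t)$ with $g_t$ having a critical point of argument $\varphi$ are exactly the curves of $G$, carrying the same transposition labels — this is the content of the subsection identifying cacti with the combinatorial data of singular foliations on the disk. That $B$ is P-fibered is the condition $\tfrac{\partial \arg v_j}{\partial t}(t)\neq 0$ for all $j,t$ (Eq.~(\ref{eq:satellite})), which holds precisely because condition 2) of Definition~\ref{def:rampichini} forces each curve $\ell_i$ of $G$ to be strictly monotone, i.e.\ never to have a horizontal tangent; since $\arg v_j(t)=\varphi_j(t)$ traces such a curve, its derivative never vanishes. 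Finally, $\arg g_t$ induces a fibration of $S^3\setminus cl(B)$ by the discussion after Eq.~(\ref{eq:limit}), and $cl(B)=L$ because the band word read off a vertical slice $\varphi=\varphi_0$ of $G$ represents the fiber surface of $(L,\Psi)$, whose boundary is $L$.

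The main obstacle I expect is the \emph{coherence of the affine normalisation across the whole loop}: Riemann's existence theorem only gives $g_t$ up to $z\mapsto a_t z + b_t$, and one must choose $(a_t,b_t)$ depending continuously and periodically on $t$ so that the family lives in $\widehat X_n$ (distinct roots, distinct critical values, constant term distinct from the critical values) — and so that the roots actually trace out a braid isotopic to the one recorded by the vertical slices of $G$ rather than some other braid with the same combinatorial shadow. The clean way to handle this is not to normalise by hand but to invoke Corollary~\ref{cor:lifting}: the monotone-curve data of $G$ specifies a loop in $\widehat V_n$, and since $\widehat\theta_n$ is a covering map this loop has lifts to $\widehat X_n$; among the $n^{n-1}$ lifts one selects the one whose cactus at $t=0$ is the prescribed $(\tau_1(0),\ldots,\tau_{n-1}(0))$ (here one uses that the cactus is a complete invariant of the sheet labelling, i.e.\ Riemann's theorem again), and homotopy-lifting guarantees this choice propagates consistently around the loop. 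Checking that the resulting braid of roots is the one encoded by $G$ then reduces to comparing band words along vertical slices, which is bookkeeping rather than a genuine difficulty.
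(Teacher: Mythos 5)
Your overall strategy matches the paper's: apply Lemma~\ref{lem:transprod} to recognise a cactus at each $t$, invoke Riemann's existence theorem, and use the covering $\widehat{\theta_n}$ of Corollary~\ref{cor:lifting}. However, there is a genuine gap at the single most delicate step, namely why the family $g_t$ closes up into a \emph{loop}.

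You assert ``the polynomial produced by Riemann's theorem at $t=0$ and at $t=2\pi$ agree, so the path is a loop,'' and again at the end claim that ``homotopy-lifting guarantees this choice propagates consistently around the loop.'' Neither claim is correct as stated. A loop in $\widehat{V_n}$ lifts under a covering map to a \emph{path} in $\widehat{X_n}$, not a loop; the lift can and generically does end at a different sheet. Equality of the cacti of $p_0$ and $p_{2\pi}$ (which does follow from conditions~8--9 of the Rampichini diagram) only shows, via the uniqueness clause of Riemann's existence theorem, that $p_{2\pi}(z)=p_0(az+b)$ for some affine map. The paper carefully argues that $a=1$ because both are monic and the boundary labelling is chosen continuously, but $b$ need not vanish: it is some root of $p_0$, since both polynomials have constant term $0$. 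The lifted path is therefore genuinely open in general. The paper's fix --- concatenating $p_t$ with the extra path $p_{2\pi}(z-\gamma(s))$, where $\gamma$ runs from $0$ to $b$ in the complement of the critical values, and then deforming the resulting loop of critical-value data inside a tubular neighbourhood to restore strict monotonicity, lifting this deformation via Corollary~\ref{cor:lifting} --- is exactly the missing ingredient. Your proposal neither identifies the failure of closure nor supplies any mechanism to repair it.

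A secondary confusion: you propose to ``read the moduli $|v_j(t)|$ off the $t$-coordinates of the corresponding curves in $G$.'' The Rampichini diagram carries no modulus data; $t$ is the disk parameter, not a radius. The moduli must be \emph{chosen}, and the only constraint is at crossing points, where the relative order $|v_i(t_0)|\lessgtr |v_{i+1}(t_0)|$ must match whether the labels change via Eq.~(\ref{eq:tau1}) or Eq.~(\ref{eq:tau2}). Everything else in your proposal --- extracting the cactus from $G$, verifying condition~(\ref{eq:permprod}) via Lemma~\ref{lem:transprod}, deducing P-fiberedness from the strict monotonicity of the curves, and identifying $cl(B)$ with $L$ via the band words on vertical slices --- is in agreement with the paper's argument.
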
 

\begin{proof}
Consider the Rampichini diagram $G$ and the induced singular foliations on the fiber disks of the unbook. For each $t\in[0,2\pi]$ we denote the hyperbolic points of these foliations by $c_j(t)$, $j=1,2,\ldots,n-1$, such that $0\leq\Psi(c_1(t))<\Psi(c_2(t))<\ldots<\Psi(c_{n-1}(t))<2\pi$ for all values of $t$ for which there are no intersections between curves in $G$. We denote the transposition associated to a hyperbolic point $c_j(t)$ by $\tau_j(t)$. Note that this convention of labelling the critical points (which differs from that in \cite{bode:adicact, bodesat}) requires us to relabel the critical points at each value of $t$ for which $\Psi(c_i(t))=\Psi(c_{i+1}(t))$, i.e., for each intersection point in the Rampichini diagram.

We want to define complex numbers $v_j(t)$, $t\in[0,2\pi]$, which will play the role of the critical values of the loop of complex polynomials. We set $\arg(v_j(t))=\Psi(c_j(t))$ and define the modulus of $v_j(t)$ such that the BKL relations on intersection points of curves in $G$ are consistent with the movements of $v_j(t)$ in the complex plane. 

This means that if the transpositions $\tau_i(t)$ and $\tau_{i+1}(t)$ change at an intersection point $(\varphi_0,\tau_0)$ according to Eq. (\ref{eq:tau1}), then $|v_i(t_0)|<|v_{i+1}(t_0)|$. If the transpositions change according to Eq. (\ref{eq:tau2}), then $|v_i(t_0)|>|v_{i+1}(t_0)|$. Note that the values of $|v_i(t)|$ only matter at the values of $t$, for which $\arg v_i(t)=\arg v_{i+1}(t)$.

We now have for every value of $t$ a set of `critical values' $v_j(t)$ and a corresponding set of transpositions $\tau_j(t)$, which satisfy $\prod_{j=1}^{n-1}\tau_j(t)=(1\ n\ n-1\ \ldots\ 3\ 2)$ for all $t\in[0,2\pi]$ by Lemma \ref{lem:transprod}.

By Riemann's existence theorem there is polynomial $p_0$ with critical values $v_j(0)$, $j=1,2,\ldots,n-1$, with cactus $\tau_j(0)$, $j=1,2,\ldots,n-1$. The critical values $(v_1(t),v_2(t),\ldots,v_{n-1}(t))$ form a loop in the space of critical values $V_n$. We can now choose a loop $a_0(t)$ in $\mathbb{C}$ with $a_0(t)\neq v_j(t)$ for all $j=1,2,\ldots,n-1$ and $t\in[0,2\pi]$ based at $a_0(0)=a_0(2\pi)$, the constant term of $p_0$. For simplicity we choose the constant loop $a_0(t)=0$ for all $t\in[0,2\pi]$. Thus $((v_1(t),v_2(t),\ldots,v_{n-1}(t)),a_0(t)$) is a loop in $\widehat{V_n}$, see Eq. (\ref{eq:whvn}). By Theorem \ref{thm:crit} this loop lifts to a path $p_t$ in $\widehat{X_n}$, the space of monic polynomials of degree $n$ with distinct roots and disctinct critical values $(v_1(t),v_2(t),\ldots,v_{n-1}(t))$ and constant term different from all $v_j(t)$ The base point of this lifted path is $p_0$.

Note that the lifted path is not necessarily a loop. Its endpoint $p_{2\pi}$ is some other polynomial with the same critical values and the same constant term as $p_0$. We have seen in the previous two subsections how the cactus of a polynomial changes, when the critical values are varied. Recall that the critical values $v_j(t)$ were constructed precisely such that the cactus of polynomial $p_t$ is described by the permutations $\tau_j(t)$ in the given Rampichini diagram. In particular, $p_0$ and $p_{2\pi}$ have the same cactus.

By Riemann's existence theorem $p_0$ and $p_{2\pi}$ differ by an affine transformation $z\mapsto az+b$ for some $a,b\in\mathbb{C}$, $a\neq 0$, say $p_{2\pi}(z)=p_0(az+b)$. However, we know that both $p_0$ and $p_{2\pi}$ are monic, so that $a$ has to be an $n$th root of unity. Note that the $n$ different choices for $a$ correspond to the $n$ different choices for an arc $A_1$ on $\partial \overline{D}$. We have made an arbitrary choice for an arc $A_1$ for $p_0$ and by continuity this choice remains the same for every $p_t$, since $p_t$ is monic for all $t\in[0,2\pi]$. Hence $a=1$.

Now let $\gamma:[0,2\pi]\to\mathbb{C}\backslash\{v_1(0),v_2(0),\ldots,v_{n-1}(0)\}$ be a path starting at $0$ and ending at $b$. Then the composition 
\begin{equation}
g_{t}\defeq 
\begin{cases}
p_{2t} &\text{ if }t\in[0,\pi],\\
p_{2\pi}(z-\gamma(2(t-\pi)) &\text{ if }t\in[\pi,2\pi]
\end{cases}
\end{equation}
of $p_t$ and $p_{2\pi}(z-\gamma)$ is a loop in $\widehat{X_n}$, the space of monic polynomials of degree $n$ with distinct roots, distinct critical values and constant term distinct from any the critical values. Furthermore, its square diagram is identical to the given Rampichini diagram $G$, except that in the top part, which corresponds to $p_{2\pi}(z-\gamma(s))$, the curves are vertical. From this diagram we can already deduce that the roots of $g_t$ form the closed braid that is identical to $L$, the binding of the given mutually braided open book.

In terms of the critical values, this can be visualised as follows. We write $w_j(t)$ for critical values of $g_t$, labelled such that $w_j(t)=v_j(t)$ for $t\in[0,\pi]$ and $w_j(t)=v_j(2\pi)$ for $t\in[\pi,2\pi]$. For the first half of $g_t$ the critical values $w_j(t)$, $j=1,2,\ldots,n-1$, form a braid with $\tfrac{\partial \arg w_j(t)}{\partial t}\neq0$ and for the second half the critical values are simply constant. The composition of these two braids is a loop in $V_n$, which can easily be deformed to a loop that satisfies $\tfrac{\partial \arg w_j(t)}{\partial t}\neq0$ everywhere, not only on the first part.

This deformation can be performed inside a tubular neighbourhood of the strands $w_j(t)$. In particular, the critical values remain distinct from the constant term of $g_t$ and therefore the deformation is a homotopy in $\widehat{V_n}$, which lifts to a homotopy of the loop $g_t$ in $\widehat{X_n}$ by Corollary \ref{cor:lifting}. The resulting loop in the space of polynomials has by construction a square diagram that is equal to the given Rampichini diagram $G$ and it corresponds to a P-fibered braid, since $\tfrac{\partial \arg w_j(t)}{\partial t}\neq0$.
\end{proof}




We know that B1)$\iff$B2) by Rampichini. Furthermore, we have proved that B4)$\implies$B3) (by Lemma \ref{lemma43}), that B3)$\implies$B1) (by Lemma \ref{lem:liftex}) and that B2)$\iff$B4) (by Proposition \ref{prop:pfibmut} and Theorem \ref{thm:42}). We conclude that B1), B2), B3) and B4) are all equivalent to each other as stated in Theorem \ref{thm:main}.

To a totally braided open book we can associate its \textit{boundary braid}, which is simply the braid that is the boundary of the fibers, which is clearly a braid relative to the same braid axis $O$ as the open book. Note that the isotopy of the open book in Theorem \ref{thm:main} restricts to a braid isotopy of the binding $L$ in the complement $S^3\backslash O$ of the braid axis $O$, so that we have the following corollary.

\begin{corollary}
A braid $B$ is P-fibered (up to conjugation) if and only if it is the boundary braid of a totally braided open book in $S^3$.
\end{corollary}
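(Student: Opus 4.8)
The statement follows quickly by combining Theorem~\ref{thm:main} with the explicit correspondence between boundary braids and loops of polynomials developed in Section~\ref{sec:31}, so the plan is short.

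First I would prove the implication ``$\Rightarrow$''. If $B$ is P-fibered, then by Definition~\ref{def:pfib} there is a loop of monic polynomials $g_t$ whose roots trace out $B$ and whose argument $\chi=\arg g_t$ is a fibration; this fibration is the open book $(L,\chi)$ with binding $L=cl(B)$, and by Proposition~\ref{prop:pfibmut} this open book is totally braided. Its fibers $\chi^{-1}(\varphi)$ have boundary $L=cl(B)$, so the boundary braid of $(L,\chi)$ is exactly $B$ (and in any case agrees with $B$ up to the conjugation ambiguity inherent in choosing a base page). Hence $B$, up to conjugation, is the boundary braid of a totally braided open book.

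For the converse ``$\Leftarrow$'', suppose $B$ is the boundary braid of a totally braided open book $(L,\Psi)$; then $L=cl(B)$ and $(L,\Psi)$ satisfies property B2). By Theorem~\ref{thm:main} there is an isotopy of $(L,\Psi)$ after which property B4) holds; concretely, applying Theorem~\ref{thm:42} to the Rampichini diagram $G$ of $(L,\Psi)$ produces a loop of polynomials $g_t\subset \widehat{X_n}$, with square diagram equal to $G$, whose roots form a P-fibered braid $B'$ closing to $L$. As observed in the paragraph preceding this corollary, the isotopy in Theorem~\ref{thm:main} is supported in $S^3\backslash O$ and restricts to a braid isotopy (relative to the fixed axis $O$) of the binding; under this braid isotopy the boundary braid $B$ of $(L,\Psi)$ is carried to the boundary braid of the resulting B4)-open book, which is $B'$. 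Since an isotopy through closed $n$-braids relative to a fixed braid axis induces a conjugation in $\mathbb{B}_n$, the braids $B$ and $B'$ are conjugate, and therefore $B$ is P-fibered up to conjugation.

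The step requiring the most care is the conclusion of the converse: one must check that the isotopy produced by Theorem~\ref{thm:main} really does carry the boundary braid of the given open book to the boundary braid of the B4)-open book --- equivalently, that the Rampichini diagram $G$ of $(L,\Psi)$ encodes the boundary braid of $(L,\Psi)$ in exactly the way that the square diagram of $g_t$ encodes $B'$ --- and that an isotopy through closed $n$-braids relative to $O$ is nothing but a conjugation. Both facts are standard (the latter is classical for closed braids in $S^3$) but should be stated explicitly; the remaining ingredients are immediate from the results already established.
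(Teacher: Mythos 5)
Your proof is correct and follows essentially the same route as the paper: the forward direction is Proposition~\ref{prop:pfibmut} plus the observation that the open book given by $\arg g_t$ has $cl(B)$ as binding, and the converse is Theorem~\ref{thm:42} together with the remark preceding the corollary that the relevant isotopies restrict to braid isotopies of $L$ in $S^3\backslash O$. One small simplification you could make in the converse is to observe directly that the band word read off at $\varphi=0$ is the same in the square diagram of $g_t$ as in the Rampichini diagram $G$, so that $B$ and $B'$ are literally the same braid word rather than merely conjugate, which sidesteps the need to invoke the classical ``isotopy of closed $n$-braids relative to a fixed axis is a conjugation'' fact.
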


Therefore, the algorithm by Morton and Rampichini \cite{mortramp} that decides whether a given band word is the boundary braid of a totally braided open book is also an algorithm that checks P-fiberedness of braids given in terms of band generators.

We can use Riemann's existence result, Theorem \ref{thm:crit} and the proof of Corollary \ref{cor:lifting} to give a proof of the following fact.

\begin{proposition}
\label{prop:graphcount}
There are exactly $n^{n-2}$ cacti $\mathcal{C}=(\tau_1,\tau_2,\ldots,\tau_{n-1})$ of degree $n$ such that $\tau_i$ is a transposition for all $i=1,2,\ldots,n-1$.
\end{proposition}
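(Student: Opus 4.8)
The plan is to count a fibre of the covering $\theta_n$ of Theorem \ref{thm:crit} and to match it with cacti up to an $n$-fold ambiguity. First, a cactus $\mathcal{C}=(\tau_1,\ldots,\tau_{n-1})$ all of whose entries are transpositions is, by Eq.\ (\ref{eq:permprod}), exactly an ordered factorisation of the $n$-cycle $\gamma^{-1}=(1\ n\ n-1\ \ldots\ 3\ 2)$ into $n-1$ transpositions; in particular the $\tau_i$ generate a transitive subgroup of $S_n$ (it contains an $n$-cycle), and a transitive subgroup of $S_n$ generated by transpositions is all of $S_n$. Now fix critical values $\mathbf{v}^{*}=(v_1^{*},\ldots,v_{n-1}^{*})$ with $v_j^{*}\neq 0$ and $0\leq\arg v_1^{*}<\ldots<\arg v_{n-1}^{*}<2\pi$, so that $\mathbf{v}^{*}\in V_n$. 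By Theorem \ref{thm:crit} the fibre $\theta_n^{-1}(\mathbf{v}^{*})$ has exactly $n^{n-1}$ elements, namely the monic degree-$n$ polynomials $p$ with distinct roots, critical values $\{v_1^{*},\ldots,v_{n-1}^{*}\}$ and constant term $0$. Each such $p$, compactified to a branched cover $p\colon\overline{D}\to\overline{D}$, has only simple ramification (the critical values have pairwise distinct arguments), and its normalisation at infinity, Eq.\ (\ref{eq:limit2}), singles out the boundary preimage $x_1$ of $1$ at argument $0$ in a canonical way, exactly as in the proof of Theorem \ref{thm:42}. This gives a well-defined map $\Phi$ from $\theta_n^{-1}(\mathbf{v}^{*})$ to the set of cacti of degree $n$ all of whose entries are transpositions, sending $p$ to its cactus with first arc $A_1$ adjacent to $x_1$.

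Next I would show that $\Phi$ is surjective and exactly $n$-to-one. Surjectivity is Riemann's existence theorem: given such a cactus $C$ there is a monic polynomial with critical values $\mathbf{v}^{*}$ and cactus $C$, and, since $0$ is a regular value (it is not a critical value), this polynomial has $n$ distinct roots, so translating it to move one root to $0$ lands in $\theta_n^{-1}(\mathbf{v}^{*})$ without changing the cactus (a translation $z\mapsto z+b$ acts on the compactification by a homeomorphism isotopic to the identity rel boundary, hence fixes the cactus). For the fibre over $C$, fix by Riemann's existence theorem a monic polynomial $P$ with critical values $\mathbf{v}^{*}$ and cactus $C$. By the uniqueness clause of that theorem, every monic polynomial with critical values $\mathbf{v}^{*}$ and cactus $C$ is of the form $P(az+b)$ with $a^{n}=1$; but $z\mapsto az$ with $a=\mathrm{e}^{2\pi\mathrm{i}m/n}$ replaces the cactus by its conjugate by $\gamma^{m}$, and since the entries of $C$ generate $S_n$, whose centre is trivial for $n\geq 3$, invariance of $C$ under conjugation by $\gamma^{m}$ forces $\gamma^{m}=\mathrm{id}$, i.e.\ $a=1$. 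Hence $\Phi^{-1}(C)=\{\,P(z+b):P(b)=0\,\}$, which has exactly $n$ elements (the roots of $P$ are distinct because $0$ is a regular value, and distinct translates of $P$ are distinct polynomials). The cases $n\leq 2$ are checked directly.

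It follows that
\begin{equation*}
\#\{\text{cacti of degree }n\text{ with transpositions}\}=\frac{|\theta_n^{-1}(\mathbf{v}^{*})|}{n}=\frac{n^{n-1}}{n}=n^{n-2}.
\end{equation*}
The main obstacle, I expect, is not any single computation but the careful set-up that makes $\Phi$ honest: pinning down the marking $x_1$ intrinsically from the behaviour at infinity and the compactification used in the proofs of Corollary \ref{cor:lifting} and Theorem \ref{thm:42}; verifying that passing to constant term $0$ by a translation stays within the class of polynomials governed by Theorem \ref{thm:crit} (this is precisely the constant-term bookkeeping in the proof of Corollary \ref{cor:lifting}) and does not disturb the cactus; and the small group-theoretic input that a cactus with transpositions has trivial stabiliser under conjugation by powers of $\gamma$ once $n\geq 3$. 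Once these points are in place, the count above is immediate.
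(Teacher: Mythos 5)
Your argument is correct and rests on the same two pillars as the paper's: the degree $n^{n-1}$ of the covering $\theta_n$ from Theorem \ref{thm:crit}, and Riemann's existence theorem. The only substantial difference is bookkeeping. You rigidify the boundary marking $x_1$ via the normalisation at infinity, producing a clean $n$-to-one map $\Phi$ from the fibre $\theta_n^{-1}(\mathbf{v}^{*})$ to cacti, and you kill the rotation ambiguity with the group-theoretic fact that $Z(S_n)$ is trivial for $n\geq 3$ together with the observation that the entries of a (connected) cactus generate $S_n$. The paper instead keeps the $n$-fold rotational ambiguity, shows by an elementary argument about fixed critical points that no nontrivial affine map preserves a polynomial in the fibre (so each affine orbit has $n^2$ elements), and multiplies the number of orbits by the $n$ cacti each orbit carries, arriving at the same count $n^{n-1}/n^2\cdot n = n^{n-1}/n = n^{n-2}$. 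A small advantage of your version is that it makes explicit why the $n$ cacti attached to a single polynomial are pairwise distinct (no nontrivial power of $\gamma$ can fix the tuple by conjugation), a point the paper asserts without comment when it speaks of ``$n$ different cacti''; conversely, the paper's stabiliser argument has the merit of being entirely elementary and avoiding the appeal to $Z(S_n)$.
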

\begin{proof}
For the case of $n=2$, the statement is obvious, since $\tau_1=(1\ 2)$ is the only cactus for $n=2$. In the following, we assume that $n>2$.

Recall that $\theta_n:X_n\to V_n$ maps a polynomial to its set of critical values. By Theorem \ref{thm:crit} it is a covering map of degree $n^{n-1}$. Fix a point in $V_n$ and consider the $n^{n-1}$ polynomials in its fiber. 


Riemann's existence result says that for $(v_1,v_2,\ldots,v_{n-1})$ and every fixed cactus $\mathcal{C}$ there is a polynomial $p$ with the corresponding set of critical values and cactus $\mathcal{C}$, which is unique up to affine transformation $z\mapsto az+b$. Since we only consider monic polynomials with constant term equal to zero, there are at most $n^2$ polynomials that can be obtained from such a transformation with values of $(a,b)=(\rme^{\rmi 2\pi k/n},\rme^{\rmi 2\pi k/n}z_j)$, $j,k=1,2,\ldots,n$, where $z_j$, $j=1,2,\ldots,n$ are the roots of $p$. 

In fact, these $n^2$ transformations all result in different polynomials, so that we obtain exactly $n^2$ polynomials. This can be seen as follows. Suppose that there is an affine transformation $z\mapsto az+b$ that maps a polynomial $p$ in the fiber to itself. Then the transformation must preserve the critical points $c_j$ of $p$ as a set, so for every $j=1,2,\ldots,n-1$ there is a $k\in\{1,2,\ldots,n-1\}$ with $ac_j+b=c_k$. Since the transformation maps $p$ to itself, we obtain $p(c_k)=c_j$, which implies $k=j$, since all critical values of $p$ are distinct. Therefore $ac_j+b=c_j$ for all $j$, which is a contradiction, since $n>2$ and the $n-1$ critical points are distinct.

Now recall that we can associate to every polynomial not just one cactus, but $n$ different cacti corresponding to the $n$ different counterclockwise labellings of the boundary arcs $A_i$, so that two polynomials have a cactus in common if and only if they have the same $n$ cacti. Therefore, the family of $n^2$ polynomials above shares its set of $n$ associated cacti. The total number of different cacti is therefore the number of polynomials in the fiber, $n^{n-1}$, divided by the number of polynomials that share the same set of $n$ cacti, $n^2$, multiplied by $n$, which results in $n^{n-2}$.
\end{proof}


Proposition \ref{prop:graphcount} is a special case of the more general formula proved in Theorem 3.2 of \cite{goulden}. We present our proof, not because it is easier, more direct or more intuitive (it is perhaps none of those), but precisely because of how roundabout it is. The proposition is stated in purely combinatorial and group theoretic terms and yet our proof is based on topological properties of spaces of complex polynomials and the formula itself stems from the degree of the covering map in Theorem \ref{thm:crit}, which itself is a consequence of Bezout's Theorem.

\section{Some braided open books}\label{sec:23}

We now know that the four different braiding characterisations B1), B2), B3) and B4) are equivalent. This immediately provides us with some examples of braided open books and various consequences. Rudolph showed that the family of T-homogeneous braids, a generalisation of the family of homogeneous braid, gives rise to totally braided open books, that is, open books that are mutually braided with the unbook \cite{rudolph2}. As a corollary we obtain
\begin{corollary}\label{cor:thomo}
T-homogeneous braids are P-fibered braids (up to conjugation).
\end{corollary}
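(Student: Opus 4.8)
The plan is to obtain the statement as an immediate consequence of the equivalence B2)$\iff$B4) packaged in Theorem \ref{thm:main}, combined with the characterisation of boundary braids recorded in the corollary above. First I would recall Rudolph's result from \cite{rudolph2}: if $B$ is a T-homogeneous braid on $n$ strands, then the braided Seifert surface built directly from its band presentation is the fiber of an open book $(L,\Psi)$ with binding $L=cl(B)$, and this open book is totally braided, i.e.\ mutually braided with the unbook $(O,\Phi)$ whose binding $O$ is the braid axis of $B$. In particular $B$ is, by construction, the boundary braid of $(L,\Psi)$: the strands of $B$ are exactly the oriented boundary of the fibers, and $B$ is a closed braid relative to $O$.

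Next I would invoke Theorem \ref{thm:main}: since $(L,\Psi)$ satisfies property B2), it can be isotoped to satisfy B4), and — as observed in the discussion following the proof of Theorem \ref{thm:42} — this isotopy of the open book restricts to a braid isotopy of $L$ inside the solid torus $S^3\backslash O$. Hence the boundary braid of the isotoped totally braided open book is conjugate to $B$. By the corollary stating that a braid is P-fibered up to conjugation if and only if it is the boundary braid of a totally braided open book in $S^3$, we conclude that $B$ is P-fibered up to conjugation.

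The only point requiring a little care is verifying that Rudolph's construction really produces an open book whose boundary braid is $B$ itself (up to conjugation), as opposed to merely some other braid with the same closure $cl(B)$; this follows from the fact that in Rudolph's description the T-homogeneous braided surface is assembled from the band word of $B$, so that its oriented boundary is precisely $B$. Beyond this bookkeeping there is no substantive obstacle: the entire content of the corollary is absorbed into Theorem \ref{thm:main} and the boundary-braid corollary, so the proof is a short chain of citations rather than a new argument.
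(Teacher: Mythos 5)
Your proposal is correct and takes essentially the same route as the paper: Rudolph's result that T-homogeneous braids give rise to totally braided open books, combined with the equivalence B2)$\iff$B4) from Theorem~\ref{thm:main} and the boundary-braid corollary following Theorem~\ref{thm:42}. Your extra care about verifying that the boundary braid of Rudolph's construction is $B$ itself (not merely some braid with the same closure) is a worthwhile point that the paper leaves implicit, but it does not change the structure of the argument.
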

\begin{corollary}\label{cor:thomo2}
The closures of T-homogeneous braids and their fiber surfaces can be obtained from the unbook via a sequence of $\pi$-symmetric Hopf plumbings and deplumbings for some simple branched cover $\pi:S^3\to S^3$.
\end{corollary}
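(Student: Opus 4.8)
\emph{Proof proposal.} The plan is to combine Rudolph's result with Theorem \ref{thm:main} and the discussion in Section \ref{sec:42}. First, Rudolph proved in \cite{rudolph2} that for a T-homogeneous braid $B$ the open book $(L,\Psi)$ whose binding $L$ is the closure of $B$ is mutually braided with the unbook; that is, $(L,\Psi)$ satisfies property B2). By Theorem \ref{thm:main} we may then isotope $(L,\Psi)$ so that it satisfies property B3): there is a simple branched cover $\pi:S^3\to S^3$ of some degree $n$, branched over a link $L_{branch}$ with a Hopf $n$-braid axis $\alpha\cup\beta$, such that $L=\pi^{-1}(\alpha)$ and $O\defeq\pi^{-1}(\beta)$ is an unknot.

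Next I would record that the open book downstairs with binding $\beta$ is the unbook. Indeed, Definition \ref{def:hopf} requires $L_{branch}$ to be an $(n-1)$-braid relative to $\beta$, so $\beta$ realises the braid index and $b(L_{branch})\leq n-1$. Lemma \ref{lem:RH} gives the reverse inequality $b(L_{branch})\geq n-1$, hence $b(L_{branch})=n-1$; the same Riemann--Hurwitz computation shows that the $\pi$-preimage of a fiber disk over $\beta$ is again a disk, so the open book with binding $O=\pi^{-1}(\beta)$ is precisely the unbook.

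Finally I would invoke the Corollary of Section \ref{sec:42}, which rests on Montesinos--Morton \cite{morton}: since $b(L_{branch})=n-1$, the preimage $\pi^{-1}(\alpha)=L$ together with its fiber surface is obtained from the unknot $\pi^{-1}(\beta)$ and its fiber disk by a finite sequence of $\pi$-symmetric Hopf plumbings and deplumbings. Because the open book over $\beta$ is the unbook, this is exactly the assertion of the corollary, with the very same simple branched cover $\pi$. There is no genuine obstacle here; the only point requiring care is the bookkeeping that identifies ``obtained from the unknot $\pi^{-1}(\beta)$ and its fiber disk'' with ``obtained from the unbook'', which is immediate since the two open books upstairs are the lifts through $\pi$ of the unbooks with bindings $\alpha$ and $\beta$. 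All of the substance has already been supplied by Theorem \ref{thm:main} and the results of Sections \ref{sec:42} and \ref{sec:31}.
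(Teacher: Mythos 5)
Your proposal is correct and follows exactly the chain of implications the paper has in mind: Rudolph's result places the T-homogeneous open book in class B2), Theorem \ref{thm:main} upgrades it to B3), and then the Montesinos--Morton corollary of Lemma \ref{lem:RH} (together with $b(L_{branch})=n-1$, forced by Definition \ref{def:hopf}) delivers the $\pi$-symmetric Hopf (de)plumbing sequence from the unbook. The paper states this corollary without a separate proof precisely because it is this immediate combination; your write-up simply makes the bookkeeping explicit, including the correct observation that the lifted open book over $\beta$ is the unbook since the Riemann--Hurwitz count shows its pages are disks.
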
  

\begin{corollary}
Let $B$ be a T-homogeneous braid. Then the closure of $B^2$ is real algebraic.
\end{corollary}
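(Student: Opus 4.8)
The plan is to deduce the statement directly from the two results immediately preceding it: the corollary that every T-homogeneous braid is conjugate to a P-fibered braid, and the theorem (cf. \cite{bode:2016lemniscate, bode:real}) asserting that the closure of the square of a P-fibered braid is real algebraic. The only thing that requires checking is that these two facts combine cleanly, which amounts to a couple of elementary observations about braid conjugation and closures.

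First I would fix a T-homogeneous braid $B\in\mathbb{B}_n$ and, invoking the preceding corollary, choose $w\in\mathbb{B}_n$ such that $B'\defeq wBw^{-1}$ is P-fibered. Next I would observe that $B'^{2}=wB^{2}w^{-1}$, so that $B^{2}$ and $B'^{2}$ are conjugate in $\mathbb{B}_n$ and hence have the same closure, i.e.\ $cl(B^{2})=cl(B'^{2})$ as links in $S^{3}$. Finally, applying the theorem on real algebraic links to the P-fibered braid $B'$ shows that $cl(B'^{2})$ is real algebraic, and therefore so is $cl(B^{2})$.

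There is essentially no obstacle here; the argument is a two-line chain once the earlier results are granted. The one point worth a sentence is that ``P-fibered'' in the cited theorem is a property of an honest (geometric) braid, so one should be mildly careful that the conjugate representative $B'$ is genuinely P-fibered rather than only P-fibered after a further conjugation — but this is exactly what the preceding corollary supplies, so no extra work is needed. One could also remark that the identical reasoning gives the analogous conclusion for any family of braids known to be conjugate to P-fibered braids, for instance the homogeneous braids or the non-homogeneous examples of \cite{bode:adicact}.
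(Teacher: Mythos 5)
Your argument is correct and coincides with the (unstated) proof the paper has in mind: combine the preceding corollary, that T-homogeneous braids are P-fibered up to conjugation, with the cited theorem that the closure of $B'^2$ is real algebraic for any P-fibered braid $B'$, and use that $(wBw^{-1})^2=wB^2w^{-1}$ to pass between conjugates without changing the closure. No discrepancy to report.
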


Likewise, the satellite and twisting operation defined \cite{bodesat} that can be used to construct new P-fibered braids from known ones, carry over to the setting of generalised exchangeable braids, mutually braided open books and $\pi$-symmetric Hopf plumbings/deplumbings.

In this final section we produce further examples of families of braided open books.

\subsection{Over- and underpasses}

Every banded surface can be represented by a \textit{ladder diagram} as in Figure \ref{fig:ladders}a), where each of the $n$ disks is represented by a vertical line and each half-twisted band corresponds to a horizontal line connecting the vertical lines that are connected by the band. Every horizontal line $h$ comes with a sign $sign(h)\in\{\pm\}$ that reflects the sign of the half-twisted band. For our purposes we take the lines in the diagram to be oriented such that the vertical lines point upwards and the horizontal lines go from left to right. In particular, the `starting point' of a horizontal line refers to its leftmost point. Furthermore, we define for each horizontal line $h$ going from the $i$th disk to the $j$th disk, i.e., corresponding to the band generator $a_{i,j}^{sign(h)1}$, the tuple $\tau(h)=(i,j)$.


\begin{figure}[h]
\labellist
\Large
\pinlabel a) at 100 900
\pinlabel b) at 1150 900
\pinlabel + at 320 800
\pinlabel -- at 580 540
\pinlabel + at 830 700
\pinlabel + at 830 310
\pinlabel -- at 320 390
\pinlabel + at 1420 800
\pinlabel -- at 1680 540
\pinlabel + at 1930 700
\pinlabel + at 1850 310
\pinlabel -- at 1680 390
\pinlabel {\color{blue}$\gamma_1$} at 2100 540
\pinlabel {\color{red}$\gamma_2$} at 2100 360
\endlabellist
\centering
\includegraphics[height=4.5cm]{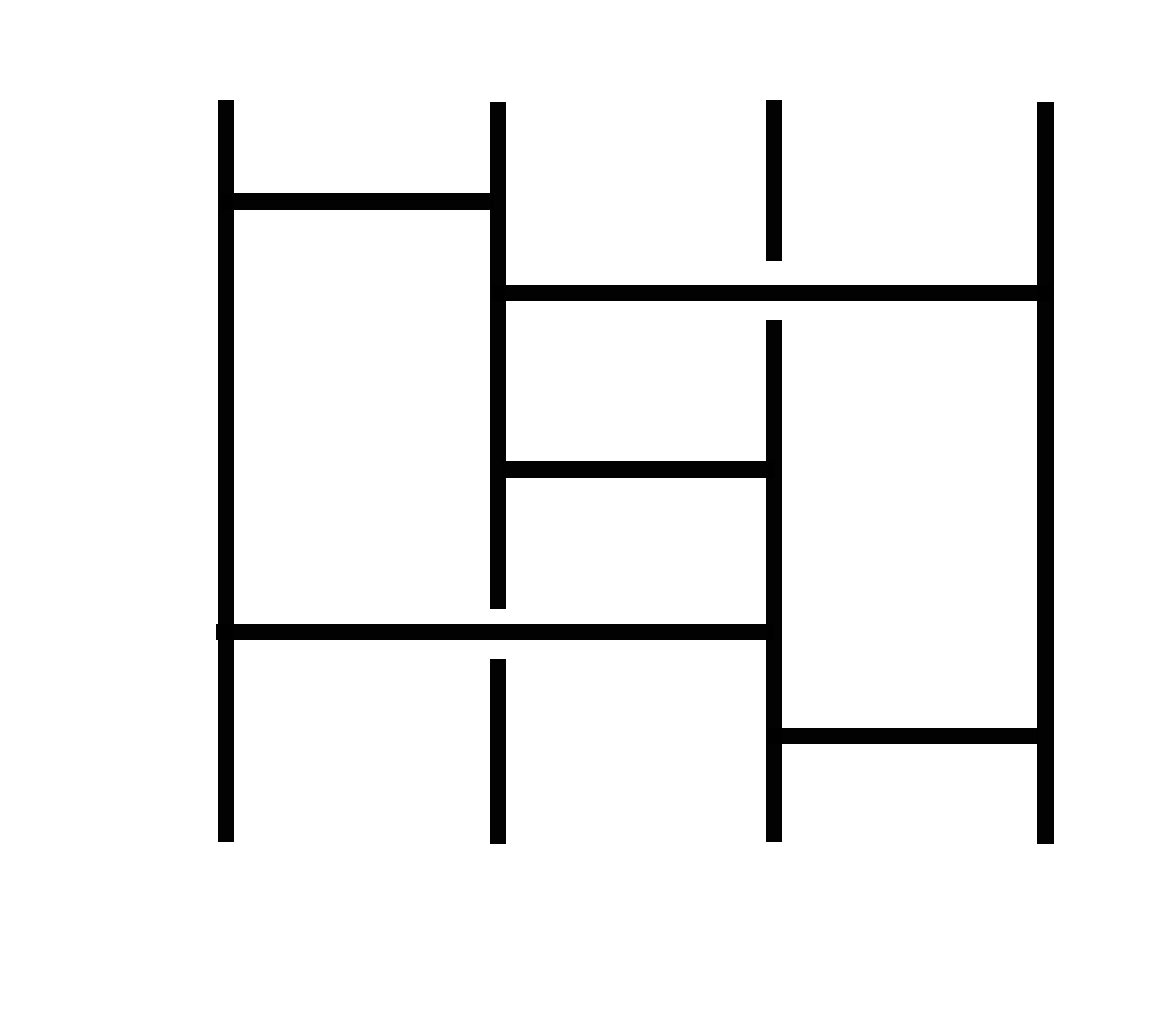}
\includegraphics[height=4.5cm]{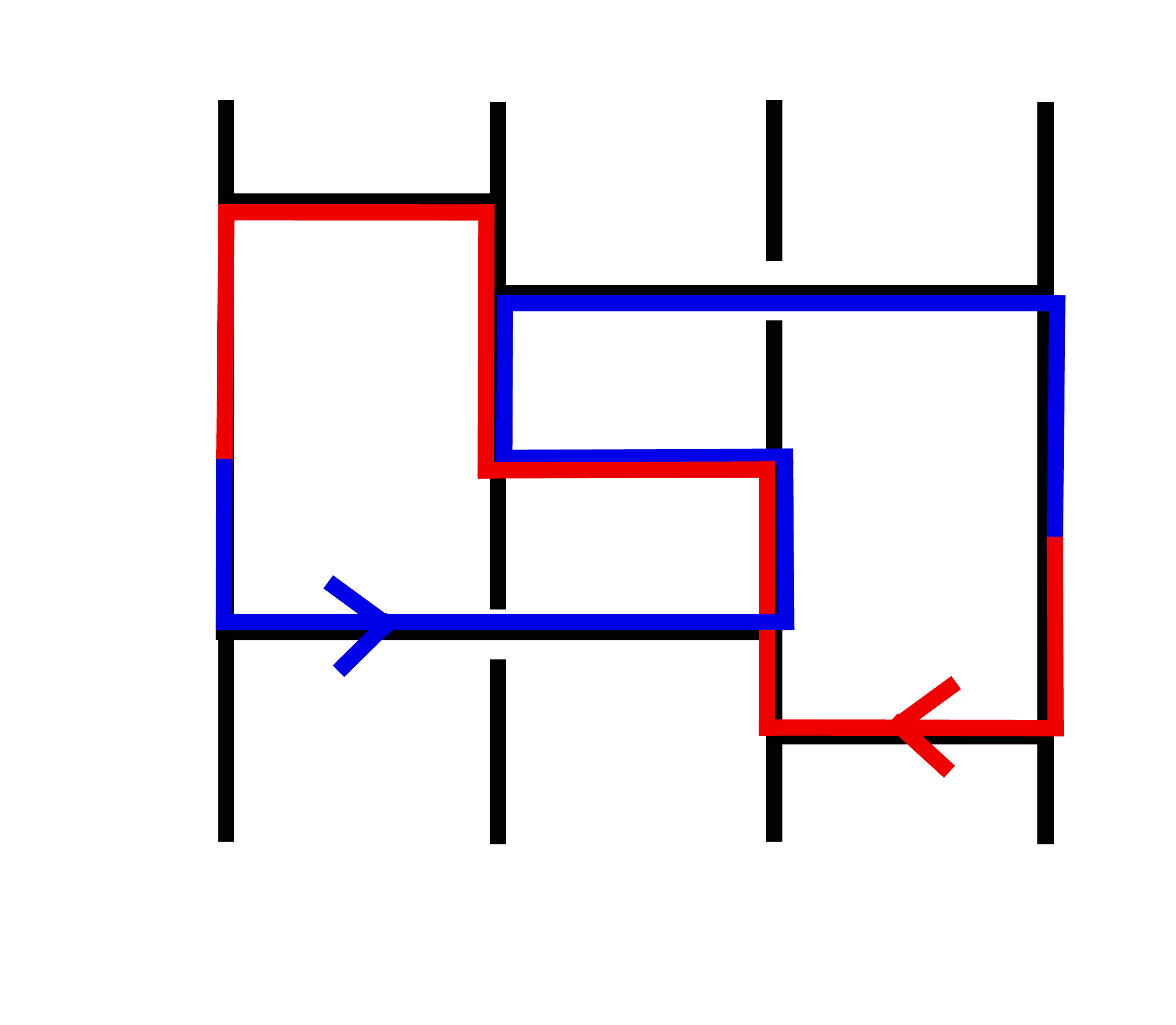}
\caption{a) A ladder diagram representing the banded surface $a_{3,4}a_{1,3}^{-1}a_{2,3}^{-1}a_{2,4}a_{1,2}$. b) The ladder diagram has an overpass $\gamma_1$ and an underpass $\gamma_2$. \label{fig:ladders}}
\end{figure}

A simple path $\gamma:[0,1]\to \mathcal{D}$ in a ladder diagram $\mathcal{D}$, with the orientation induced by that on the unit interval, consists of a sequence of horizontal $h_i$ and vertical segments $v_i$, $i=1,2,\ldots,m$, so that $\gamma$ is the concatenation $\gamma=h_1v_1h_2v_2\ldots h_mv_m$, where $h_1$ and $v_m$ are potentially constant, i.e., only a point. We define $o(h_i)=+1$ if the orientation of a horizontal line in $\mathcal{D}$ matches the orientation of $\gamma$ on that segment $h_i$, and $o(h_i)=-1$ if the two orientations are opposite. Analogously, we define $o(v_i)$. If $h_1$ or $v_m$ is constant, we leave $o(h_1)$ or $o(v_m)$ undefined.

For a diagram $\mathcal{D}$ as in Figure \ref{fig:ladders} with $n$ vertical lines we can ask if paths $\gamma_1,\gamma_2:[0,1]\to \mathcal{D}$, $\gamma_1=h_1^1v_1^1h_2^1v_2^1\ldots h_{m}^1v_{m}^1$, $\gamma_2=h_1^2v_1^2h_2^2v_2^2\ldots h_{n-1}^2v_{n-1}^2$ with all of the following properties exist:
\begin{itemize}
\item $\gamma_1(0)=\gamma_2(1)$ lies on the leftmost vertical line and $\gamma_1(1)=\gamma_2(0)$ on the rightmost vertical line.
\item Neither $\gamma_1$ nor $\gamma_2$ cross themselves (but they may cross each other).
\item On segments where $\gamma_1$ is vertical, it does not cross any horizontal lines, nor does it contain starting points of any horizontal lines.
\item If $o(h_i^1)=1$ for some $i\in\{1,2\ldots,m_1\}$, then $o(v_{i-1})=sign(h_i)1$.
\item If $o(h_i^1)=-1$ for some $i\in\{1,2\ldots,m_1\}$, then $o(v_{i})=-sign(h_i)1$.
\item For all $i=1,2,\ldots,n-1$ we have $\tau(h_i^2)=(n-i,n-i+1)$ .
\end{itemize}

\begin{definition}
Paths $\gamma_1$ and $\gamma_2$ as above are called an \textbf{overpass} and \textbf{underpass} of $\mathcal{D}$, respectively.
\end{definition}

The defining properties of over- and underpasses might seem a bit arbitrary. When visualised on a braided surface however, it becomes apparent that the union of an overpass and an underpass gives us the kind of braid axis that we are looking for in a generalised exchangeable braid.

\begin{lemma}
\label{lem:over}
Let $F$ be a braided surface of degree $n$ with boundary braid $B$ with closure $L$. If the corresponding ladder diagram $\mathcal{D}$ has an overpass and an underpass, then there is a braid axis $O$ that lies in a neighbourhood of $F\cup B$ and that can be taken to be positively transverse to all level sets of any circle-valued map $\Psi:S^3\backslash L\to S^1$ that behaves like an open book in a neighbourhood of $L$ and that satisfies $\Psi^{-1}(\chi)=F$ for some $\chi\in S^1$. In particular, if $F$ is a Bennequin surface and $L$ is fibered, the existence of an overpass and an underpass implies that $B$ is a generalised exchangeable braid and hence that the corresponding open book can be braided.
\end{lemma}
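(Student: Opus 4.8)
The plan is to realise the overpass and underpass as pieces of an unknot $O$ that forms the desired braid axis, and then to verify that $O$ is positively transverse to every page of the open book. First I would describe how a ladder diagram $\mathcal{D}$ sits on the braided surface $F$: the $n$ vertical lines correspond to the $n$ disks of $F$ (which I may take to be nested meridional disks of a tubular neighbourhood of the eventual axis), and the horizontal segments correspond to the half-twisted bands, each attached with the sign recorded in the diagram. A path in $\mathcal{D}$ that alternates horizontal and vertical segments can be pushed off $F$ to one side, and the defining conditions on an overpass $\gamma_1$ are exactly the conditions guaranteeing that $\gamma_1$, when pushed off, stays \emph{above} all the bands it is not supposed to meet and crosses the bands it does meet in the correct direction so that it runs monotonically with respect to the page-parameter $\Psi$; the conditions on an underpass $\gamma_2$ guarantee the dual behaviour, running monotonically \emph{below} $F$ through a sequence of bands realising the elementary transpositions $(n-1,n),(n-2,n-1),\dots,(1,2)$, which is precisely a collection of bands whose union with the disks is (a neighbourhood of) an unknotted arc.

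Next I would glue $\gamma_1$ and $\gamma_2$ along their common endpoints on the leftmost and rightmost vertical lines to obtain a closed curve $O\defeq \gamma_1\cup\gamma_2$, and check two things: (i) $O$ is an unknot, and (ii) $O$ is disjoint from $L=\partial F$. For (i), the underpass part $\gamma_2$ together with the portion of $F$ it meets forms a trivially banded sub-surface (a chain of adjacent bands is an unknotted band), and $\gamma_1$ is an unknotted arc lying above $F$ that can be isotoped to a standard position relative to the boundary of a collar of $F$; assembling these, $O$ bounds an obvious disk. For (ii) one simply notes that $\gamma_1$ and $\gamma_2$ are paths in the interior of $F$ pushed slightly off, so $O$ lies in a small neighbourhood of $F\cup B$ and misses $B=\partial F$ itself.

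The main step — and the main obstacle — is the transversality claim: for \emph{any} circle-valued map $\Psi:S^3\setminus L\to S^1$ that behaves like an open book near $L$ and has $F$ as the page $\Psi^{-1}(\chi)$, the curve $O$ is positively transverse to all level sets of $\Psi$. The key observation is that transversality to the fibration only depends on $O$ in a neighbourhood of $F$, since $O$ lies in such a neighbourhood, and near $F$ the fibration $\Psi$ is, up to isotopy, the standard model $F\times(-\varepsilon,\varepsilon)\to(-\varepsilon,\varepsilon)\subset S^1$ on the $F$-side together with the standard open-book behaviour around each component of $L=\partial F$. I would argue that the push-off directions prescribed by the sign conditions $o(h_i^1)=\mathrm{sign}(h_i)$, $o(v_{i-1})=\mathrm{sign}(h_i)$ (and the analogous conditions for $\gamma_2$) are exactly what is needed for the $\Psi$-coordinate to be strictly monotone along each segment of $\gamma_1$ and $\gamma_2$: along a vertical segment $O$ moves transversally through the collar $F\times(-\varepsilon,\varepsilon)$ in one direction, and along a horizontal segment (crossing a band) the half-twist forces a monotone change of $\Psi$ whose sign is pinned down by the sign of the band. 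Patching these monotone pieces around the closed curve $O$, and checking that the transitions at the corners $\gamma_1(0)=\gamma_2(1)$ and $\gamma_1(1)=\gamma_2(0)$ preserve the direction, yields that $\Psi|_O:O\to S^1$ has nowhere-vanishing, everywhere-positive derivative, i.e.\ $O$ is a braid axis for the open book $(L,\Psi)$.

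Finally, specialising: if $F$ is a Bennequin surface and $L$ is fibered, then $F$ is (isotopic to) the fiber of the open book, so we may take $\Psi$ to be the fibration whose page is $F$; then $L$ is braided relative to $O$ (since $B=\partial F$ is a closed braid with respect to $O$ by construction of the ladder diagram) and $O$ is braided relative to $L$ by the transversality just established. Hence $L\cup O$ is generalised exchangeable in the sense of Definition \ref{def:exch}, so $B$ is a generalised exchangeable braid, and by Theorem \ref{thm:main} the open book with binding $L$ and page $F$ can be braided. The delicate bookkeeping will be matching the combinatorial sign conventions $o(h_i),o(v_i),\mathrm{sign}(h)$ in the definition of over/underpass with the geometric direction in which $\gamma_1,\gamma_2$ must be pushed off $F$ so that $\Psi$ increases; I expect this to be the part requiring the most care, and a couple of carefully drawn figures of a band with its two push-off sides.
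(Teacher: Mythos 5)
Your overall strategy matches the paper's: realise the overpass and underpass as arcs on (push-offs of) $F$, assemble a closed curve $O$, and verify that $O$ is an unknotted braid axis positively transverse to the pages. But there is a genuine gap in the assembly step. You write $O \defeq \gamma_1 \cup \gamma_2$, ``glued along their common endpoints.'' In the ladder diagram those endpoints coincide, but geometrically they cannot: the paper realises $\gamma_1$ as a curve on $F$ itself and $\gamma_2$ as a curve on $F^+\cup N(B)$, a positive push-off of $F$ together with a neighbourhood of the boundary braid, so the two ends live on different nearby pages. The missing ingredient is two connecting arcs inside $N(B)$, winding positively around $L$, that join the end of $\gamma_1$ to the start of $\gamma_2$ and vice versa. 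These arcs are not a technicality: they are where essentially all of the $\Psi$-variation of $O$ occurs (the boundary condition of an open book in $N(B)$ forces any positively transverse arc there to carry the full $2\pi$ of $\Psi$-rotation), and without them the curve $\gamma_1\cup\gamma_2$ cannot possibly be positively transverse, since $\gamma_1\subset F = \Psi^{-1}(\chi)$ has $\Psi$ constant on it.

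Relatedly, the transversality argument you sketch misattributes the role of the sign conditions $o(h_i^1)=\operatorname{sign}(h_i)$, etc. In the paper those conditions are used to show that $\gamma_1$ can be isotoped to lie \emph{above} all strands of $B$ in the diagram plane and $\gamma_2$ \emph{below} them, which is what makes $O$ visibly an unknot and a braid axis for $B$; they do not directly pin down monotonicity of $\Psi$ along the band crossings. The positive transversality instead comes from the structural fact that $O\subset F\cup F^+\cup N(B)$ with the $N(B)$ arcs positively transverse to the open-book boundary condition: since the behaviour of $\Psi$ near $F$ and near $L$ is the standard collar/open-book model, a small isotopy of $O$ inside that controlled region makes it positively transverse to \emph{every} level set, independent of how $\Psi$ behaves far from $F$. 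Your monotonicity-by-segments claim would need to be reformulated in these terms to be correct.
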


\begin{proof}
We prove this lemma by drawing the pictures for the horizontal segments of the over- and underpass. For this we first interpret the overpass as a curve on the surface $F$ and the underpass as a curve on $F^+\cup N(B)$ (cf. Figure~\ref{fig:bigexample}a)), where $F^+$ is a positive push-off of $F$, e.g., if $F$ is a regular level set $\Psi^{-1}(\chi)$, $\chi\in S^1$ of some circle-valued Morse function $\Psi:S^3\backslash L$ that behaves like an open book in a tubular neighbourhood of $L$, then $F^+=\Psi^{-1}(\chi+\varepsilon)$ for some small $\varepsilon$, and $N(B)$ is a tubular neighbourhood of $B$.

We claim that the overpass (interpreted as a curve in $F$) can be isotoped to lie above all strands of $B$, meaning above the diagram plane, and the underpass can be isotoped to lie below the diagram plane, which justifies their names.

\begin{figure}[h]
\labellist
\Large
\pinlabel a) at 200 1700
\pinlabel b) at 200 -200
\endlabellist
\centering
\includegraphics[height=8cm]{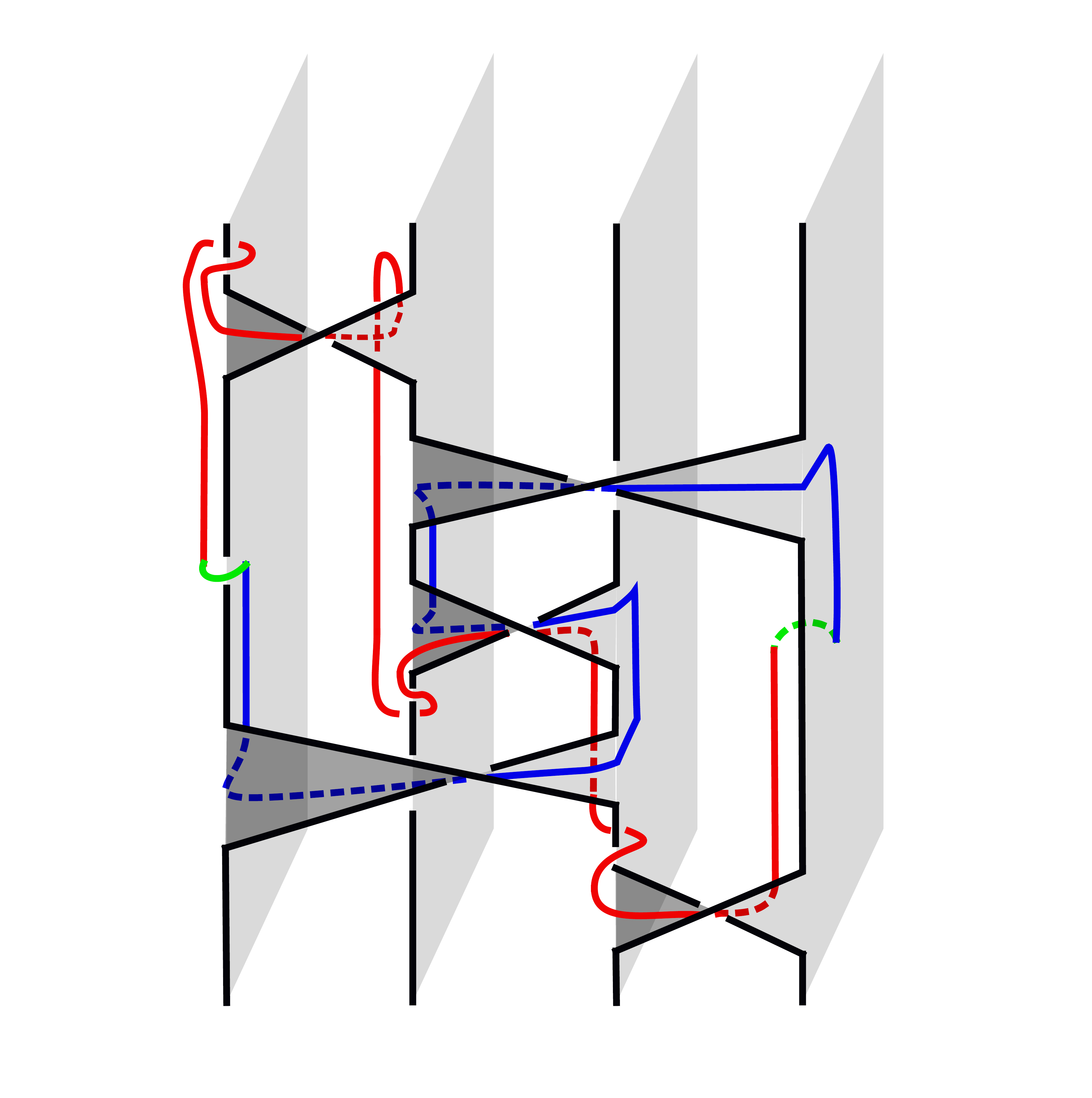}
\includegraphics[height=8cm]{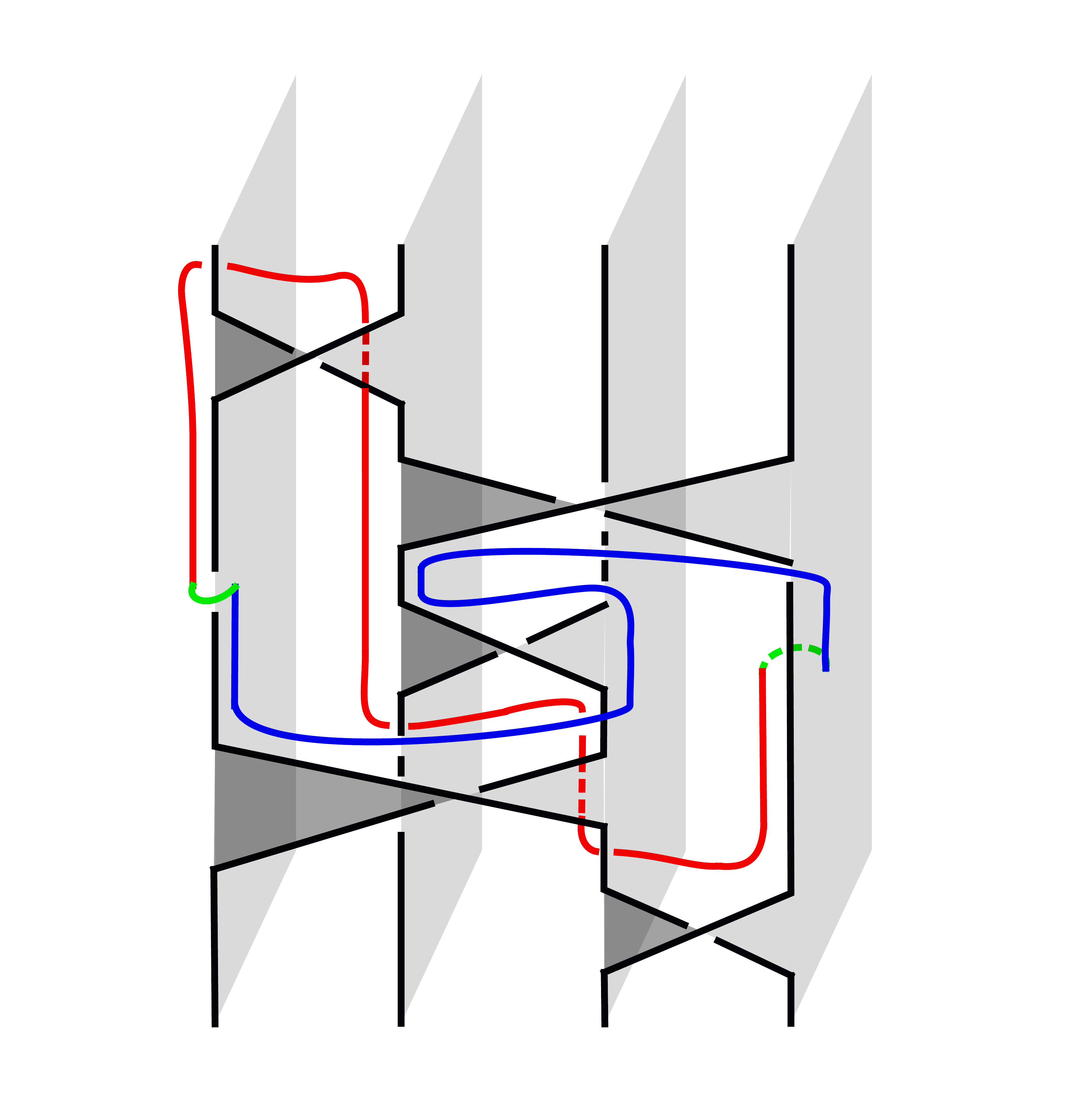}
\caption{a) The banded surface $F$ corresponding to the ladder diagram in Figure \ref{fig:ladders} with curves $\gamma_1$ and $\gamma_2$ representing the overpass and underpass. The endpoints of $\gamma_1$ and $\gamma_2$ are joined by arcs in a tubular neighbourhood of the braid $B=\partial F$. b) After an isotopy it becomes apparent that the union of $\gamma_1$, $\gamma_2$ and the two arcs in $N(B)$ is a braid axis for $B$. \label{fig:bigexample}}
\end{figure}

We interpret an underpass $\gamma_2$ in the ladder diagram $\mathcal{D}$ as a curve in $F^+\cup N(B)$ as follows. The behaviour of the underpass at a positive and negative crossing, i.e., a positive and a negative horizontal line in $\mathcal{D}$, is shown in Figure \ref{fig:underpass}a) and c), respectively. In both figures the red underpass is oriented from right to left. This means in particular, that in both cases the twisting around the leftmost strand is such that the underpass $\gamma_2$ is positively transverse to the fibers of any $S^1$-valued map that satisfies the boundary condition of an open book on $N(B)$.

In Figure \ref{fig:underpass}a) the underpass starts at the top right corner and ends at the top left corner of the picture. If $\gamma_2$ in the ladder diagram $\mathcal{D}$ enters the crossing from the bottom right and/or travels down after traversing the horizontal line corresponding to the band, the curve in Figure \ref{fig:underpass}a) is continued on $F^+$ as seen for example in the band corresponding to $a_{1,2}$ in Figure \ref{fig:bigexample}a). Thus the simple curve $\gamma_2$ in $F^+\cup N(B)$ can cross itself (in the diagram), even if the underpass $\gamma_2$ in the ladder diagram is not allowed to. However, all of these crossings can be easily removed by a Reidemeister move of type 1.

The analogous extension of the curve $\gamma_2$ in Figure \ref{fig:underpass}c) if the underpass enters the crossing from the top right and/or travels upwards after transversing the horizontal line corresponding to the band is straightforward.

\begin{figure}[h]
\labellist
\Large
\pinlabel a) at 100 2100
\pinlabel b) at 1150 2100
\pinlabel c) at 100 900
\pinlabel d) at 1150 900
\endlabellist
\centering
\includegraphics[height=15cm]{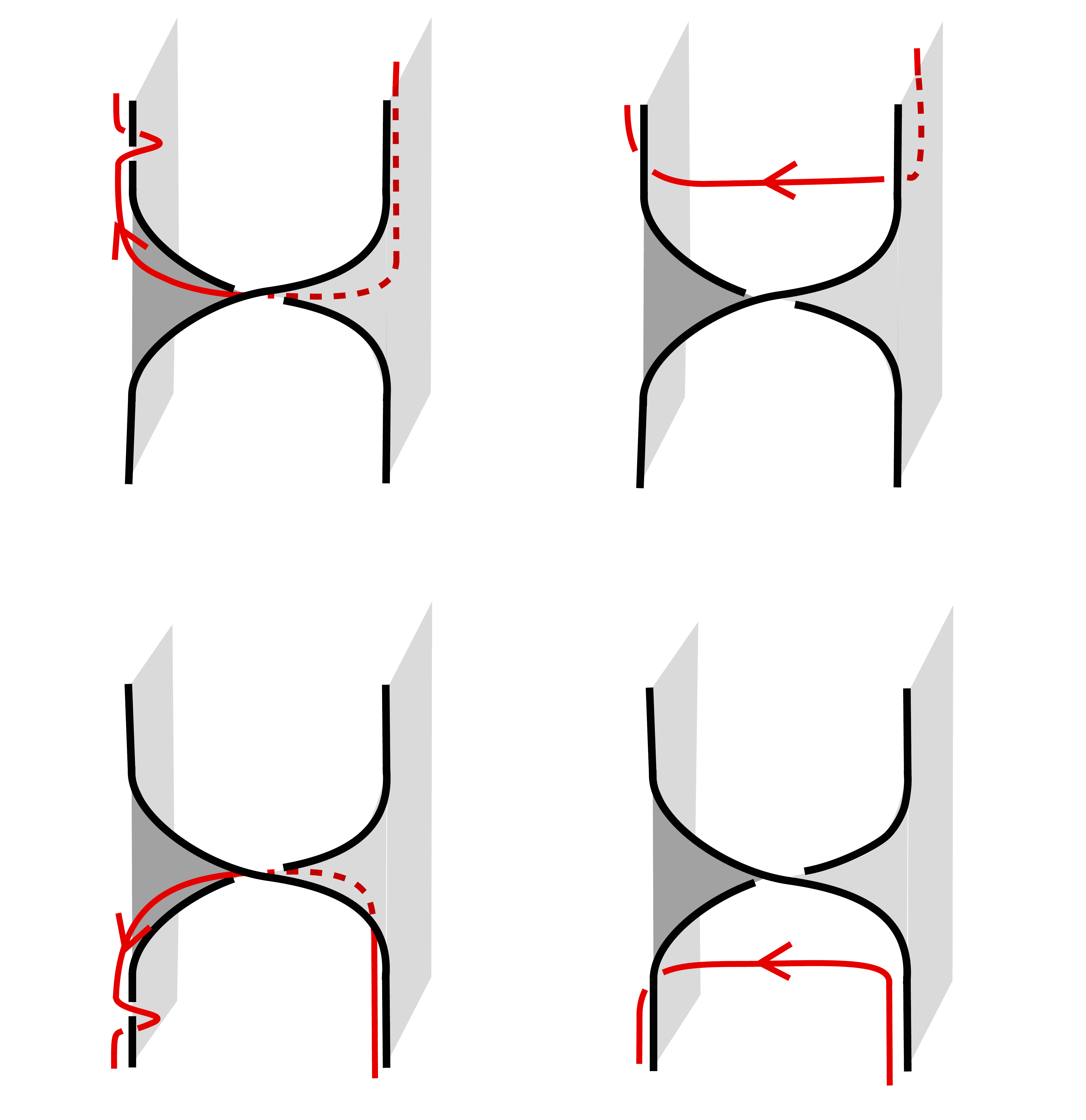}
\caption{a) An underpass $\gamma_2$ interpreted as a curve on $F^+\cup N(B)$ in a neighbourhood of a positive crossing. b) $\gamma_2$ can be deformed to lie below the strands of the braid $\partial F$. c) An underpass $\gamma_2$ interpreted as a curve on $F^+\cup N(B)$ in a neighbourhood of a negative crossing. d) $\gamma_2$ can be deformed to lie below the strands of the braid $\partial F$. \label{fig:underpass}}
\end{figure}

In Figure \ref{fig:overpass}a) the blue curve is part of the overpass $\gamma_1$ on a positive band, while in Figure \ref{fig:overpass}c) the blue curve is part of the overpass $\gamma_1$ on a negative band. If the blue curve is oriented from left to right, then $o(h_i^1)=1$, where $h_i^1$ is the horizontal segment corresponding to the band shown in the picture. If this segment of $\gamma_1$ is oriented from right to left, then $o(h_i^1)=-1$.

The conditions on the orientations $o(v_i^1)$ of the vertical segments $v_i^1$ of the overpass mean that in these figures the curve continues on the leftmost sheet as indicated, e.g. in Figure \ref{fig:overpass}a) the vertical segment of the overpass continues downward if $o(h_i^1)=-1$. In contrast to the underpass $\gamma_2$, the overpass $\gamma_1$ is not allowed to turn around on a sheet of $F$. Hence there are no self-crossings. On the right sheet, on which the band terminates, there are no restrictions on the orientation of the overpass. It could continue upwards or downwards.

The vertical segments of underpasses and overpasses in a ladder diagram that are not near horizontal lines correspond to parts of the corresponding curves that are vertical on the corresponding sheet of $F^+$ and $F$, respectively. In particular, every vertical segment of an underpass that crosses a horizontal line in the ladder diagram lies behind the corresponding band in the surface diagram, which is precisely why this is not allowed for vertical segments of overpasses.

\begin{figure}[h]
\labellist
\Large
\pinlabel a) at 100 900
\pinlabel b) at 1050 900
\pinlabel c) at 100 -150
\pinlabel d) at 1050 -150
\endlabellist
\centering
\includegraphics[height=7cm]{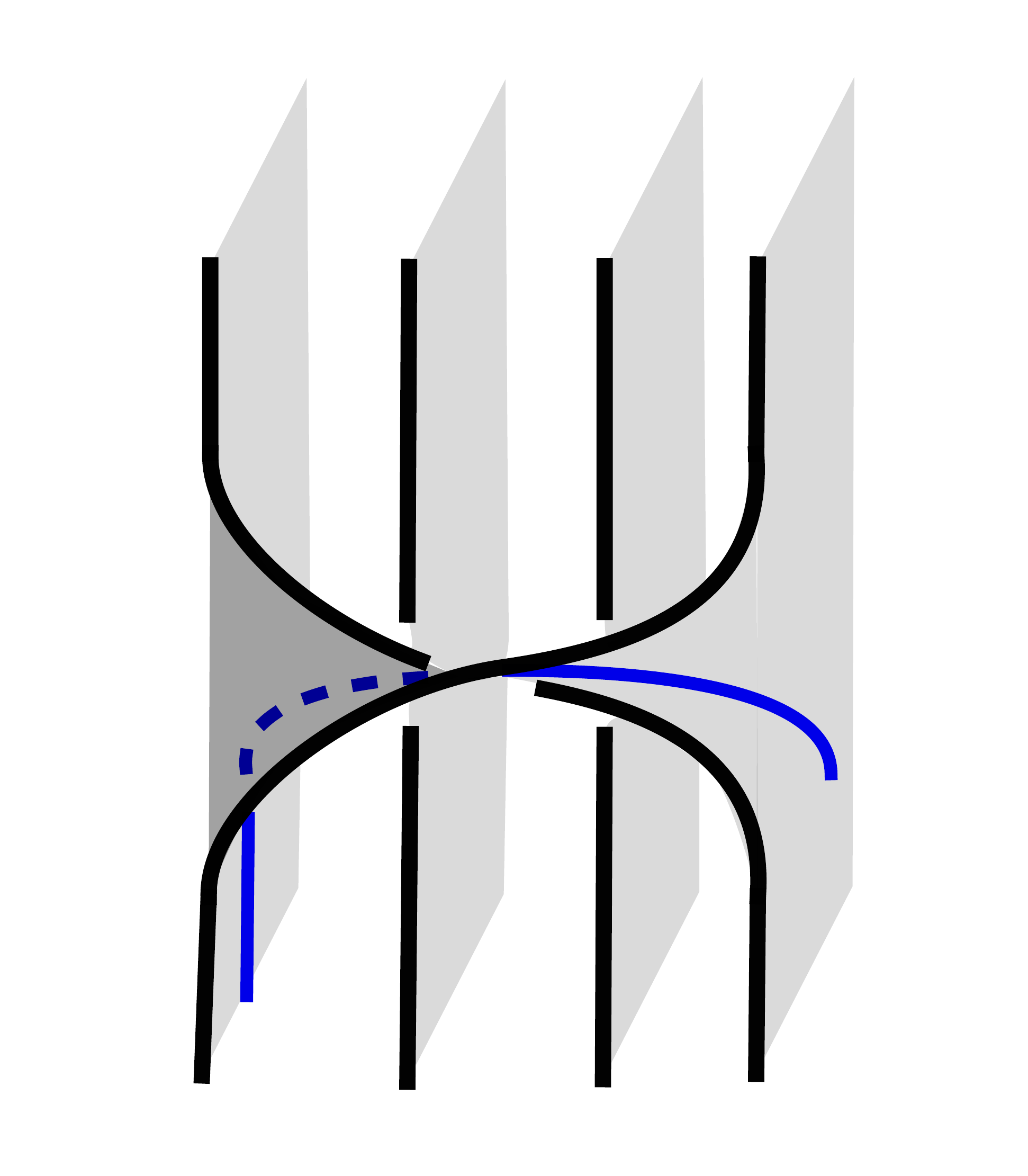}
\includegraphics[height=7cm]{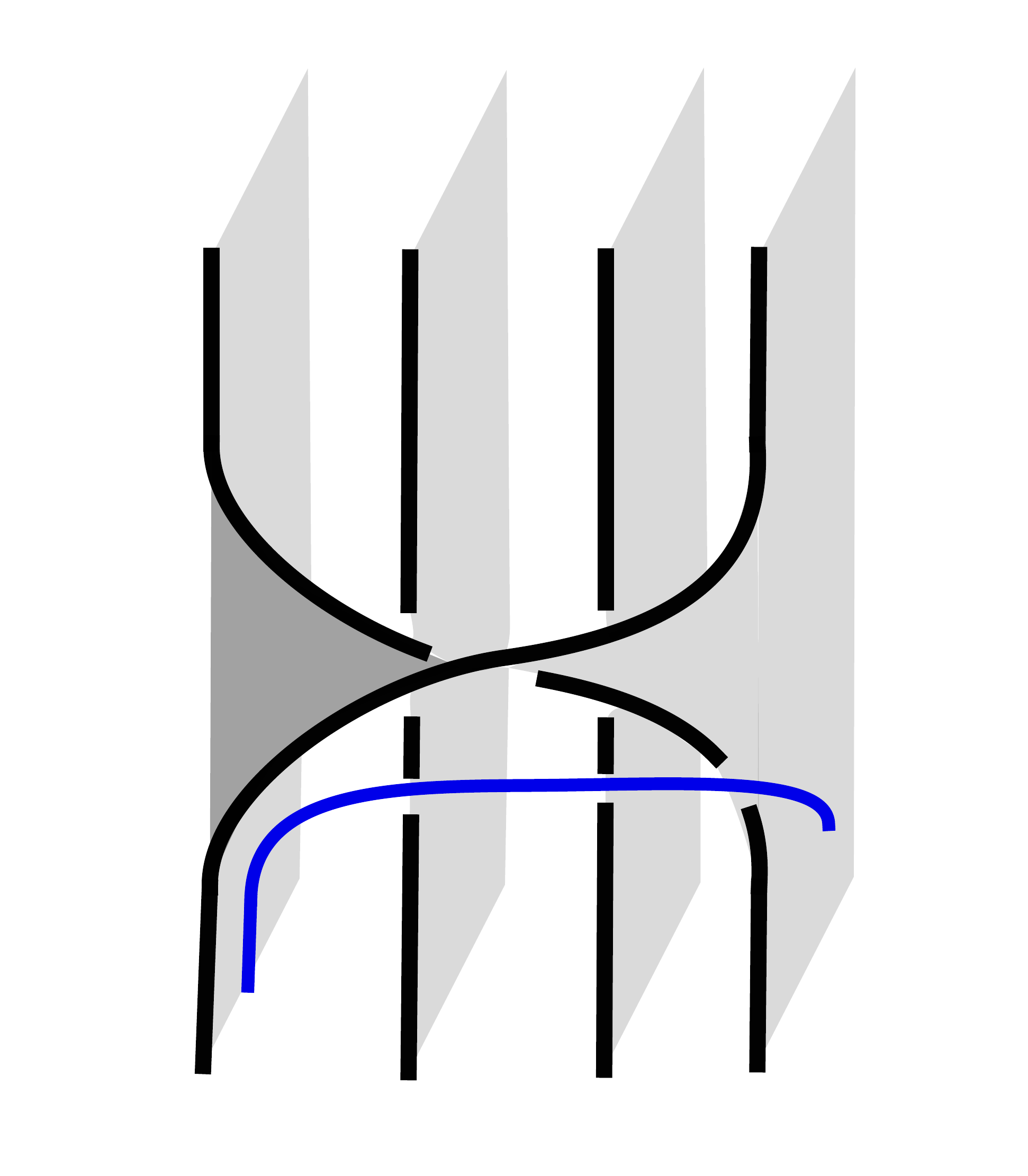}
\includegraphics[height=7cm]{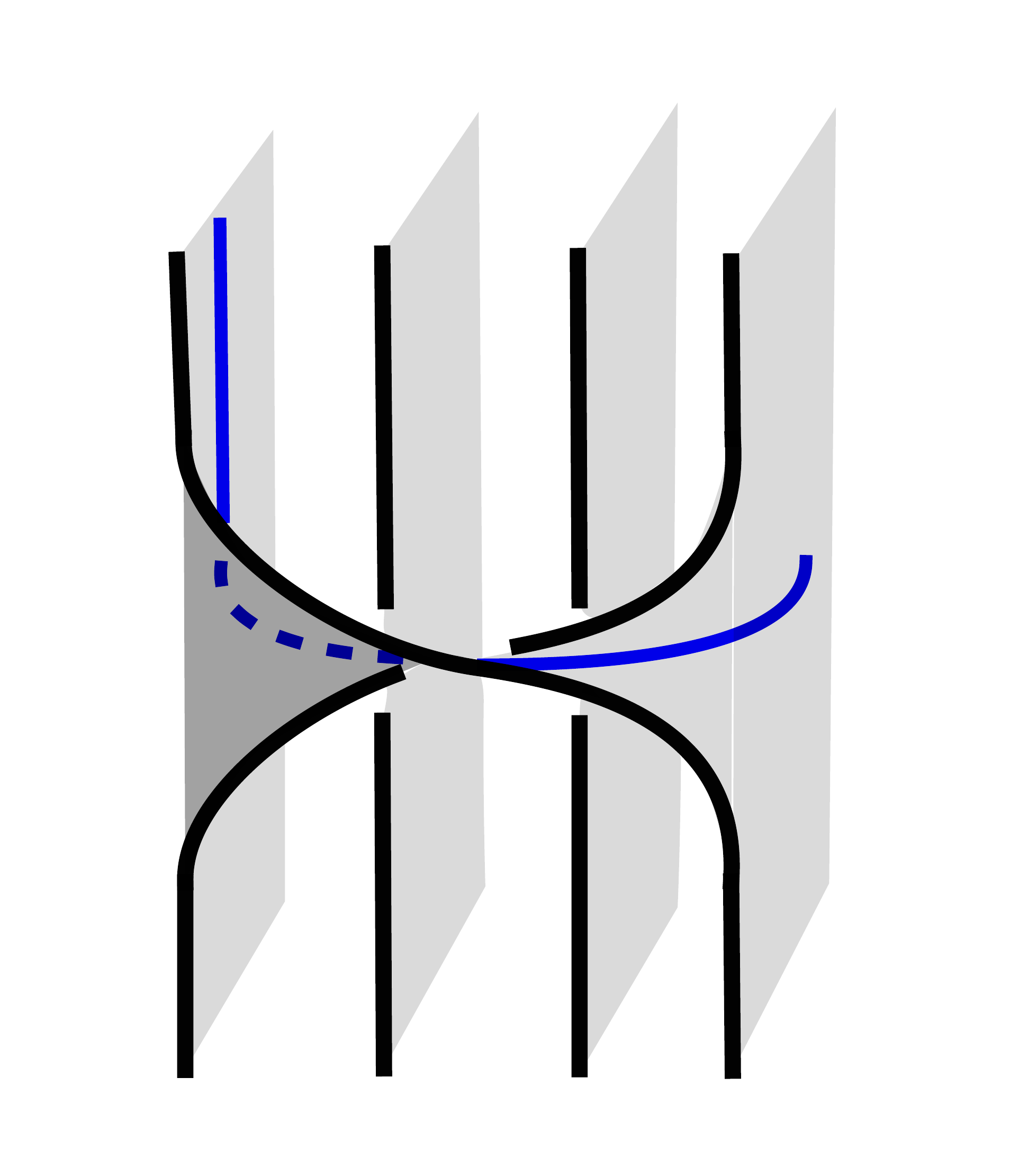}
\includegraphics[height=7cm]{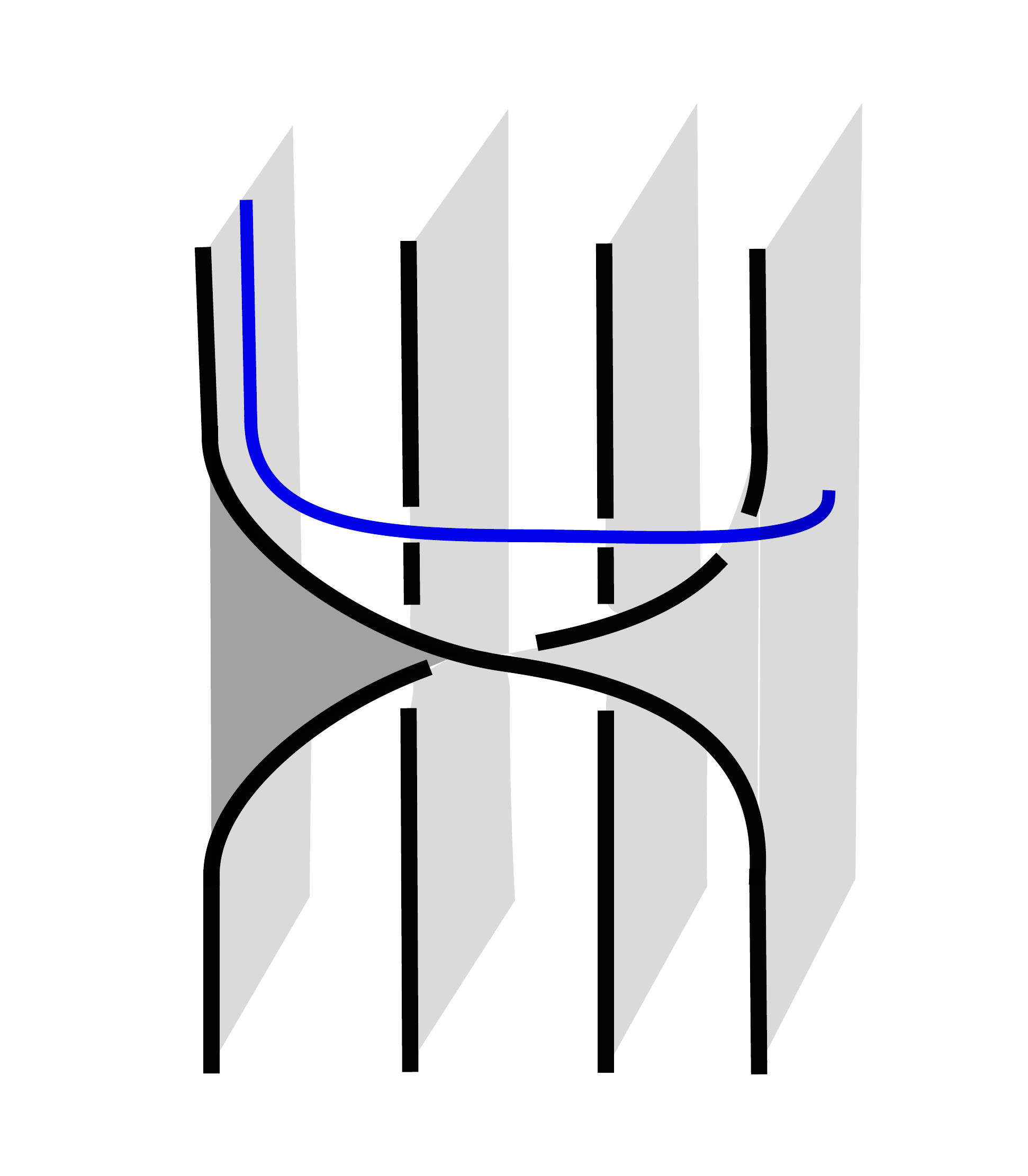}
\caption{a) An overpass $\gamma_1$ interpreted as a curve on $F$ in a neighbourhood of a crossing corresponding to a positive horizontal edge $h$. b) $\gamma_1$ can be deformed to lie above the strands of the braid $\partial F$. c) An overpass $\gamma_1$ interpreted as a curve on $F$ in a neighbourhood of a crossing corresponding to a negative horizontal edge $h$. d) $\gamma_1$ can be deformed to lie above the strands of the braid $\partial F$.\label{fig:overpass}}
\end{figure}

We can assume that the ends of $\gamma_1$ and $\gamma_2$ lie in $F\cup N(B)$ and $F^+\cup N(B)$, respectively, and at the same height in the surface diagram, since their union in the ladder diagram is a loop. Now connect the end point of $\gamma_2$ to the start point of $\gamma_1$ and the end point of $\gamma_1$ to the start point of $\gamma_2$ via arcs in $N(B)$ that are positively transverse to the fibers of any $S^1$-valued map that satisfies the boundary condition of an open book on $N(B)$. 

In this way we obtain one closed loop formed by $\gamma_1\subset F$, $\gamma_2\subset F^+\cup N(B)$ and the two connecting arcs in $N(B)$. Figure \ref{fig:bigexample}a) shows this curve for the example ladder diagram with over- and underpass from Figure \ref{fig:ladders}.

Figures \ref{fig:underpass}b) and d) show how for both types of crossings the underpass can be deformed to lie below the strands of $B$. Note that these isotopies can be performed without changing the unique intersection point of that segment of $\gamma_2$ with $F$ and without introducing new intersection points between $\gamma_2$ and $F$. This means that the isotopy can be performed without intersecting $\gamma_1\subset F$.

By performing such an isotopy in the neighbourhood of every band that corresponds to a horizontal segment of $\gamma_2$ and leaving the vertical segments fixed, we obtain an isotopy (with fixed endpoints) from $\gamma_2$ to a simple path $\gamma_2'$, which starts at the rightmost sheet of $F^+$, ends at the leftmost sheet of $F^+$, and in between lies behind all strands of $B$ without crossing itself.

In Figure \ref{fig:overpass}b) the overpass is isotoped to lie above the strands of $B$. Note that this move is only possible because the red curve comes from the bottom left (if it is oriented from left to right) or ends at the bottom left (if it is oriented from right to left), i.e., $o(v_{i-1})=o(h_i)=1$ or $o(v_i)=o(h_i)=-1$. Whether the overpass goes upwards or downwards on the far-right disk in the figure is not relevant for the isotopy.

Figure \ref{fig:overpass}c) and d) show the analogous isotopy for the case where $sign(h)=-1$.

Performing such an isotopy for every horizontal segment of the overpass $\gamma_1$ results in a path $\gamma_1'$ that starts on the leftmost disk of the surface $F$, ends on the rightmost disk and in between lies above all strands of the braid or is vertical on $F$. If $\gamma_1$ had a vertical segment that crosses a horizontal line in the diagram $\mathcal{D}$ or that contains the starting point of a horizontal line, it would lie below the corresponding band. The definition of an overpass guarantees that this does not happen.

The result of these isotopies for the example ladder diagram from Figure \ref{fig:ladders} is shown in Figure \ref{fig:bigexample}b).


The union of $\gamma_1'$ and $\gamma_2'$ and the two arcs in $N(B)$ that connect their endpoints forms an unknot $O$ that is a braid axis for $B$, since neither $\gamma_1'$ nor $\gamma_2'$ cross themselves, $\gamma_1'$ lies above all strands, and $\gamma_2'$ lies below all strands. 

The unknot $O$ is isotopic in $S^3\backslash L$ to the union of $\gamma_1$, $\gamma_2$ and the two arcs in $N(B)$ that connect their endpoints, which lies in $F\cup F^+\cup N(B)$ and has the property that all parts that lie in $N(B)$ are positively transverse to the open book boundary condition in $N(B)$. Thus for any map $f:S^3\backslash L\to S^1$ that has the required behaviour on $N(B)$ and that has $F$ as a level set, we can isotope $O$ to be positively transverse to all fibers. In particular, if the closure of $B$ is a fibered link, $B$ is a generalised exchangeable braid with braid axis $O$.
\end{proof}

Lemma \ref{lem:over} raises an interesting generalisation of the concept of generalised exchangeable braids. For any link $L$ (not necessarily fibered) we could consider a map $f:S^3\backslash L\to S^1$ that behaves like an open book in a neighbourhood of $L$, but has a finite number of Morse-critical points. The minimal number of critical points of any such map for a given link $L$ is called the Morse-Novikov number $\mathcal{MN}(L)$ of a link $L$ and clearly $\mathcal{MN}(L)=0$ if and only if $L$ is fibered. The obvious generalisation of the question whether all fibered links are closures of generalised exchangeable braids is whether for any link $L$ and any such map $f$ that realises $\mathcal{MN}(L)$, there is a braid axis of $L$ that is positively transverse to all level sets of $f$.

\begin{theorem}
\label{thm:1234}
Let $F$ be a Bennequin surface of degree $n$, whose boundary is a fibered link. If $F$ is represented by a band word that contains the band generators $a_{1,2}$, $a_{2,3}$, $a_{3,4},\ldots, a_{n-1,n}$ and $a_{1,n}$ (anywhere in the braid word, not necessarily consecutively, in any order, with any sign), then the open book with binding $L$ and fiber $F$ can be braided. 
\end{theorem}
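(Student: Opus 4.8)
The plan is to deduce Theorem~\ref{thm:1234} from Lemma~\ref{lem:over}: it suffices to exhibit, in the ladder diagram $\mathcal{D}$ of $F$, an overpass $\gamma_1$ and an underpass $\gamma_2$. Indeed, since $F$ is a Bennequin surface of a fibered link, Lemma~\ref{lem:over} then produces a braid axis $O$ of $B=\partial F$ positively transverse to every page of the open book with fiber $F$, so $B$ is a generalised exchangeable braid; by Theorem~\ref{thm:main} the open book with binding $L$ and fiber $F$ can then be braided. So the whole content of the proof is the construction of $\gamma_1$ and $\gamma_2$ from the prescribed band generators.

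For the underpass, the defining condition $\tau(h^2_i)=(n-i,\,n-i+1)$ forces the horizontal segments of $\gamma_2$, read in order, to be copies of $a_{n-1,n},a_{n-2,n-1},\dots,a_{2,3},a_{1,2}$, each of which occurs in the band word by hypothesis (if several copies occur, choose any one). I would take $\gamma_2$ to be the descending staircase that begins at the strand-$n$ endpoint of the chosen copy of $a_{n-1,n}$, slides across it to strand $n-1$, runs vertically along strand $n-1$ to the copy of $a_{n-2,n-1}$, crosses to strand $n-2$, and so on, and finishes with a vertical segment along the first strand ending at $\gamma_1(0)$, the strand-$1$ foot of $a_{1,n}$. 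Since the definition of an underpass places no restriction on its vertical segments, and since the strand indices visited by $\gamma_2$ strictly decrease, this $\gamma_2$ is automatically a simple path meeting all the conditions of an underpass.

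For the overpass I would use the single long generator $a_{1,n}$: take $\gamma_1=h^1_1 v^1_1$, where $h^1_1$ is a chosen copy of $a_{1,n}$ traversed from strand $1$ to strand $n$ (so $o(h^1_1)=+1$, matching the left-to-right orientation of horizontal lines) and $v^1_1$ is a vertical segment along strand $n$ joining the far endpoint of $h^1_1$ to $\gamma_1(1)=\gamma_2(0)$, the strand-$n$ endpoint of the chosen copy of $a_{n-1,n}$. The crucial observation is that the only vertical segment of $\gamma_1$ lies on the outermost strand $n$: no horizontal line of $\mathcal{D}$ crosses strand $n$, and the only bands meeting strand $n$ meet it at their (non-starting) endpoints, so $\gamma_1$ satisfies the ``no crossed horizontal lines, no band feet on vertical segments'' requirement. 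Because $o(h^1_1)=+1$, the orientation constraints in the definition of an overpass are vacuous, whatever the sign of $a_{1,n}$; likewise the signs of the staircase generators are irrelevant, since the underpass conditions see only the index pairs $\tau(h^2_i)$. The concatenation $\gamma_1\gamma_2$ closes up to a loop in $\mathcal{D}$, and the two arcs are permitted to intersect (for instance, the vertical segments of $\gamma_2$ run behind the band $a_{1,n}$), so all hypotheses of Lemma~\ref{lem:over} are satisfied.

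The main point — and the reason for the precise form of the hypothesis — is the routing of the overpass. An overpass is not allowed to cross horizontal lines or to pass over band feet on its vertical segments, so it has to be threaded along the two outermost strands; it is exactly the generator $a_{1,n}$ that lets it jump from strand $1$ to strand $n$ in one horizontal move, bypassing the interior strands, while the chain $a_{1,2},a_{2,3},\dots,a_{n-1,n}$ supplies a return path for the underpass, whose vertical segments are unconstrained. Once the two paths are checked against the defining conditions, Lemma~\ref{lem:over} does all of the remaining geometry. (For $n\le 3$ the statement is in any case contained in Theorem~\ref{thm:3}; when $n=2$ one uses the single generator $a_{1,2}$ as the band of both the overpass and the underpass, realised on $F$ and on a positive push-off of $F$.)
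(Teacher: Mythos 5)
Your proposal is correct and follows essentially the same route as the paper's own (very terse) proof, which simply asserts that the occurrence of $a_{1,n}$ forces an overpass and the chain $a_{1,2},\ldots,a_{n-1,n}$ forces an underpass, then cites Lemma~\ref{lem:over}. You have usefully spelled out the two points that make this work and that the paper leaves implicit: that the orientation constraint on an overpass at its first horizontal segment is vacuous because there is no $v_0^1$ (so the sign of $a_{1,n}$ is irrelevant), and that the single vertical segment $v_1^1$ of the overpass can be placed on strand $n$, where no horizontal line crosses and no horizontal line has its starting point, precisely because $a_{1,n}$ jumps to the outermost strand in one move. You have also correctly observed that the underpass's vertical segments are unconstrained, so the staircase $a_{n-1,n},a_{n-2,n-1},\ldots,a_{1,2}$ works regardless of signs or the order/position in which those generators occur. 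This is a faithful, more detailed version of the argument the paper intends.
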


\begin{proof}
The fact that the band word contains $a_{1,n}$ (or $a_{1,n}^{-1}$) immediately implies the existence of an overpass, while the occurrence of $a_{i,i+1}$ or $a_{i,i+1}^{-1}$ for all $i\in\{1,2,\ldots,n-1\}$ implies the existence of an underpass. The theorem then follows from Lemma \ref{lem:over}.
\end{proof}

\subsection{Braided open books of degree 3}\label{subsec:3}

The infinite family of inhomogeneous P-fibered braids that was constructed in \cite{bode:adicact} consisted of braids on 3 strands. Using over- and underpasses, we find that in fact all fibered links of braid index at most 3 are closures of P-fibered braids on at most 3 strands.


\begin{lemma}
\label{lem:subsub}
Let $A\in\mathbb{B}_n$ be a braid word in band generators. If $A$ is represented by a ladder diagram with and overpass and an underpass, then $AB$ and $BA$ can be represented by ladder diagrams with an overpass and an underpass for any braid $B\in\mathbb{B}_n$.
\end{lemma}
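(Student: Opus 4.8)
The plan is to observe that the ladder diagram of a concatenation of band words is obtained by vertically stacking the two individual ladder diagrams on the same $n$ vertical lines, and that every condition in the definition of an overpass and an underpass is local to the horizontal segments (bands) that the curve actually meets. So I would first let $A$ also denote a band word whose ladder diagram $\mathcal{D}_A$ carries an overpass $\gamma_1$ and an underpass $\gamma_2$ (this is exactly the hypothesis), let $B$ denote any band word representing the given braid, and take $AB$ and $BA$ as the representatives of the two products. In the ladder diagram $\mathcal{D}_{AB}$ the sub-diagram $\mathcal{D}_A$ occupies the lower strip while the bands coming from $B$ occupy the complementary upper strip; in $\mathcal{D}_{BA}$ the roles of the two strips are reversed. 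Via the inclusion $\mathcal{D}_A\hookrightarrow\mathcal{D}_{AB}$ onto that lower strip, the curves $\gamma_1$ and $\gamma_2$ become paths in $\mathcal{D}_{AB}$, and I claim they are still an overpass and an underpass of $\mathcal{D}_{AB}$.

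The second step is to verify this claim condition by condition. The endpoint conditions $\gamma_1(0)=\gamma_2(1)$ on the leftmost vertical line and $\gamma_1(1)=\gamma_2(0)$ on the rightmost vertical line hold because the $n$ vertical lines are shared by $\mathcal{D}_A$ and $\mathcal{D}_{AB}$. The conditions that $\gamma_1$ and $\gamma_2$ do not cross themselves are inherited verbatim, since these are literally the same curves inside the same strip. The orientation-compatibility conditions relating $o(h_i^1)$, $o(v_{i-1}^1)$, $o(v_i^1)$ and $\mathrm{sign}(h_i)$, together with the underpass requirement $\tau(h_i^2)=(n-i,\,n-i+1)$, involve only $\gamma_1$, $\gamma_2$ and the bands of $A$, none of which is altered by adjoining $B$. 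The one condition that deserves a word of explanation is that the vertical segments of the overpass must neither cross a horizontal line nor contain the starting point of one: since those segments lie in the lower strip and all the bands of $B$ lie in the disjoint upper strip, the only horizontal lines a vertical segment of $\gamma_1$ can meet are bands of $\mathcal{D}_A$, so the condition reduces to precisely the one already satisfied in $\mathcal{D}_A$. (For the underpass there is nothing further to check here, since its vertical segments are anyway permitted to cross horizontal lines.) The argument for $\mathcal{D}_{BA}$ is identical with the two strips interchanged.

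I do not expect a genuine obstacle; the substance of the lemma is simply that an overpass and an underpass are unaffected by stacking additional bands in a region disjoint from the curves. The only point requiring care is the verification above for the vertical segments of the overpass, and this is exactly why it matters that the bands of $B$ can be realised in a strip disjoint from the portion of $\mathcal{D}_A$ swept out by $\gamma_1$ --- which is automatic from the stacking picture. Everything else is bookkeeping, with the upshot that $\gamma_1$ and $\gamma_2$ serve verbatim as an overpass and an underpass of both $\mathcal{D}_{AB}$ and $\mathcal{D}_{BA}$.
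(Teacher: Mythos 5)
Your proof is correct and follows exactly the same route as the paper: stack the ladder diagram of $B$ above (resp. below) that of $A$ and observe that the original overpass and underpass of $\mathcal{D}_A$ remain valid verbatim. The paper states this more tersely; your condition-by-condition verification adds useful detail but is the same argument.
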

\begin{proof}
The ladder diagram for $AB$ consists of the ladder diagram for $B$ on top of the ladder diagram for $A$. Hence, if the ladder diagram for $A$ allows an overpass $\gamma_1$ and an underpass $\gamma_2$, the same curves $\gamma_1$ and $\gamma_2$ are an overpass and an underpass for the ladder diagram for $AB$. The same argument holds for $BA$, whose ladder diagram is the ladder diagram for $A$ on top of the ladder diagram for $B$.
\end{proof}

\begin{lemma}
\label{lem:3}
Let $L$ be a link with braid index at most 3. Then for $L$ or its mirror image any of its minimal genus Seifert surfaces can be made into a braided surface of degree 3 with a ladder diagram that has an overpass and an underpass.
\end{lemma}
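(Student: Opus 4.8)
The plan is to reduce the lemma, by way of Theorem~\ref{thm:1234} and Lemma~\ref{lem:subsub}, to producing one explicit band word, and then to obtain that word from the structure theory of closed $3$-braids. Recall that the proof of Theorem~\ref{thm:1234} extracts from a degree-$n$ band word an overpass as soon as the generator $a_{1,n}$ occurs in it, and an underpass as soon as all of $a_{1,2},\dots,a_{n-1,n}$ occur; and that by Lemma~\ref{lem:subsub} it is enough for some initial segment of the band word to carry such a ladder diagram. So for $n=3$ it suffices to produce, for $L$ or for its mirror image, a degree-$3$ braided surface of minimal genus bounding the link whose band word contains $a_{1,2}$, $a_{2,3}$ and $a_{1,3}$ — together with a short list of exceptional links for which an overpass and an underpass will instead be drawn by hand exactly as in the proof of Lemma~\ref{lem:over}.

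The input from the literature is the following: by the classification of closed $3$-braids (Birman--Menasco) and Xu's computation of the genus of a closed $3$-braid, every link $L$ of braid index at most $3$ has, after possibly replacing it by its mirror image, a minimal genus Seifert surface which is the (generalised, i.e.\ allowing $a_{1,3}$-bands) Bennequin surface of a $3$-braid in a Birman--Menasco normal form; when $b(L)\leq 2$ this is the disk of the unknot or the standard surface of a $(2,k)$-torus link, made into a degree-$3$ braided surface by a Markov stabilisation, which leaves its isotopy type unchanged since the added disk and band retract onto the rest of the surface.

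Next I would go through the normal forms. If the normal form contains a nonzero number of full twists, then, after passing to the mirror image if necessary so that these twists are positive, the word contains a long block $(a_{1,2}a_{2,3})^{\,m}$ with $m\geq 2$; applying the relation~(\ref{eq:2rel}) in the form $a_{2,3}a_{1,2}=a_{1,3}a_{2,3}$ to an interior adjacency of this block — a move that does not change the number of bands, hence preserves both the isotopy type and the genus of the banded surface — introduces an $a_{1,3}$, and we are done by Theorem~\ref{thm:1234} and Lemma~\ref{lem:subsub}. If the normal form contains no full twist, the link is either a $(2,k)$-torus link or a connected sum of two such — in which case the band word may be taken to be $a_{1,2}^{p}a_{2,3}^{q}$ and an overpass and an underpass are exhibited directly, as in the proof of Lemma~\ref{lem:over} — or it has a genuinely mixed normal form, in which case (the banded surface being connected) both an $a_{1,2}^{\pm1}$ and an $a_{2,3}^{\pm1}$ occur, and after a conjugation and possibly a passage to the mirror image to fix the signs an interior adjacency of the required shape appears, so that the relation~(\ref{eq:2rel}) again produces an $a_{1,3}$. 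Either way one obtains, for $L$ or its mirror image, a degree-$3$ braided surface of minimal genus whose ladder diagram has an overpass and an underpass, which is the statement of the lemma.

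The hard part is this last bookkeeping: one must verify, case by case along the Birman--Menasco list, that the minimal-genus representative supplied by Xu's theorem can always be turned into one containing all three band generators by moves that do not increase the band count (so that minimality of genus is retained), and one must correctly isolate the torus-link and connected-sum families as precisely those cases where the $a_{1,3}$-band cannot be produced and the overpass and underpass have to be drawn explicitly. The passage to the mirror image enters both here — to make the relevant band signs cooperate with the direction in which an overpass is allowed to twist around the binding — and already in the statement of Xu's genus formula.
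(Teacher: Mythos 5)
Your proposal takes a genuinely different route from the paper's, and it has concrete gaps that the paper's argument avoids entirely.

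The paper's proof is quite short and elementary. It first invokes Bennequin's theorem (cited as \cite{benn}) to say that \emph{any} minimal genus Seifert surface $F$ of a link of braid index $3$ can be braided to a degree-$3$ braided surface with some band word $w$ in $a_{1,2}$, $a_{1,3}$, $a_{2,3}$. If $w$ contains all three generators, Theorem \ref{thm:1234} applies. Otherwise, it exploits the cyclic-relabeling symmetry of a braided surface of degree $3$: shifting all indices by $1$ or $2$ modulo $3$ gives band words $w'$ and $w''$ that still represent $F$. Connectivity of $F$ forces each of $w$, $w'$, $w''$ to involve at least two of the three generators; since by assumption none involves all three, one of them uses only $a_{1,2}$ and $a_{2,3}$. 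After a conjugation it contains the subword $a_{1,2}^{\varepsilon_1}a_{2,3}^{\varepsilon_2}$, and since $a_{1,2}$, $a_{2,3}$ are Artin generators, passing to the mirror image flips all signs, so one of $F$ or its mirror admits a band word with the subword $a_{1,2}^{\varepsilon}a_{2,3}$; this subword's ladder already carries an overpass and underpass, and Lemma \ref{lem:subsub} finishes. No normal-form machinery and no case split is needed.

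Your argument instead tries to \emph{produce} an $a_{1,3}$ by applying the BKL relations to a Birman--Menasco/Xu normal form. This runs into three concrete difficulties. First, the lemma asserts the conclusion for \emph{any} minimal genus Seifert surface; your route only constructs one particular such surface (the Bennequin surface of a normal form), so you would further need to know that every minimal genus Seifert surface is isotopic to the one you build, which is exactly the content of Bennequin's theorem that the paper uses at the outset and that you replace with stronger and less directly applicable classification results. Second, the application of Eq.\ (\ref{eq:2rel}) to create an $a_{1,3}^{\pm 1}$ from an adjacent pair is straightforward only when the two generators have the same sign; for opposite signs (e.g.\ $a_{1,2}a_{2,3}^{-1}$) you have to derive a mixed-sign identity such as $a_{1,2}a_{2,3}^{-1}=a_{2,3}^{-1}a_{1,3}$ by hand, and this is precisely the sign bookkeeping you concede is ``the hard part''. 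Third, your exceptional list (torus links, connected sums) is never delimited, and you do not verify that the enumeration of Birman--Menasco forms is exhaustive for your dichotomy. The paper's cyclic-relabeling trick sidesteps all of this at once: rather than trying to force $a_{1,3}$ into the word, it reduces to the case where $a_{1,3}$ is \emph{absent}, which is exactly the case where the subword $a_{1,2}^{\varepsilon}a_{2,3}$ and Lemma \ref{lem:subsub} suffice. If you want to salvage your approach, you would at minimum need to (a) justify the ``any minimal genus surface'' claim via Bennequin rather than Xu, and (b) carry out the mixed-sign case of the band relation or observe, as the paper does, that you do not need $a_{1,3}$ at all.
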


\begin{proof}
If the braid index of $L$ is strictly less than 3, the statement is obvious. In the following we assume that the braid index of $L$ is equal to 3.

It was shown by Bennequin that for links of braid index 3 any minimal genus Seifert surface $F$ can be made into a braided surface of degree 3 \cite{benn}. If the corresponding band word contains all of $a_{1,2}$, $a_{2,3}$ and $a_{1,3}$ with some signs, then by Theorem \ref{thm:1234} $F$ can be braided. We can thus assume that the band word does not contain all three generators (modulo signs).

We claim that $F$ or its mirror image can be represented by a band word $w$ in band generators that contains $a_{1,2}^{\varepsilon}a_{2,3}$ for some $\varepsilon\in{\pm1}$. This proves the lemma for $F$ or its mirror image by Lemma \ref{lem:subsub}, since an overpass and an underpass can be drawn in the ladder diagram for $a_{1,2}^{\varepsilon}a_{2,3}$.

Let $w=\prod_{k=1}^{\ell}a_{i_k,j_k}^{\varepsilon_k}$ be the band word in the three band generators $a_{1,2}$, $a_{1,3}$, $a_{2,3}$ and their inverses that represents the braided surface $F$. Note that $w'=\prod_{k=1}^{\ell}a_{i_k-1,j_k-1}^{\varepsilon_k}$ and $w''=\prod_{k=1}^{\ell}a_{i_k-2,j_k-2}^{\varepsilon_k}$ with indices taken modulo 3 also represent $F$ as a braided surface. (Recall that $a_{j,i}=a_{i,j}$.)

Since $F$ is connected, $w$, $w'$ and $w''$ all must contain at least two of the three generators (with some possibly negative exponent). Since we assume that none of these band words contains all three generators, one of $w$, $w'$ and $w''$ consists only of $a_{1,2}$, $a_{2,3}$ and their inverses. This implies that (possibly after a conjugation) it contains $a_{1,2}^{\varepsilon_1}a_{2,3}^{\varepsilon_2}$ for some $\varepsilon_1,\varepsilon_2\in\{\pm1\}$.


Note that $a_{1,2}$, $a_{2,3}$ and their inverses are precisely the Artin generators and their inverses, so that the mirror image of $F$ is represented by the same band word except that all exponents are multiplied by $-1$. Therefore, $F$ or its mirror image can be represented by a word in band generators that contains $a_{1,2}^{\varepsilon}a_{2,3}$ for some $\varepsilon\in\{\pm 1\}$.
\end{proof}

The proof of Theorem \ref{thm:3} is now simply a combination of Lemma \ref{lem:over} and Lemma \ref{lem:3}.

\begin{theorem}
\label{thm:3f}
Let $(L,\Psi)$ be an open book in $S^3$ whose binding $L$ has a braid index of at most 3. Then $L$ is the closure of a generalised exchangeable braid.
\end{theorem}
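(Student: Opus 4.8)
The plan is to obtain Theorem \ref{thm:3f} as a direct combination of Lemma \ref{lem:over} and Lemma \ref{lem:3}, with essentially no further work beyond checking that the surface produced by Lemma \ref{lem:3} is of the type required by Lemma \ref{lem:over}.

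First, since $L$ is the binding of the open book $(L,\Psi)$ it is fibered; and the mirror image $\overline{L}$ of a fibered link is again fibered (mirror the whole open book). I would then apply Lemma \ref{lem:3} to $L$: it yields a minimal genus Seifert surface $F$ of $L$ or of $\overline{L}$, together with a presentation of $F$ as a braided surface of degree $3$ whose ladder diagram $\mathcal{D}$ has an overpass and an underpass. Being a braided Seifert surface of minimal genus, $F$ is a Bennequin surface in the sense of Section \ref{sec:mutual}. In the case that $F$ is a Bennequin surface of $L$ itself, Lemma \ref{lem:over} applies directly — $L$ is fibered and $\mathcal{D}$ carries an overpass and an underpass — and it shows that the boundary braid $B=\partial F$ is a generalised exchangeable braid; since $B$ closes to $L$, this is exactly the assertion that $L$ is the closure of a generalised exchangeable braid.

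It remains to treat the case in which Lemma \ref{lem:3} only provides such a surface for the mirror image $\overline{L}$. Here I would run the previous paragraph for $\overline{L}$ to conclude that $\overline{L}$ is the closure of a generalised exchangeable braid, and then observe that this property is invariant under taking mirror images. The cleanest way to see this is that the overpass/underpass machinery is manifestly symmetric under reflection: the mirror of the ladder diagram $\mathcal{D}$ is again a ladder diagram, with the sign of every band reversed, and the defining conditions of an overpass and an underpass only compare the orientations $o(h_i)$, $o(v_i)$ of the path with the signs $\mathrm{sign}(h_i)$, so they continue to hold after the global sign flip. Hence $L$ itself then also has a Bennequin surface whose ladder diagram has an overpass and an underpass, and Lemma \ref{lem:over} finishes the proof. (Alternatively, one may invoke Theorem \ref{thm:main}: the properties B1) through B4) are equivalent, and, say, P-fiberedness is preserved under complex conjugating the associated loop of polynomials, so B1) is mirror-invariant.)

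The substantive content of the argument is entirely contained in Lemmas \ref{lem:3} and \ref{lem:over}; the only point requiring any care in assembling them is the verification, carried out above, that the minimal genus Seifert surface of the (possibly mirrored) fibered link supplied by Lemma \ref{lem:3} really is a Bennequin surface to which Lemma \ref{lem:over} applies, together with the mirror-image bookkeeping. I do not anticipate any genuine obstacle at this stage.
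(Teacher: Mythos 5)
Your overall structure agrees with the paper: invoke Lemma \ref{lem:3} to get a braided Bennequin surface with over/underpass for $L$ or its mirror, apply Lemma \ref{lem:over}, and then resolve the mirror ambiguity. The paper's proof handles that last step exactly by the parenthetical alternative you mention: pass through Theorem \ref{thm:main} to P-fiberedness, note that conjugating the loop of polynomials ($g_t\mapsto\overline{g_t}$, using $\arg\overline{g_t}(\overline{z})=-\arg g_t(z)$) preserves P-fiberedness while realizing the mirror, and return to B1) via Theorem \ref{thm:main}.

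However, the argument you present as ``the cleanest way'' --- that the overpass/underpass conditions are manifestly preserved under the global sign flip $\mathrm{sign}(h_i)\mapsto-\mathrm{sign}(h_i)$ --- is not correct as stated. The overpass conditions equate $o(v_{i-1})$ or $o(v_i)$ with $\pm\mathrm{sign}(h_i)$; they are not invariant under flipping $\mathrm{sign}(h_i)$ while keeping the same path $\gamma_1$ with the same $o(h_i)$, $o(v_i)$. Concretely, if $o(h_i^1)=1$ the condition $o(v_{i-1})=\mathrm{sign}(h_i)$ becomes $o(v_{i-1})=-\mathrm{sign}(h_i)$ after the sign flip, i.e.\ it becomes false rather than remaining true. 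Geometrically this is to be expected: mirroring swaps ``above'' and ``below,'' so an overpass of $F$ does not automatically yield an overpass of $\overline F$, and the underpass has a rigid combinatorial constraint ($\tau(h_i^2)=(n-i,n-i+1)$) that prevents the roles from simply swapping. You would need to exhibit a new overpass and underpass for the mirrored diagram, which is not immediate. Since you do also state the correct argument via Theorem \ref{thm:main} (which is precisely what the paper does), the fix is to promote that parenthetical to the main step and drop, or actually prove, the claimed mirror symmetry of the ladder-diagram conditions.
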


\begin{proof}
Lemma \ref{lem:over} and Lemma \ref{lem:3} show that $L$ or its mirror image is the closure of a generalised exchangeable braid. By Theorem \ref{thm:main} this implies that $L$ or its mirror image is the closure of a P-fibered braid. 

If $B$ is a P-fibered braid, parametrised by $\cup_i(z_{i}(t),t)\subset\mathbb{C}\times[0,2\pi]$, then its mirror image is parametrised by the complex conjugates $\cup_i(\overline{z}_{i}(t),t)$. If $g_t$ denotes the loop of polynomials corresponding to $B$, then $\overline{g_t}$, the loop of polynomials where all coefficients of $g_t$ have been complex conjugated, corresponds to the mirror image of $B$. Since $\arg \overline{g_t}(\overline{z})=-\arg g_t(z)$, it follows that the mirror image of $B$ is also P-fibered.

Therefore, both $L$ and its mirror image are closures of P-fibered braids and by Theorem \ref{thm:main} closures of generalised exchangeable braids.
\end{proof}

In the proof of the lemmata leading up to Theorem \ref{thm:3f} we have used that for links of braid index 3 every minimal genus Seifert surface can be turned into a braided surface of degree 3, which was proved by Bennequin. It is known that the analogous statement is not true for links of braid index 4 \cite{mikami}. Of course, the corresponding Seifert surfaces can still be braided, but they could require a higher number of strands. 

Combining Theorem \ref{thm:3f} with our main Theorem \ref{thm:main}, Theorem \ref{thm:ralg} and the results from \cite{morton} we immediately obtain the following corollaries.

\begin{corollary}
Let $L$ be a fibered link with braid index at most 3. Then $L$ is the closure of some braid $B$, such that the closure of $B^2$ is real algebraic.
\end{corollary}

\begin{corollary}
Let $L$ be a fibered link with braid index at most 3 and $F$ be its fiber surface. Then there exists a simple branched cover $\pi:S^3\to S^3$, branched over a link $L_{branch}$, such that $F$ can be obtained from a disk via a sequence of successive $\pi$-symmetric Hopf plumbings and deplumbings.
\end{corollary}

\subsection{Concluding remarks}
We have shown that the four different definitions of a braided open book in $S^3$ are all equivalent. It is still an open question if every fibered link is the binding of a braided open book. However, the equivalence between the different definitions allows us to translate results about one type of braiding to another. This way we can for example prove that a braid is P-fibered if it is generalised exchangeable and vice versa. Examples of this occur for example in Corollary \ref{cor:thomo} and Corollary \ref{cor:thomo2}. It also results in new techniques that allow us to prove that certain open books can be braided, compare Section \ref{subsec:3}.

The connections to the Benedetti-Shiota conjecture on real algebraic links (Conjecture \ref{conj:bene}) and Montesino's and Morton's stronger version of Harer's conjecture (Conjecture \ref{con:morton}) make the concept of a braided open book a promising tool for making progress on these open problems. In fact, more recently Rampichini diagrams have already been successfully used to prove that a large family of P-fibered braids (including all T-homogeneous braids) closes to real algebraic links \cite{bode:thomo}.

Since some of the definitions of a braided open book have natural generalizations to 3-manifolds that are not necessarily $S^3$, future research should also investigate these structures in this more general setting.







\end{document}